\DeclareFontFamily{U}{rsfs}{}
\DeclareFontShape{U}{rsfs}{n}{it}{<->
rsfs10}{} \DeclareSymbolFont{mscr}{U}{rsfs}{n}{it}
\DeclareSymbolFontAlphabet{\scr}{mscr}
\def\mathscr{\scr}
\def\e#1\e{\begin{equation}#1\end{equation}}
\def\ea#1\ea{\begin{align}#1\end{align}}
\def\eq#1{{\rm(\ref{#1})}}
\theoremstyle{plain}
\newtheorem{thm}{Theorem}[section]
\newtheorem{prop}[thm]{Proposition}
\newtheorem{cor}[thm]{Corollary}
\newtheorem{quest}[thm]{Question}
\theoremstyle{definition}
\newtheorem{dfn}[thm]{Definition}
\newtheorem{ex}[thm]{Example}
\newtheorem{rem}[thm]{Remark}
\newtheorem{conj}[thm]{Conjecture}
\newtheorem{ass}[thm]{Assumption}
\numberwithin{figure}{section}
\numberwithin{equation}{section}
\def\dim{\mathop{\rm dim}\nolimits}
\def\vdim{\mathop{\rm vdim}\nolimits}
\def\Im{\mathop{\rm Im}\nolimits}
\def\det{\mathop{\rm det}\nolimits}
\def\Ker{\mathop{\rm Ker}}
\def\Spec{\mathop{\rm Spec}}
\def\Perf{\mathop{\rm Perf}}
\def\Iso{\mathop{\rm Iso}\nolimits}
\def\Aut{\mathop{\rm Aut}}
\def\Ho{\mathop{\rm Ho}}
\def\Hol{\mathop{\rm Hol}}
\def\PGL{\mathop{\rm PGL}\nolimits}
\def\GL{\mathop{\rm GL}\nolimits}
\def\SO{\mathop{\rm SO}}
\def\SU{\mathop{\rm SU}}
\def\Sp{\mathop{\rm Sp}}
\def\Re{\mathop{\rm Re}}
\def\Spin{\mathop{\rm Spin}}
\def\SF{\mathop{\rm SF}}
\def\SFa{\mathop{\rm SF_{al}}\nolimits}
\def\SFai{\mathop{\rm SF_{al}^{ind}}\nolimits}
\def\CF{\mathop{\rm CF}\nolimits}
\def\CFi{\mathop{\rm CF^{ind}}\nolimits}
\def\U{{\mathbin{\rm U}}}
\def\inv{{\rm inv}}
\def\ad{{\rm ad}}
\def\ch{\mathop{\rm ch}\nolimits}
\def\bdim{{\mathbin{\bf dim}\kern.1em}}
\def\Pic{\mathop{\rm Pic}}
\def\Stab{\mathop{\rm Stab}\nolimits}
\def\supp{\mathop{\rm supp}}
\def\rank{\mathop{\rm rank}\nolimits}
\def\Hom{\mathop{\rm Hom}\nolimits}
\def\Ext{\mathop{\rm Ext}\nolimits}
\def\cExt{\mathop{{\mathcal E}{\rm xt}}\nolimits}
\def\fExact{\mathop{\mathfrak{Exact}}}
\def\id{{\mathop{\rm id}\nolimits}}
\def\Id{{\mathop{\rm Id}\nolimits}}
\def\Map{{\mathop{\rm Map}\nolimits}}
\def\Art{\mathop{\bf Art}\nolimits}
\def\HSta{\mathop{\bf HSta}\nolimits}
\def\VertAlg{\mathop{\bf VertAlg}\nolimits}
\def\LieAlg{\mathop{\bf LieAlg}\nolimits}
\def\gr{{\bf gr}}
\def\TopSta{{\mathop{\bf TopSta}\nolimits}}
\def\pl{{\rm pl}}
\def\ran{{\rm an}}
\def\rst{{\rm st}}
\def\ss{{\rm ss}}
\def\top{{\rm top}}
\def\num{{\rm num}}
\def\irr{{\rm irr}}
\def\virt{{\rm virt}}
\def\fund{{\rm fund}}
\def\coh{\operatorname{coh}}
\def\vect{{\rm vect}}
\def\cs{{\rm cs}}
\def\Top{{\mathop{\bf Top}\nolimits}}
\def\modCQ{\mathop{\text{\rm mod-}\C Q}}
\def\modCQI{\text{\rm mod-$\C Q/I$}}
\def\modCtQ{\mathop{\text{\rm mod-}\C\ti Q}}
\def\bs{\boldsymbol}
\def\ge{\geqslant}
\def\le{\leqslant\nobreak}
\def\boo{{\mathbin{\mathbbm 1}}}
\def\O{{\mathcal O}}
\def\bA{{\mathbin{\mathbb A}}}
\def\bG{{\mathbin{\mathbb G}}}
\def\bL{{\mathbin{\mathbb L}}}
\def\K{{\mathbin{\mathbb K}}}
\def\R{{\mathbin{\mathbb R}}}
\def\Z{{\mathbin{\mathbb Z}}}
\def\bP{{\mathbin{\mathbb P}}}
\def\Q{{\mathbin{\mathbb Q}}}
\def\N{{\mathbin{\mathbb N}}}
\def\C{{\mathbin{\mathbb C}}}
\def\CP{{\mathbin{\mathbb{CP}}}}
\def\fC{{\mathbin{\mathfrak C}\kern.05em}}
\def\A{{\mathbin{\cal A}}}
\def\G{{{\cal G}}}
\def\M{{\mathbin{\cal M}}}
\def\B{{\mathbin{\cal B}}}
\def\cE{{\mathbin{\cal E}}}
\def\cF{{\mathbin{\cal F}}}
\def\cG{{\mathbin{\cal G}}}
\def\cH{{\mathbin{\cal H}}}
\def\cN{{\mathbin{\cal N}\kern .04em}}
\def\cP{{\mathbin{\cal P}}}
\def\cS{{\mathbin{\cal S}}}
\def\T{{{\cal T}\kern .04em}}
\def\oM{{\mathbin{\smash{\,\,\overline{\!\!\mathcal M\!}\,}}}}
\def\tiM{{\mathbin{\smash{\kern.7em\widetilde{\kern-.7em\mathcal M\kern-.4em}\kern.4em}}}}
\def\cV{{\cal V}}
\def\g{{\mathfrak g}}
\def\so{{\mathfrak{so}}}
\def\bW{{\bs W}\kern -0.1em}
\def\bY{{\bs Y}\kern -0.1em}
\def\al{\alpha}
\def\be{\beta}
\def\ga{\gamma}
\def\de{\delta}
\def\io{\iota}
\def\ep{\epsilon}
\def\la{\lambda}
\def\th{\theta}
\def\ze{\zeta}
\def\vp{\varphi}
\def\si{\sigma}
\def\om{\omega}
\def\De{\Delta}
\def\La{\Lambda}
\def\Om{\Omega}
\def\Ga{\Gamma}
\def\Si{\Sigma}
\def\Th{\Theta}
\def\Up{\Upsilon}
\def\ts{\textstyle}
\def\st{\scriptstyle}
\def\sst{\scriptscriptstyle}
\def\sm{\setminus}
\def\bu{\bullet}
\def\op{\oplus}
\def\ot{\otimes}
\def\bt{\boxtimes}
\def\bigop{\bigoplus}
\def\bigot{\bigotimes}
\def\iy{\infty}
\def\es{\varnothing}
\def\ra{\rightarrow}
\def\ab{\allowbreak}
\def\longra{\longrightarrow}
\def\hookra{\hookrightarrow}
\def\lb{\llbracket}
\def\rb{\rrbracket}
\def\ha{{\ts\frac{1}{2}}}
\def\t{\times}
\def\ci{\circ}
\def\ti{\tilde}
\def\d{{\rm d}}
\def\md#1{\vert #1 \vert}
\def\an#1{\langle #1 \rangle}
\begin{document}
\allowdisplaybreaks

\newcommand{\arXivNumber}{2005.05637}

\renewcommand{\thefootnote}{}

\renewcommand{\PaperNumber}{068}

\FirstPageHeading

\ShortArticleName{Universal Structures in $\mathbb C$-Linear Enumerative Invariant Theories}

\ArticleName{Universal Structures in $\mathbb C$-Linear Enumerative\\ Invariant Theories\footnote{This paper is a~contribution to the Special Issue on Enumerative and Gauge-Theoretic Invariants in honor of Lothar G\"ottsche on the occasion of his 60th birthday. The~full collection is available at \href{https://www.emis.de/journals/SIGMA/Gottsche.html}{https://www.emis.de/journals/SIGMA/Gottsche.html}}}

\Author{Jacob GROSS~$^{\rm a}$, Dominic JOYCE~$^{\rm a}$ and Yuuji TANAKA~$^{\rm b}$}

\AuthorNameForHeading{J.~Gross, D.~Joyce and Y.~Tanaka}

\Address{$^{\rm a)}$~The Mathematical Institute, Radcliffe Observatory Quarter,\\
\hphantom{$^{\rm a)}$}~Woodstock Road, Oxford, OX2 6GG, UK}
\EmailD{\href{mailto:jacob.a.gross@gmail.com}{jacob.a.gross@gmail.com}, \href{mailto:joyce@maths.ox.ac.uk}{joyce@maths.ox.ac.uk}}
\URLaddressD{\url{https://people.maths.ox.ac.uk/~joyce/}}

\Address{$^{\rm b)}$~Department of Mathematics, Faculty of Science, Kyoto University,\\
\hphantom{$^{\rm b)}$}~Kitashirakawa Oiwake-cho, Sakyo-ku, Kyoto 606-8502, Japan}
\EmailD{\href{mailto:y-tanaka@math.kyoto-u.ac.jp}{y-tanaka@math.kyoto-u.ac.jp}}
\URLaddressD{\url{https://www.math.kyoto-u.ac.jp/~y-tanaka/}}

\ArticleDates{Received November 22, 2021, in final form September 06, 2022; Published online September 23, 2022}

\Abstract{An {\it enumerative invariant theory} in algebraic geometry, differential geometry, or representation theory, is the study of invariants which `count' $\tau$-(semi)stable objects $E$ with fixed topological invariants $\llbracket E\rrbracket=\alpha$ in some geometric problem, by means of a {\it virtual class} $[\M_\alpha^{{\rm ss}}(\tau)]_{{\rm virt}}$ in some homology theory for the moduli spaces $\M_\alpha^{{\rm st}}(\tau)\subseteq\M_\alpha^{{\rm ss}}(\tau)$ of $\tau$-(semi)stable objects. Examples include Mochizuki's invariants counting coherent sheaves on surfaces, Donaldson--Thomas type invariants counting coherent sheaves on Calabi--Yau 3- and 4-folds and Fano 3-folds, and Donaldson invariants of 4-manifolds. We make conjectures on new universal structures common to many enumerative invariant theories. Any such theory has two moduli spaces $\M$, $\M^{{\rm pl}}$, where the second author (see \url{https://people.maths.ox.ac.uk/~joyce/hall.pdf}) gives $H_*(\M)$ the structure of a {\it graded vertex algebra}, and $H_*\big(\M^{{\rm pl}}\big)$ a {\it graded Lie algebra}, closely related to~$H_*(\M)$. The virtual classes $[\M_\alpha^{{\rm ss}}(\tau)]_{{\rm virt}}$ take values in $H_*\big(\M^{{\rm pl}}\big)$. In most such theories, defining $[\M_\alpha^{{\rm ss}}(\tau)]_{{\rm virt}}$ when $\M_\alpha^{{\rm st}}(\tau)\ne\M_\alpha^{{\rm ss}}(\tau)$ (in gauge theory, when the moduli space contains reducibles) is a difficult problem. We conjecture that there is a natural way to define invariants $[\M_\alpha^{{\rm ss}}(\tau)]_\inv$ in homology over~$\Q$, with $[\M_\alpha^{{\rm ss}}(\tau)]_\inv=[\M_\alpha^{{\rm ss}}(\tau)]_{{\rm virt}}$ when $\M_\alpha^{{\rm st}}(\tau)=\M_\alpha^{{\rm ss}}(\tau)$, and that these invariants satisfy a universal wall-crossing formula under change of stability condition~$\tau$, written using the Lie bracket on $H_*\big(\M^{{\rm pl}}\big)$. We prove our conjectures for moduli spaces of representations of quivers without oriented cycles. Versions of our conjectures in algebraic geometry using Behrend--Fantechi virtual classes are proved in the sequel~[\href{https://arxiv.org/abs/2111.04694}{arXiv:2111.04694}].}

\Keywords{invariant; stability condition; vertex algebra; wall crossing formula; quiver}

\Classification{14D20; 17B69; 16G20}

\begin{flushright}
\it In honour of Lothar G\"ottsche's 60$^{\it th}$ birthday
\end{flushright}

\newpage

\setcounter{tocdepth}{2}
{\small \tableofcontents}

\renewcommand{\thefootnote}{\arabic{footnote}}
\setcounter{footnote}{0}

\section{Introduction}\label{wc1}

For us, an {\it enumerative invariant theory} in algebraic or differential geometry or representation theory is the study of invariants $I_\al(\tau)$ which `count' $\tau$-semistable objects $E$ with fixed topological invariants $\lb E\rb=\al$ in some geometric problem, usually by means of a virtual class $[\M_\al^\ss(\tau)]_\virt$ in some homology theory for the moduli space $\M_\al^\ss(\tau)$ of $\tau$-semistable objects, with $I_\al(\tau)=\int_{[\M_\al^\ss(\tau)]_\virt}\mu_\al$ for some natural cohomology class $\mu_\al$. Often the invariants $I_\al(\tau)$ have a~deformation-invariance property.

We say the enumerative invariant theory is $\C$-{\it linear} if the objects $E$ to be counted live in a $\C$-linear additive category $\A$. (The algebro-geometric version of our theory should extend to $\K$-linear additive categories, for $\K$ an algebraically closed field.) Here are some examples of such $\C$-linear theories:
\begin{itemize}\itemsep=0pt
\item[(i)] Invariants counting Gieseker semistable coherent sheaves on complex projective surfaces, as in Mochizuki~\cite{Moch}.
\item[(ii)] Donaldson--Thomas invariants counting semistable coherent sheaves on Calabi--Yau or Fano 3-folds, as in Thomas \cite{Thom1} and Joyce--Song~\cite{JoSo}.
\item[(iii)] Donaldson--Thomas type invariants counting semistable coherent sheaves on Calabi--Yau 4-folds, as in Borisov--Joyce \cite{BoJo} and Oh--Thomas~\cite{OhTh}.
\item[(iv)] Donaldson invariants counting anti-self-dual $\U(n)$- or $\SU(n)$-connections on compact oriented 4-manifolds, as in Donaldson--Kronheimer \cite{Dona2,DoKr,Kron}.
\item[(v)] Invariants counting semistable representations of quivers, or quivers with relations, and similar representation-theoretic problems.
\end{itemize}
A non-example is Gromov--Witten invariants counting Deligne--Mumford stable $J$-holomorphic curves $j\colon \Si\ra X$, since these do not form a $\C$-linear category.

We discuss some new universal mathematical structures that we expect to underlie most of the $\C$-linear enumerative invariant theories above. We explain our conjectural picture in detail in Section~\ref{wc4}. Here is a brief partial sketch, for simplicity in the algebraic geometry case with $\A=\coh(X)$ for a smooth projective $\C$-scheme $X$, as in (i), (ii) above.
\begin{itemize}\itemsep=0pt
\item[(a)] There are two ways to form a moduli stack of objects in $\A$: the usual moduli stack $\M$, in which a $\C$-point $[E]\in\M$ corresponding to an object $E\in\A$ has isotropy group $\Iso_\M([E])=\Aut(E)$, and the `projective linear' moduli stack $\M^\pl$, in which $\Iso_{\M^\pl}([E])=\Aut(E)/(\bG_m\cdot\id_E)$. There is a morphism $\M\ra\M^\pl$ which is a $[*/\bG_m]$-fibration over nonzero objects.

The second author \cite{Joyc12} explains how to give $H_*(\M,\Q)$ the structure of a {\it graded vertex algebra}, and $H_*\big(\M^\pl,\Q\big)$ a {\it graded Lie algebra} (both with nonstandard gradings). Here by a well-known construction in vertex algebra theory $H_*(\M,\Q)/D(H_*(\M,\Q))$ is a Lie algebra, and $H_*\big(\M^\pl,\Q\big)\cong H_*(\M,\Q)/D(H_*(\M,\Q))$, giving the Lie algebra structure.

Note that $H_*(\M,\Q)$ and $H_*\big(\M^\pl,\Q\big)$, with their vertex algebra/Lie algebra structures, can often be written down quite explicitly (see~\cite{Gros}).
\item[(b)] Let $\tau$ be a suitable stability condition on $\A$, and write $\M^\rst_\al(\tau)\subseteq\M^\ss_\al(\tau)$ for the moduli schemes of $\tau$-stable and $\tau$-semistable objects in $\A$ with Chern class $\al$. Then the coarse moduli scheme $\M^\ss_\al(\tau)$ is proper. In the cases we are interested in, either $\M^\rst_\al(\tau)$ is smooth, or has a natural perfect obstruction theory in the sense of~\cite{BeFa}. Also $\M^\rst_\al(\tau)\subset\M^\pl$ is an open substack.

Hence if $\M^\rst_\al(\tau)=\M^\ss_\al(\tau)$ (that is, if there are no strictly $\tau$-semistable objects in class $\al$) we have a virtual class $[\M^\ss_\al(\tau)]_\virt$ in the homology $H_*(\M^\ss_\al(\tau),\Z)$ or $H_*(\M^\ss_\al(\tau),\Q)$, and we may regard $[\M^\ss_\al(\tau)]_\virt$ as lying in the Lie algebra $H_*(\M^\pl,\Q)$ from (a). We consider $[\M^\ss_\al(\tau)]_\virt$ for all $\al$ to be the family of invariants we want to study.
\item[(c)] If $\M^\rst_\al(\tau)\ne\M^\ss_\al(\tau)$, the question of defining virtual classes $[\M^\rst_\al(\tau)]_\virt$ or $[\M^\ss_\al(\tau)]_\virt$ is a well known, mostly unsolved problem. In algebraic geometry, the issue is that $\M^\rst_\al(\tau)$ is not proper, and $\M^\ss_\al(\tau)$ does not have an obstruction theory, so we cannot use~\cite{BeFa}. In differential geometry, the problem is reducible connections giving singularities in moduli~spaces.

We conjecture that there are natural invariants $[\M^\ss_\al(\tau)]_\inv\in H_*\big(\M^\pl,\Q\big)$ for all $\al$, with $[\M^\ss_\al(\tau)]_\inv=[\M^\ss_\al(\tau)]_\virt$ in (b) when $\M^\rst_\al(\tau)=\M^\ss_\al(\tau)$, which satisfy a package of properties including (d),~(e) below. Here we must work in homology over $\Q$, not $\Z$, if~$\M^\rst_\al(\tau)\ne\M^\ss_\al(\tau)$.
\item[(d)] Let $\tau$, $\ti\tau$ be two suitable stability conditions on $\A$, for instance, Gieseker stability on $\coh(X)$ with respect to two polarizations $L,\ti L\ra X$.

We conjecture that the invariants $[\M^\ss_\al(\tau)]_\inv$, $[\M^\ss_\al(\ti\tau)]_\inv$ are related by the universal wall-crossing formula taken from the second author~\cite{Joyc7}:
\begin{align}
[\M^\ss_\al(\ti\tau)]_\inv =  \sum_{\al_1+\cdots+\al_n=\al} &\ti U(\al_1,\dots,\al_n; \tau,\ti\tau) \cdot \bigl[\bigl[ \dots \bigl[[\M^\ss_{\al_1}(\tau)]_\inv,\nonumber\\
&	
 [\M^\ss_{\al_2}(\tau)]_\inv\bigr],\dots\bigr], [\M^\ss_{\al_n}(\tau)]_\inv\bigr],\label{wc1eq1}
\end{align}
where the $\ti U(\al_1,\dots,\al_n;\tau,\ti\tau) \in \Q$ are a combinatorial coefficient system with only finitely many non-zero terms (see  \cite[Definition~4.4]{Joyc7}, \cite[Section~3.3]{JoSo}), and \eq{wc1eq1} uses the Lie bracket $[\,,\,]$ on $H_*\big(\M^\pl,\Q\big)$ from (a).
\item[(e)] We do not currently have a direct definition of $[\M^\ss_\al(\tau)]_\inv$ in (c) when $\M^\rst_\al(\tau)\ne\M^\ss_\al(\tau)$. However, as for Donaldson--Thomas invariants in \cite[Section~5.4]{JoSo} and Mochizuki's use of `$L$-Bradlow pairs' in \cite[Section~7.3]{Moch}, there is an indirect way to define $[\M^\ss_\al(\tau)]_\inv$ using the wall-crossing formula \eq{wc1eq1} in an auxiliary category $\B$ of `pairs' $V\ot\O(-N)\ra E$ in~$\A$.
\end{itemize}

Our theory is inspired by two main sources. The second author has a general theory of motivic invariants in algebraic geometry \cite{Joyc4,Joyc2,Joyc5,Joyc6,Joyc3, Joyc7}, including a wall crossing formula \cite[Theorems~5.2 and~5.4]{Joyc7} of the form \eq{wc1eq1} in a Lie algebra of `stack functions', which was applied to Donaldson--Thomas theory in~\cite{JoSo}. This does not apply to the enumerative invariants above, which are not motivic. But his recent work on vertex algebra and Lie algebra structures on homology of moduli spaces~\cite{Joyc12} provides the tools we need to extend it to enumerative invariants.

We hope that in the future our theory will lead to a better understanding of deep properties of enumerative invariants. In particular, the connection between vertex algebras and invariants is relatively unexplored. For example, can we explain modular properties and other structural features of generating functions of invariants in terms of the vertex algebras appearing in~(a)?

So far we have described only conjectures. The main results of this paper, stated in Section~\ref{wc5} and proved in Section~\ref{wc6}, are to prove our conjectures in (c)--(e) above when $\A=\modCQ$ is the abelian category of $\C$-representations of a quiver $Q$ without oriented cycles. In a~sequel~\cite{Joyc13}, the second author will prove the conjectures in other cases in algebraic geometry using Behrend--Fantechi virtual classes~\cite{BeFa}, including $\A=\modCQI$ for $(Q,I)$ a quiver with relations, and $\A=\coh(X)$ for $X$ a smooth projective complex curve, surface, or Fano 3-fold.

We define our invariants in {\it ordinary homology} $H_*\big(\M^\pl,\Q\big)$. To form virtual classes in ordinary homology, one needs {\it proper} moduli schemes, as in (b) above. One might hope that there should be a version of our theory in {\it Borel--Moore homology} $H_*^{\rm BM}\big(\M^\pl,\Q\big)$, that would not require properness.

However, this does not work, as pushforwards are only defined in Borel--Moore homology for proper morphisms, but we need pushforwards along non-proper morphisms at several crucial points, in particular to define the vertex algebra on $H_*(\M,\Q)$ and Lie algebra on $H_*\big(\M^\pl,\Q\big)$ in~(a). Also if $\M^\rst_\al(\tau)=\M^\ss_\al(\tau)$ we need to push $[\M^\ss_\al(\tau)]_\virt$ forward along the not necessarily proper inclusion $\M^\rst_\al(\tau)\hookra\M^\pl_\al$ to regard $[\M^\ss_\al(\tau)]_\virt$ as an element of~$H_*\big(\M_\al^\pl\big)$.

\section{Background on vertex algebras and Lie algebras}\label{wc2}

In this section, we review some material from the second author \cite{Joyc12}. Throughout this section~$R$ is a commutative $\Q$-algebra, for instance $R=\Q,\R$ or $\C$.

\subsection{Vertex algebras and Lie algebras}\label{wc21}

For background on vertex algebras, we recommend Frenkel--Ben-Zvi~\cite{FrBZ}, Kac~\cite{Kac2}, Lepowsky--Li~\cite{LeLi}, and the second author~\cite{Joyc12}. As we work over a $\Q$-{\it algebra} $R$, there are a few simplifications to the general theory. Here is one of several equivalent definitions of vertex algebra.

\begin{dfn}\label{wc2def1}
Let $V_*=\bigop_{n\in\Z}V_n$ be a graded $R$-module. Write $V_*((z)):= V_*[[z]]\big[z^{-1}\big]$ for the $R$-module of Laurent series in a formal variable~$z$. The $R$-modules $V_*[[z]],\ab V_*((z))$ are made $\Z$-graded by declaring~$\deg z=-2$.

A {\it field} on $V_*$ is an $R$-module homomorphism $V_*\ra V_*((z))$. The set of all fields on $V_*$ is denoted $\cF(V_*)$ and is considered as a graded $R$-module by declaring $\cF(V_*)_n$ to be the set of degree $n$ fields $V\ra V((z))$ for~$n\in\Z$.

A {\it graded vertex algebra} $\big(V_*,\boo,{\rm e}^{zD},Y\big)$ over $R$ is a $\Z$-graded $R$-module $V_*$ with an identity element $\boo\in V_0$, a grading-preserving operator ${\rm e}^{zD}\colon V\ra V[[z]]$ with ${\rm e}^{zD}v=\sum_{n\ge 0}\frac{1}{n!}D^n(v)\,z^n$ for $D\colon V_*\ra V_{*+2}$ the {\it translation operator}, and a grading-preserving {\it state-field correspondence} $Y\colon V_*\ra\cF(V_*)_*$ written $Y(u,z)v=\sum_{n\in\Z}u_n(v)z^{-n-1}$, where $u_n$ maps $V_*\ra V_{*+a-2n-2}$ for $u\in V_a$, satisfying:
\begin{itemize}\itemsep=0pt
\item[(i)] $Y(\boo,z)v=v$ for all $v\in V$.
\item[(ii)] $Y(v,z)\boo={\rm e}^{zD}v$ for all $v\in V$.
\item[(iii)] For all $u\in V_a$ and $v\in V_b$, there exists $N\gg 0$ such for all $w\in V_*$
\end{itemize}
\begin{equation*}
(z_1-z_2)^N\bigl(Y(u,z_1)Y(v,z_2)w-(-1)^{ab}Y(v,z_2)Y(u,z_1)w\bigr)=0\quad \text{in $V_*\big[\big[z_1^{\pm 1},z_2^{\pm 1}\big]\big]$}.
\end{equation*}
Part (iii) is called the {\it weak commutativity property}.

Let $\big(V_*,\boo,{\rm e}^{zD},Y\big)$ and $\big(V'_*,\boo',{\rm e}^{zD'},Y'\big)$ be graded vertex algebras over~$R$. A~{\it morphism} $\phi\colon \big(V_*,\boo,{\rm e}^{zD},Y\big)\ra \big(V'_*,\boo',{\rm e}^{zD'},Y'\big)$ is an $R$-module morphism $\phi\colon V_*\ra V_*'$ which preserves all the structures. That is, $\phi$ maps $V_n\ra V_n'$, and $\phi(\boo)=\boo'$, and $\phi\ci D=D'\ci\phi$, and $\phi\ci Y=Y'\ci(\phi\ot\phi)$. Such morphisms make graded vertex algebras over $R$ into a category~$\VertAlg_R^\gr$.
\end{dfn}

Vertex algebras are very complicated objects, and the above brief definition probably communicates little real understanding of them~-- we refer readers to \cite{FrBZ,Joyc12,Kac2,LeLi} for more. In this paper, the main property of (graded) vertex algebras we use is that they have a functor to (graded) Lie algebras.

\begin{dfn}\label{wc2def2}
A {\it graded Lie algebra} over $R$ is a pair $(V_*,[\,,\,])$, where $V_*=\bigop_{a\in\Z}V_a$ is a graded $R$-module, and $[\,,\,]\colon V_*\t V_*\ra V_*$ is an $R$-bilinear map called the {\it Lie bracket}, which is graded (that is, $[\,,\,]$ maps $V_a\t V_b\ra V_{a+b}$ for all $a,b\in\Z$), such that for all $a,b,c\in\Z$ and $u\in V_a$, $v\in V_b$ and $w\in V_c$ we have
\begin{equation*}
[v,u]=(-1)^{ab+1}[u,v],\qquad
(-1)^{ca}[[u,v],w]+(-1)^{ab}[[v,w],u]+(-1)^{bc}[[w,u],v]=0.
\end{equation*}

Let $(V_*,[\,,\,])$, $(V'_*,[\,,\,])$ be graded Lie algebras over $R$. A {\it morphism} $\phi\colon (V_*,[\,,\,])\ra (V'_*,[\,,\,])$ is an $R$-module morphism $\phi\colon V_*\ra V_*'$ which preserves all the structures. That is, $\phi$ maps $V_n\ra V_n'$ and $\phi\bigl([u,v]\bigr)=\bigl[\phi(u),\phi(v)\bigr]$. Such morphisms make graded Lie algebras over $R$ into a~category~$\LieAlg_R^\gr$.
\end{dfn}

The next proposition is due to Borcherds \cite[Section~4]{Borc}.

\begin{prop}\label{wc2prop1}
Let $\big(V_*,\boo,{\rm e}^{zD},Y\big)$ be a graded vertex algebra over $R$. We may construct a~graded Lie algebra $\big(\check V_*,[\,,\,]\big)$ over $R$ as follows. Noting the shift in grading, define a $\Z$-graded $R$-module $\check V_*$ by
\begin{equation*}
\check V_n=V_{n+2}/D(V_n)\qquad\text{for $n\in\Z$},
\end{equation*}
so that $\check V_*=V_{*+2}/D(V_*)$. If $u\in V_{a+2}$ and $v\in V_{b+2}$, the Lie bracket on $\check V_*$ is
\begin{equation*}
\bigl[u+D(V_a),v+D(V_b)\bigr]=u_0(v)+D(V_{a+b})\in\check V_{a+b}.
\end{equation*}

A morphism $\phi\colon \big(V_*,\boo,{\rm e}^{zD},Y\big)\!\ra \!\big(V'_*,\boo',{\rm e}^{zD'},Y'\big)$ induces a morphism $\check\phi\colon (V_*,[\,,\,])\!\ra\! (V'_*,[\,,\,])$ by $\check\phi\bigl(u+D(V_*)\bigr)=\phi(u)+D'(V'_*)$. Mapping $\big(V_*,\boo,{\rm e}^{zD},Y\big)\ab\mapsto(\check V_*,[\,,\,])$ and $\phi\mapsto\check\phi$ defines a~functor~$\VertAlg_R^\gr\ra\LieAlg_R^\gr$.
\end{prop}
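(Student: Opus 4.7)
The plan is to follow the standard Borcherds argument, which rests on three identities that are consequences of the axioms of Definition~\ref{wc2def1}: \textbf{(I)} translation covariance $[D,u_n] = -n\,u_{n-1}$, equivalently $(Du)_n = -n\,u_{n-1}$; \textbf{(II)} skew-symmetry $Y(u,z)v = (-1)^{ab} e^{zD} Y(v,-z)u$ for $u\in V_a$, $v\in V_b$; and \textbf{(III)} the Borcherds commutator formula
\[
[u_m,v_n] = \sum_{k\ge 0} \binom{m}{k}(u_k v)_{m+n-k}.
\]
All three are standard consequences of axioms (i)--(iii) of a vertex algebra; see for example Kac~\cite{Kac} or Frenkel--Ben-Zvi~\cite{FrBZ}. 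I would invoke them as black boxes rather than re-derive them from the spartan axioms of Definition~\ref{wc2def1}.

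First I would check that the bracket on $\check V_*$ is well-defined. The degree count is routine: for $u\in V_{a+2}$ and $v\in V_{b+2}$, the index convention of Definition~\ref{wc2def1} gives $u_0:V_{b+2}\to V_{(b+2)+(a+2)-2}=V_{a+b+2}$, which after quotienting by $D(V_{a+b})$ yields an element of $\check V_{a+b}$. Independence of representatives follows from (I): setting $n=0$ gives $(Du')_0 = 0$, so replacing $u$ by $u+Du'$ leaves $u_0(v)$ unchanged, while $[D,v_0]=0$ gives $u_0(Dv') = D(u_0 v')\in D(V_{a+b})$, so replacing $v$ by $v+Dv'$ leaves $u_0(v)$ unchanged modulo $D(V_{a+b})$.

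For skew-symmetry, extract the coefficient of $z^{-1}$ on both sides of (II). On the right, $Y(v,-z)u = \sum_n (-1)^{-n-1} v_n(u)\,z^{-n-1}$ contributes $-v_0(u)$ at $z^{-1}$, and the higher terms of $e^{zD}$ combined with more negative powers of $z$ produce only elements of $D(V_{a+b})$. Hence $u_0(v) \equiv -(-1)^{ab} v_0(u) \pmod{D(V_{a+b})}$, which is exactly $[v,u] = (-1)^{ab+1}[u,v]$. For the Jacobi identity, specialise (III) to $m=n=0$: only $k=0$ survives, giving $[u_0,v_0] = (u_0 v)_0$ as operators on $V_*$. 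Applied to $w\in V_{c+2}$ this reads $u_0(v_0 w) - (-1)^{ab} v_0(u_0 w) = (u_0 v)_0 w$, which on passing to $\check V_*$ is $[[u,v],w] = [u,[v,w]] - (-1)^{ab}[v,[u,w]]$; reorganising using skew-symmetry yields the graded Jacobi identity stated in Definition~\ref{wc2def2}.

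Functoriality is immediate. A morphism $\phi$ preserves $\boo$, $D$ and $Y$, so in particular $\phi\circ D=D'\circ\phi$ and $\phi(u_n v)=(\phi u)_n(\phi v)$ for every $n$. Therefore $\phi$ descends to a grading-preserving $R$-module map $\check\phi:\check V_*\to \check V'_*$ that intertwines the brackets, and composition is visibly respected. The main obstacle in a fully self-contained proof would be the derivation of (I)--(III) from axioms (i)--(iii); once these are in hand, the quotient calculation above is a short piece of bookkeeping, which is why I would separate these two layers of the argument and treat the first as classical.
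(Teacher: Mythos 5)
The paper gives no proof of this proposition, attributing it to Borcherds \cite[\S 4]{Borc}, and your argument is precisely the standard Borcherds construction: well-definedness via $(Du)_0=0$ and $[D,u_0]=0$, antisymmetry from the $z^{-1}$-coefficient of skew-symmetry (the higher $e^{zD}$ terms landing in $D(V_{a+b})$), and Jacobi from the $m=n=0$ commutator formula. Your degree bookkeeping and signs are consistent with the paper's conventions (note $(-1)^{(a+2)(b+2)}=(-1)^{ab}$, so the shift by $2$ is harmless), and treating (I)--(III) as classical consequences of the vacuum, creation and weak-commutativity axioms over a $\Q$-algebra is exactly the level of detail the paper itself relies on.
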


\subsection{Stacks, and their homology groups}\label{wc22}

In Section~\ref{wc23} we will explain that if $\A$ is a suitable $\C$-linear abelian category, or $\T$ is a suitable $\C$-linear triangulated category, and $\M$ is the moduli stack of objects in $\A$ or $\T$, and we choose a little extra data, then~\cite{Joyc12} makes the homology $H_*(\M)$ into a graded vertex algebra. First we give some brief background on stacks $\M$ and their homology groups~$H_*(\M)$.

Stacks are a class of spaces in algebraic geometry. In this paper, two types of algebro-geometric stacks are relevant: {\it Artin $\C$-stacks}, as in G\'omez~\cite{Gome}, Laumon and Moret-Bailly \cite{LaMo} and Olsson~\cite{Olss}, which form a 2-category $\Art_\C$, and {\it higher $\C$-stacks}, as in To\"en and Vezzosi \cite{Toen1,Toen2,ToVe1,ToVe2}, which form an $\iy$-category $\HSta_\C$ containing $\Art_\C\subset\HSta_\C$ as a full discrete 2-subcategory.

The general rule is that for any algebro-geometric $\C$-linear abelian or exact category $\A$ appearing in this paper, such as $\A=\coh(X)$ or $\A=\vect(X)$ for $X$ a smooth projective $\C$-scheme, the moduli stack $\M$ of objects in $\A$ is an Artin $\C$-stack, and for any algebro-geometric $\C$-linear triangulated category $\T$, such as $\T=D^b\coh(X)$, the moduli stack $\M$ is a higher $\C$-stack.

If $S$ is an Artin or higher $\C$-stack, we write $S(\C)$ for the set of 2-isomorphism classes $[x]$ of 1-morphisms $x\colon \Spec\C\ra S$. Elements of $S(\C)$ are called $\C$-{\it points}, or {\it geometric points}, of $S$. If $\phi\colon S\ra T$ is a 1-morphism then composition with $\phi$ induces a map of sets~$\phi_*\colon S(\C)\ra T(\C)$.

If $S$ is an Artin $\C$-stack, each $\C$-point $x\in S(\C)$ has an {\it isotropy group} $\Iso_S(x)$, an algebraic $\C$-group. We say that $S$ {\it has affine geometric stabilizers} if $\Iso_S(x)$ is an affine algebraic $\C$-group for all~$x\in S(\C)$.

An important class of Artin $\C$-stacks are {\it quotient stacks} $[S/G]$, where $S$ is a $\C$-scheme and $G$ is an algebraic $\C$-group acting on $S$. When $\M=[S/G]$, the $\C$-points are $G(\C)$-orbits $xG(\C)$ for $\C$-points $x\in S(\C)$, and the isotropy groups are~$\Iso_{[S/G]}(xG(\C))=\Stab_{G(\C)}(x)$.

As in Simpson \cite{Simp} and Blanc \cite[Section~3.1]{Blan}, any Artin $\C$-stack or higher $\C$-stack $\M$ has a {\it topological realization} $\M^\top$, which is a topological space (in fact, a CW-complex) natural up to homotopy equivalence. Topological realization gives a functor $(-)^\top\colon \Ho(\HSta_\C)\ra\Top^{\bf ho}$ from the homotopy category $\Ho(\HSta_\C)$ to the category $\Top^{\bf ho}$ of topological spaces with morphisms homotopy classes of continuous maps.

Let $\M$ be an Artin $\C$-stack, or higher $\C$-stack, and $R$ be a commutative $\Q$-algebra, such as $R=\Q,\R$ or $\C$. We define the {\it homology $H_*(\M)=H_*(\M,R)$ of $M$ with coefficients in} $R$ to be $H_*(\M)=H_*(\M^\top,R)$, the usual homology of the topological space $\M^\top$. Similarly we define the cohomology $H^*(\M)=H^*(\M,R)=H^*(\M^\top,R)$. These are sometimes called the {\it Betti (co)homology}, to distinguish them from other (co)homology theories of stacks. We usually omit the coefficient $\Q$-algebra $R$ from our notation $H_*(\M)$, $H^*(\M)$.

The following properties of $H_*(\M)$, $H^*(\M)$ will be important later:
\begin{itemize}\itemsep=0pt
\item[(a)] Let $S$ be a $\C$-scheme, and $S^\ran$ the underlying complex analytic space. Then $H_*(S)\cong H_*(S^\ran)$ and $H^*(S)\cong H^*(S^\ran)$.
\item[(b)] If $\M$ is a quotient stack $[S/G]$, we have a homotopy equivalence
\begin{equation*}
\M^\top\simeq (S^\ran\t EG^\ran)/G^\ran,
\end{equation*}
where $EG^\ran\ra BG^\ran$ is a classifying space for the complex analytic topological group $G^\ran=G(\C)$. If $S$ is contractible (e.g., if $S$ is a point $*$ or an affine space $\bA^n$) this implies that $\M^\top\simeq BG^\ran$.
\item[(c)] For a disjoint union $\M=\coprod_{i\in I}\M_i$ we have $H_*(\M)\cong\bigop_{i\in I}H_*(\M_i)$ and $H^*(\M)\cong\prod_{i\in I}H^*(\M_i)$.
\item[(d)] $H_*(-)$ is covariantly functorial, and $H^*(-)$ is contravariantly functorial, under morphisms of stacks, in the obvious way.
\item[(e)] Cap and cup products $\cap$, $\cup$ are defined and have the usual properties, e.g., if $f\colon S\ra T$ is a morphism of stacks and $\al \in H_*(S)$, $\be\in H^*(T)$ then
\begin{equation*}
H_*(f)(\al \cap H^*(f)(\be)) = H_*(f)(\al) \cap \be.
\end{equation*}
\item[(f)] The K\"unneth theorem gives isomorphisms $H_*(S\t T)\cong H_*(S)\ot_RH_*(T)$ and $H^*(S\t T)\cong H^*(S)\ot_RH^*(T)$, as $R$ is a $\Q$-algebra.
\item[(g)] Let $\cE^\bu\ra\M$ be a perfect complex (e.g., a vector bundle). Then $\cE^\bu$ corresponds to a~morphism $\phi_{\cE^\bu}\colon \M\ra\Perf_\C$, where $\Perf_\C$ is a higher stack which classifies perfect complexes, as in To\"en and Vezzosi \cite[Definition~1.3.7.5]{ToVe2}. The topological realization of $\Perf_\C$ is $B\U\t\Z$, where $B\U=\varinjlim_{n\ra\iy}B\U(n)$ is the stable unitary classifying space, so that $B\U\t\Z$ is the classifying space for topological complex K-theory $K^0(-)$, as in May \cite[Sec\-tions~23--24]{May}. Thus $\phi_{\cE^\bu}^\top\colon \M^\top\ra B\U\t\Z$ defines a K-theory class $[\cE^\bu]\in K^0(\M^\top)$. Hence we may define the {\it Chern classes} $c_i(\cE^\bu)=c_i([\cE^\bu])$ in $H^{2i}(\M)=H^{2i}(\M^\top)$. These have the usual properties of Chern classes, e.g., $c_k(\cE^\bu\op\cF^\bu)=\sum_{i+j=k}c_i(\cE^\bu)\cup c_j(\cF^\bu)$.
\end{itemize}
As $B\bG_m\simeq \CP^\iy$, using (b) above and $\CP^n\hookra\CP^\iy$ we have an isomorphism
\begin{gather}
H^*([*/\bG_m]) \cong R[[z]]  \quad \text{as $R$-algebras, with $\deg z=2$, so that}\nonumber\\
H^{2n}([*/\bG_m]) =\an{z^n}_R,  \quad \text{normalized so that}\quad  \int_{\CP^n}z^n=1.\label{wc2eq1}
\end{gather}

Our conjectures in Section~\ref{wc4} involve fairly general Artin and higher stacks. However, our main results in Sections~\ref{wc5}--\ref{wc6} involve only moduli stacks $\M$ of abelian categories of quiver representations $\modCQ$. These are of a very simple kind: we have $\M=\coprod_{\bs d\in\N^{Q_0}}\M_{\bs d}$, where $\M_{\bs d}$ is a~quotient stack $[V_{\bs d}/\Pi_{v\in Q_0}\GL(\bs d(v),\C)]$ for $V_{\bs d}$ a $\C$-vector space. Then we can compute $H_*(\M)$, $H^*(\M)$ using~(b),~(c) above. So Sections~\ref{wc5}--\ref{wc6} do not need a detailed knowledge of stacks.

There is also a theory of {\it topological stacks} due to Metzler~\cite{Metz} and Noohi~\cite{Nooh1,Nooh2}, which we can use to write the differential-geometric version of our conjectural picture in Section~\ref{wc4}. In this paper, by `topological stacks' we mean {\it hoparacompact topological stacks} in the sense of Noohi \cite[Section~8.3]{Nooh2}, which form a 2-category $\TopSta$. These are a generalization of topological spaces, which as in \cite{Nooh2} have a well-behaved homotopy theory. As for the algebraic case, there is a topological realization functor $(-)^\top\colon \Ho(\TopSta)\ra\Top^{\bf ho}$. If $\B$ is a topological stack we define $H_*(\B)=H_*(\B^\top,R)$ and $H^*(\B)=H^*(\B^\top,R)$, as above.

We will be primarily interested in the following case (see \cite{JTU} for more details). Let $X$ be a compact manifold, $P\ra X$ a principal $\U(n)$-bundle, $\A_P$ the infinite-dimensional affine space of all connections $\nabla_P$ on $P$, and $\G_P=\Aut(P)$ the infinite-dimensional Lie group of gauge transformations of $P$. Then $\G_P$ acts continuously on $\A_P$, and we define $\B_P=\A_P/\G_P$ to be the quotient topological stack. Since~$\A_P$ is contractible, we have
\begin{equation*}
H_*(\B_P):=H_*\big(\B_P^\top,R\big)=H_*((\A_P\t E\G_P)/\G_P,R)\cong H_*(B\G_P,R).
\end{equation*}

\subsection{A geometric construction of vertex algebras}\label{wc23}

In \cite{Joyc12}, amongst other things, the second author defines new graded vertex algebra structures on the homology $H_*(\M)$ of moduli stacks $\M$ of objects in suitable abelian categories $\A$ or triangulated categories $\T$. We explain the construction in a special case. The next assumption sets out the data we need. As in Section~\ref{wc22}, throughout $R$ is a fixed $\Q$-algebra, and $H_*(-)=H_*(-,R)$ and $H^*(-)=H^*(-,R)$ denote (co)homology over~$R$.

\begin{ass}\label{wc2ass1}
Let $\A$ be a $\C$-linear abelian or exact category coming from algebraic geometry or representation theory, e.g., we could take $\A=\coh(X)$ or $\vect(X)$ for $X$ a smooth projective $\C$-scheme, or $\A=\modCQ$ the category of $\C$-representations of a quiver $Q$. Assume:
\begin{itemize}\itemsep=0pt
\item[(a)] We can form a natural moduli stack $\M$ of objects in $\A$, an Artin $\C$-stack, locally of finite type. Then $\C$-points of $\M$ are isomorphism classes $[E]$ of objects $E\in\A$, and the isotropy groups are $\Iso_\M([E])=\Aut(E)$.
\item[(b)] There is a natural morphism of Artin stacks $\Phi\colon \M\t\M\ra\M$ which on $\C$-points acts by $\Phi_*\colon ([E],[F])\mapsto[E\op F]$, for all objects $E,F\in\A$, and on isotropy groups acts by $\Phi_*\colon \Iso_{\M\t\M}([E],[F])\cong\Aut(E)\t\Aut(F)\ra \Iso_\M([E\op F])\cong\Aut(E\op F)$ by $(\la,\mu)\mapsto\bigl(\begin{smallmatrix}\la & 0 \\ 0 & \mu\end{smallmatrix}\bigr)$ for $\la\in\Aut(E)$ and $\mu\in\Aut(F)$, using the obvious matrix notation for \mbox{$\Aut(E\op F)$}. That is, $\Phi$ is the morphism of moduli stacks induced by direct sum in the abelian category~$\A$. It is associative and commutative in~$\Ho(\Art_\C)$.
\item[(c)] There is a natural morphism of Artin stacks $\Psi\colon [*/\bG_m]\t\M\ra\M$ which on $\C$-points acts by $\Psi_*\colon (*,[E])\mapsto[E]$, for all objects $E$ in $\A$, and on isotropy groups acts by $\Psi_*\colon \Iso_{[*/\bG_m]\t\M}(*,[E])\cong\bG_m\t\Aut(E)\ra \Iso_\M([E])\cong\Aut(E)$ by $(\la,\mu)\mapsto \la\mu=(\la\cdot\id_E)\ci\mu$ for $\la\in\bG_m$ and $\mu\in\Aut(E)$. Note that $\Psi$ is {\it not} the same as the projection $\pi_\M\colon [*/\bG_m]\t\M\ra\M$ from the product $[*/\bG_m]\t\M$, which acts on isotropy groups as $(\pi_\M)_*\colon (\la,\mu)\mapsto\mu$. We have identities in $\Ho(\Art_\C)$:
\begin{gather*}
\Psi\ci(\id_{[*/\bG_m]}\t\Phi)=\Phi\ci\bigl(\Psi\ci\Pi_{12},\Psi\ci\Pi_{13}\bigr)\colon \
[*/\bG_m]\t\M^2\longra\M,\\
\Psi\ci(\id_{[*/\bG_m]}\t\Psi)=\Psi\ci(\Om\t\id_\M)\colon \
[*/\bG_m]^2\t\M\longra\M,
\end{gather*}
where $\Pi_{ij}$ projects to the $i^{\rm th}$ and $j^{\rm th}$ factors, and
$\Om\colon [*/\bG_m]^2\ra[*/\bG_m]$ is induced by the morphism $\bG_m\t\bG_m\ra\bG_m$ mapping~$(\la,\mu)\mapsto\la\mu$.
\item[(d)] We are given a surjective quotient of abelian groups $K_0(\A)\twoheadrightarrow K(\A)$ of the Grothendieck group $K_0(\A)$ of $\A$. We write $\lb E\rb\in K(\A)$ for the class of $E\in\A$. We suppose that if $E\in\A$ with $\lb E\rb=0$ in $K(\A)$ then $E=0$.

We require that the map $\M(\C)\ra K(\A)$ mapping $E \mapsto\lb E\rb$ should be locally constant. This gives a decomposition $\M=\coprod_{\al\in K(\A)}\M_\al$ of $\M$ into open and closed $\C$-substacks $\M_\al\subset \M$ of objects in class $\al$, where $\M_0=\{[0]\}$. We write $\M_{\ne 0}=\M\sm\M_0$. We write $\Phi_{\al,\be}=\Phi\vert_{\M_\al\t\M_\be}\colon \M_\al\t\M_\be\ra\M_{\al+\be}$ and $\Psi_\al=\Psi\vert_{[*/\bG_m]\t\M_\al}\colon [*/\bG_m]\t\M_\al\ra\M_\al$.
\item[(e)] We are given a symmetric biadditive form $\chi\colon K(\A)\t K(\A)\ra \Z$.
\item[(f)] We are given signs $\ep_{\al,\be}\in\{\pm 1\}$ for all $\al,\be \in K(\A)$, such that for all $\al,\be,\ga\in K(\A)$ we~have
\begin{gather}
\label{wc2eq3}
\ep_{\al,\be} \cdot \ep_{\be,\al}=(-1)^{\chi(\al,\be) + \chi(\al,\al)\chi(\be,\be)}, \\
\label{wc2eq4}
\ep_{\al,\be}\cdot\ep_{\al+\be,\ga}=\ep_{\al,\be+\ga}\cdot\ep_{\be,\ga}, \\
\label{wc2eq5}
\ep_{\al,0}=\ep_{0,\al}=1.
\end{gather}
\item[(g)] We are given a perfect complex $\Th^\bu$ on $\M\t\M$, such that the restriction $\Th_{\al,\be}^\bu:=\Th^\bu\vert_{\M_\al\t\M_\be}$ to $\M_\al\t\M_\be$ has rank $\chi(\al,\be)$ for all $\al,\be\in K(\A)$, and there are isomorphisms of perfect complexes
\begin{gather}
\si_\M^*(\Th^\bu)\cong(\Th^\bu)^\vee[2n],
\label{wc2eq6}\\
(\Phi\t\id_\M)^*(\Th^\bu)\cong
\Pi_{13}^*(\Th^\bu)\op \Pi_{23}^*(\Th^\bu),
\label{wc2eq7}\\
(\id_\M\t\Phi)^*(\Th^\bu)\cong
\Pi_{12}^*(\Th^\bu)\op \Pi_{13}^*(\Th^\bu),
\label{wc2eq8}\\
(\Psi\t\id_\M)^*(\Th^\bu)\cong \Pi_1^*(L_{[*/\bG_m]})\ot \Pi_{23}^*(\Th^\bu),
\label{wc2eq9}\\
(\Pi_2,\Psi\ci\Pi_{13})^*(\Th^\bu)\cong \Pi_1^*(L_{[*/\bG_m]}^*)\ot\Pi_{23}^*(\Th^\bu).
\label{wc2eq10}
\end{gather}
Here \eq{wc2eq6} is on $\M\t\M$, where $\si_\M\colon \M\t\M\ra\M\t\M$ exchanges the factors, and $n\in\Z$. Equations \eq{wc2eq7}--\eq{wc2eq8} are on $\M\t\M\t\M$, and \eq{wc2eq9}--\eq{wc2eq10} on $[*/\bG_m]\t\M\t\M$. We write $\Pi_i$ for the projection to the $i^{\rm th}$ factor, and $\Pi_{ij}$ for the projection to the product of the $i^{\rm th}$ and $j^{\rm th}$ factors. We write $L_{[*/\bG_m]}\ra[*/\bG_m]$ for the line bundle corresponding to the weight 1 representation of $\bG_m=\C\sm\{0\}$ on~$\C$.
\end{itemize}	
\end{ass}

Although Assumption \ref{wc2ass1}(a)--(g) look like a lot of rather arbitrary data, as explained in \cite{Joyc12,Joyc13} and Section~\ref{wc4}, there are natural choices for all this data in many large classes of interesting examples.

\begin{dfn}\label{wc2def3}
Suppose Assumption \ref{wc2ass1} holds. Given all this data, we define a graded vertex algebra structure on the homology $H_*(\M)$ from Section~\ref{wc22}. The inclusion of the zero object $0\in\A$ gives a morphism $[0]\colon *\hookra\M$ inducing $R\cong H_*(*)
\ra H_*(\M)$, and we define $\boo\in H_*(\M)$ to be the image of $1\in R$ under this map. Taking homology of $\Psi$ gives a map
\begin{equation*}
\xymatrix@C=30pt{ H_*([*/ \bG_m])\!\ot_R\! H_*(\M)  \ar[r]^(0.57){\bt} & H_*([*/ \bG_m] \!\t\! \M) \ar[r]^(0.61){H_*(\Psi)} & H_*(\M). }
\end{equation*}
As $H^*([*/\bG_m])\cong\Hom_R(H_*([*/ \bG_m]),R)$, this is equivalent to a map
\begin{equation*}
\xymatrix@C=24pt{ H_*(\M) \ar[r] & H_*(\M) \hat\ot_R H^*([*/\bG_m]) \ar[r]^(0.61){\eq{wc2eq1}} & H_*(\M)[[z]], }
\end{equation*}
using equation \eq{wc2eq1}, and we denote the composition ${\rm e}^{zD}$.

The decomposition $\M=\coprod_{\al\in K(\A)}\M_\al$ induces an identification
\e
H_*(\M)=\bigop_{\al\in K(\A)}H_*(\M_\al).
\label{wc2eq11}
\e
For $u\in H_a(\M_\al)\subset H_*(\M)$ and $v\in H_b(\M_\be)\subset H_*(\M)$, define
\begin{gather}
Y(u,z)v=Y(z)(u\ot v)=  \ep_{\al, \be}(-1)^{a\chi(\be,\be)} \sum\nolimits_{i\ge 0} z^{\chi(\al, \be)-i}\nonumber\\
\hphantom{Y(u,z)v=Y(z)(u\ot v)= }{} \times H_*(\Phi) \ci \big({\rm e}^{zD}\ot \id\big) ((u \bt v) \cap c_i(\Th^\bu)).
\label{wc2eq12}
\end{gather}
Using \eq{wc2eq11}, for $n\in\Z$ and $\al\in K(\A)$ we write
\e
\hat H_n(\M_\al)= H_{n-\chi(\al,\al)}(\M_\al),\qquad \hat H_n(\M)=\bigop_{\al\in K(\A)}\hat H_n(\M_\al).
\label{wc2eq13}
\e
That is, $\hat H_*(\M)$ is $H_*(\M)$, but with grading shifted by $-\chi(\al,\al)$ in the component $H_*(\M_\al)\subset H_*(\M)$. The second author \cite{Joyc12} proves:	
\end{dfn}

\begin{thm}\label{wc2thm1}
In Definition \eqref{wc2def3}, $\big(\hat H_*(\M), \boo,{\rm e}^{zD},Y\big)$ is a graded vertex algebra over~$R$.
\end{thm}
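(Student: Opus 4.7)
The plan is to verify the three axioms of Definition \ref{wc2def1} in turn, after first checking that $\boo$, $e^{zD}$, and $Y$ respect the shifted grading $\hat H_*$. The grading check is bookkeeping: for $u \in H_a(\M_\al)$ and $v \in H_b(\M_\be)$, a direct count using \eqref{wc2eq12} shows that the coefficient of $z^{-n-1}$ lies in $H_{a+b-2\chi(\al,\be)-2n-2}(\M_{\al+\be})$; by biadditivity of $\chi$ this has $\hat H$-degree exactly $(a+\chi(\al,\al))+(b+\chi(\be,\be))-2n-2$, matching the convention of Definition \ref{wc2def1}. A similar count shows $e^{zD}$ is grading-preserving and $\boo \in \hat H_0(\M)$.

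To establish the vacuum axiom $Y(\boo,z)v = v$, I would substitute $\al=0$, $a=0$ in \eqref{wc2eq12}: by \eqref{wc2eq5} the sign $\ep_{0,\be}$ is $1$; by biadditivity $\chi(0,\be)=0$; and since $\M_0 = \{[0]\}$ is a point, $e^{zD}\boo = \boo$. The crucial ingredient is that $\Th^\bu|_{\M_0\t\M_\be}$ is trivial in $K$-theory, which I would deduce from the splitting \eqref{wc2eq7} specialised at the zero object; this forces $c_i = 0$ for $i\ge 1$, collapsing the sum to $H_*(\Phi)(\boo\bt v) = v$, since $\Phi|_{\M_0\t\M_\be}$ is the canonical isomorphism $0\op E\cong E$. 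The creation property $Y(v,z)\boo = e^{zD}v$ is the symmetric calculation using \eqref{wc2eq8} in place of \eqref{wc2eq7}, together with the compatibility of $e^{zD}$ with $\Phi|_{\M_\be\t\M_0}$ which follows from Assumption \ref{wc2ass1}(c).

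The main obstacle is weak commutativity, Definition \ref{wc2def1}(iii). The plan is to expand both $Y(u,z_1)Y(v,z_2)w$ and $Y(v,z_2)Y(u,z_1)w$ as iterated pushforwards. Using associativity of $\Phi$ (Assumption \ref{wc2ass1}(b)) and the additivity relations \eqref{wc2eq7}--\eqref{wc2eq10} for $\Th^\bu$ and $\Psi$, both expressions rearrange into a single threefold operation $H_*(\M_\al\t\M_\be\t\M_\ga)\to H_*(\M_{\al+\be+\ga})$: pushforward along the triple sum $\Phi\ci(\Phi\t\id)$ combined with $e^{z_1 D}$ and $e^{z_2 D}$ on the first two factors, cap product with Chern classes of pullbacks of $\Th^\bu$ along $\Pi_{12},\Pi_{13},\Pi_{23}$, and a prefactor which is a formal Laurent expansion in $z_1,z_2$.

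The two orderings then differ precisely by transposing the first two factors via $\si_\M$. Using the duality \eqref{wc2eq6} to rewrite $\si_\M^*\Th^\bu$, so that $c_i(\si_\M^*\Th^\bu) = (-1)^i c_i(\Th^\bu)$, together with graded commutativity of K\"unneth products and the sign identity \eqref{wc2eq3}, I would show that the difference between the two iterated products is a formal distribution supported at $z_1=z_2$, with pole order bounded by the rank $\chi(\al,\be)$ together with the shift $n$ from \eqref{wc2eq6}. Multiplication by $(z_1-z_2)^N$ for $N$ exceeding this bound then annihilates the difference. This is the technical heart of the argument and the place where the sign axioms \eqref{wc2eq3}--\eqref{wc2eq5} and the self-duality \eqref{wc2eq6} of $\Th^\bu$ are used in tandem; the cocycle-type conditions \eqref{wc2eq4} are what guarantee that the signs coming from rewriting the two orderings match correctly under the threefold rearrangement.
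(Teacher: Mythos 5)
The paper does not contain a proof of Theorem \ref{wc2thm1}: it is quoted from \cite{Joyc12}, so there is no in-text argument to compare yours against. Judged on its own terms, your outline follows the only reasonable strategy (and the one used in \cite{Joyc12}): verify the grading, then the vacuum and creation axioms, then weak commutativity, directly from \eq{wc2eq12}. Your degree count is correct, the vanishing of $c_i(\Th^\bu\vert_{\M_0\t\M_\be})$ for $i\ge 1$ via \eq{wc2eq7} restricted over the zero object is a valid argument, and $e^{zD}\boo=\boo$ holds because $\Psi$ is trivial on $\M_0$; so parts (i) and (ii) of Definition \ref{wc2def1} are essentially dealt with.

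The gap is in weak commutativity, which you rightly call the technical heart but do not actually prove. Two things are missing. First, the mechanism that produces the powers of $(z_1-z_2)$ is not the cap product with $c_i(\Th^\bu)$ by itself but the interaction of $e^{zD}$ with $\Th^\bu$ through \eq{wc2eq9}--\eq{wc2eq10}: when the inner $e^{z_2D}$ is pushed past the outer cap product in the iterated application of \eq{wc2eq12}, the twist by $L_{[*/\bG_m]}^{\pm 1}$ converts the Chern classes of $\Th^\bu_{\al,\be}$ into those of $\Th^\bu_{\al,\be}$ tensored with a line of first Chern class $\pm(z_1-z_2)$, and it is the rank term in that expansion that contributes the prefactor $(z_1-z_2)^{\chi(\al,\be)}$. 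Without carrying this computation out you cannot identify the common threefold expression or bound $N$. Second, when $\chi(\al,\be)<0$ the two orderings are binomial expansions of $(z_1-z_2)^{\chi(\al,\be)}$ in the two regions $\md{z_1}>\md{z_2}$ and $\md{z_2}>\md{z_1}$, which are genuinely different elements of $V_*[[z_1^{\pm 1},z_2^{\pm 1}]]$; the entire content of axiom (iii) is that their difference is killed by $(z_1-z_2)^N$, and saying the difference is ``a formal distribution supported at $z_1=z_2$'' names this conclusion rather than deriving it. You must exhibit both iterated products as explicit prefactors times expansions of one and the same expression, and then check that $\ep_{\al,\be}(-1)^{a\chi(\be,\be)}$ and $(-1)^{\hat a\hat b}\ep_{\be,\al}(-1)^{b\chi(\al,\al)}$ differ exactly by the Koszul sign $(-1)^{ab}$ of swapping $u\bt v$ together with $(-1)^{\chi(\al,\be)}$ from reversing $z_1-z_2$; that identity is precisely \eq{wc2eq3}, but the verification is not in your write-up. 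As it stands the proposal is a correct plan for axiom (iii), not a proof of it.
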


Many examples of vertex algebras can be found in Frenkel--Ben-Zvi \cite{FrBZ}, Kac \cite{Kac2}, and Lepowsky--Li \cite{LeLi}. They are generally complicated to write down, as all nontrivial (i.e., non-holomorphic) vertex algebras are infinite-dimensional. When we can compute the vertex algebras arising from Theorem~\ref{wc2thm1} explicitly, they are usually minor modifications of ({\it super-}){\it lattice vertex algebras} \cite[Section~5.4]{FrBZ}, \cite[Section~5.4]{Kac2}. See Theorem~\ref{wc4thm1} below for a class of examples for $\A=D^b\coh(X)$ in which we can describe $\hat H_*(\M)$ and its vertex algebra structure explicitly.

Vertex algebras from quiver categories $\A=\modCQ$ or $\A=D^b\modCQ$ can also be written down explicitly \cite{Joyc12}. That for $\A=D^b\modCQ$ is the lattice vertex algebra used to construct the Kac--Moody Lie algebra $\g$ associated to the underlying Dynkin diagram (undirected graph) of $Q$, as in Kac \cite[Sections~1 and~5]{Kac1}, and $\A=\modCQ$ gives the vertex subalgebra yielding the positive part $\mathfrak{n}_+$ of~$\g$.

\subsection{Lie algebras from the vertex algebras of Section~\ref{wc23}}
\label{wc24}

In the situation of Section~\ref{wc23}, Proposition \ref{wc2prop1} makes $\hat H_{*+2}(\M)/D\big(\hat H_*(\M)\big)$ into a graded Lie algebra. We can interpret this as the shifted homology $\check H_*\big(\M^\pl\big)$ of a modification $\M^\pl$ of $\M$, which we now describe.

\begin{dfn}
\label{wc2def4}
Continue in the situation of Assumption \ref{wc2ass1} and Definition \ref{wc2def3}. Then $[*/\bG_m]$ is a group stack, and $\Psi\colon [*/\bG_m]\t\M\ra\M$ is an action of $[*/\bG_m]$ on $\M=\M_0\amalg\M_{\ne 0}$, which is trivial on $\M_0=\{[0]\}$, and free on $\M_{\ne 0}$. As explained in \cite{Joyc12}, we may take the quotient of~$\M$ by~$\Psi$ to get a stack $\M^\pl$, which we call the {\it projective linear moduli stack}, with projection $\Pi^\pl\colon \M\ra\M^\pl$, in a 2-co-Cartesian square in $\Art_\C$:
\begin{equation*}
\xymatrix@C=130pt@R=15pt{ *+[r]{[*/\bG_m]\t\M} \drtwocell_{}\omit^{}\omit{^{}} \ar[r]_(0.65){\Psi} \ar[d]^{\pi_\M} & *+[l]{\M} \ar[d]_{\Pi^\pl} \\
*+[r]{\M} \ar[r]^(0.35){\Pi^\pl} & *+[l]{\M^\pl.\!} }
\end{equation*}

The construction of $\M^\pl$ as $\M/[*/\bG_m]$ is known in the literature as {\it rigidification}, as in Abramovich--Olsson--Vistoli \cite{AOV} and Romagny~\cite{Roma}, and is written $\M^\pl=\M\!\!\fatslash\,\bG_m$ in \cite{AOV,Roma}. It is used, for example, to rigidify the Picard stack $\Pic(X)$ of line bundles on a~projective scheme $X$ to get the Picard scheme $\Pic(X)\!\!\fatslash\,\bG_m$. A typical example is that $\M$ contains a~component $[*/\GL(n,\C)]$, and $\M^\pl$ contains a corresponding component $[*/\PGL(n,\C)]$, where $\PGL(n,\C)=\GL(n,\C)/(\bG_m\cdot\Id_n)$. That is, the passage $\M\ra\M^\pl$ converts general linear isotropy groups $\GL(n,\C)$ to projective linear isotropy groups $\PGL(n,\C)$, which is why we call $\M^\pl$ `projective linear'.

We regard $\M^\pl$ as the moduli stack of all objects in $\A$ `up to projective linear isomorphisms'. Since $\Pi^\pl\colon \M\ra\M^\pl$ is a $[*/\bG_m]$-bundle it is an isomorphism on $\C$-points. Thus, $\C$-points $x\in\M^\pl(\C)$ correspond naturally to isomorphism classes~$[E]$ of objects $E\in\A$, as for $\M(\C)$, and we will write points of $\M^\pl(\C)$ as~$[E]$, and then $\Pi^\pl(\C)$ maps~$[E]\mapsto [E]$.

The isotropy groups of $\M^\pl$ satisfy $\Iso_{\M^\pl}([E])\cong\Iso_\M([E])/\bG_m$ for $E\ne 0$, where the $\bG_m$-subgroup of $\Iso_\M([E])$ is determined by the action of $\Psi$ on isotropy groups. Thus by Assumption~\ref{wc2ass1}(c) we see that
\e
\Iso_{\M^\pl}([E])\cong\Aut(E)/(\bG_m\cdot\id_E).\label{wc2eq14}
\e
The action of $\Pi^\pl$ on isotropy groups is given by the commutative diagram
\begin{equation*}
\xymatrix@C=150pt@R=15pt{ *+[r]{\Iso_\M([E])} \ar[r]_(0.4){\Pi^\pl_*} \ar[d]^\cong & *+[l]{\Iso_{\M^\pl}([E])} \ar[d]^\cong_{\eq{wc2eq14}} \\
*+[r]{\Aut(E)} \ar[r]^(0.4){\ep\,\longmapsto\, \ep\bG_m} & *+[l]{\Aut(E)/(\bG_m\cdot\id_E).\!\!} }	
\end{equation*}

The splitting $\M=\coprod_{\al\in K(\A)}\M_\al$ descends to $\M^\pl=\coprod_{\al\in K(\A)}\M_\al^\pl$, with $\M_\al^\pl=\M_\al\!\!\fatslash\,\bG_m$ for $\al\ne 0$ and $\M_0^\pl=\M_0=*$. Thus as for \eq{wc2eq11} we have
\e
H_*\big(\M^\pl\big)=\bigop_{\al\in K(\A)}H_*\big(\M_\al^\pl\big).
\label{wc2eq15}
\e
In a similar way to \eq{wc2eq13}, using \eq{wc2eq15}, for $n\in\Z$ and $\al\in K(\A)$ we write
\e
\check H_n\big(\M_\al^\pl\big)=H_{n+2-\chi(\al,\al)}\big(\M_\al^\pl\big),\qquad \check H_n\big(\M^\pl\big)=\bigop_{\al\in K(\A)}\check H_n\big(\M_\al^\pl\big).
\label{wc2eq16}
\e
That is, $\check H_*\big(\M^\pl\big)$ is $H_*\big(\M^\pl\big)$, but with grading shifted by $2-\chi(\al,\al)$ in the component $H_*\big(\M_\al^\pl\big)\subset H_*\big(\M^\pl\big)$.
\end{dfn}

\begin{rem}
\label{wc2rem1}
We now have {\it two different versions} $\M$ and $\M^\pl$ of the moduli stack of objects in~$\A$. Most of the literature on moduli stacks focusses on $\M$. However, for enumerative invariants, $\M^\pl$ {\it is often more useful}.

To see why, observe that one often forms enumerative invariants by forming {\it moduli schemes} $\M_\al^\rst(\tau)\subseteq\M_\al^\ss(\tau)$ of $\tau$-(semi)stable objects in $\A$ in class $\al$ in $K(\A)$, for $\tau$ a suitable stability condition. In good cases $\M_\al^\rst(\tau)$ has a perfect obstruction theory, and $\M_\al^\ss(\tau)$ is proper. If $\M_\al^\rst(\tau)=\M_\al^\ss(\tau)$, then by Behrend and Fantechi \cite{BeFa} we have a virtual class $[\M_\al^\ss(\tau)]_\virt$ in~$H_*(\M_\al^\ss(\tau))$.

If $E\in\A$ is $\tau$-stable then $\Aut(E)=\bG_m$, and thus $\Iso_\M([E])=\bG_m$ and $\Iso_{\M^\pl}([E])=\{1\}$. Now $\C$-schemes may be regarded as examples of Artin $\C$-stacks with trivial isotropy groups. Then $\M_\al^\rst(\tau)\subseteq\M^\pl$ is an {\it open substack}, so we can regard $[\M_\al^\ss(\tau)]_\virt$ as lying in $H_*\big(\M^\pl\big)$. But in general there is no natural morphism $\M_\al^\rst(\tau)\ra\M$, so we cannot map $[\M_\al^\ss(\tau)]_\virt$ to~$H_*(\M)$.
\end{rem}

The next theorem, proved in \cite{Joyc12}, gives a geometric interpretation of the graded Lie algebra~$\hat H_{*+2}(\M)/D\big(\hat H_*(\M)\big)$.

\begin{thm}\label{wc2thm2}
Work in the situation of Assumption {\rm\ref{wc2ass1}} and Definitions~{\rm \ref{wc2def3}} and~{\rm \ref{wc2def4}}, and consider the graded Lie algebra $\hat H_{*+2}(\M)/D\big(\hat H_*(\M)\big)$ constructed by combining Proposition~{\rm \ref{wc2prop1}} and Theorem~{\rm \ref{wc2thm1}}. Then  $\Pi^\pl\colon \M\ra\M^\pl$ gives a morphism $H_*\big(\Pi^\pl\big)\colon H_*(\M)\ab\ra H_*\big(\M^\pl\big)$. It is surjective, with kernel $D(H_*(\M))$. This induces an isomorphism
\begin{equation*}
H_*\big(\Pi^\pl\big)_*\colon \ H_*(\M)/D(H_*(\M))\longra H_*\big(\M^\pl\big).
\end{equation*}
Comparing \eq{wc2eq13} and \eq{wc2eq16}, we see this is an isomorphism for all~$n\in\Z$:
\e
\hat H_{n+2}(\M)/D\big(\hat H_n(\M)\big)\longra \check H_n\big(\M^\pl\big).
\label{wc2eq17}
\e
Thus there is a unique Lie bracket $[\,,\,]$ on $\check H_*\big(\M^\pl\big)$ making it into a graded Lie algebra, such that \eq{wc2eq17} is a Lie algebra isomorphism. Hence $\check H_0\big(\M^\pl\big)$ is a Lie algebra.
\end{thm}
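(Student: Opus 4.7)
The plan is to deduce the theorem from the geometric observation that $\Pi^\pl:\M\ra\M^\pl$ is topologically a $\CP^\iy$-bundle on the nonzero part $\M_{\ne 0}$, together with a direct computation showing that the translation operator $D$ records precisely the fiber direction. The key bookkeeping is then to match this with the Borcherds Lie algebra from Proposition \ref{wc2prop1}.

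First I would verify the inclusion $D(H_*(\M))\subseteq\Ker H_*(\Pi^\pl)$. The 2-co-Cartesian square in Definition \ref{wc2def4} gives the identity $\Pi^\pl\ci\Psi=\Pi^\pl\ci\pi_\M$, hence $H_*(\Pi^\pl)\ci H_*(\Psi)=H_*(\Pi^\pl)\ci H_*(\pi_\M)$. Expanding $e^{zD}v=\sum_{n\ge 0}\frac{1}{n!}D^n(v)\,z^n$ via the formula in Definition \ref{wc2def3}, the coefficient of $z^n$ is $H_*(\Psi)(t^{(n)}\ot v)$, where $t^{(n)}\in H_{2n}([*/\bG_m])$ is the class dual to $z^n\in H^{2n}([*/\bG_m])$ under \eq{wc2eq1}. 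Since $\pi_\M$ is the Künneth projection, $H_*(\pi_\M)$ kills $t^{(n)}\otimes v$ for all $n\ge 1$. Hence $H_*(\Pi^\pl)(D^n v)=0$ for every $n\ge 1$.

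Next I would establish surjectivity and the reverse inclusion. The action $\Psi$ is trivial on $\M_0=\{[0]\}$ and free on $\M_{\ne 0}$, so rigidification exhibits $\M_{\ne 0}\ra\M_{\ne 0}^\pl$ as a $[*/\bG_m]$-gerbe, and $\M_0\ra\M_0^\pl$ as an isomorphism. Topologically, $(\M_{\ne 0})^\top\ra(\M_{\ne 0}^\pl)^\top$ is a fiber bundle with fiber $B\bG_m^\ran=\CP^\iy$. The natural Chern class $\eta=c_1(L_{[*/\bG_m]})\in H^2(\M)$, arising from the tautological line bundle of the gerbe (via the classifying morphism of the gerbe to $B[*/\bG_m]$), restricts on each fiber to the generator of $H^2(\CP^\iy)$. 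Leray--Hirsch then gives $H^*(\M_{\ne 0})\cong H^*(\M_{\ne 0}^\pl)[\eta]$ as an $H^*(\M^\pl_{\ne 0})$-module, and dually a natural splitting $H_*(\M_{\ne 0})\cong H_*(\M_{\ne 0}^\pl)\ot_R H_*(\CP^\iy)$. Under this identification, $H_*(\Pi^\pl)$ is the projection onto the $H_0(\CP^\iy)=R$-summand, immediately giving surjectivity. Moreover, tracing through the definition of $D$ using $\Psi$, the operator $D$ corresponds to multiplication by $t^{(1)}\in H_2(\CP^\iy)$ in the second factor, so its image is exactly the complement of that summand, proving $\Ker H_*(\Pi^\pl)=D(H_*(\M))$. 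The component $\M_0=*$ contributes only $\boo\in H_0(\M_0)$, with $D\boo=0$, and maps isomorphically to $\M_0^\pl$.

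The main obstacle is showing that the Leray--Hirsch splitting can be chosen compatibly with $D$; this amounts to verifying that $c_1(L_{[*/\bG_m]})$ really is the correct class and that its cap product on homology matches the $D$-operator built from $\Psi$ in Definition \ref{wc2def3}. Unwinding the Künneth identification in \eq{wc2eq1} and the construction of $\Psi$ in Assumption \ref{wc2ass1}(c), both operations come from the same action $\Psi$, so the compatibility is a naturality statement for cap/cup products with respect to the morphism $\Psi$.

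Finally, the grading shifts in \eq{wc2eq13} and \eq{wc2eq16} match componentwise: $\hat H_{n+2}(\M_\al)=H_{n+2-\chi(\al,\al)}(\M_\al)$ and $\check H_n(\M_\al^\pl)=H_{n+2-\chi(\al,\al)}(\M_\al^\pl)$, so the isomorphism $H_*(\M)/D(H_*(\M))\cong H_*(\M^\pl)$ restricts to the claimed \eq{wc2eq17} in each $K(\A)$-component. Transporting the graded Lie bracket on $\hat H_{*+2}(\M)/D(\hat H_*(\M))$ from Proposition \ref{wc2prop1} and Theorem \ref{wc2thm1} across this isomorphism yields the required graded Lie algebra structure on $\check H_*(\M^\pl)$, and in particular $\check H_0(\M^\pl)$ is a Lie algebra.
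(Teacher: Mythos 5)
Your overall strategy --- realise $\Pi^\pl$ topologically as a $\CP^\iy$-bundle over $\M^\pl_{\ne 0}$, identify $D$ with the Pontryagin action of $H_{>0}(\CP^\iy)$ along the fibres, and read off kernel and image --- is the intended one (the paper itself defers the proof to \cite{Joyc12}). Your first step, that $D(H_*(\M))\subseteq\Ker H_*(\Pi^\pl)$ via the identity $\Pi^\pl\ci\Psi=\Pi^\pl\ci\pi_\M$ from the 2-co-Cartesian square and the vanishing of $H_*(\pi_\M)$ on $t^{(n)}\bt v$ for $n\ge 1$, is correct, and the final bookkeeping with the grading shifts \eq{wc2eq13} and \eq{wc2eq16} is fine.

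The genuine gap is the input to Leray--Hirsch. A $[*/\bG_m]$-gerbe $\M_{\ne 0}\ra\M^\pl_{\ne 0}$ does \emph{not} carry a ``tautological line bundle'' restricting to the weight-one representation on the fibres: the existence of such a bundle (equivalently, of a class $\eta\in H^2(\M_{\ne 0},R)$ restricting to a generator of $H^2(\CP^\iy,R)$ on every fibre) is precisely the condition that the gerbe be (rationally) trivial. It is obstructed by the image in $H^3(-,R)$ of the class in $H^3((\M^\pl_{\ne 0})^\top,\Z)$ classifying the $B\U(1)$-fibration; the differential $d_3$ in the Serre spectral sequence is cup product with this class, and when it is nonzero no such $\eta$ exists and $H_*(\Pi^\pl)$ need not even be surjective. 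This is exactly the ``suppressed assumption'' flagged in Remark \ref{wc2rem2}: the theorem holds because in the abelian-category situations considered the fibration $\Pi^\pl$ is rationally trivial (for instance, when $\chi(\al,\be)\ne 0$ for some $\be$ with $\M_\be\ne\es$, equation \eq{wc2eq9} shows $\det\bigl(\Th^\bu\vert_{\M_\al\t\{[E]\}}\bigr)$ has $\Psi$-weight $\chi(\al,\be)$, and one may take $\chi(\al,\be)^{-1}c_1$ over the $\Q$-algebra $R$), and it genuinely fails in the triangulated-category setting. So you must either add rational triviality as a hypothesis or construct $\eta$ from the data of Assumption \ref{wc2ass1}; asserting a tautological $\eta$ for an arbitrary gerbe is false. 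Once a weight-one $\eta$ is in hand, your remaining check that the splitting is compatible with $D$ does reduce to naturality under $\Psi$, as you say.
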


\begin{rem}\label{wc2rem2}\quad
\begin{enumerate}\itemsep=0pt
\item[(a)] The fact that \eq{wc2eq17} is an isomorphism depends on the assumptions that $R$ is a $\Q$-algebra, that we are working with an abelian or exact category $\A$ rather than a triangulated category $\T$, and on another assumption we have suppressed that holds in all the cases we will discuss, which imply that the $[*/\bG_m]$-fibration $\Pi^\pl\colon \M\ra\M^\pl$ is `rationally trivial' in the sense of~\cite{Joyc12}.

In the more general cases discussed in \cite{Joyc12}~-- in particular, if either $R$ is not a $\Q$-algebra, or we work with triangulated categories $\T$ such as $D^b\coh(X)$ -- then equation \eq{wc2eq17} need not be an isomorphism, although under mild conditions it is an isomorphism when $n=0$, which is what matters in this paper.

Proving a conjecture in \cite{Joyc12}, Upmeier \cite{Upme} gives a direct construction of the graded Lie bracket $[\,,\,]$ on $\check H_*\big(\M^\pl\big)$, using a new notion of characteristic class of perfect complexes over $[*/\bG_m]$-fibrations $\Pi\colon S\ra T$ of stacks $S$, $T$, called the {\it projective Euler class}, such that \eq{wc2eq17} is a morphism of graded Lie algebras in every case, whether or not \eq{wc2eq17} is an isomorphism.

\item[(b)] As in \cite{Joyc12}, in cases in which we can compute $\check H_*\big(\M^\pl\big)$ explicitly, $\check H_0\big(\M^\pl\big)$ is often (a~minor modification of) a Kac--Moody Lie algebra $\g$ as in Kac \cite[Sections~1 and~5]{Kac1} (when $\A$ is a~derived category $D^b\coh(X)$ or $D^b\modCQ$), or its positive part $\mathfrak{n}_+$ (when $\A=\modCQ$). For example, if $Q$ is a quiver whose underlying graph is an ADE Dynkin diagram and $\A=D^b\modCQ$ then $\check H_0\big(\M^\pl\big)=\g$ is the corresponding finite-dimensional ADE Lie algebra $\mathfrak{sl}(n+1,\C)$, $\so(2n,\C)$, $\mathfrak{e}_6$, $\mathfrak{e}_7$, $\mathfrak{e}_8$, and $\A=\modCQ$ gives its positive part~$\mathfrak{n}_+$.

Vertex algebras were originally introduced in mathematics by Borcherds~\cite{Borc} to better understand the construction of Kac--Moody-type Lie algebras.
\end{enumerate}
\end{rem}

\subsection{Morphisms of the vertex and Lie algebras of Sections~\ref{wc23}--\ref{wc24}}
\label{wc25}

We now construct morphisms between the graded vertex algebras and Lie algebras of Sections~\ref{wc23}--\ref{wc24}, that will be used in the proofs of our main results in~Section~\ref{wc6}.

\begin{dfn}\label{wc2def5}
Let $\A$, $\M$, $\Phi$, $\Psi$, $K(\A)$, $\chi$, $\ep_{\al,\be}$, $\Th^\bu$ and
$\A'$, $\M'$, $\Phi'$, $\ab\Psi'$, $\ab K(\A')$, $\ab\chi'$, $\ab\ep'_{\al,\be}$, $\Th^{\prime\bu}$ be two sets of data satisfying Assumption~\ref{wc2ass1}. Write
$\big(\hat H_*(\M),\ab \boo,\ab {\rm e}^{zD},\ab Y\big)$ and $\big(\hat H_*(\M'), \boo',{\rm e}^{zD'},Y'\big)$ for the corresponding graded vertex algebras from Theorem~\ref{wc2thm1}. Suppose:
\begin{itemize}\itemsep=0pt
\item[(a)] We are given a $\C$-linear exact functor $\Si\colon \A\ra\A'$. This should induce a morphism $\si\colon \M\ra\M'$ of moduli stacks, which acts on $\C$-points as $\si([E])=[\Si(E)]$. The induced morphism $\Si_*\colon K_0(\A)\ra K_0(\A')$ descends to $\Si_*\colon K(\A)\ra K(\A')$, with $\Si_*(\lb E\rb)=\lb \Si(E)\rb$. In $\Ho(\Art_\C)$ we have
\begin{equation*}
\Phi'\ci(\si\t\si)=\si\ci\Phi,\qquad
\Psi'\ci(\id_{[*/\bG_m]}\t\si)=\si\ci\Psi.
\end{equation*}
\item[(b)] We are given a biadditive morphism $\xi\colon K(\A)\t K(\A)\ra\Z$.
\item[(c)] We are given a vector bundle $F\ra\M\t\M$ of mixed rank, such that $F_{\al,\be}:=F\vert_{\M_\al\t\M_\be}$ has rank $\xi(\al,\be)$ for all $\al,\be\in K(\A)$. We also write $G\ra\M$ for the vector bundle $\De_\M^*(F^*)$, where $\De_\M\colon \M\ra\M\t\M$ is the diagonal morphism. Then $G_\al:=G\vert_{\M_\al}$ has rank $\xi(\al,\al)$.
\end{itemize}

All this data should satisfy:
\begin{itemize}
\itemsep=0pt
\setlength{\parsep}{0pt}
\item[(i)] $\chi'(\Si_*(\al),\Si_*(\be))=\chi(\al,\be)+\xi(\al,\be)+\xi(\be,\al)$ for all $\al,\be\in K(\A)$.
\item[(ii)] $\ep'_{\Si_*(\al),\Si_*(\be)}=(-1)^{\xi(\al,\be)}\ep_{\al,\be}$ for all $\al,\be\in K(\A)$.
\item[(iii)] As for \eq{wc2eq7}--\eq{wc2eq10}, there are isomorphisms of vector bundles
\begin{gather}
(\Phi\t\id_\M)^*(F) \cong
\Pi_{13}^*(F)\op \Pi_{23}^*(F),
\label{wc2eq18}\\
(\id_\M\t\Phi)^*(F) \cong
\Pi_{12}^*(F)\op \Pi_{13}^*(F),
\label{wc2eq19}\\
(\Psi\t\id_\M)^*(F) \cong \Pi_1^*(L_{[*/\bG_m]})\ot \Pi_{23}^*(F),
\label{wc2eq20}\\
(\Pi_2,\Psi\ci\Pi_{13})^*(F) \cong \Pi_1^*(L_{[*/\bG_m]}^*)\ot\Pi_{23}^*(F).
\label{wc2eq21}
\end{gather}
\item[(iv)] In $K_0(\Perf(\M\t\M))$ we have
\begin{equation*}
(\si\t\si)^*([\Th^{\prime\bu}])=[\Th^\bu]+[F]+[\si_\M^*(F^*)],
\end{equation*}
where $\si_\M\colon \M\t\M\ra\M\t\M$ exchanges the factors. (This implies~(i).)
\item[(v)] There is a vector bundle $G^\pl\ra\M^\pl$ with~$G\cong\big(\Pi^\pl\big)^*\big(G^\pl\big)$.
\end{itemize}

Write $c_\top(G)\in H^*(\M)$ for the top Chern class $c_{\rank G}(G)$ of $G$. Note that as $\rank G$ depends on the component $\M_\al\subset\M$ as in~(c), we have $c_\top(G)\vert_{\M_\al}=c_{\xi(\al,\al)}(G_\al)$. Define an $R$-module morphism
\e
\Om\colon \ H_*(\M)\longra H_*(\M')\qquad\text{by}\quad \Om(u)=H_*(\si)\bigl(u\cap c_\top(G)\bigr).
\label{wc2eq22}
\e
Here if $u\in H_a(\M_\al)$ then $\Om(u)\in H_{a-2\xi(\al,\al)}(\M'_{\Si_*(\al)})$. Combining this with \eq{wc2eq13} and~(i), we see that $\Om\colon \hat H_*(\M)\ra\hat H_*(\M')$ is grading-preserving. The next theorem is proved in~\cite{Joyc12}:
\end{dfn}

\begin{thm}\label{wc2thm3}
In Definition {\rm \ref{wc2def5}}, $\Om\colon \big(\hat H_*(\M), \boo,{\rm e}^{zD},Y\big)\ra
\big(\hat H_*(\M'),\ab \boo',\ab {\rm e}^{zD'},\ab Y'\big)$ is a morphism of graded vertex algebras.
\end{thm}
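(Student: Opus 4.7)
The plan is to verify that $\Om$ satisfies the four defining axioms of a morphism of graded vertex algebras: grading preservation, $\Om(\boo)=\boo'$, intertwining of $e^{zD}$, and intertwining of $Y$. The first three are short. Grading preservation follows from Definition \ref{wc2def5}(i) with $\be=\al$: capping with $c_\top(G_\al)$ lowers homological degree by $2\xi(\al,\al)$, and the regrading from $\M_\al$ to $\M'_{\Si_*\al}$ compensates exactly. For the vacuum, exactness of $\Si$ gives $\si([0])=[0']$, and biadditivity of $\xi$ gives $\rank G_0 = \xi(0,0) = 0$, so $c_\top(G_0)=1$ and $\Om(\boo)=\boo'$. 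For the translation operator, hypothesis (v) says $G$ descends to $G^\pl$ on $\M^\pl=\M/[*/\bG_m]$, so $c_\top(G)$ is $\Psi$-invariant; together with $\si \ci \Psi = \Psi' \ci (\id_{[*/\bG_m]}\t\si)$ from (a) and the projection formula, this yields $\Om \ci e^{zD} = e^{zD'} \ci \Om$.

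The main step is the state-field identity $\Om(Y(u,z)v) = Y'(\Om u, z)\Om v$. I would apply the defining formula \eq{wc2eq12} on each side and match coefficient by coefficient in $z$. Signs and parities agree via (ii) together with the dualization identity $c_\top(F^*) = (-1)^{\rank F}c_\top(F)$. The discrepancy in $z$-power exponents $\chi'(\Si_*\al,\Si_*\be) - \chi(\al,\be) = \xi(\al,\be)+\xi(\be,\al)$ is absorbed by the Chern-class expansion coming from (iv), which in total Chern classes reads
\e
c\bigl((\si\t\si)^*\Th'^\bu\bigr) = c(\Th^\bu) \cdot c(F) \cdot c(\si_\M^*F^*).
\e
On the $\Om Y$ side, using $\si\ci\Phi = \Phi'\ci(\si\t\si)$ and the projection formula along $\Phi$ converts $c_\top(G_{\al+\be})$ into $\Phi^*c_\top(G_{\al+\be})$. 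The bridging identity one needs is the vector-bundle decomposition
\e
\Phi^*(G_{\al+\be}) \cong \pi_1^*(G_\al) \op \pi_2^*(G_\be) \op F^*_{\al,\be} \op (\si_\M^*F^*)_{\al,\be}
\e
on $\M_\al\t\M_\be$, derived by iterating \eq{wc2eq18}--\eq{wc2eq19} applied to $(\De_\M\ci\Phi)^*F^*$. The $\Psi$-compatibilities \eq{wc2eq20}--\eq{wc2eq21} for $F$ then mirror \eq{wc2eq9}--\eq{wc2eq10} for $\Th^\bu$, so that pushing $e^{zD}$ through the additional factors $c(F)\cdot c(\si_\M^*F^*)$ distributes $z$-powers consistently between the two sides.

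The main obstacle will be the detailed bookkeeping in the state-field step: matching two infinite series in $z$ term by term, with non-top Chern classes $c_k(F)$ and $c_l(\si_\M^*F^*)$ on the $Y'\Om$-side conspiring with $z$-powers generated by $e^{zD}$ on the $\Om Y$-side to combine into exactly the top Chern class of $\Phi^*(G_{\al+\be})$. Axioms (i)--(v) of Definition \ref{wc2def5} are designed precisely so that $F$ plays for $\Om$ the structural role that $\Th^\bu$ plays for $Y$, so the match is forced; but tracking signs from dualizing $F^*$ and orders in $z$ throughout the generating-function bookkeeping is where the care is required.
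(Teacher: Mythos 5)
The paper does not actually prove Theorem \ref{wc2thm3}: it is quoted from \cite{Joyc12}, so there is no in-paper argument to compare yours against. Judged on its own terms, your outline is structurally correct and identifies the right ingredients. Your bridging identity is right: writing $\De_\M\ci\Phi=(\Phi\t\Phi)\ci\De_{\M\t\M}$ and applying \eq{wc2eq18}--\eq{wc2eq19} twice gives $(\De_\M\ci\Phi)^*(F)\cong\pi_1^*(G^*)\op\pi_2^*(G^*)\op F\op\si_\M^*(F)$, whence $\Phi^*(G_{\al+\be})\cong\pi_1^*(G_\al)\op\pi_2^*(G_\be)\op F_{\al,\be}^*\op(\si_\M^*F^*)_{\al,\be}$, with ranks adding correctly by biadditivity of $\xi$. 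Your arguments for $\Om(\boo)=\boo'$ and for $\Om\ci e^{zD}=e^{zD'}\ci\Om$ (via $\Psi^*G\cong\pi_\M^*G$ from (v) and $\si\ci\Psi=\Psi'\ci(\id\t\si)$, plus the projection formula) are complete as stated.

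Two points you should make explicit to close the state-field step. First, the mechanism by which the two series match is the splitting-principle identity $c_{\rm top}(E\ot L)=\sum_k c_1(L)^{\,{\rm rank}\,E-k}\,c_k(E)$: on the $\Om Y$ side, \eq{wc2eq20} says $F$ has weight $1$ for $\Psi$ in the first factor, so $e^{zD}$ acting through $c_\top(F^*_{\al,\be})$ produces $c_\top(L^*\ot F^*)=(-1)^{\xi(\al,\be)}\sum_k z^{\xi(\al,\be)-k}c_k(F)$, which is exactly the extra factor $\sum_k z^{\xi(\al,\be)-k}c_k(F)$ appearing on the $Y'\Om$ side after expanding $c_j((\si\t\si)^*\Th^{\prime\bu})=\sum_{i+k+l=j}c_i(\Th^\bu)\cup c_k(F)\cup c_l(\si_\M^*F^*)$ from (iv); the sign $(-1)^{\xi(\al,\be)}$ is precisely what (ii) absorbs (the factor from $\si_\M^*F^*$, which has weight $0$ in the first variable by \eq{wc2eq21}, contributes no sign and no $e^{zD}$-twist, consistent with the exponent shift $\xi(\be,\al)-l$). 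Second, the prefactor $(-1)^{a\chi(\be,\be)}$ versus $(-1)^{a'\chi'(\Si_*\be,\Si_*\be)}$ needs the parity check $a'\chi'(\Si_*\be,\Si_*\be)=(a-2\xi(\al,\al))(\chi(\be,\be)+2\xi(\be,\be))\equiv a\chi(\be,\be)\pmod 2$, which you do not mention but which holds by (i). With these two identities written out, your plan assembles into a correct proof.
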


\begin{rem}
\label{wc2rem3}
To prove Theorem \ref{wc2thm3}, it is essential that $G\ra\M$ should be a {\it vector bundle}, not just a perfect complex. This ensures that $\xi(\al,\al)=\rank G_\al\ge 0$ and $c_i(G_\al)=0$ for $i>\xi(\al,\al)$, which are used in the proof.

One consequence is that we cannot make the analogues of Definition \ref{wc2def5} and Theorem \ref{wc2thm3} work when we replace $\A$, $\A'$ by {\it triangulated categories} $\T$, $\T'$, such as $\T=D^b\coh(X)$, as allowed in \cite{Joyc12}, because we cannot satisfy the conditions of Definition~\ref{wc2def5} with nonzero vector bundles~$F$, $G$ in the triangulated case -- for example, unless $\xi\equiv 0$ we would have $\xi(\al,\be)<0$ for some~$\al$,~$\be$ with $\M_\al,\M_\be\ne\es$, and then Definition~\ref{wc2def5}(c) gives $\rank F_{\al,\be}<0$, a contradiction.
\end{rem}

\begin{dfn}\label{wc2def6}
Work in the situation of Definition \ref{wc2def5}. Then $\si\colon \M\ra\M'$ descends to a~morphism $\si^\pl\colon \M^\pl\ra\M^{\prime\pl}$ with $\si^\pl\ci\Pi^\pl=\Pi^{\prime\pl}\ci\si$ in $\Ho(\Art_\C)$. As for \eq{wc2eq22}, define an $R$-module morphism
\e
\Om^\pl\colon \ H_*\big(\M^\pl\big)\longra H_*\big(\M^{\prime\pl}\big)\qquad\text{by}\quad \Om^\pl(u)=H_*\big(\si^\pl\big)\big(u\cap c_\top\big(G^\pl\big)\big).
\label{wc2eq23}
\e
By \eq{wc2eq16} we see that $\Om^\pl\colon \check H_*\big(\M^\pl\big)\longra\check H_*\big(\M^{\prime\pl}\big)$ is grading preserving. From $\si^\pl\ci\Pi^\pl=\Pi^{\prime\pl}\ci\si$, $G\cong\big(\Pi^\pl\big)^*\big(G^\pl\big)$, and \eq{wc2eq22}--\eq{wc2eq23} we see that the following diagram commutes:
\begin{equation*}
\xymatrix@C=120pt@R=15pt{
*+[r]{H_*(\M)} \ar[r]_{\Om\vert_{H_*(\M)}} \ar[d]^{H_*(\Pi^\pl)} & *+[l]{H_*(\M')} \ar[d]_{H_*(\ti\Pi^\pl)} \\
*+[r]{H_*\big(\M^\pl\big)} \ar[r]^{\Om^\pl} & *+[l]{H_*\big(\M^{\prime\pl}\big).\!} }	
\end{equation*}
Hence Theorems \ref{wc2thm1}, \ref{wc2thm2}, and \ref{wc2thm3} imply:
\end{dfn}

\begin{cor}\label{wc2cor1}
In Definition {\rm \ref{wc2def6}}, $\Om^\pl\colon \check H_*\big(\M^\pl\big)\ra \check H_*\big(\M^{\prime\pl}\big)$ is a morphism of the graded Lie algebras in Theorem~{\rm \ref{wc2thm2}}.
\end{cor}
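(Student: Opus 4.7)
The plan is to derive the corollary purely by functoriality, combining the vertex-algebra-to-Lie-algebra construction of Proposition \ref{wc2prop1}, the geometric identification supplied by Theorem \ref{wc2thm2}, and the vertex algebra morphism property of $\Om$ from Theorem \ref{wc2thm3}. No new computation in homology is needed; everything has been packaged into these three inputs.

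First I would apply Proposition \ref{wc2prop1} to the graded vertex algebra morphism $\Om:(\hat H_*(\M),\boo,e^{zD},Y)\to(\hat H_*(\M'),\boo',e^{zD'},Y')$. Since $(V_*,\boo,e^{zD},Y)\mapsto(\check V_*,[\,,\,])$ is functorial, we obtain a graded Lie algebra morphism
\begin{equation*}
\check\Om:\hat H_{*+2}(\M)/D(\hat H_*(\M))\longrightarrow\hat H_{*+2}(\M')/D(\hat H_*(\M')),\qquad u+D(\hat H_*(\M))\longmapsto\Om(u)+D(\hat H_*(\M')).
\end{equation*}
Next, Theorem \ref{wc2thm2} applied on both sides provides the graded Lie algebra isomorphisms \eqref{wc2eq17} induced by $H_*(\Pi^\pl)$ and $H_*(\Pi^{\prime\pl})$, identifying $\check H_*(\M^\pl)$ and $\check H_*(\M^{\prime\pl})$ with the quotients above and transporting the bracket.

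It then suffices to verify that, under these identifications, $\check\Om$ corresponds to $\Om^\pl$. This is exactly the content of the commutative square at the end of Definition \ref{wc2def6}: the relation $H_*(\ti\Pi^\pl)\circ\Om\vert_{H_*(\M)}=\Om^\pl\circ H_*(\Pi^\pl)$, together with surjectivity of $H_*(\Pi^\pl)$ and the fact that its kernel is $D(H_*(\M))$, forces $\Om^\pl$ to coincide with the map induced by $\Om$ on the quotients. Hence $\Om^\pl$ inherits the graded Lie algebra morphism property from $\check\Om$.

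The only subtlety, and the step that could plausibly be an obstacle, is the bookkeeping of grading shifts: one must confirm that $\Om^\pl$ respects the shifted grading in \eqref{wc2eq16}. Capping with $c_\top(G^\pl_\al)$ lowers degree by $2\xi(\al,\al)$, so $\Om^\pl$ sends $H_{a}(\M^\pl_\al)$ into $H_{a-2\xi(\al,\al)}(\M^{\prime\pl}_{\Si_*(\al)})$; specializing $\al=\be$ in Definition \ref{wc2def5}(i) yields $\chi'(\Si_*(\al),\Si_*(\al))=\chi(\al,\al)+2\xi(\al,\al)$, which is precisely what is needed to match the shifts $2-\chi(\al,\al)$ and $2-\chi'(\Si_*(\al),\Si_*(\al))$ on the two sides of \eqref{wc2eq16}. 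Granted this compatibility, the corollary follows immediately from the diagram chase above.
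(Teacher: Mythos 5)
Your proposal is correct and follows essentially the same route as the paper: the corollary is deduced by applying the functor of Proposition \ref{wc2prop1} to the vertex algebra morphism $\Om$ of Theorem \ref{wc2thm3}, transporting through the isomorphisms of Theorem \ref{wc2thm2}, and using the commutative square in Definition \ref{wc2def6} to identify the induced map with $\Om^\pl$. Your grading check via Definition \ref{wc2def5}(i) with $\al=\be$ is exactly the verification the paper records in Definition \ref{wc2def6}.
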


\section[Background on stability conditions, wall-crossing formulae, and pair invariants]{Background on stability conditions, wall-crossing formulae,\\ and pair invariants}
\label{wc3}

Next we explain parts of the second author's series \cite{Joyc4,Joyc2,Joyc5,Joyc6,Joyc3,Joyc7}, focussing in particular on Ringel--Hall algebras and Lie algebras, motivic invariants, and their wall-crossing formulae. Almost all of this section will not be used later, but appears only for motivation, because of the comparison between Theorem~\ref{wc3thm5} and Conjecture~\ref{wc4conj1}. So we omit details in places.

\subsection{Constructible functions and stack functions}\label{wc31}

The theory of constructible functions on Artin stacks was developed by the second author \cite{Joyc2}. We use the notation on stacks from Section~\ref{wc22}. All stacks in Section~\ref{wc3} are assumed to have affine geometric stabilizers.

\begin{dfn}\label{wc3def1}
Let $X$, $Y$ be Artin $\C$-stacks. We call $C\subseteq X(\C)$ {\it constructible} if $C=\bigcup_{i\in I}X_i(\C)$, where $\{X_i\colon i\in I\}$ is a~finite collection of finite type Artin $\C$-substacks $X_i$ of~$X$. Let $R$ be a~commutative $\Q$-algebra. A function $f\colon X(\C)\ra R$ is called {\it constructible} if $f(X(\C))$ is finite and $f^{-1}(c)$ is a constructible set in $X(\C)$ for each $c\in f(X(\C))\sm\{0\}$. Write $\CF(X)$ for the $R$-module of $R$-valued constructible functions on $X$. If $C\subseteq X(\C)$ is constructible we write $\de_C\in\CF(X)$ for its characteristic function. If $\phi\colon X\ra Y$ is a representable 1-morphism then \cite[Section~4.3]{Joyc2} defines the $R$-linear {\it pushforward} $\phi_*\colon \CF(X) \ra\CF(Y)$, using Euler characteristics. If $\th\colon X\ra Y$ is a finite type 1-morphism then \cite[Section~5.2]{Joyc2} defines the $R$-linear {\it pullback} $\th^*\colon \CF(Y)\ra\CF(X)$.
\end{dfn}

Here \cite[Theorems~5.4, 5.6 and Definition~5.5]{Joyc2} are some properties of
these.

\begin{thm}\label{wc3thm1}
Let $W$, $X$, $Y$, $Z$ be Artin $\C$-stacks, and $\be\colon X\ra Y$, $\ga\colon Y\ra Z$ be $1$-morphisms. Then
\begin{gather}
(\ga\ci\be)_* =\ga_*\ci\be_*\colon \ \CF(X)\longra\CF(Z),
\label{wc3eq1}\\
(\ga\ci\be)^* =\be^*\ci\ga^*\colon \ \CF(Z)\longra\CF(X),
\label{wc3eq2}
\end{gather}
supposing $\be,\ga$ representable in \eq{wc3eq1}, and of finite type
in~\eq{wc3eq2}. If
\e
\begin{gathered}
\xymatrix{
W \ar[r]_\eta \ar[d]^\th & Y \ar[d]_\psi \\
X \ar[r]^\phi & Z }
\end{gathered}
\qquad
\begin{gathered}
\text{is a $2$-Cartesian square with}\\
\text{$\eta$, $\phi$ representable and}\\
\text{$\th$, $\psi$ of finite type, then}\\
\text{the following commutes:}
\end{gathered}
\qquad
\begin{gathered}
\xymatrix@C=35pt{
\CF(W) \ar[r]_{\eta_*} & \CF(Y) \\
\CF(X) \ar[r]^{\phi_*} \ar[u]_{\th^*} & \CF(Z).
\ar[u]^{\psi^*} }
\end{gathered}
\label{wc3eq3}
\e
\end{thm}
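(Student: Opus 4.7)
The plan is to handle the three statements in the order \eqref{wc3eq2}, \eqref{wc3eq1}, \eqref{wc3eq3}, since each builds on ideas from the previous. The pullback identity \eqref{wc3eq2} is the easiest: by the very definition $\theta^*(f)=f\circ\theta_*$, so
\begin{equation*}
(\gamma\circ\beta)^*(f)=f\circ(\gamma\circ\beta)_*=f\circ\gamma_*\circ\beta_*=\beta^*(\gamma^*(f)),
\end{equation*}
and finite type is preserved under composition so everything lies in the right constructible-function module. This is really just bookkeeping.

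For the pushforward identity \eqref{wc3eq1}, the key technical ingredient is a multiplicativity property for the isotropy-group multiplicities $m_\phi$. Fix $x\in X(\C)$ with $y=\beta_*(x)\in Y(\C)$ and $z=\gamma_*(y)\in Z(\C)$, and set
\begin{equation*}
A=\beta_*(\Iso(x))\subseteq \Iso(y),\qquad B=\gamma_*(A)\subseteq C=\gamma_*(\Iso(y))\subseteq \Iso(z).
\end{equation*}
Since $\beta,\gamma$ are representable, $\beta_*$ and $\gamma_*$ are injective on isotropy, so the map $\gamma_*$ descends to an isomorphism $\Iso(y)/A\xrightarrow{\cong}C/B$ of $\C$-varieties. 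The coset fibration $C/B\hookrightarrow \Iso(z)/B\twoheadrightarrow \Iso(z)/C$ and multiplicativity of the na\"\i ve Euler characteristic on Zariski-locally-trivial fiber bundles of affine $\C$-varieties then give
\begin{equation*}
m_{\gamma\circ\beta}(x)=\chi(\Iso(z)/B)=\chi(C/B)\cdot\chi(\Iso(z)/C)=m_\beta(x)\cdot m_\gamma(\beta_*(x)).
\end{equation*}
Armed with this, evaluating $(\gamma\circ\beta)_*(f)(z)$ and $\gamma_*(\beta_*(f))(z)$ from Definition~\ref{wc3def1} and stratifying $X$ by constant values of $f$ and by $\beta_*$-fiber, the identity follows from a Fubini-type additivity of $\chi^\na$ over constructible stratifications, which is part of the basic formalism of~\cite{Joyc2}.

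For the base-change square \eqref{wc3eq3}, I would argue pointwise on $Y(\C)$. Fix $y\in Y(\C)$ with $z=\psi_*(y)\in Z(\C)$. The 2-Cartesian property identifies $W(\C)\cap\eta_*^{-1}(y)$ with a groupoid fibered over $\phi_*^{-1}(z)$: for each $x\in\phi_*^{-1}(z)$, the points of $W(\C)$ over $(x,y)$ form a torsor for $\Iso(z)/(\phi_*(\Iso(x))\cdot\psi_*(\Iso(y)))$, and the isotropy of $w=(x,y)\in W(\C)$ is the fiber product $\Iso(x)\times_{\Iso(z)}\Iso(y)$. Computing $m_\eta(w)$ and $m_\phi(x)$ via the same coset/fibration bookkeeping as above and comparing the resulting expressions
\begin{equation*}
(\eta_*\circ\theta^*(f))(y)=\sum_{w\in\eta_*^{-1}(y)}m_\eta(w)\,f(\theta_*(w)),\quad (\psi^*\circ\phi_*(f))(y)=\sum_{x\in\phi_*^{-1}(z)}m_\phi(x)\,f(x),
\end{equation*}
where the sums are really $\chi^\na$-integrals over constructible sets, reduces the claim to checking that a certain Euler characteristic of a quotient of algebraic groups is the expected product, by the same multiplicativity lemma. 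The main obstacle is precisely this Cartesian-diagram compatibility of the $m$-multiplicities: one must track carefully how the image and stabilizer subgroups behave in the fiber product of affine algebraic groups, and use that the constructibility and finite-type hypotheses ensure all the na\"\i ve Euler characteristics are finite and additive so that the Fubini step makes sense.
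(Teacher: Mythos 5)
The paper gives no proof of this theorem: it is quoted verbatim from \cite[Th.s 5.4, 5.6 \& Def.~5.5]{Joyc2}, so there is no in-paper argument to compare against. Your strategy is the standard one from that source. Parts \eq{wc3eq2} and \eq{wc3eq1} are fine as you have them: the pullback identity is purely definitional, and for the pushforward the multiplicativity $m_{\ga\ci\be}(x)=m_\be(x)\,m_\ga(\be_*(x))$, obtained from the fibration $\Iso(z)/B\ra\Iso(z)/C$ with fibre $C/B\cong\Iso(y)/A$ together with Fubini/functoriality for $\chi^\na$, is exactly the right mechanism.

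One point in your treatment of \eq{wc3eq3} needs correcting. The $\C$-points of $W$ lying over a fixed pair $(x,y)$ with $\phi_*(x)=\psi_*(y)=z$ are classified by 2-isomorphisms $\al:\phi(x)\Rightarrow\psi(y)$ modulo the action $\al\mapsto\psi_*(b)\,\al\,\phi_*(a)^{-1}$ of $\Iso(x)\t\Iso(y)$, i.e.\ by the double coset space $Q\backslash G/P$ with $G=\Iso(z)$, $P=\phi_*(\Iso(x))$, $Q=\psi_*(\Iso(y))$. This is \emph{not} a torsor under $G/(P\cdot Q)$; the product $P\cdot Q$ is in general not even a subgroup. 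Correspondingly the final identity is not a single ``Euler characteristic of a quotient equals a product'' statement but an additivity-over-orbits computation: for $w=(x,y,\al)$ one finds $\eta_*(\Iso(w))=\psi_*^{-1}(Q\cap\al P\al^{-1})\subseteq\Iso(y)$, so that $m_\eta(w)=\chi\bigl(Q/(Q\cap\al P\al^{-1})\bigr)$ is the Euler characteristic of the $Q$-orbit of $\al P$ in $G/P$; since the double cosets are precisely the $Q$-orbits on $G/P$, integrating $m_\eta$ over the fibre of $W(\C)$ above $(x,y)$ gives $\chi(G/P)=m_\phi(x)$ by additivity of $\chi^\na$ over the constructible orbit decomposition (not by the multiplicativity lemma). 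With that substitution your reduction of \eq{wc3eq3} to a pointwise identity over $\phi_*^{-1}(z)$ goes through, and the rest of the argument is sound.
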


The second author \cite{Joyc3} also introduced `stack functions', a universal generalization of constructible functions on stacks. To each Artin $\C$-stack $X$ we assign an $R$-module $\SF(X)$ of `stack functions', which is generated by morphisms $f\colon S\ra X$ with $S$ a finite type Artin $\C$-stack and~$f$ a~representable 1-morphism, subject to some relations we will not give. They have the same package of properties as constructible functions above:
\begin{itemize}\itemsep=0pt
\item[(i)] A constructible set $C \subset X(\C)$ has a `characteristic function' $\bar\de_C\in\SF(X)$.
\item[(ii)] There are $R$-linear {\it pushforwards} $\phi_*\colon \SF(X)\ra\SF(Y)$ by representable 1-morphisms $\phi\colon X\ra Y$.
\item[(iii)] There are $R$-linear {\it pullbacks} $\th^*\colon \SF(Y) \ra\SF(X)$ by finite type 1-morphisms $\th\colon X\ra Y$.
\item[(iv)] The analogues of \eq{wc3eq1}--\eq{wc3eq3} hold. There are natural transformations $\SF(X)\ra\CF(X)$, defined using fibrewise Euler characteristics, which commute with pushforwards and pullbacks, and map $\bar\de_C\mapsto\de_C$ in~(i).
\end{itemize}
As in \cite{Joyc4,Joyc2,Joyc5,Joyc6,Joyc3,Joyc7}, stack functions are useful for studying {\it motivic invariants}.

\subsection{Ringel--Hall algebras and Lie algebras}\label{wc32}

Following \cite{Joyc5}, we now explain how to define the {\it Ringel--Hall algebra} of a $\C$-linear abelian category, using either constructible functions or stack functions. The next assumption sets out the data we need. It is similar to Assumption \ref{wc2ass1}, and often both hold at once.

\begin{ass}
\label{wc3ass1}
Let $\A$ be a $\C$-linear abelian category coming from algebraic geometry or representation theory, e.g., we could take $\A=\coh(X)$ for $X$ a smooth projective $\C$-scheme, or $\A=\modCQ$ the category of $\C$-representations of a quiver $Q$. Assume:
\begin{itemize}\itemsep=0pt
\item[(a)] We can form a natural moduli stack $\M$ of objects $E$ in $\A$, an Artin $\C$-stack, locally of finite type. Then $\C$-points of $\M$ are isomorphism classes $[E]$ of objects $E\in\A$, and the isotropy groups are $\Iso_\M([E])=\Aut(E)$.
\item[(b)] We can form a natural moduli stack $\fExact$ of short exact sequences $E_\bu=(0\ra E_1\ra E_2\ra E_3\ra 0)$ in $\A$, an Artin $\C$-stack, locally of finite type. There are 1-morphisms $\pi_i\colon \fExact\ra\M$ for $i=1,2,3$ acting by $\pi_i\colon E_\bu\mapsto E_i$ on $\C$-points. Here $\pi_2\colon \fExact\ra\M$ is representable, and $(\pi_1,\pi_3)\colon \fExact\ra\M\t\M$ is finite type.
\item[(c)] We are given a surjective quotient $K_0(\A)\twoheadrightarrow K(\A)$ of the Grothendieck group $K_0(\A)$ of~$\A$. We write $\lb E\rb\in K(\A)$ for the class of $E\in\A$. We require that the map $\M(\C)\ra K(\A)$ mapping $E \mapsto\lb E\rb$ should be locally constant. This gives a decomposition $\M=\coprod_{\al\in K(\A)}\M_\al$ of $\M$ into open and closed $\C$-substacks $\M_\al\subset \M$ of objects in class $\al$. We also suppose that $\M_0=\{[0]\}$, that is, $0\in\A$ is the only object in class $0\in K(\A)$.

Define the {\it positive cone} $C(\A)\subset K(\A)$ by~$C(\A)=\bigl\{\lb E\rb\colon 0\ne E\in\A\bigr\}$.
\item[(d)] For all $\al,\be,\ga\in K(\A)$ there is a 2-commutative diagram in $\Art_\C$ with all squares 2-Cartesian:
\begin{gather}
\begin{gathered}
\xymatrix@!0@C=175pt@R=32pt{
*+[r]{\M_\al\!\t\!\M_\be\!\t\!\M_\ga} & \fExact_{\al,\be}\!\t\M_\ga \ar[l]^(0.33){(\pi_1,\pi_3)\t\id_{\M_\ga}} \ar[r]_(0.43){\pi_2\t\id_{\M_\ga}} & *+[l]{\M_{\al+\be}\!\t\!\M_\ga}
\\
*+[r]{\M_\al\!\t\!\fExact_{\be,\ga}} \ar[u]_{\id_{\M_\al}\t(\pi_1,\pi_3)} \ar[d]^{\id_{\M_\al}\t\pi_2} & \cN_{\al,\be,\ga} \ar[l]_(0.3){\Pi_{\al,(\be,\ga)}} \ar[r]^(0.4){\Pi_{(\al+\be,\ga)}} \ar[u]_{\Pi_{(\al,\be),\ga}} \ar[d]^{\Pi_{(\al,\be+\ga)}}\drtwocell_{}\omit^{}\omit{^{}} \urtwocell_{}\omit^{}\omit{^{}} \dltwocell_{}\omit^{}\omit{^{}} \ultwocell_{}\omit^{}\omit{^{}} & *+[l]{\fExact_{\al+\be,\ga}} \ar[u]^{(\pi_1,\pi_3)} \ar[d]_{\pi_2} \\
*+[r]{\M_\al\!\t\!\M_{\be+\ga}} & \fExact_{\al,\be+\ga} \ar[l]_(0.4){(\pi_1,\pi_3)} \ar[r]^(0.4){\pi_2} & *+[l]{\M_{\al+\be+\ga}.\!}}\!\!\!\!\!\!
\end{gathered}
\label{wc3eq4}
\end{gather}
\item[] Here $\fExact_{\al,\be}$ is the moduli stack of short exact sequences $0\ra E_\al\ra E_{\al+\be}\ra E_\be\ra 0$ in $\A$, where $\lb E_\de\rb=\de$, and $\cN_{\al,\be,\ga}$ is the moduli stack of flags $E_\al\subset E_{\al+\be}\subset E_{\al+\be+\ga}$ of subobjects in $\A$. In the language of \cite{Joyc4}, $\cN_{\al,\be,\ga}$ is a moduli stack of {\it configurations} in~$\A$.
\end{itemize}
\end{ass}

Assumption \ref{wc3ass1} holds in large classes of interesting examples, as in~\cite{Joyc4,Joyc5,Joyc6,Joyc7}.

\begin{dfn}\label{wc3def2}
Suppose Assumption \ref{wc3ass1} holds. Following \cite[Section~4]{Joyc5}, define an $R$-bilinear product $*\colon \CF(\M)\t\CF(\M)\ra\CF(\M)$ by
\begin{gather*}
f*g=(\pi_2)_*\ci(\pi_1,\pi_3)^*(f\bt g),
\end{gather*}
using $\pi_2\colon \fExact\ra\M$, $(\pi_1,\pi_3)\colon \fExact\ra\M\t\M$ and the notation of Section~\ref{wc31}. Considering the commutative diagram from \eq{wc3eq3} and \eq{wc3eq4}
\begin{equation*}
\xymatrix@!0@C=180pt@R=38pt{
*+[r]{\CF(\M_\al\!\t\!\M_\be\!\t\!\M_\ga)} \ar[d]^{(\id_{\M_\al}\t(\pi_1,\pi_3))^*} \ar[r]_(0.67){\raisebox{-10pt}{$\st ((\pi_1,\pi_3)\t\id_{\M_\ga})^*$}} & \CF(\fExact_{\al,\be}\!\t\M_\ga) \ar[d]^{(\Pi_{(\al,\be),\ga})^*} \ar[r]_(0.43){(\pi_2\t\id_{\M_\ga})_*} & *+[l]{\CF(\M_{\al+\be}\!\t\!\M_\ga)} \ar[d]_{(\pi_1,\pi_3)^*} \\
*+[r]{\CF(\M_\al\!\t\!\fExact_{\be,\ga})} \ar[r]^(0.7){\Pi_{\al,(\be,\ga)}^*} \ar[d]^{(\id_{\M_\al}\t\pi_2)_*} & \CF(\cN_{\al,\be,\ga})  \ar[r]^(0.4){(\Pi_{(\al+\be,\ga)})_*}  \ar[d]^{(\Pi_{(\al,\be+\ga)})_*} & *+[l]{\CF(\fExact_{\al+\be,\ga})}  \ar[d]_{(\pi_2)_*} \\
*+[r]{\CF(\M_\al\!\t\!\M_{\be+\ga})} \ar[r]^(0.6){(\pi_1,\pi_3)^*} & \CF(\fExact_{\al,\be+\ga})  \ar[r]^(0.4){(\pi_2)_*} & *+[l]{\CF(\M_{\al+\be+\ga}),\!}}
\end{equation*}
we find that $(f_\al*f_\be)* f_\ga=f_\al*(f_\be*f_\ga)$ for $f_\al,f_\be,f_\ga\in \CF(\M)$. Hence $\CF(\M)$ is an {\it associative $R$-algebra}, with product $*$ and identity $\de_{[0]}$, where $\de_{[0]}\colon \M(\C)\ra R$ is given by $\de_{[0]}([E])=1$ if $E=0$ and $\de_{[0]}([E])=0$ otherwise.

The same definition makes the stack functions $\SF(\M)$ into an associative $R$-algebra. We call $\CF(\M)$, $\SF(\M)$ {\it Ringel--Hall algebras}, as the construction originated with the work of Ringel on Hall algebras~\cite{Ring}. Observe that as $*$ is associative, $\CF(\M)$, $\SF(\M)$ are also {\it Lie algebras} over~$R$, with Lie bracket
\e
[f,g]=f*g-g*f.\label{wc3eq6}
\e

An object $E\in\A$ is called {\it indecomposable} if we cannot write $E\cong E_1\op E_2$ for $E_1,E_2\not\cong 0$ in~$\A$. A constructible function $f\in\CF(\M)$ is called {\it supported on indecomposables} if $f([E])\ne 0$ for $[E]\in\M(\C)$ implies that $E$ is indecomposable. Write $\CFi(\M)\subset\CF(\M)$ for the $R$-submodule of $f\in\CF(\M)$ supported on indecomposables. Then \cite[Theorem~4.9]{Joyc5} says that $\CFi(\M)$ is closed under the Lie bracket~\eq{wc3eq6}, and so is a {\it Lie subalgebra} of $\CF(\M)$. Note that $\CFi(\M)$ is generally not closed under~$*$.

In \cite[Section~5]{Joyc5}, the second author defines an $R$-subalgebra $\SFa(\M)\subset\SF(\M)$ of stack functions {\it with algebra stabilizers}, and a Lie subalgebra $\SFai(\M)\subset \SFa(\M)$ of stack functions {\it supported on virtual indecomposables}, which is an analogue of the Lie subalgebra~$\CFi(\M)\subset\CF(\M)$.
\end{dfn}

\subsection{(Weak) stability conditions on abelian categories}\label{wc33}

Next we summarize some material from \cite{Joyc6}. See also Rudakov~\cite{Ruda}.

\begin{dfn}\label{wc3def3}
Suppose Assumption \ref{wc3ass1} holds. Let $(T,\leq)$ be a totally ordered set and $\tau\colon C(\A)\ra T$ be a map. We call $(\tau,T,\leq)$ a {\it weak stability condition} on $\A$ if for all $\al,\be,\ga \in C(\A)$ with $\be=\al+\ga$, either
$\tau(\al) \leq \tau(\be) \leq \tau(\ga)$, or~$\tau(\al) \geq \tau(\be) \geq \tau(\ga)$.

We call $(\tau,T,\leq)$ a {\it stability condition} if for all such $\al$, $\be$, $\ga$, either
$\tau(\al)<\tau(\be)<\tau(\ga)$, or $\tau(\al)>\tau(\be)>\tau(\ga)$, or~$\tau(\al)=\tau(\be)=\tau(\ga)$.

Let $(\tau,T,\leq)$ be a weak stability condition. An object $E$ of $\A$ is called
\begin{itemize}\itemsep=0pt
\item[(i)] {\it $\tau$-stable} if $\tau([E'])<\tau([E/E'])$ for all subobjects $E'\subset E$ with $E'\ne 0,E$.
\item[(ii)] {\it $\tau$-semistable} if $\tau([E'])\!\leq\!\tau([E/E'])$ for all $E'\subset E$ with $E'\ne 0,E$.
\item[(iii)] {\it $\tau$-unstable} if it is not $\tau$-semistable.
\item[(iv)] {\it strictly $\tau$-semistable} if it is $\tau$-semistable but not $\tau$-stable.
\end{itemize}

If $(\tau,T,\le)$, $\big(\ti\tau,\ti T,{\le}\big)$ are weak stability conditions on $\A$, we say that $\big(\ti\tau,\ti T,{\le}\big)$ {\it dominates} $(\tau,T,\leq)$ if $\tau (\al)\leq\tau(\be)$ implies $\ti\tau(\al)\leq\ti\tau(\be)$ for all~$\al,\be\in C(\A)$.

We call a weak stability condition $(\tau,T,\leq)$ on $\A$ {\it permissible} if
\begin{itemize}\itemsep=0pt
\item[(a)] $\A$ is $\tau$-{\it artinian}, that is, there are no infinite chains $\cdots\subsetneq E_3\subsetneq  E_2\subsetneq E_1$ of subobjects  in~$\A$ with $\tau([E_{n+1} ])\leq\tau([E_n/E_{n+1}])$ for all $n=1,2,\dots$.
\item[(b)] For each $\al\in K(\A)$, write $\M^\ss_\al(\tau)=\bigl\{[E]\in\M(\C)\colon E$ is $\tau$-semistable, $\lb E\rb=\al\bigr\}$. Then $\M^\ss_\al(\tau)$ is a constructible set, as in Definition~\ref{wc3def1}.
\end{itemize}
We also write $\M^\rst_\al(\tau)=\bigl\{[E]\in\M(\C)\colon E$ is $\tau$-stable, $\lb E\rb=\al\bigr\}$.
\end{dfn}

\begin{rem}\label{wc3rem1}\quad
\begin{enumerate}\itemsep=0pt
\item[(a)] As in \cite[Section~4.1]{Joyc6}, there are some theorems which hold for stability conditions but are false for weak stability conditions. However, we will not use any of these in this paper, so we will work with weak stability conditions.

\item[(b)] In the examples we will be interested in, $\M^\rst_\al(\tau)$ and $\M^\ss_\al(\tau)$ will actually be {\it finite type open substacks} in $\M$, but for the theory of \cite{Joyc6,Joyc7} it is sufficient to suppose only that~$\M^\ss_\al(\tau)$ is a constructible set.	
\end{enumerate}
\end{rem}

\begin{ex}\label{wc3ex1}
For any $\A$ satisfying Assumption \ref{wc3ass1}, we may take $T=*$ to be the point, $\le$ the unique order on $*$, and $\tau_{\rm tr}\colon C(\A)\ra *$ to be the projection. Then $(\tau_{\rm tr},*,\le)$ is a~stability condition on $\A$, the {\it trivial stability condition}. Every $E\in\A$ is $\tau_{\rm tr}$-semistable, so $\M_\al^\ss(\tau_{\rm tr})=\M_\al(\C)$ for $\al\in C(\A)$. Thus $(\tau_{\rm tr},*,\le)$ is permissible if and only if $\M_\al$ is of finite type for all $\al\in C(\A)$. The trivial stability condition dominates all weak stability conditions on~$\A$.	
\end{ex}

The next two examples are taken from \cite[Sections~4.3--4.4]{Joyc6}.

\begin{ex}\label{wc3ex2}
As explained in more detail in Section~\ref{wc5}, let $Q=(Q_0,Q_1,h,t)$ be a quiver, and $\A=\modCQ$ be the abelian category of $\C$-representations of $Q$. Write objects of $\A$ as $(\bs V,\bs\rho)=((V_v)_{v\in Q_0},(\rho_e)_{e\in Q_1})$, where $V_v$ is a finite-dimensional $\C$-vector space and $\rho_e\colon V_{t(e)}\ra V_{h(e)}$ a linear map. Define the {\it dimension vector} of $(\bs V,\bs\rho)$ to be $\bs d=\bdim(\bs V,\bs\rho)\in\N^{Q_0}\subset\Z^{Q_0}$, where $\bs d(v)=\dim_\C V_v$ for $v\in Q_0$. Define $K(\A)=\Z^{Q_0}$ to be the lattice of dimension vectors, with $\lb(\bs V,\bs\rho)\rb=\bdim(\bs V,\bs\rho)$. Then the positive cone is~$C(\A)=\N^{Q_0}\sm\{0\}$.

Choose real numbers $\mu_v\in\R$ for all $v\in Q_0$. Define a map $\mu\colon C(\A)\ra\R$ by
\begin{equation*}
\mu(\bs d)=\frac{\sum_{v\in Q_0}\bs d(v)\mu_v}{\sum_{v\in Q_0}\bs d(v)}.
\end{equation*}
Then \cite[Example~4.14]{Joyc6} shows that $(\mu,\R,\le)$ is a permissible stability condition on $\modCQ$, called {\it slope stability}. We call $\mu$ a {\it slope function}.
\end{ex}

\begin{ex}\label{wc3ex3}
Let $X$ be a smooth projective $\C$-scheme of dimension $m$, and $\A=\coh(X)$ the
abelian category of coherent sheaves on $X$. Let $K(\A)=K^\num(\coh(X))$ be the numerical Grothendieck group. Define $G$ to be the set of monic rational polynomials in $t$ of degree at most~$m$:
\begin{equation*}
G=\bigl\{p(t)=t^d+a_{d-1}t^{d-1}+\cdots+a_0\colon d=0,1,\dots,m,\,
a_0,\dots,a_{d-1}\in\Q\bigr\}.
\end{equation*}
Define a total order `$\le$' on $G$ by $p\le p'$ for $p,p'\in G$ if
either
\begin{itemize}\itemsep=0pt
\item[(a)] $\deg p>\deg p'$, or
\item[(b)] $\deg p=\deg p'$ and $p(t)\le p'(t)$ for all $t\gg 0$.
\end{itemize}
We write $p<q$ if $p\le q$ and $p\ne q$. Note that $\deg p>\deg p'$
in (a) implies that $p(t)>p'(t)$ for all $t\gg 0$, which is the
opposite to $p(t)\le p'(t)$ for $t\gg 0$ in (b).

Fix a very ample line bundle $\O_X(1)$ on $X$. For $E\in\coh(X)$, the {\it Hilbert polynomial} $P_E$ is the unique polynomial in $\Q[t]$ such that $P_E(n)=\dim H^0(E(n))$ for all $n\gg 0$. Equivalently, $P_E(n)=\bar\chi\bigl([\O_X(-n)],[E]\bigr)$ for all $n\in\Z$. Thus, $P_E$ depends only on the class $\al\in
K^\num(\coh(X))$ of $E$, and we may write $P_\al$ instead of $P_E$. Define $\tau\colon C(\coh(X))\ra G$ by $\tau(\al)=P_\al/r_\al$, where~$P_\al$ is the Hilbert polynomial of $\al$, and $r_\al$ is the
leading coefficient of $P_\al$, which must be positive. Then as in
\cite[Example~4.16]{Joyc6}, $(\tau,G,\le)$ is a permissible stability condition on
$\coh(X)$, called {\it Gieseker stability}. Gieseker stability
is studied in~\cite[Section~1.2]{HuLe}.
\end{ex}

\subsection[\texorpdfstring{Constructible and stack functions $\ep_\al(\tau)$, $\bar\ep_\al(\tau)$}{Constructible and stack functions εₐ(τ),‾εₐ(τ)}]{Constructible and stack functions $\boldsymbol{\ep_\al(\tau)}$, $\boldsymbol{\bar\ep_\al(\tau)}$}\label{wc34}

\begin{dfn}\label{wc3def4}
Suppose Assumption \ref{wc3ass1} holds, and let $(\tau,T,\le)$ be a permissible weak stability condition on $\A$. Then for each $\al\in C(\A)$ we have a constructible set $\M_\al^\ss(\tau)\subset\M(\C)$, so as in Section~\ref{wc31} we have $\de_{\M_\al^\ss(\tau)}\in\CF(\M)$ and $\bar\de_{\M_\al^\ss(\tau)}\in\SF(\M)$, where in fact $\bar\de_{\M_\al^\ss(\tau)}\in\SFa(\M)$. Following \cite[Definitions~7.6 and 8.1]{Joyc6}, define elements $\ep^\al(\tau)\in\CF(\M)$ and $\bar\ep{}^\al(\tau)\in\SFa(\M)$ for $\al\in C(\A)$ by
\begin{gather}
\ep^\al(\tau)=
\sum_{\substack{n\ge 1,\,\al_1,\dots,\al_n\in C(\A)\colon\\
\al_1+\cdots+\al_n=\al,\, \tau(\al_i)=\tau(\al),\text{ all
$i$}}}
\frac{(-1)^{n-1}}{n} \de_{\M_{\al_1}^\ss(\tau)}*\de_{\M_{\al_2}^\ss(\tau)}
*\cdots*\de_{\M_{\al_n}^\ss(\tau)},
\label{wc3eq7}\\
\bar\ep{}^\al(\tau)=
\sum_{\substack{n\ge 1,\,\al_1,\dots,\al_n\in C(\A)\colon \\
\al_1+\cdots+\al_n=\al,\, \tau(\al_i)=\tau(\al),\text{ all
$i$}}}
\frac{(-1)^{n-1}}{n} \bar\de_{\M_{\al_1}^\ss(\tau)}*\bar\de_{\M_{\al_2}^\ss(\tau)}
*\cdots*\bar\de_{\M_{\al_n}^\ss(\tau)},
\label{wc3eq8}
\end{gather}
where $*$ is the Ringel--Hall multiplication, and there are only finitely many nonzero terms in \eq{wc3eq7}--\eq{wc3eq8} by  \cite[Definition~7.6]{Joyc6}. Then \cite[Theorems~7.7 and~8.2]{Joyc6} prove
\begin{gather}
\de_{\M_\al^\ss(\tau)} =
\sum_{\substack{n\ge 1,\,\al_1,\dots,\al_n\in C(\A)\colon\\
\al_1+\cdots+\al_n=\al,\, \tau(\al_i)=\tau(\al),\text{ all
$i$}}}
\frac{1}{n!} \ep^{\al_1}(\tau)*\ep^{\al_2}(\tau)*
\cdots*\ep^{\al_n}(\tau),\label{wc3eq9}\\
\bar\de_{\M_\al^\ss(\tau)} =
\sum_{\substack{n\ge 1,\,\al_1,\dots,\al_n\in C(\A)\colon \\
\al_1+\cdots+\al_n=\al,\, \tau(\al_i)=\tau(\al),\text{ all
$i$}}}
\frac{1}{n!} \bar\ep{}^{\al_1}(\tau)*\bar\ep{}^{\al_2}(\tau)*
\cdots*\bar\ep{}^{\al_n}(\tau),
\label{wc3eq10}
\end{gather}
with only finitely many nonzero terms in \eq{wc3eq9}--\eq{wc3eq10}. Formally, for each $t\in T$ we may rewrite~\eq{wc3eq7} and~\eq{wc3eq9} as
\begin{gather*}
\sum_{\al\in C(\A)\colon \tau(\al)=t}\ep^\al(\tau)=\log\Biggl[1+ \sum_{\al\in C(\A)\colon \tau(\al)=t}\de_{\M_\al^\ss(\tau)}\Biggr],\\
 1+ \sum_{\al\in C(\A)\colon \tau(\al)=t}\de_{\M_\al^\ss(\tau)}=\exp\Biggl[\sum_{\al\in C(\A)\colon \tau(\al)=t}\ep^\al(\tau)\Biggr].
\end{gather*}
\end{dfn}

Then \cite[Theorems~7.8 and 8.7]{Joyc6} prove:

\begin{thm}\label{wc3thm2}
In Definition {\rm\ref{wc3def4}}, $\ep^\al(\tau)$ lies in the Lie subalgebra $\CFi(\M)\!\subset\CF(\M)$ from Section~{\rm \ref{wc32}}, and $\bar\ep{}^\al(\tau)$ lies in the Lie subalgebra $\SFai(\M)\subset\SFa(\M)$.
\end{thm}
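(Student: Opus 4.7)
The plan is to verify directly that $\ep^\al(\tau)([E])=0$ whenever $E\in\A$ is decomposable, and to carry out the analogous argument for $\bar\ep{}^\al(\tau)$ in the stack-function setting. First I reduce to a single $\tau$-slope: fix $t\in T$ and restrict attention to the full abelian subcategory $\A^t\subseteq\A$ of $\tau$-semistable objects $E$ with $\tau(\lb E\rb)=t$ together with $0$, which is closed under direct sums and direct summands because semistables of a fixed slope form an abelian subcategory of $\A$. Since every composition $\al_1+\cdots+\al_n=\al$ appearing in \eq{wc3eq7}--\eq{wc3eq8} satisfies $\tau(\al_i)=t$, the functions $\ep^\al(\tau)$ and $\bar\ep{}^\al(\tau)$ are supported on the moduli substack of objects in $\A^t$. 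By Krull--Schmidt in $\A$, showing $\ep^\al(\tau)\in\CFi(\M)$ reduces to checking $\ep^\al(\tau)([E_1\op E_2])=0$ for all $E_1,E_2\ne 0$ in $\A^t$ with $\lb E_1\op E_2\rb=\al$.

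Next I set up a formal log/exp framework. In a suitable completion of $\CF(\M)$ with respect to the $K(\A)$-grading, define
\begin{equation*}
F_t:=\de_{[0]}+\!\!\!\!\sum_{\al:\tau(\al)=t}\!\!\!\!\de_{\M_\al^\ss(\tau)},\qquad L_t:=\!\!\!\!\sum_{\al:\tau(\al)=t}\!\!\!\!\ep^\al(\tau).
\end{equation*}
The identities \eq{wc3eq7} and \eq{wc3eq9} are equivalent to $L_t=\log F_t$ and $F_t=\exp L_t$, with $\log,\exp$ formed using the Ringel--Hall product $*$. Crucially, $F_t$ is \emph{group-like} under direct sum, in the sense that $\Phi^*(F_t)=F_t\bt F_t$ for $\Phi:\M\t\M\ra\M$: this encodes the fact that $E_1\op E_2$ lies in $\A^t\cup\{0\}$ iff both $E_1,E_2$ do. Showing $\ep^\al(\tau)\in\CFi(\M)$ for every $\al$ with $\tau(\al)=t$ therefore reduces to the \emph{primitivity} statement $L_t([E_1\op E_2])=0$ for all $E_1,E_2\ne 0$.

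Proving this primitivity is the main obstacle. The abstract principle ``$\log$ of a group-like element in a bialgebra is primitive'' does not apply mechanically, since the pullback $\Phi^*$ is not an algebra morphism for the Ringel--Hall product $*$ (extensions do not reduce to direct sums). My approach is to expand $L_t([E_1\op E_2])$ via \eq{wc3eq7} into a signed count
\begin{equation*}
L_t([E_1\op E_2])=\sum_{n\ge 1}\frac{(-1)^{n-1}}{n}\,\chi^\na\bigl(\mathrm{Filt}_n^{\A^t}(E_1\op E_2)\bigr),
\end{equation*}
where $\mathrm{Filt}_n^{\A^t}(E_1\op E_2)$ is the scheme of length-$n$ filtrations of $E_1\op E_2$ with nonzero subquotients in $\A^t$, and then to construct a sign-reversing involution on $\coprod_n\mathrm{Filt}_n^{\A^t}(E_1\op E_2)$ using a distinguished indecomposable Krull--Schmidt summand $I$ of $E_1$: filtrations in which the image of $I$ first appears as a ``clean'' refinement step pair with those in which that refinement step has been collapsed, producing a cancellation between lengths $n$ and $n-1$. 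The stack-function claim $\bar\ep{}^\al(\tau)\in\SFai(\M)$ follows by running the same argument motivically inside $\SFa(\M)$, using the scaling action $\Psi:[*/\bG_m]\t\M\ra\M$ of Assumption \ref{wc2ass1}(c) to organize the automorphism weights that define virtual indecomposability.
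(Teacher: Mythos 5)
You are attempting to reprove an imported result: the paper offers no proof of Theorem \ref{wc3thm2}, but quotes it from \cite{Joyc6} (Theorems 7.8 and 8.7). Your reduction is the right one --- evaluate $\ep^\al(\tau)$ at a decomposable object $E_1\op E_2$, restrict to a single $\tau$-value, recast \eq{wc3eq7} as $L_t=\log F_t$ --- and you correctly flag that $\Phi^*$ is not an algebra morphism for $*$, so primitivity of $\log$ of a group-like element cannot be invoked formally. But the step that actually does the work is missing, and the substitute you propose would fail for two separate reasons. First, the weights $(-1)^{n-1}/n$ are not alternating unit weights, so a sign-reversing involution pairing length-$n$ filtrations with length-$(n-1)$ filtrations cannot cancel contributions termwise ($\tfrac{(-1)^{n-1}}{n}+\tfrac{(-1)^{n-2}}{n-1}\ne 0$); the relevant combinatorics is the identity expressing $\log$ of a product of two commuting group-like elements as a sum over all interleavings of two filtrations, which is an orbit-summation identity, not an involution. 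Second, the involution is not even well defined: a subobject of $E_1\op E_2$ need not decompose compatibly with the summands (graphs of morphisms $E_1\ra E_2$ give ``diagonal'' subobjects), so ``the step where the image of $I$ first appears cleanly'' has no meaning for a general filtration. The missing geometric input is a localization argument: let $\bG_m$ act on $E_1\op E_2$ by $\id_{E_1}\op\,\la\,\id_{E_2}$; the induced action on the scheme of filtrations has fixed locus exactly the split filtrations $F_i=(F_i\cap E_1)\op(F_i\cap E_2)$, so $\chi^\na$ of the filtration space reduces to a sum over interleavings of filtrations of $E_1$ and of $E_2$, after which the composition identity for $\log$ gives the vanishing. (Your assertion that semistables of fixed value form an abelian subcategory also needs care for \emph{weak} stability conditions; only closure under direct sums, summands and extensions is available and needed.)

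The stack-function half is not addressed by ``running the same argument motivically''. Membership in $\SFai(\M)$ is not the condition of vanishing at decomposable objects: it is defined via the virtual-rank projection operators on stack functions with algebra stabilizers constructed in \cite{Joyc5}, and Euler-characteristic localization has no direct motivic analogue --- the class of the non-split locus of filtrations is not zero in $\SF(\M)$, only its image under $\SF(\M)\ra\CF(\M)$ is. Verifying $\bar\ep{}^\al(\tau)\in\SFai(\M)$ is the substantially harder half of the quoted result and requires engaging with the stabilizer-group decompositions directly. As written, your proposal establishes neither half of the theorem.
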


Here is a way to think about the point of all this:
\begin{itemize}\itemsep=0pt
\item Knowing the $\de_{\M_\al^\ss(\tau)}$, $\bar\de_{\M_\al^\ss(\tau)}$ for $\al\in C(\A)$ is equivalent to knowing the subsets of $\tau$-semistable objects $\M_\al^\ss(\tau)\subset\M(\C)$.
\item By \eq{wc3eq7}--\eq{wc3eq10}, knowing the $\de_{\M_\al^\ss(\tau)}$, $\bar\de_{\M_\al^\ss(\tau)}$ for all $\al\in C(\A)$ is equivalent to knowing the $\ep^\al(\tau)$, $\bar\ep{}^\al(\tau)$ for all $\al\in C(\A)$.
\item The $\de_{\M_\al^\ss(\tau)}$, $\bar\de_{\M_\al^\ss(\tau)}$ tend to satisfy identities in the algebras $(\CF(\M),*)$ and $(\SFa(\M),*)$, but the $\ep^\al(\tau)$, $\bar\ep{}^\al(\tau)$ satisfy corresponding identities in the {\it Lie algebras} $(\CFi(\M),[\,,\,])$ and $(\SFai(\M),[\,,\,])$.
\item Thus, working with the $\ep^\al(\tau)$, $\bar\ep{}^\al(\tau)$ may be useful if we want to use a construction which works for Lie algebras, but not for associative algebras.

This occurs in Donaldson--Thomas theory of Calabi--Yau 3-folds \cite[Section~5.3]{JoSo}, where there is a Lie algebra morphism from $\SFai(\M)$ which does not extend to $\SFa(\M)$. It also occurred in Section~\ref{wc23}, when we had a Lie algebra structure on $\check H_*\big(\M^\pl\big)$, but no corresponding associative algebra.
\end{itemize}

\subsection{Wall-crossing under change of stability condition}\label{wc35}

In \cite{Joyc7} the second author proved transformation laws for the
$\de_{\M_\al^\ss(\tau)}$, $\bar\de_{\M_\al^\ss(\tau)}$ and $\ep^\al(\tau)$, $\bar\ep{}^\al(\tau)$ under change of stability condition. These involve combinatorial coefficients
$S(*;\tau,\ti\tau)\in\Z$ and $U(*;\tau, \ti\tau)\in\Q$ defined in
\cite[Section~4.1]{Joyc7}. Following \cite[Section~3.3]{JoSo}, we have changed some notation
from~\cite{Joyc7}.

\begin{dfn}\label{wc3def5}Suppose Assumption \ref{wc3ass1} holds, and let
$(\tau,T,\le)$, $\big(\ti\tau,\ti T,{\le}\big)$ be weak stability conditions on~$\A$.

Let $n\ge 1$ and $\al_1,\dots,\al_n\in C(\A)$. If for all
$i=1,\dots,n-1$ we have either
\begin{itemize}
\itemsep=0pt
\setlength{\parsep}{0pt}
\item[(a)] $\tau(\al_i)\le\tau(\al_{i+1})$ and
$\ti\tau(\al_1+\cdots+\al_i)>\ti\tau(\al_{i+1}+\cdots+\al_n)$, or
\item[(b)] $\tau(\al_i)>\tau(\al_{i+1})$ and~$\ti\tau(\al_1+\cdots+\al_i)\le\ti\tau(\al_{i+1}+\cdots+\al_n)$,
\end{itemize}
then define $S(\al_1,\dots,\al_n;\tau,\ti\tau)=(-1)^r$,
 where $r$ is the number
of $i=1,\dots,n-1$ satisfying (a). Otherwise define
$S(\al_1,\dots,\al_n;\tau,\ti\tau)=0$. Now
define
\begin{gather}
 U(\al_1,\dots,\al_n;\tau,\ti\tau)=
\label{wc3eq11}\\
 \sum_{\substack{\phantom{wiggle}\\
1\le l\le m\le n,\, 0=a_0<a_1<\cdots<a_m=n,\,
0=b_0<b_1<\cdots<b_l=m\colon \\
\text{Define $\be_1,\dots,\be_m\in C(\A)$ by
$\be_i=\al_{a_{i-1}+1}+\cdots+\al_{a_i}$}.\\
\text{Define $\ga_1,\dots,\ga_l\in C(\A)$ by
$\ga_i=\be_{b_{i-1}+1}+\cdots+\be_{b_i}$}.\\
\text{We require $\tau(\be_i)=\tau(\al_j)$, $i=1,\dots,m$,
$a_{i-1}<j\le a_i$},\\
\text{and $\ti\tau(\ga_i)=\ti\tau(\al_1+\cdots+\al_n)$,
$i=1,\dots,l$}}
\!\!\!\!\!\!\!\!\!\!\!\!\!\!\!\!\!\!\!\!\!\!\!\!\!\!\!\!\!\!\!\!\!
\!\!\!\!\!\!\!\!\!\!\!\!\!\!\!\!\!\!\!}
\begin{aligned}[t]
\frac{(-1)^{l-1}}{l} \prod\limits_{i=1}^lS(\be_{b_{i-1}+1},
\be_{b_{i-1}+2},\dots,\be_{b_i}; \tau,\ti\tau)\prod_{i=1}^m\frac{1}{(a_i-a_{i-1})!}& .
\end{aligned}
\nonumber
\end{gather}
\end{dfn}

Here are some properties of the coefficients $U(-)$. Equations \eq{wc3eq12}--\eq{wc3eq13} come from \cite[Theorem~4.8]{Joyc7}, and \eq{wc3eq14} follows from \eq{wc3eq11} and \cite[equation~(60)]{Joyc7}.

\begin{thm}\label{wc3thm3}
Let Assumption {\rm \ref{wc3ass1} hold}, and
$(\tau,T,\le)$, $\big(\hat\tau,\hat T,{\le}\big)$, $\big(\ti\tau,\ti T,{\le}\big)$ be weak stability conditions on~$\A$. Then for all $\al_1,\dots,\al_n\in C(\A)$ we have
\begin{gather}
U(\al_1,\dots,\al_n;\tau,\tau)
=\begin{cases} 1, & n=1, \\
0, & \text{otherwise}, \end{cases}
\label{wc3eq12}
\\
\begin{aligned}
\sum_{\substack{m,\, a_0,\dots,a_m\colon m=1,\dots,n,\\ 0=a_0<a_1<\cdots<a_m=n, \\
\text{set $\be_k=\al_{a_{k-1}+1}+\cdots+\al_{a_k}$}, \\
k=1,\dots,m}}\,
\begin{aligned}[t]
&U(\be_1,\dots,\be_m;\hat\tau,\ti\tau) \prod_{k=1}^mU (\al_{a_{k-1}+1},\al_{a_{k-1}+2},\dots,\al_{a_k};\tau,\hat\tau)\\
&\qquad =U(\al_1,\dots,\al_n;\tau,\ti\tau).
\end{aligned}
\end{aligned}
\label{wc3eq13}
\end{gather}

If also $\big(\ti\tau,\ti T,{\le}\big)$ dominates $(\tau,T,\le)$, as in Definition~{\rm \ref{wc3def3}}, then
\e
U(\al_1,\dots,\al_n;\tau,\ti\tau) = U(\al_1,\dots,\al_n;\ti\tau,\tau) = 0\qquad \text{unless $\ti\tau(\al_1) = \cdots = \ti\tau(\al_n)$}.
\label{wc3eq14}
\e
\end{thm}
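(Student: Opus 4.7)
\emph{Plan.} The three parts are pure combinatorial identities about the integers $S(-;\tau,\ti\tau)\in\Z$ and rationals $U(-;\tau,\ti\tau)\in\Q$ of Definition \ref{wc3def5}; the only structural inputs are the seesaw property of weak stability conditions and, for \eqref{wc3eq14}, the domination hypothesis. I treat them in order of increasing difficulty, referring to \cite[\S 4]{Joyc7} for the detailed bookkeeping.

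For \eqref{wc3eq12}, the key observation is that $S(\al_1,\ldots,\al_n;\tau,\tau)=0$ for every $n\ge 2$: conditions (a) and (b) of Definition \ref{wc3def5} become mutually exclusive when $\tau=\ti\tau$, since the same pair of subsums is required to satisfy opposite strict/non-strict inequalities, and an iterated seesaw argument starting from whichever of (a) or (b) holds at $i=1$ propagates to a contradiction at some later index. Substituting this vanishing into \eqref{wc3eq11} forces $l=m$ (all $b$-blocks must be singletons) and collapses $U(\al_1,\ldots,\al_n;\tau,\tau)$ to
\begin{equation*}
\sum_{m=1}^n\frac{(-1)^{m-1}}{m}\sum_{0=a_0<a_1<\cdots<a_m=n}\prod_{i=1}^m\frac{1}{(a_i-a_{i-1})!},
\end{equation*}
under the constraint that $\tau(\al_1)=\cdots=\tau(\al_n)$ (else the sum already vanishes term-by-term). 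The remaining expression is the coefficient of $x^n$ in $\log(1+(e^x-1))=\log(e^x)=x$, hence equals $1$ for $n=1$ and $0$ for $n\ge 2$.

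For \eqref{wc3eq13}, I would proceed by direct combinatorial expansion. Expanding each $U$-factor on the left via \eqref{wc3eq11} produces a triple-nested sum over $a$-, $b$- and $c$-partitions of $\{1,\ldots,n\}$ keyed respectively to $\tau$, $\hat\tau$ and $\ti\tau$. The two $S$-factors $S(-;\tau,\hat\tau)$ and $S(-;\hat\tau,\ti\tau)$ repackage into the single factor $S(-;\tau,\ti\tau)$ appearing in the right-hand side via an auxiliary composition identity for $S$ obtained directly from its definition, while the rational weights $\tfrac{(-1)^{l-1}}{l}\prod_i\tfrac{1}{(a_i-a_{i-1})!}$ also line up correctly across the refinement. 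The argument is pure bookkeeping, and is the longest of the three steps.

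For \eqref{wc3eq14}, suppose a summand in \eqref{wc3eq11} for $U(\al_1,\ldots,\al_n;\tau,\ti\tau)$ is nonzero. Then $S(\be_{b_{i-1}+1},\ldots,\be_{b_i};\tau,\ti\tau)\ne 0$ for every $i$, and $\ti\tau(\ga_i)=\ti\tau(\al_1+\cdots+\al_n)$ for every $i$. The domination hypothesis converts each $\tau$-inequality appearing in the $S$-condition into a parallel $\ti\tau$-inequality, and these collide with the opposite-sense $\ti\tau$-inequality demanded by (a)/(b), so the only way $S$ can be nonzero is if the $\tau$-inequalities become equalities, forcing in turn the relevant $\ti\tau$-values of prefix and suffix sums to coincide; combined with the outer $\ga$-constraint and seesaw, this propagates down to $\ti\tau(\al_j)=\ti\tau(\al_1+\cdots+\al_n)$ for every $j$. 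The vanishing of $U(\al_1,\ldots,\al_n;\ti\tau,\tau)$ follows by a symmetric argument, in which the roles of the inner $\tau$-constraint and the direction of domination are swapped. The main obstacle is \eqref{wc3eq13}: conceptually it is the combinatorial associativity underlying the whole wall-crossing formalism, but the triple index-shuffling is notationally demanding.
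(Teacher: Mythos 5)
First, a point of comparison: the paper does not prove Theorem \ref{wc3thm3} at all --- it quotes \eq{wc3eq12}--\eq{wc3eq13} from \cite[Th.~4.8]{Joyc7} and \eq{wc3eq14} from \cite[eq.~(60)]{Joyc7} --- so you are reconstructing a cited result from scratch. Your treatment of \eq{wc3eq12} is essentially right: granted that $S(\al_1,\ldots,\al_n;\tau,\tau)=0$ for $n\ge 2$, the collapse of \eq{wc3eq11} to the $l=m$ terms and the $\log(\exp x)=x$ computation correctly give $U(\ldots;\tau,\tau)=\delta_{n1}$. But your justification of that vanishing is garbled: conditions (a),(b) of Definition \ref{wc3def5} are \emph{always} mutually exclusive, and for $1<i<n-1$ they compare different pairs ($\tau(\al_i)$ versus $\tau(\al_{i+1})$, and $\tau$ of the prefix versus the suffix sum), so the "same pair of subsums with opposite inequalities" picture only works at $n=2$; for general $n$ the iterated seesaw induction you allude to is the whole proof and must be written out. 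Similarly, your mechanism for \eq{wc3eq14} is the right one, but the asserted "collision" between $\ti\tau(\al_i)\le\ti\tau(\al_{i+1})$ (from domination) and $\ti\tau(\al_1+\cdots+\al_i)>\ti\tau(\al_{i+1}+\cdots+\al_n)$ is not a direct contradiction --- these involve different classes --- and needs the seesaw property applied to the partial sums, plus the chain through the constraints $\tau(\be_i)=\tau(\al_j)$ and $\ti\tau(\ga_i)=\ti\tau(\al_1+\cdots+\al_n)$ in \eq{wc3eq11} to descend from the $\be$'s to the $\al$'s.

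The genuine gap is \eq{wc3eq13}, which is the substance of the theorem and which your proposal does not prove. The "auxiliary composition identity for $S$" is not "obtained directly from its definition": it is a theorem of comparable depth to \eq{wc3eq13} itself, proved in \cite{Joyc7} by a delicate cancellation in which most terms of the double sum over refinements cancel in pairs via the signs $(-1)^r$, and nothing in your outline indicates how that cancellation goes or why the surviving terms assemble into $S(-;\tau,\ti\tau)$. Likewise "the rational weights also line up correctly across the refinement" is precisely the assertion to be proved: matching the factors $\frac{(-1)^{l-1}}{l}\prod_i\frac{1}{(a_i-a_{i-1})!}$ through the composition of two $U$-expansions is the $\log$--$\exp$ bookkeeping that makes \eq{wc3eq13} the associativity of wall-crossing, and it interacts nontrivially with the constancy constraints in \eq{wc3eq11}. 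As written, your proof of \eq{wc3eq13} consists of the claim that a direct expansion would work. Either carry out that expansion in full (expect it to be the bulk of the proof), or do what the paper does and cite \cite[Th.~4.8]{Joyc7}.
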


Then in \cite[Theorem~5.2]{Joyc7} the second author derives wall-crossing formulae under change of stability condition from $(\tau,T,\le)$ to~$\big(\ti\tau,\ti T,{\le}\big)$:

\begin{thm}\label{wc3thm4}
Let Assumption {\rm \ref{wc3ass1}} hold, and $(\tau,T,\le)$,
$\big(\ti\tau,\ti T,{\le}\big)$ be permissible weak stability conditions on $\A$. Suppose also that there exists a permissible weak stability condition $\big(\hat\tau,\hat T,{\le}\big)$ on $\A$ which dominates both $(\tau,T,\le)$, $\big(\ti\tau,\ti T,{\le}\big)$ in the sense of Definition {\rm \ref{wc3def3}}. Then for all $\al\in C(\A)$ we have
\begin{gather}
\de_{\M_\al^\ss(\ti\tau)} =
\sum_{\substack{n\ge 1,\,\al_1,\dots,\al_n\in
C(\A)\colon \\ \al_1+\cdots+\al_n=\al}}
S(\al_1,\dots,\al_n;\tau,\ti\tau)\cdot\de_{\M_{\al_1}^\ss(\tau)}*\de_{\M_{\al_2}^\ss(\tau)}
*\cdots*\de_{\M_{\al_n}^\ss(\tau)},
\label{wc3eq15}\\
\bar\de_{\M_\al^\ss(\ti\tau)} =
\sum_{\substack{n\ge 1,\,\al_1,\dots,\al_n\in
C(\A)\colon \\ \al_1+\cdots+\al_n=\al }}
S(\al_1,\dots,\al_n;\tau,\ti\tau)\cdot
\bar\de_{\M_{\al_1}^\ss(\tau)}*\bar\de_{\M_{\al_2}^\ss(\tau)}
*\cdots*\bar\de_{\M_{\al_n}^\ss(\tau)},
\label{wc3eq16}\\
\ep^\al(\ti\tau) =
\sum_{\substack{n\ge 1,\,\al_1,\dots,\al_n\in
C(\A)\colon \\ \al_1+\cdots+\al_n=\al}}
U(\al_1,\dots,\al_n;\tau,\ti\tau)\cdot\ep^{\al_1}(\tau)*\ep^{\al_2}(\tau)*\cdots* \ep^{\al_n}(\tau),
\label{wc3eq17}\\
\bar\ep{}^\al(\ti\tau) =
\sum_{\substack{n\ge 1,\,\al_1,\dots,\al_n\in
C(\A)\colon\\ \al_1+\cdots+\al_n=\al }}
U(\al_1,\dots,\al_n;\tau,\ti\tau)\cdot\bar\ep{}^{\al_1}(\tau)*\bar\ep{}^{\al_2}(\tau)*\cdots* \bar\ep{}^{\al_n}(\tau),
\label{wc3eq18}
\end{gather}
where there are only finitely many nonzero terms in
\eqref{wc3eq15}--\eqref{wc3eq18}.
\end{thm}

Here the third weak stability condition $\big(\hat\tau,\hat T,{\le}\big)$ does not enter \eq{wc3eq15}--\eq{wc3eq18}, but is used to prove that there are only finitely many nonzero terms. For the case of quivers in Sections~\mbox{\ref{wc5}--\ref{wc6}}, we may take $\big(\hat\tau,\hat T,{\le}\big)$ to be the trivial stability condition $(\tau_{\rm tr},*,\le)$ in Example~\ref{wc3ex1}, which dominates any~$(\tau,T,\le)$, $\big(\ti\tau,\ti T,{\le}\big)$.

Theorem \ref{wc3thm3} implies that using \eq{wc3eq17}--\eq{wc3eq18} to transform from $\ep^*(\tau)$, $\bar\ep{}^*(\tau)$ to $\ep^*(\hat\tau)$, $\bar\ep{}^*(\hat\tau)$, and then to transform from $\ep^*(\hat\tau)$, $\bar\ep{}^*(\hat\tau)$ to $\ep^*(\ti\tau)$, $\bar\ep{}^*(\ti\tau)$, is equivalent to transforming from $\ep^*(\tau)$, $\bar\ep{}^*(\tau)$ to $\ep^*(\ti\tau)$, $\bar\ep{}^*(\ti\tau)$, as you would expect.

The next result is proved in \cite[Theorem~5.4]{Joyc7}. We have no explicit definition for $\ti U(\al_1,\dots,\al_n;\allowbreak \tau,\ti\tau)$, we only show that \eq{wc3eq18} can be rewritten in the form~\eq{wc3eq19}.

\begin{thm}\label{wc3thm5}
In Theorem {\rm \ref{wc3thm4}}, equations \eqref{wc3eq17}--\eqref{wc3eq18} may be rewritten as equations in the Lie algebras $\CFi(\M)$, $\SFai(\M)$ using the Lie brackets $[\,,\,]$, rather than as equations in $\CF(\M)$, $\SFa(\M)$ using the Ringel--Hall product~$*$. That is, we
may rewrite~\eqref{wc3eq18}, and similarly~\eqref{wc3eq17}, in the form
\e
\bar\ep{}^\al(\ti\tau)=
\sum_{\substack{n\ge 1,\,\al_1,\dots,\al_n\in
C(\A)\colon \\ \al_1+\cdots+\al_n=\al}\!\!\!\!\!\!\!\!\!\!\!\!\!\!\!\!\!\!}
\ti U(\al_1,\dots,\al_n;\tau,\ti\tau)\,\cdot[[\cdots[[\bar\ep{}^{\al_1}(\tau),\bar\ep{}^{\al_2}(\tau)],\bar\ep{}^{\al_3}(\tau)],
\dots],\bar\ep{}^{\al_n}(\tau)],
\label{wc3eq19}
\e
for some system of combinatorial coefficients $\ti U(\al_1,\dots,\al_n;\tau,\ti\tau)\in\Q$, with only finitely many nonzero terms, such that if we expand $[f,g]=f*g-g*f$ then \eq{wc3eq19} becomes \eq{wc3eq18}.

Alternatively, we may interpret {\rm\eq{wc3eq17}--\eq{wc3eq18}} as holding in the universal enveloping algebras~$U(\CFi(\M))$, $U(\SFai(\M))$.
\end{thm}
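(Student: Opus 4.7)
The plan is to interpret \eq{wc3eq18} formally in the free associative algebra $F=R\langle y_\be:\be\in C(\A)\rangle$, show that the right-hand side lies in the free Lie subalgebra $L(F)\subset F$, and then extract a left-nested bracket presentation by means of the Dynkin--Specht--Wever theorem. Setting $y_\be\mapsto\bar\ep{}^\be(\tau)$ recovers \eq{wc3eq19}, and the parallel argument with constructible functions gives \eq{wc3eq17}.

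Concretely, define
\begin{equation*}
P_\al=\sum_{n\ge 1}\,\sum_{\al_1+\cdots+\al_n=\al}U(\al_1,\ldots,\al_n;\tau,\ti\tau)\,y_{\al_1}y_{\al_2}\cdots y_{\al_n}\in F.
\end{equation*}
Granting the claim $P_\al\in L(F)$, the Dynkin operator $D$ given by $D(y_{\be_1}\cdots y_{\be_n})=[[\cdots[[y_{\be_1},y_{\be_2}],y_{\be_3}],\ldots],y_{\be_n}]$ acts as multiplication by $n$ on degree-$n$ elements of $L(F)$, so each multihomogeneous piece satisfies $P_\al^{(n)}=\tfrac{1}{n}D(P_\al^{(n)})$. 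This yields an expression $P_\al=\sum\ti U(\al_1,\ldots,\al_n;\tau,\ti\tau)\,[[\cdots[[y_{\al_1},y_{\al_2}],\ldots],y_{\al_n}]$ with $\ti U=\tfrac{1}{n}U$ a valid (though non-unique) choice, and specializing $y_\be\mapsto\bar\ep{}^\be(\tau)$ gives \eq{wc3eq19}. The finiteness statement for $\ti U$ follows from the corresponding finiteness in Theorem \ref{wc3thm4}. The universal enveloping algebra interpretation is then tautological along the canonical maps $U(\SFai(\M))\to\SFa(\M)$ and $U(\CFi(\M))\to\CF(\M)$.

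The main obstacle is the Lie-polynomial claim $P_\al\in L(F)$. This amounts to nontrivial combinatorial identities on the coefficients $U$: for $n=2$ one needs the antisymmetry $U(\al_1,\al_2;\tau,\ti\tau)+U(\al_2,\al_1;\tau,\ti\tau)=0$, and for larger $n$ a cascade of Jacobi-type cancellations, none of which is transparent from the recursive definition \eq{wc3eq11}. I would establish this via a universality argument. Theorem \ref{wc3thm2} guarantees that for \emph{every} data set satisfying Assumption \ref{wc3ass1}, the specialization of $P_\al$ lies in the Lie subalgebra $\SFai(\M)$. Choosing $\A$ to be sufficiently generic -- for instance $\A=\modCQ$ for a quiver $Q$ without oriented cycles, the setting of \S\ref{wc5}--\S\ref{wc6}, where the Ringel--Hall Lie algebra admits a transparent description close to freeness after Ringel and Green -- forces the formal identity $P_\al\in L(F)$ to hold. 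An alternative route is a direct induction on $n$ combining the composition law \eq{wc3eq13}, the finiteness arguments that underpin Theorem \ref{wc3thm4}, and a careful bookkeeping of the summation \eq{wc3eq11}; this should yield the required symmetry constraints on $U$ as combinatorial consequences of the fact that the LHS of \eq{wc3eq18} is constructed from logarithms of generating series and hence lives in a Lie algebra.
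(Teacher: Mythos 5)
First, a point of reference: this paper does not prove Theorem \ref{wc3thm5} at all --- it is quoted from \cite[Th.~5.4]{Joyc7} --- so your proposal must be judged against that source. Your reduction step is correct and is the standard one: if the formal element $P_\al=\sum U(\al_1,\ldots,\al_n;\tau,\ti\tau)\,y_{\al_1}\cdots y_{\al_n}$ lies in the free Lie algebra $L(F)\subset F$, then the Dynkin--Specht--Wever theorem (valid here since $R$ is a $\Q$-algebra) gives the left-nested form with the explicit choice $\ti U=\frac{1}{n}U$, finiteness of the $\ti U$ is inherited from Theorem \ref{wc3thm4}, expanding the brackets returns \eq{wc3eq18} on the nose, and the specialization $y_\be\mapsto\bar\ep{}^\be(\tau)$ lands in $\SFai(\M)$ by Theorem \ref{wc3thm2}. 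The enveloping-algebra remark is likewise fine.

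The genuine gap is that the claim $P_\al\in L(F)$ \emph{is} the theorem, and neither of your routes establishes it. The universality route does not work as stated: knowing that the evaluation of $P_\al$ at the elements $\bar\ep{}^\be(\tau)$ of a Ringel--Hall algebra lies in the Lie subalgebra $\SFai(\M)$ does not pull back to the formal statement, because the evaluation map from $F$ is far from injective (Ringel--Hall algebras of quivers are not free --- they satisfy Serre-type and other relations), and one would additionally need a PBW-type structural fact, nowhere proved, that $\SFai(\M)$ meets the associative subalgebra generated by the $\bar\ep{}^\be(\tau)$ exactly in the Lie subalgebra they generate; one would further have to realize the given combinatorial pattern of $\tau,\ti\tau$-values, hence the given coefficients $U$, by stability conditions on a quiver. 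Your second route is only a statement of intent. What is actually required is a combinatorial proof that the coefficients $U$ satisfy the primitivity (Friedrichs/Ree) criterion --- already for $n=2$ the antisymmetry $U(\al_1,\al_2;\tau,\ti\tau)+U(\al_2,\al_1;\tau,\ti\tau)=0$, and for higher $n$ orthogonality to proper shuffles --- and the natural source of these identities is the multiplicative, group-like behaviour of the $S$-coefficients in \eq{wc3eq15}--\eq{wc3eq16} together with the log--exp relations \eq{wc3eq7}--\eq{wc3eq10}, which is the mechanism you gesture at in your final sentence but do not carry out. As it stands the proposal is a correct reduction of the theorem to its hardest step, not a proof of it.
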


It will be very important later that \eq{wc3eq17}--\eq{wc3eq19} are {\it universal wall-crossing formulae in a~Lie algebra}. So we can make sense of the same wall-crossing formulae in the Lie algebras~$\check H_0(\M)$ of~Section~\ref{wc24}.

These results are applied in \cite[Sections~6.4--6.5]{Joyc7} and \cite{JoSo} in the following way. Suppose we can define a Lie algebra morphism $\Psi\colon \SFai(\M)\ra L_{K(\A)}$, where $L_{K(\A)}$ is an explicit Lie algebra, often of the form $\an{\la^\al\colon \al\in K(\A)}_R$ for some commutative ring $R$, with Lie bracket $[\la^\al,\la^\be]=c_{\al,\be}\la^{\al+\be}$ for coefficients $c_{\al,\be}$ in $R$. Then we may define motivic invariants $J^\al(\tau)\in R$ by $\Psi(\bar\ep{}^\al(\tau))=J^\al(\tau)\la^\al$. Applying $\Psi$ to \eq{wc3eq18}, interpreted using Theorem~\ref{wc3thm5}, then gives a~wall-crossing formula for the invariants $J^\al(\tau)$. This is used in \cite[Theorem~3.14]{JoSo} to prove a~wall-crossing formula for Donaldson--Thomas invariants of Calabi--Yau 3-folds.

\section[A conjectural picture of universal structures for enumerative invariant theories]{A conjectural picture of universal structures\\ for enumerative invariant theories}\label{wc4}

\subsection{General statement of the conjecture}\label{wc41}

We first describe our conjectural picture, in a way that tries to unify several rather different contexts in algebraic geometry and differential geometry. Sections \ref{wc43}--\ref{wc46} will explain these contexts. Our initial statement will be imprecise, with details added in Sections~\ref{wc43}--\ref{wc46}. We write it as Assumption~\ref{wc4ass1} followed by Conjecture~\ref{wc4conj1}. The assumption covers material which is basically already known, and we can provide explicit definitions and proofs for (much of them in~\cite{Joyc12}).

\begin{ass}\label{wc4ass1}
Let $\A$ be a $\C$-linear additive category. Some examples we have in mind are, in algebraic geometry, $\A=\coh(X)$ or $\A=D^b\coh(X)$ for $X$ a smooth projective $\C$-scheme; in representation theory, $\A=\modCQ$ or $\A=D^b\modCQ$ for $Q$ a quiver without oriented cycles; and in differential geometry, $\A$ could be the category of pairs $(E,\nabla_E)$ for $E\ra X$ a unitary complex vector bundle on a fixed compact manifold $X$, and $\nabla_E$ a connection on $E$. Assume:
\begin{itemize}\itemsep=0pt
\item[(a)] We can form two natural moduli spaces $\M$ and $\M^\pl$ of objects $E$ in $\A$, where the usual moduli space $\M$ parametrizes such $E$ up to isomorphism, and the `projective linear' moduli space $\M^\pl$ parametrizes $E$  up to isomorphisms modulo $\bG_m\cdot\id_E$. There is a morphism $\Pi\colon \M\ra\M^\pl$.

Here $\M$, $\M^\pl$ may be Artin $\C$-stacks, or higher $\C$-stacks, or derived $\C$-stacks, or topological stacks, or topological spaces up to homotopy equivalence, depending on the context.

There is a morphism $\Phi\colon \M\t\M\ra\M$ mapping~$([E],[F])\mapsto[E\op F]$.
\item[(b)] We are given a surjective quotient $K_0(\A)\twoheadrightarrow K(\A)$ of the Grothendieck group~$K_0(\A)$ of~$\A$. We write $\lb E\rb\in K(\A)$ for the class of $E\in\A$. There should be splittings $\M=\coprod_{\al\in K(\A)}\M_\al$, $\M^\pl=\coprod_{\al\in K(\A)}\M_\al^\pl$ such that $\M_\al$, $\M_\al^\pl$ parametrize objects in class $\al\in K(\A)$. We write $\Phi_{\al,\be}=\Phi\vert_{\M_\al\t\M_\be}\colon \M_\al\t\M_\be\ra\M_{\al+\be}$.
\item[(c)] We are given a symmetric biadditive form $\chi\colon K(\A)\t K(\A)\ra \Z$.
\item[(d)] We are given some natural additional geometric structures $\cG$, $\cG^\pl$ on $\M$, $\ab\M^\pl$. In the case of abelian categories in algebraic geometry, for $\cG$ we mean the data $\Psi$, $\Th^\bu$ in Assumption~\ref{wc2ass1}.

Using $\cG$, $\cG^\pl$, there are notions of {\it real virtual dimension} of $\M$, $\M^\pl$, with $\vdim_\R\M_\al=-\chi(\al,\al)$ and $\vdim_\R\M_\al^\pl=2-\chi(\al,\al)$ for $\al\in K(\A)$.

\item[(e)] There is a notion of {\it orientation} on $\M$, $\M^\pl$, defined using the geometric structures $\G$, $\G^\pl$. In some contexts the complex geometry induces a natural orientation, just as a complex manifold has a natural orientation considered as a real manifold, so issues of orientations can largely be ignored. In other contexts there is no natural choice. The morphism $\Pi\colon \M\ra\M^\pl$ identifies orientations on $\M$ and~$\M^\pl$.

We suppose that $\M$ is orientable (this can often be proved), and that an orientation has been chosen for $\M$, the natural one if this is defined. We write $o_\al$, $o_\al^\pl$ for the orientations on $\M_\al$, $\M_\al^\pl$.
\item[(f)] The morphism $\Phi\colon \M\t\M\ra\M$ has a natural relative orientation, so orientations on $\M$ pull back under $\Phi$ to orientations on $\M\t\M$. Using this, there are signs $\ep_{\al,\be}\in\{\pm 1\}$ for all $\al,\be\in K(\A)$ with $\M_\al, \M_\be\ne\es$ such that $o_\al\bt o_\be=\ep_{\al,\be}\cdot \Phi^*(o_{\al+\be})$. These $\ep_{\al,\be}$ satisfy \eq{wc2eq3}--\eq{wc2eq5}.
\item[(g)] Define $\hat H_*(\M)$ to be $H_*(\M)$ with grading shifted as in \eq{wc2eq13}. Then, as described in \cite{Joyc12}, and in Section~\ref{wc23} for abelian categories $\A$ in algebraic geometry, using the data $\cG$ and signs $\ep_{\al,\be}$ in (d),(f) we can make $\hat H_*(\M)$ into a {\it graded vertex algebra over}~$R$.
\item[(h)] Define $\check H_*\big(\M^\pl\big)$ to be $H_*\big(\M^\pl\big)$ with grading shifted as in \eq{wc2eq16}. Then, as described in~\cite{Joyc12}, and in Section~\ref{wc24} for abelian categories $\A$ in algebraic geometry, using the data~$\cG^\pl$ and signs $\ep_{\al,\be}$ in (d), (f) we can make $\check H_*\big(\M^\pl\big)$ into a~{\it graded Lie algebra over}~$R$. Thus $\check H_0\big(\M^\pl\big)$ is a Lie algebra.

Note that in many situations one can compute $\hat H_*(\M)$ and $\check H_*\big(\M^\pl\big)$ very explicitly (often~$\hat H_*(\M)$ is a~lattice vertex algebra), and they are not difficult to work with in examples.
\item[(i)] There is a notion of {\it stability condition} $\tau$ on $\A$, which induces notions of when objects $E\in\A$ are $\tau$-{\it stable} or $\tau$-{\it semistable}. For example:
\begin{itemize}\itemsep=0pt
\item[$\bu$] When $\A$ is an abelian category in algebraic geometry, we take $\tau$ to be a permissible weak stability condition as in Section~\ref{wc33}.
\item[$\bu$] When $\A$ is a triangulated category such as $D^b\coh(X)$ or $D^b\modCQ$, we may take~$\tau$ to be a suitable Bridgeland stability condition.
\item[$\bu$] When $X$ is a compact oriented 4-manifold with $b^2_+(X)=1$, and we wish to study Donaldson theory on~$X$, and~$\A$ is the category of unitary complex vector bundles $E\ra X$ with connections~$\nabla_E$, a stability condition is a Riemannian metric~$g$ on~$X$, which induces a splitting $H^2_{\rm dR}(X,\R)=\cH^2_+\op\cH^2_-$ of the de Rham 2-cohomology into harmonic self-dual and anti-self-dual forms, with $\cH^2_+=\an{\om_+}$. Then $(E,\nabla)$ is $\tau$-{\it semistable} if $F_{\nabla_E}^+=ic\cdot \id_E\ot\om_+$ for $c\in\R$. We call $(E,\nabla)$ $\tau$-{\it stable} if it is $\tau$-semistable and irreducible.
\end{itemize}
\item[(j)] If $\tau$ is a stability condition on $\A$, we form moduli spaces $\M_\al^\rst(\tau)\subseteq\M_\al^\ss(\tau)\subseteq\M_\al^\pl$ of $\tau$-(semi)stable objects $E\in\A$ in each class $\al\in K(\A)$.

Here $\M_\al^\ss(\tau)$ is compact (or has a compact `coarse moduli space'), and $\M_\al^\rst(\tau)$ has the structure of a `virtual manifold' (for example, $\M_\al^\rst(\tau)$ may be a smooth $\C$-scheme, or a~$\C$-scheme with perfect obstruction theory~\cite{BeFa}, or a smooth manifold, or a derived smooth manifold \cite{Joyc10,Joyc8,Joyc9,Joyc11}). The orientation on $\M_\al^\pl$ in (e) induces an orientation on $\M_\al^\rst(\tau)$.

Thus, if $\M_\al^\rst(\tau)=\M_\al^\ss(\tau)$ (that is, if there are no strictly $\tau$-semistables in class $\al$) then $\M_\al^\ss(\tau)$ is a compact, oriented virtual manifold of virtual dimension $\vdim_\R\M_\al^\ss(\tau)=2-\chi(\al,\al)$, so it has {\it virtual class} $[\M_\al^\ss(\tau)]_\virt$ in homology $H_{2-\chi(\al,\al)}\big(\M_\al^\pl\big)$, which is $\check H_0\big(\M_\al^\pl\big)$ by \eq{wc2eq16}.

In fact $[\M_\al^\ss(\tau)]_\virt$ may be defined in $\Z$-homology $H_{2-\chi(\al,\al)}(\M_\al,\Z)$.
\end{itemize}
\end{ass}

As in Sections~\ref{wc43}--\ref{wc46} and \cite{Joyc12,Joyc13}, there are many interesting enumerative invariant theories in which we can define all the data of Assumption~\ref{wc4ass1}.

The following conjecture should not be regarded as a precise statement, nor are we claiming that it should hold in every theory for which some version of Assumption \ref{wc4ass1} holds. It is intended as an outline of a general structure we expect to see in many enumerative invariant theories, which may need modification in particular cases. We make some more exact conjectures in Sections~\ref{wc43}--\ref{wc46}.

\begin{conj}
\label{wc4conj1}
Suppose Assumption \ref{wc4ass1} holds. Then:
\begin{itemize}
\itemsep=0pt
\setlength{\parsep}{0pt}
\item[(i)] For all $\al\in K(\A)$ we may define invariants $[\M_\al^\ss(\tau)]_\inv\in \check H_0\big(\M_\al^\pl\big)$. If $\M_\al^\rst(\tau)=\M_\al^\ss(\tau)$ then Assumption \ref{wc4ass1}(j) gives a virtual class $[\M_\al^\ss(\tau)]_\virt$ in $\check H_0\big(\M_\al^\pl\big)$, and then $[\M_\al^\ss(\tau)]_\inv\ab =[\M_\al^\ss(\tau)]_\virt$. It is crucial that $\check H_0\big(\M_\al^\pl\big)=H_{2-\chi(\al,\al)}\big(\M_\al^\pl\big)$ is homology over a $\Q$-{\it algebra} $R$, as $[\M_\al^\ss(\tau)]_\inv$ may exist in $\Q$-homology but not in $\Z$-homology.
\item[(ii)] Let $\tau$, $\ti\tau$ be stability conditions on $\A$ as in Assumption \ref{wc4ass1}(i). It may be necessary to impose a condition on $\tau$, $\ti\tau$ to ensure that \eq{wc4eq1}--\eq{wc4eq2} below have only finitely many nonzero terms~-- see the notion of `globally finite change of stability condition' in \cite[Definition~5.1]{Joyc7}. Roughly, this says that $\tau$, $\ti\tau$ are sufficiently close in the space of stability conditions on $\A$.

Then for all $\al\in K(\A)$ (or in a `positive cone' $C(\A)\subset K(\A)$), the analogue of \eq{wc3eq19} holds in the Lie algebra $\check H_0\big(\M^\pl\big)$ from Assumption~\ref{wc4ass1}(h):
\e
\begin{gathered}
{}
[\M_\al^\ss(\ti\tau)]_\inv=
\sum_{\substack{n\ge 1,\,\al_1,\dots,\al_n\in
C(\A)\colon \\ \al_1+\cdots+\al_n=\al}}
\begin{aligned}[t]
\ti U(\al_1,&\dots,\al_n;\tau,\ti\tau)\cdot\bigl[\bigl[\cdots\bigl[[\M_{\al_1}^\ss(\tau)]_\inv,\\
&
[\M_{\al_2}^\ss(\tau)]_\inv\bigr],\dots\bigr],[\M_{\al_n}^\ss(\tau)]_\inv\bigr].
\end{aligned}
\end{gathered}
\label{wc4eq1}
\e
 Equivalently, as in Theorem \ref{wc3thm5}, the analogue of \eq{wc3eq18} holds in the universal enveloping algebra $\bigl(U\big(\check H_0\big(\M^\pl\big)\big),*\bigr)$:
\e
\begin{gathered}
{}
[\M_\al^\ss(\ti\tau)]_\inv=
\sum_{\substack{n\ge 1,\,\al_1,\dots,\al_n\in
C(\A)\colon \\ \al_1+\cdots+\al_n=\al }}
\begin{aligned}[t]
U(\al_1,&\dots,\al_n;\tau,\ti\tau)\cdot[\M_{\al_1}^\ss(\tau)]_\inv *\\
&
[\M_{\al_2}^\ss(\tau)]_\inv*\cdots *[\M_{\al_n}^\ss(\tau)]_\inv.
\end{aligned}
\end{gathered}
\label{wc4eq2}
\e
We call \eq{wc4eq1}--\eq{wc4eq2} {\it wall-crossing formulae} for the invariants~$[\M_\al^\ss(\tau)]_\inv$.

We have only defined the $U(\al_1,\dots,\al_n;\tau,\ti\tau)$, $\ti U(\al_1,\dots,\al_n;\tau,\ti\tau)$ in Section~\ref{wc35} in the abelian category case, in which the `positive cone' $C(\A)$ makes sense. There should be a~way to extend the theory to triangulated categories such as $D^b\coh(X),D^b\modCQ$. Then we no longer have a good definition of `positive cone' $C(\A)\subset K(\A)$, so it is more difficult both to define the $U(\al_1,\dots,\al_n;\tau,\ti\tau)$, and to ensure that \eq{wc4eq1}--\eq{wc4eq2} have only finitely many nonzero terms. This extension was discussed in~\cite[Problem~7.1]{Joyc7}.
\item[(iii)] We currently have no {\it direct} definition of the invariants $[\M_\al^\ss(\tau)]_\inv$ in~(i) when $\M_\al^\rst(\tau)\ne\M_\al^\ss(\tau)$, starting from the moduli space $\M_\al^\ss(\tau)\subseteq\M_\al^\pl$ and the geometric structures $\G_\al^\pl$,~$o_\al^\pl$ upon it. However, in many cases there is an {\it indirect} definition, by making use of the wall-crossing formulae \eq{wc4eq1}--\eq{wc4eq2} in an auxiliary category $\B$. We call this the {\it method of pair invariants}, and it is used in \cite[Sections~5.4 and~13.1]{JoSo} to define Donaldson--Thomas invariants of Calabi--Yau 3-folds. (These are used by Tanaka--Thomas \cite{TaTh2} to study Vafa--Witten invariants. See also Mochizuki's use of `$L$-Bradlow pairs' in \cite[Section~7.3]{Moch}.) When it applies, it shows that classes~$[\M_\al^\ss(\tau)]_\inv$ satisfying (i),(ii) must be unique. We explain it in the next definition.
\end{itemize}	
\end{conj}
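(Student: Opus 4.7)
The plan is to prove all three parts simultaneously by constructing a morphism of graded Lie algebras $\Psi : \SFai(\M) \to \check H_*(\M^\pl)$ from the second author's stack-function Lie algebra of \S\ref{wc32} into the Lie algebra of \S\ref{wc24}, and then setting $[\M_\al^\ss(\tau)]_\inv := \Psi(\bar\ep{}^\al(\tau))$ for the Ringel--Hall elements of Definition~\ref{wc3def4}. Given such a $\Psi$, the wall-crossing formulae \eq{wc4eq1}--\eq{wc4eq2} of part~(ii) drop out of applying $\Psi$ to the universal wall-crossing identities of Theorem~\ref{wc3thm5}, which already have the shape of \eq{wc4eq1} in $\SFai(\M)$. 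Thus the whole conjecture reduces to constructing $\Psi$ with the correct compatibility with virtual classes on the stable locus.

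The construction of $\Psi$ is the crux. On a generator of $\SFai(\M)$ given by a representable finite-type morphism $f : S \to \M_\al$, one would form $f^\pl := \Pi^\pl \ci f : S \to \M_\al^\pl$ and define $\Psi(f)$ as $H_*(f^\pl)$ applied to the cap product of a natural homology class on $S$ with the top Chern class of an appropriate vector bundle built from $f^*(\Th^\bu_{\al,\al})$. The grading shifts in \eq{wc2eq13} and \eq{wc2eq16} are exactly what is needed for $\Psi$ to be grading-preserving once one accounts for the $[*/\bG_m]$-fibration $\Pi^\pl$ and the rank of the Chern class. The Lie-bracket compatibility $\Psi([f,g]) = [\Psi(f),\Psi(g)]$ would be verified by comparing the Ringel--Hall bracket, defined via the stack of short exact sequences in Assumption~\ref{wc3ass1}(b)--(c) and pushed to $\M$ by direct sum, with the Borcherds bracket on $\check H_*(\M^\pl)$ unfolded as in Definition~\ref{wc2def3}, using the splitting identities \eq{wc2eq7}--\eq{wc2eq10} for $\Th^\bu$ under $\Phi$ and $\Psi$. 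Upmeier's projective Euler class (Remark~\ref{wc2rem2}) should give the cleanest route, since it already realizes the Lie bracket on $\check H_*(\M^\pl)$ as a pushforward along a $[*/\bG_m]$-fibration of a Chern-class cap product.

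For part~(i), when $\M_\al^\rst(\tau) = \M_\al^\ss(\tau)$, the class $\bar\ep{}^\al(\tau)$ restricted to the stable locus reduces to $\bar\de_{\M_\al^\rst(\tau)}$, and one must set up $\Psi$ so that its value on $\bar\de_{[\M_\al^\rst(\tau)/\bG_m]}$ coincides with the pushforward of the fundamental or Behrend--Fantechi virtual class under $\M_\al^\rst(\tau) \hookra \M_\al^\pl$. In the quiver case of \S\ref{wc5}--\S\ref{wc6} everything is explicit: $\M = \coprod_{\bs d} [V_{\bs d} / \prod_v \GL(\bs d(v),\C)]$ is smooth, the Ringel--Hall Lie algebra and $\check H_*(\M^\pl)$ are computable through the fibrations over products of classifying spaces $B\PGL$, and $\Psi$ can be written down by hand; the identification with virtual classes then reduces to a direct comparison on smooth quotient stacks.

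Part~(iii) is handled by the method of pair invariants. Working in an auxiliary abelian category $\B$ of triples $(V, E, \phi)$ with $\phi : V \ot \O(-N) \to E$, one obtains two natural stability conditions $\tau^\pm$ for which the moduli of $\tau^\pm$-semistable pairs has no strict semistables (for generic parameters), so part~(i) directly defines the pair invariants, recovering classical constructions of Mochizuki and Joyce--Song. Applying the wall-crossing formula \eq{wc4eq1} in $\B$ between $\tau^-$ and $\tau^+$ then expresses the pair invariants as Lie polynomials in the unknowns $[\M_\al^\ss(\tau)]_\inv$, and one solves this triangular system recursively in $\al$ to obtain an indirect definition, which is automatically unique; this follows the template of \cite[\S 5.4]{JoSo} and \cite[\S 7.3]{Moch}. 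The main obstacle is the construction of $\Psi$ beyond the quiver case: outside smooth ambient geometry one needs a virtual pushforward compatible with the Ringel--Hall product \emph{and} with Behrend--Fantechi virtual classes on the stable locus, which requires substantial obstruction-theoretic input and is precisely what the sequel \cite{Joyc13} must carry out, forcing restrictions to curves, surfaces, Fano 3-folds, or quivers with relations.
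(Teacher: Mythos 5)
The central step of your proposal --- a Lie algebra morphism $\Psi:\SFai(\M)\ra\check H_*(\M^\pl)$ defined on a generator $f:S\ra\M_\al$ by pushing forward ``a natural homology class on $S$'' capped with a Chern class --- is precisely the object that does not exist, and its non-existence is why the conjecture is nontrivial. Stack functions satisfy motivic (scissor) relations, e.g.\ $[\bA^1\ra *]=[\{0\}\ra *]+[\bG_m\ra *]$ in $\SF(*)$; a general finite type Artin stack $S$ carries no natural Betti homology class, and the fundamental or virtual classes of the pieces of a locally closed decomposition do not add up in homology (they do not even live in the same degree), so no assignment of the kind you describe descends to $\SF(\M)$ or to $\SFai(\M)$. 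This is exactly why the paper stresses that the enumerative invariants in question are \emph{not} motivic, and why every Lie algebra morphism out of $\SFai(\M)$ that it uses (e.g.\ Joyce--Song's $\Psi:\SFai(\M)\ra L(Q)$ in \S\ref{wc63}) lands in an explicit Lie algebra built from Euler characteristics, not in $\check H_*(\M^\pl)$. Once $\Psi$ is removed, your definition $[\M_\al^\ss(\tau)]_\inv:=\Psi(\bar\ep{}^\al(\tau))$ and your derivation of \eq{wc4eq1}--\eq{wc4eq2} from Theorem \ref{wc3thm5} both collapse.

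What the paper actually proves (only the case $\A=\modCQ$) is organised quite differently and avoids any such morphism. The invariants are \emph{defined} by the wall-crossing formula itself: one fixes an increasing slope function $\mu$, for which $[\M_{\bs d}^\ss(\mu)]_\inv$ is forced to be $1$ or $0$ as in \eq{wc5eq9}, and sets $[\M_{\bs d}^\ss(\tau)]_\inv$ for arbitrary $\tau$ by \eq{wc5eq8} with $\mu,\tau$ in place of $\tau,\ti\tau$; consistency of \eq{wc5eq7}--\eq{wc5eq8} for all pairs $\tau,\ti\tau$ then follows purely from the composition identity \eq{wc3eq13} for the coefficients $U(-)$, with no geometric input. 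The genuinely hard part is showing these classes equal the pushed-forward fundamental classes when $\M_{\bs d}^\rst(\tau)=\M_{\bs d}^\ss(\tau)$, which your proposal dismisses as ``a direct comparison on smooth quotient stacks''. In the paper this occupies \S\ref{wc63}--\S\ref{wc66} and is a four-stage reduction: binary $\bs d$ with tree support (using Joyce--Song's $\Psi$ into $L(Q)$ together with a comparison map $\Up$ that is a Lie algebra morphism only modulo non-binary classes); general binary $\bs d$ (induction on deleted edges and injectivity of $\bigop_e\Om_e^\pl$); generic $\mu,\bs d$ (Martin's abelianization theorem for symplectic quotients by a maximal torus); and the general case (the pair-invariant induction, comparing the projective bundle $\tiM_{(\bs d,1)}^\ss(\ti\mu^{\bs d}_+)\ra\M_{\bs d}^\ss(\mu)$ with the homological Lie bracket). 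Your sketch of part (iii) correctly reproduces the uniqueness argument of Definition \ref{wc4def1}, but as an existence proof your proposal still rests entirely on the non-existent $\Psi$.
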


\begin{dfn}\label{wc4def1}
Let Assumption \ref{wc4ass1} and Conjecture~\ref{wc4conj1}(i),~(ii) hold for~$\A$. For clarity, suppose also that $\A$ is a $\C$-linear abelian category in algebraic geometry as in Assumption~\ref{wc3def1}, and that `stability conditions' $\tau$ in Assumption~\ref{wc4ass1}(i) mean permissible weak stability conditions $(\tau,T,\le)$ in~Section~\ref{wc33}.

To use the {\it method of pair invariants}, we should construct a second abelian category $\B$ satisfying Assumption \ref{wc4ass1} and Conjecture~\ref{wc4conj1}(i),~(ii), such that:
\begin{itemize}\itemsep=0pt
\item[(i)] There is a $\C$-linear inclusion $i\colon \A\hookra\B$ as a full abelian subcategory.
\item[(ii)] There is a distinguished object $I\in\B$, with $\Hom_\B(I,I)=\C$. Every object $B\in\B$ fits into an exact sequence in $\B$, unique up to isomorphism:
\e
\xymatrix@C=30pt{ 0 \ar[r] & i(A) \ar[r] & B \ar[r] & V\ot_\C I \ar[r] & 0, }
\label{wc4eq3}
\e
where $A\in\A$ and $V$ is a finite-dimensional $\C$-vector space, so that $V\ot_\C I\cong {\buildrel
{\ulcorner\quad\text{$n$ copies }\quad\urcorner} \over
{I\op I\op\cdots\op I}}$ if $\dim_\C V=n$.
\item[(iii)] There is an identification $K(\B)=K(\A)\op\Z$ with $\lb B\rb=\bigl(\lb A\rb,\dim_\C V\bigr)$ in \eq{wc4eq3}. Then $C(\B)=\bigl((C(\A)\amalg\{0\})\t\N\bigr)\sm\{(0,0)\}$.
\item[(iv)] Writing $\chi$, $\ti\chi$ for $\chi$ in Assumption \ref{wc4ass1}(c) for $\A,\B$, we have
$\chi(\al,\be)=\ti\chi((\al,0),(\be,0))$ for $\al,\be\in K(\A)$ and $\ti\chi((0,m),(0,n))=2mn$ for $m,n\in\Z$.
\item[(v)] Write $\M$, $\M^\pl$ for the moduli spaces for $\A$, and $\tiM$, $\tiM^\pl$ for the moduli spaces for $\B$. Then the inclusion $i\colon \A\hookra\B$ in (i) induces morphisms $i\colon \M\ra\tiM$, $i^\pl\colon \M^\pl\ra\tiM^\pl$, which are isomorphisms $i\colon \M_\al\ra\tiM_{(\al,0)}$ and $i^\pl\colon \M_\al^\pl\ra\tiM_{(\al,0)}^\pl$ for $\al\in K(\A)$. These isomorphisms identify the extra data $\G_\al$, $\G_\al^\pl$ and $\ti\G{}_{(\al,0)}$, $\ti\G{}_{(\al,0)}^\pl$ in Assumption~\ref{wc4ass1}(d) for~$\A$,~$\B$, and the orientations $o_\al$, $o_\al^\pl$ and $\ti o{}_{(\al,0)}$, $\ti o{}_{(\al,0)}^\pl$ in Assumption~\ref{wc4ass1}(e). The signs $\ep_{\al,\be}$,~$\ti\ep_{\ti\al,\ti\be}$ in Assumption~\ref{wc4ass1}(f) satisfy~$\ep_{\al,\be}=\ti\ep_{(\al,0),(\be,0)}$.
\item[(vi)] $i_*\colon \hat H_*(\M)\ra\hat H_*\big(\tiM\big)$ and $i_*^\pl\colon \check H_*\big(\M^\pl\big)\ra\check H_*\big(\tiM^\pl\big)$ should be morphisms of the graded vertex and Lie algebras in Assumption~\ref{wc4ass1}(g),~(h).
\end{itemize}	
For example, as in \cite[Section~5.4]{JoSo}, if $\A=\coh(X)$ for $X$ a smooth projective $\C$-scheme, we may define $\B$ to be the category of morphisms $\phi\colon V\ot_\C\O_X(n)\ra A$ for $n\in\Z$ fixed, and $V$ a~finite-dimensional $\C$-vector space, and~$A\in\coh(X)$.

Now suppose $(\tau,T,\le)$ is a permissible weak stability condition on $\A$. Define permissible weak stability conditions $(\tau_+,T_+,\le)$ and $(\tau_-,T_-,\le)$ on $\B$, such that $T_+=(T\t\{0,1\})\amalg\{+\iy\}$ and $T_-=(T\t\{-1,0\})\amalg\{-\iy\}$, where:
\begin{itemize}\itemsep=0pt
\item The order $\le$ on $T_+$ is given by $(t_1,n_1)\le(t_2,n_2)$ for $(t_1,n_1)$, $(t_2,n_2)$ in $T\t\{0,1\}$ if either $t_1<t_2$ in $T$, or $t_1=t_2$ and $n_1\le n_2$, and $(t,n)<+\iy$ for all $(t,n)\in T\t\{0,1\}$.
\item The order $\le$ on $T_-$ is given by $(t_1,n_1)\le(t_2,n_2)$ for $(t_1,n_1)$, $(t_2,n_2)$ in $T\t\{-1,0\}$ if either $t_1<t_2$ in $T$, or $t_1=t_2$ and $n_1\le n_2$, and $-\iy<(t,n)$ for all $(t,n)\in T\t\{-1,0\}$.
\item If $\al\in C(\A)$ then $\tau_\pm(\al,0)=(\tau(\al),0)$ and $\tau_\pm(\al,n)=(\tau(\al),\pm 1)$ for $n>0$.
\item If $n>0$ then $\tau_\pm((0,n))=\pm\iy\in T_\pm$.
\end{itemize}
We should then be able to easily prove that:
\begin{itemize}\itemsep=0pt
\item[(a)] If $\al\in C(\A)$ then $i^\pl$ identifies $\M_\al^\ss(\tau)$ with $\tiM_{(\al,0)}^\ss(\tau_\pm)$. Hence $i_*^\pl$ maps $[\M_\al^\ss(\tau)]_\inv\mapsto\big[\tiM_{(\al,0)}^\ss(\tau_\pm)\big]_\inv$.
\item[(b)] $\tiM_{(0,1)}^\rst(\tau_\pm)=\tiM_{(0,1)}^\pl=\{[I]\}$ is a point, and $\big[\tiM_{(0,1)}^\ss(\tau_\pm)\big]_\inv=1_{H_0(\tiM_{(0,1)}^\pl)}$ in $\check H_0\big(\tiM_{(0,1)}^\pl\big)=H_0\big(\tiM_{(0,1)}^\pl\big)=R$.
\item[(c)] If $\al\in C(\A)$ then $\tiM_{(\al,1)}^\ss(\tau_-)=\es$, as if $B\in\B$ with $\lb B\rb=(\al,1)$ then \eq{wc4eq3} gives a~subobject $i(A)\subset B$ with $B/i(A)\cong I$, and $\tau_-(i(A))>-\iy=\tau_-(I)$, so $B$ is $\tau_-$-unstable by Definition \ref{wc3def3}. Hence $\big[\tiM_{(\al,1)}^\ss(\tau_-)\big]_\virt=0$.
\item[(d)] If $\al\in C(\A)$ then $\tiM_{(\al,1)}^\rst(\tau_+)=\tiM_{(\al,1)}^\ss(\tau_+)$, as if $B\in\B$ with $\lb B\rb=(\al,1)$ then there can exist no subobjects $0\ne B'\subsetneq B$ with $\tau_+(B')=\tau_+(B/B')$. Hence the virtual class $\big[\tiM_{(\al,1)}^\ss(\tau_+)\big]_\virt$ is defined in Assumption~\ref{wc4ass1}(j), without using Conjecture~\ref{wc4conj1}.
\end{itemize}

Now consider the wall-crossing formula \eq{wc4eq1} for $\B$ with $(\al,1)$, $\ab\tau_-$, $\ab\tau_+$ in place of $\al$, $\tau$, $\ti\tau$, for $\al\in C(\A)$. It turns out that this may be written
\begin{gather}
 \big[\tiM_{(\al,1)}^\ss(\tau_+)\big]_\virt=
\label{wc4eq4}\\
 \sum_{\substack{n\ge 1,\;\al_1,\dots,\al_n\in
C(\A)\colon \\ \al_1+\cdots+\al_n=\al, \\
\tau(\al_i)=\tau(\al),\; i=1,\dots,n}}\!
\frac{(-1)^n}{n!}\cdot\bigl[\bigl[\cdots\bigl[\big[\tiM_{(0,1)}^\ss(\tau_-)\big]_\inv,
\big[\tiM_{(\al_1,0)}^\ss(\tau_-)\big]_\inv\bigr],\dots\bigr],\big[\tiM_{(\al_n,0)}^\ss(\tau_-)\big]_\inv\bigr].
\nonumber
\end{gather}
This holds because using \cite[Proposition~13.8]{JoSo} one can show that
\begin{gather*}
 U\bigl((\al_1,0),\dots,(\al_{k-1},0),(0,1),(\al_k,0),\dots,(\al_n,0);\tau_-,\tau_+)\\
\qquad {} =\begin{cases} \dfrac{(-1)^{n-k}}{(k-1)!(n-k)!}, & \text{if $\tau(\al_i)=\tau(\al)$, $i=1,\dots,n$}, \\ 0 & \text{otherwise}, \end{cases}
\end{gather*}
and then equation \eq{wc4eq4} follows from \eq{wc4eq2} as in \cite[Proposition~13.10]{JoSo}.

Using (a),(b) above, let us rewrite \eq{wc4eq4} as
\e
\big[\tiM_{(\al,1)}^\ss(\tau_+)\big]_\virt=\bigl[i_*^\pl\bigl([\M_\al^\ss(\tau)]_\inv\bigr),1_{H_0(\tiM_{(0,1)}^\pl)}\bigr]+\text{lower order terms},
\label{wc4eq5}
\e
where the `lower order terms' are those with $n\ge 2$ in the sum in \eq{wc4eq4}. Our goal is to define $[\M_\al^\ss(\tau)]_\inv$ uniquely in the case when $\M_\al^\rst(\tau)\ne \M_\al^\ss(\tau)$. We do this by induction on $\al$ in some order in $C(\A)$ compatible with addition, for instance by induction on $\rank\al=1,2,\dots$ if this is defined. Then by induction we can suppose the `lower order terms' are uniquely defined. The left-hand side of \eq{wc4eq5} is determined by (d) above. Hence in the inductive step, $\bigl[i_*^\pl\bigl([\M_\al^\ss(\tau)]_\inv\bigr),1_{H_0(\tiM_{(0,1)}^\pl)}\bigr]$ is uniquely defined. If we have chosen $\B$ such that $\big[{-},1_{H_0(\tiM_{(0,1)}^\pl)}\big]$ is injective, then as $i_*^\pl$ is an isomorphism, this shows $[\M_\al^\ss(\tau)]_\inv$ is uniquely determined.

There is an alternative version of the method of pair invariants in which we replace \eq{wc4eq3} by the exact sequence
\begin{equation*}
\xymatrix@C=30pt{ 0 \ar[r] & V\ot_\C I \ar[r] & B \ar[r] & i(A) \ar[r] & 0, }
\end{equation*}
and exchange $(\tau_+,T_+,\le)$ and $(\tau_-,T_-,\le)$ in the argument above.
\end{dfn}

\begin{rem}
\label{wc4rem1}
In defining enumerative invariants in algebraic or differential geometry, there are usually three main difficulties:
\begin{itemize}\itemsep=0pt
\item[(a)] {\it transversality}, whether we can make the moduli spaces smooth.
\item[(b)] {\it compactness}, whether the moduli spaces are compact, or can be compactified by including singular solutions.
\item[(c)] {\it strictly semistable} or {\it reducible} points in the moduli spaces, which cause problems with the definition of virtual classes when~$\M_\al^\rst(\tau)\ne\M_\al^\ss(\tau)$.
\end{itemize}
An important aspect of Conjecture \ref{wc4conj1} is that it offers a new, universal, systematic approach to problem~(c).

With a proper understanding of obstruction theories \cite{BeFa}, derived algebraic geometry \cite{Toen1,Toen2,ToVa,ToVe1,ToVe2}, and derived differential geometry \cite{Joyc10,Joyc8,Joyc9,Joyc11}, part (a) is really not a~problem (though people still make a fuss about it): virtual classes are well defined and well behaved even when moduli spaces are not transverse.

For (b), in algebraic geometry we usually get compactness for free by considering moduli spaces of the right kind of objects (for example, torsion-free sheaves rather than vector bundles). In differential geometry, compactifying moduli spaces by singular solutions usually involves difficult analytic issues. We have nothing new to say about this, we will just assume it works.

A common approach to problem (c) is to avoid it, by only considering classes $\al\in K(\A)$ for which $\M_\al^\rst(\tau)=\M_\al^\ss(\tau)$, or by restricting to $\al$ for which there are very few strictly $\tau$-semistable points in $\M_\al^\ss(\tau)$, which can be understood and dealt with `by hand'. Some examples of this: Thomas \cite{Thom1} originally defined Donaldson--Thomas invariants of Calabi--Yau 3-folds only when $\M_\al^\rst(\tau)=\M_\al^\ss(\tau)$. In Donaldson theory of 4-manifolds \cite{DoKr}, almost all work focusses on $\SU(2)$- or $\SO(3)$-instantons, rather than $G$-instantons for other Lie groups $G$, and on simply-connected 4-manifolds. This is because reducibles for $\SU(2)$- or $\SO(3)$-instantons come from line bundles, and are easily understood. Similar comments apply to Casson invariants of 3-manifolds \cite{AkMc,Taub}, in which restricting to $\SU(2)$ and to homology 3-spheres is used to control reducibles.

However, if we wish to study wall-crossing for enumerative invariants, we cannot restrict to $\al$ with $\M_\al^\rst(\tau)=\M_\al^\ss(\tau)$, as the wall-crossing may involve other moduli spaces $\M_\be^\ss(\tau)$ with $\M_\be^\rst(\tau)\ne\M_\be^\ss(\tau)$. Conjecture \ref{wc4conj1} aims to solve problem (c) of defining invariants in the presence of strictly semistables/reducibles, and understanding their wall-crossing, simultaneously, and Conjecture \ref{wc4conj1}(iii) uses the wall-crossing formula to define the invariants.	
\end{rem}

\begin{rem}[on orientations]\label{wc4rem2}
In contexts in which we are free to choose the orientation~$o_\al^\pl$ on~$\M_\al^\pl$ in Assumption~\ref{wc4ass1}(e), we can change the sign of each invariant $[\M_\al^\ss(\tau)]_\inv$ arbitrarily by changing the sign of~$o_\al^\pl$.

This may seem to contradict the identities \eq{wc4eq1}, \eq{wc4eq2}, \eq{wc4eq4} which mix $[\M_\al^\ss(\tau)]_\inv$ for different~$\al$. However, changing the $o_\al^\pl$ changes the $\ep_{\al,\be}$ in Assumption~\ref{wc4ass1}(f), and the Lie bracket~$[\,,\,]$ on~$\check H_*\big(\M^\pl\big)$ and product $*$ on $U\big(\check H_*\big(\M^\pl\big)\big)$ used in \eq{wc4eq1}, \eq{wc4eq2}, \eq{wc4eq4} depend on the $\ep_{\al,\be}$. The combined effect of the sign changes on $[\M_\al^\ss(\tau)]_\inv$ and $[\,,\,],*$ cancels out.

As in \cite{Joyc12}, we can set the theory up in an orientation-independent way, by replacing $\check H_*\big(\M^\pl\big)$ by homology $\check H_*\big(\M^\pl,O^\pl\big)$ twisted by a principal $\Z_2$-bundle $O^\pl\ra\M^\pl$ of orientations on~$\M^\pl$, which is assumed to be trivializable (though not canonically trivial) in Assumption~\ref{wc4ass1}(e). Then $[\M_\al^\ss(\tau)]_\inv,[\,,\,],*$ exist in and on $\check H_*\big(\M^\pl,O^\pl\big)$ canonically, without having to choose orientations.
\end{rem}

\subsection{Rewriting the wall-crossing formula in the style of Kontsevich--Soibelman}\label{wc42}

The second author developed the wall-crossing story for motivic invariants in Section~\ref{wc3} and \cite{Joyc4,Joyc2,Joyc5,Joyc6,Joyc3,Joyc7} in 2003--2005, and applied it to Donaldson--Thomas invariants of Calabi--Yau 3-folds in 2008~\cite{JoSo}. Independently, Kontsevich and Soibelman \cite[Section~1.4]{KoSo} in 2008 wrote down their own wall-crossing formula for motivic Donaldson--Thomas invariants, which is equivalent to a special case of \eq{wc3eq15}--\eq{wc3eq16} in a suitable associative algebra. Kontsevich and Soibelman's version has proved more popular with subsequent authors, possibly because the coefficients $S(-)$, $U(-)$ in \eq{wc3eq15}--\eq{wc3eq18} are not easy to understand and compute.

We now explain how to rewrite our conjectured wall-crossing formulae \eq{wc4eq1}--\eq{wc4eq2} in the style of Kontsevich--Soibelman \cite[Section~1.4]{KoSo}. We work in the universal enveloping algebra $U\big(\check H_0\big(\M^\pl\big)\big)$, an associative $R$-algebra with product $*$. As $\check H_0\big(\M^\pl\big)\subset U\big(\check H_0\big(\M^\pl\big)\big)$, the invariants $[\M_\al^\ss(\tau)]_\inv$ in Conjecture~\ref{wc4conj1}(i) lie in $U\big(\check H_0\big(\M^\pl\big)\big)$, as we used in equation~\eq{wc4eq2}.

In the situation of Assumption \ref{wc4ass1} and Conjecture~\ref{wc4conj1}(i), in an analogue of \eq{wc3eq9}--\eq{wc3eq10}, define elements $\de^\ss_\al(\tau)\in U\big(\check H_0\big(\M^\pl\big)\big)$ for $\al\in C(\A)$ by
\e
\de^\ss_\al(\tau)=
\sum_{\substack{n\ge 1,\,\al_1,\dots,\al_n\in C(\A)\colon\\
\al_1+\cdots+\al_n=\al,\, \tau(\al_i)=\tau(\al),\text{ all
$i$}}}
\frac{1}{n!} [\M_{\al_1}^\ss(\tau)]_\inv*
\cdots*[\M_{\al_n}^\ss(\tau)]_\inv.
\label{wc4eq6}
\e
As for \eq{wc3eq7}--\eq{wc3eq8}, this equation can be inverted to give
\begin{equation*}
[\M_\al^\ss(\tau)]_\inv=
\sum_{\substack{n\ge 1,\,\al_1,\dots,\al_n\in C(\A)\colon\\
\al_1+\cdots+\al_n=\al,\; \tau(\al_i)=\tau(\al),\text{ all
$i$} }}
\frac{(-1)^{n-1}}{n} \de_{\al_1}^\ss(\tau)*\cdots*\de_{\al_n}^\ss(\tau).
\end{equation*}

Let $(\tau,\R,\le)$ and $(\ti\tau,\R,\le)$ be permissible slope stability conditions on $\A$ in the sense of Section~\ref{wc33}. (See Example~\ref{wc3ex2} and Section~\ref{wc52} on slope stability.) Suppose that $a<b$ in $\R$ are such that the quasi-abelian subcategories ${\mathcal A}_{[a,b]},\ti{\mathcal A}_{[a,b]}\subset\A$ generated by $\tau$- and $\ti\tau$-semistable objects $E,\ti E\in\A$ with $\tau(E),\ti\tau(\ti E)\in[a,b]$ satisfy ${\mathcal A}_{[a,b]}=\ti{\mathcal A}_{[a,b]}$. Then the Kontsevich--Soibelman style analogue of \eq{wc4eq2}~is
\begin{gather*}
1+\sum_{\substack{n\ge 1,\,\al_1,\dots,\al_n\in C(\A)\colon\\
a\le\tau(\al_1)<\tau(\al_2)<\cdots<\tau(\al_n)\le b } }\de_{\al_1}^\ss(\tau)*\cdots*\de_{\al_n}^\ss(\tau)\nonumber\\
\qquad{} =1+\sum_{\substack{n\ge 1,\,\al_1,\dots,\al_n\in C(\A)\colon\\
a\le\ti\tau(\al_1)<\ti\tau(\al_2)<\cdots<\ti\tau(\al_n)\le b} }\de_{\al_1}^\ss(\ti\tau)*\cdots*\de_{\al_n}^\ss(\ti\tau),
\label{wc4eq7}
\end{gather*}
when this makes sense (we do not claim that it always does). As the sums in \eq{wc4eq7} are infinite, they should be interpreted as lying in a completion $\bar U\big(\check H_0\big(\M^\pl\big)\big)$ of $U\big(\check H_0\big(\M^\pl\big)\big)$ with respect to a suitable ideal. Morally speaking, both sides of \eq{wc4eq7} count all objects in~${\mathcal A}_{[a,b]}=\ti{\mathcal A}_{[a,b]}$.

\begin{rem}\label{wc4rem3}
As in Zhu \cite{Zhu}, the graded vertex algebra $\hat H_*(\M)$ has a {\it Zhu algebra} $A\big(\hat H_*(\M)\big)=\hat H_*(\M)/I$, an associative $R$-algebra, with a natural morphism $U\big(\check H_0\big(\M^\pl\big)\big)\ra A\big(\hat H_*(\M)\big)$. It might be interesting to interpret \eq{wc4eq2} and \eq{wc4eq6}--\eq{wc4eq7} as equations in $A\big(\hat H_*(\M)\big)$ or its completion $\bar A\big(\hat H_*(\M)\big)$.
\end{rem}

\subsection{The conjecture in algebraic geometry and representation theory}\label{wc43}

We now give more details on the ideas of Section~\ref{wc41} for $\C$-linear enumerative invariant theories coming from algebraic geometry and representation theory. But we exclude Donaldson--Thomas type invariants of Calabi--Yau 4-folds \cite{BoJo,CaLe,OhTh}, which will be discussed in Section~\ref{wc44}. In the sequel \cite[Sections~6--8]{Joyc13} we prove Conjecture~\ref{wc4conj1} for most of the situations discussed in this section.

\subsubsection{General discussion for abelian categories}\label{wc431}

We first discuss the case of $\C$-linear abelian categories $\A$, such as $\coh(X)$ for $X$ a smooth projective $\C$-scheme, or compactly-supported coherent sheaves $\coh_\cs(X)$ for $X$ smooth and quasi-projective, or $\modCQ$ for $Q$ a quiver, or $\modCQI$ for $(Q,I)$ a quiver with relations.

Then we have {\it Ext groups} $\Ext^k(E,F)$ for $E,F\in\A$ and $k=0,1,\dots$, with $\Ext^0(E,F)=\Hom_\A(E,F)$, which are finite-dimensional $\C$-vector spaces. We say that the category $\A$ has {\it dimension} $\dim\A=m$ if $\Ext^k(E,F)=0$ for all $E$, $F$ and $k>m$, and $\Ext^m(E,F)\ne 0$ for some~$E$,~$F$. For example, if $X$ is a smooth (quasi-)projective $m$-fold then $\coh(X)$ (or $\coh_\cs(X)$) has dimension~$m$, and if~$Q$ is a quiver then $\modCQ$ has dimension~1 (or~0 if~$Q$ has no edges).

The {\it Euler form} of $\A$ is the biadditive map $\chi_\A\colon K_0(\A)\t K_0(\A)\ra\Z$ with
\begin{equation*}
\chi_\A([E],[F])=\sum_{k=0}^{\dim\A}(-1)^k\dim_\C\Ext^k(E,F).
\end{equation*}
It need not be symmetric. The {\it numerical Grothendieck group} is $K^\num(\A)=K_0(\A)/\Ker\chi_\A$. It is usually a good choice for $K(\A)$ in Assumption \ref{wc4ass1}(b).

In all these cases Assumption \ref{wc2ass1} applies, and the vertex algebra $\hat H_*(\M)$ and Lie algebra $\check H_*\big(\M^\pl\big)$ in Assumption~\ref{wc4ass1}(g),~(h) are constructed as in Sections~\ref{wc23}--\ref{wc24}. There is a~natural perfect complex $\cExt^\bu$ on $\M\t\M$ called the {\it Ext complex}, whose cohomology at a~$\C$-point $([E],[F])\in(\M\t\M)(\C)$ is $H^k(\cExt^\bu\vert_{([E],[F])})=\Ext^k(E,F)$, with $\rank\bigl(\cExt^\bu\vert_{\M_\al\t\M_\be}\bigr)=\chi_\A(\al,\be)$ for~$\al,\be\in K(\A)$.

As in \cite{Joyc12}, in Assumption \ref{wc2ass1}(g) we define $\Th^\bu=(\cExt^\bu)^\vee\op\si_\M^*(\cExt^\bu)[2n]$ for some $n\in\Z$, where $\si_\M\colon \M\t\M\ra\M\t\M$ swaps the factors. Then \eq{wc2eq6}--\eq{wc2eq10} hold. We set $\chi(\al,\be)=\chi_\A(\al,\be)+\chi_\A(\be,\al)$, so that $\chi$ is symmetric with $\rank\bigl(\Th^\bu\vert_{\M_\al\t\M_\be}\bigr)=\chi(\al,\be)$. In all these cases, there are canonical orientations on $\M$, $\M^\pl$ in Assumption \ref{wc4ass1}(e) coming from the complex geometry. For these orientations, the signs in Assumption~\ref{wc4ass1}(f) are~$\ep_{\al,\be}=(-1)^{\chi_\A(\al,\be)}$.

All the material of Assumption \ref{wc4ass1}(a)--(i) works for abelian categories $\A$ of any dimension $m\ge 0$, e.g., when $\A=\coh(X)$ for $X$ a smooth projective $m$-fold. However, in Assumption~\ref{wc4ass1}(j), the formation of virtual classes $[\M_\al^\ss(\tau)]_\virt$ depends critically on $\dim\A$:
\begin{itemize}\itemsep=0pt
\item[(i)] If $\dim\A=1$ then moduli spaces $\M_\al^\rst(\tau)$ are smooth $\C$-schemes. If $\M_\al^\rst(\tau)=\M_\al^\ss(\tau)$ then $\M_\al^\ss(\tau)$ is a compact complex manifold, and thus has a fundamental class $[\M_\al^\ss(\tau)]_\fund$ in homology. This case includes $\A=\coh(X)$ for $X$ a projective curve, and $\A=\modCQ$ for~$Q$ a~quiver.
\item[(ii)] If $\dim\A=2$ then moduli spaces $\M_\al^\rst(\tau)$ have perfect obstruction theories in the sense of Behrend--Fantechi~\cite{BeFa}. If $\M_\al^\rst(\tau)=\M_\al^\ss(\tau)$ then $\M_\al^\ss(\tau)$ is proper, and thus has a virtual class $[\M_\al^\ss(\tau)]_\virt$ by~\cite{BeFa}. This case includes $\A=\coh(X)$ for $X$ a projective surface, and $\A=\modCQI$ for $(Q,I)$ a quiver with relations (though see Remark~\ref{wc4rem4}).
\end{itemize}

In general if $\dim\A\ge 3$ there is no way to define virtual classes $[\M_\al^\ss(\tau)]_\virt$, and enumerative invariants do not exist. However, there are three special cases in which $[\M_\al^\ss(\tau)]_\virt$ can be defined by a mathematical trick:
\begin{itemize}
\itemsep=0pt
\setlength{\parsep}{0pt}
\item[(iii)] If $\A=\coh(X)$ for $X$ a Calabi--Yau 3-fold, as in Donaldson--Thomas \cite{DoTh}, Thomas \cite{Thom1} and Joyce--Song \cite{JoSo}. (Essentially the same idea is used for Vafa--Witten invariants of surfaces in Tanaka--Thomas~\cite{TaTh1,TaTh2}.)
\item[(iv)] If $\A=\coh(X)$ for $X$ a Fano 3-fold, as in Thomas \cite{Thom1}.
\item[(v)] If $\A=\coh(X)$ for $X$ a Calabi--Yau 4-fold, as in Borisov--Joyce \cite{BoJo}, Oh--Thomas \cite{OhTh} and Cao--Leung~\cite{CaLe}.
\end{itemize}
We will discuss (iii), (iv) in Section~\ref{wc436}, and (v) in Section~\ref{wc44}.

\begin{rem}\label{wc4rem4}
A well behaved abelian category $\A$ has an inclusion $A\hookra D^b\A$, and then $\Ext^k(E,F)=\Hom_{D^b\A}(E,F[k])$ for $E,F\in\A$ and~$k\in\Z$.

For abelian categories $\modCQI$ of representations of a quiver with relations $(Q,I)$, this definition of Ext groups may have $\Ext^k(E,F)\ne 0$ for infinitely many $k$, and $\cExt^\bu$ is not perfect, so the theory above does not work. However, there is a way to fix this. One can define a triangulated category $\T$, similar to $D^b\A$, with an inclusion $\A\hookra\T$ as the heart of a t-structure, such that $\Ext_{\sst\T}^k(E,F)=\Hom_{\sst\T}(E,F[k])$ has the properties we need, and then we replace $\Ext^k(E,F)$ above by $\Ext_{\sst\T}^k(E,F)$. See \cite[Remark~7.10]{JoSo} for discussion of this.	
\end{rem}

\subsubsection{General discussion for triangulated categories}\label{wc432}

Next we discuss our programme for triangulated categories, such as derived categories $D^b\coh(X)$ and $D^b\modCQ$. The moduli spaces $\M$, $\M^\pl$ of objects in such categories are higher stacks or derived stacks in the sense of~\cite{Toen1,Toen2,ToVa,ToVe1,ToVe2}.

One surprising fact is that the homology $H_*(\M),H_*\big(\M^\pl\big)$ is usually simpler and easier to compute than for abelian categories. For abelian categories, the direct sum $\op$ makes $\M$ into a commutative monoid in stacks, and $\M^\top$ into an H-space. In triangulated categories, the shift operator~$[1]$ acts as an inverse for $\op$ up to homotopy, making $\M$ into (roughly) an abelian group in stacks, and $\M^\top$ into a grouplike H-space, which are nicer than general H-spaces.

In the triangulated case, as in \cite{Joyc12} it is generally no longer true that $\check H_k\big(\M^\pl\big)\ab\cong \hat H_{k+2}(\M)/\ab D\big(\hat H_k(\M)\big)$, but this still holds under reasonable conditions when $k=0$, and the Lie algebra $\check H_0\big(\M^\pl\big)$ is what we need for applications to enumerative invariants. Here is the main result of the first author~\cite[Theorem~1.1]{Gros}:

\begin{thm}\label{wc4thm1}
Suppose $X$ is a smooth projective $\C$-scheme which is either a curve, a~surface, a~rational $3$- or $4$-fold, a toric variety, a flag variety, or one of some other classes we will not~give.

Write $K^i(X^\ran)$ for $i=0,1$ for the topological K-theory groups of the complex analytic topological space $X^\ran$ of $X$, and $K^0_{\rm sst}(X)$ for the $0^{\rm th}$ semi-topological K-theory group of $X$, in the sense of Friedlander and Walker~{\rm \cite{FrWa}}.

Let $\M$ be the moduli stack of objects in $D^b\coh(X)$, a higher $\C$-stack, which exists by {\rm\cite{ToVa}}. Then there is a canonical isomorphism of graded $R$-modules:
\begin{gather}
\hat H_*(\M)\cong R\big[K^0_{\rm sst}(X)\big]\ot_R \mathop{\rm Sym}\nolimits^*\bigl(K^0(X^\ran)\ot_\Z t^2R\big[t^2\big]\bigr)\ot_R\bigwedge\vphantom{\bigl(}^*\bigl(K^1(X^\ran)\ot_\Z tR\big[t^2\big]\bigr).\!\!\!
\label{wc4eq8}
\end{gather}
Here $\hat H_*(\M)$ is $H_*(\M,R)$ with grading shifted as in \eqref{wc2eq13}. The group ring $R[K^0_{\rm sst}(X)]:=\an{e^\al\colon \al\in K^0_{\rm sst}(X)}_R$ is graded by $\deg e^\al=-\chi(\al,\al)$. In the symmetric and exterior products $\mathop{\rm Sym}^*(\cdots),\bigwedge^*(\cdots)$, we take $K^i(X^\ran)$ to have degree $0$, and $t$ to be a formal variable with~\hbox{$\deg t=1$}.	

The vertex algebra structure on $\hat H_*(\M)$ is also equally explicit. By {\rm~\cite{Joyc12}}, this gives an explicit description of the Lie algebra $\check H_0\big(\M^\pl\big)=\hat H_2(\M)/D\big(\hat H_0(\M)\big)$.
\end{thm}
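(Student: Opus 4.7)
The plan is to identify the topological realization $\M^\top$ with the underlying infinite loop space of Blanc's semi-topological K-theory spectrum of $X$, and then reduce the homology computation to rational homotopy theory. Since $\M$ is (essentially) the derived moduli stack of perfect complexes on $X$ constructed by To\"en--Vaqui\'e, direct sum $\oplus$ and shift $[1]$ make $\M$ into a grouplike $E_\iy$-object in higher stacks. Applying the topological realization functor of \cite{Blan}, $\M^\top$ becomes a grouplike $E_\iy$-space, so $\M^\top\simeq\Omega^\iy E$ for a connective spectrum $E$ that one identifies with the connective cover of the semi-topological K-theory spectrum $K^{\rm st}(X)$. This gives $\pi_0\M^\top\cong K^0_{\rm sst}(X)$ and, via the H-space structure, homotopy equivalences between all path components, so that
\begin{equation*}
H_*(\M^\top,R)\cong R[K^0_{\rm sst}(X)]\ot_R H_*(\M^\top_0,R).
\end{equation*}

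Next I would compute the rational homotopy of the identity component $\M^\top_0$. For each class of $X$ listed, the comparison $K^{\rm st}(X)\ot\Q\ra K^{\rm top}(X^\ran)\ot\Q$ is known to be an equivalence in positive degrees: for curves this is elementary; for surfaces it is the main theorem of Friedlander--Walker \cite{FrWa}; for toric and flag varieties it follows from a cellular/Bia\l ynicki--Birula decomposition, localization, and excision in $K^{\rm st}$; the listed rational 3- and 4-folds and other special cases are handled by induction via the blow-up formula. Combined with Bott periodicity on the topological side, this yields
\begin{equation*}
\pi_n(\M^\top_0)\ot_\Z\Q\cong\begin{cases}K^0(X^\ran)\ot_\Z\Q,&n\ge 2\text{ even,}\\ K^1(X^\ran)\ot_\Z\Q,&n\ge 1\text{ odd.}\end{cases}
\end{equation*}

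Since $\M^\top_0$ is a grouplike (hence rationally homotopy-commutative) H-space, the Milnor--Moore theorem identifies its rational homology with the free graded-commutative algebra on $\pi_{\ge 1}(\M^\top_0)\ot\Q$. Extending coefficients to any $\Q$-algebra $R$ and grouping contributions by parity of degree gives exactly the factor $\mathop{\rm Sym}\nolimits^*\bigl(K^0(X^\ran)\ot_\Z t^2R[t^2]\bigr)\ot_R\bigwedge\!\!\vphantom{\bigl(}^*\bigl(K^1(X^\ran)\ot_\Z tR[t^2]\bigr)$ of \eq{wc4eq8}, while the translation action of $\pi_0$ supplies the $R[K^0_{\rm sst}(X)]$ factor. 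The grading shift $H_n(\M_\al)\leadsto\hat H_{n-\chi(\al,\al)}(\M_\al)$ of Definition~\ref{wc2def3} is absorbed into the convention $\deg e^\al=-\chi(\al,\al)$ on the group algebra.

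The main obstacle is the rational comparison $K^{\rm st}(X)\ot\Q\simeq K^{\rm top}(X^\ran)\ot\Q$ in positive degrees: this is a form of the Friedlander--Walker semi-topological conjecture, open in general, and the list of $X$ in the hypothesis records exactly those classes where a proof is currently available; enlarging the list of admissible $X$ in the theorem reduces precisely to establishing this comparison for those new cases. For the explicit vertex algebra structure one further traces through Definition~\ref{wc2def3}: the translation $e^{zD}$ arises from the $\bG_m$-action on $\M$, and the state-field map $Y$ is controlled by the Chern classes of $\Th^\bu$, whose underlying K-theory class pulls back from the external tensor product on $K^{\rm top}(X^\ran)$; under the identification \eq{wc4eq8} this produces an explicit lattice-Heisenberg-type vertex algebra, whose Lie quotient $\check H_0(\M^\pl)$ is computed by the construction in \S\ref{wc24}.
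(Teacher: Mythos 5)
The paper does not prove Theorem \ref{wc4thm1} itself but quotes it from the first author's paper \cite{Gros}, and your outline reproduces the strategy of that reference: realize $\M^\top$ as the grouplike H-space underlying Blanc's semi-topological K-theory of $X$ (using that $\op$ and the shift $[1]$ group-complete the To\"en--Vaqui\'e moduli stack), invoke the known rational comparison $K^{\rm sst}_*(X)\ot\Q\cong K^*_{\rm top}(X^\ran)\ot\Q$ in positive degrees for exactly the classes of $X$ listed, and apply Milnor--Moore/Cartan--Serre to identify the rational homology of each component with the free graded-commutative algebra on the rational homotopy. This is essentially the same approach as the cited source, and your observation that the hypothesis list is dictated precisely by where the rational semi-topological/topological comparison is currently known is correct.
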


Even if we only want to study enumerative invariants for the abelian category $\coh(X)$, it may still be helpful to work with $\hat H_*(\M)$, $\check H_0\big(\M^\pl\big)$ for $\M$, $\M^\pl$ the moduli of objects in $D^b\coh(X)$, as these are easy to write down.

It is an important problem to extend Conjecture~\ref{wc4conj1} to Bridgeland stability conditions~\cite{Brid1} on triangulated categories such as $D^b\coh(X)$ and $D^b\modCQ$. This raises several difficult issues. There is already a significant literature for related questions on derived category versions of Donaldson--Thomas invariants of Calabi--Yau 3-folds. We make some remarks:
\begin{itemize}\itemsep=0pt
\item[(a)] The authors expect the issues of defining invariants $[\M_\al^\ss(\tau)]_\inv$ counting strictly $\tau$-semi\-sta\-bles in Conjecture \ref{wc4conj1}(i), and their characterization via pair invariants in Conjecture~\ref{wc4conj1}(iii), to extend to Bridgeland stability conditions $\tau=(Z,\cP)$ on triangulated categories $\T$ with essentially no changes. This is because both these issues can be written in the abelian subcategory $\T_\phi\subset\T$ of $\tau$-semistable objects with phase $\phi\in\R$.
\item[(b)] The authors only expect \eq{wc4eq1}--\eq{wc4eq2} to make sense in a triangulated category $\T$, and to have finitely many nonzero terms, if $\tau$, $\ti\tau$ are sufficiently close in the moduli space of Bridgeland stability conditions on $\T$.

If $\tau=(Z,\cP)$ and $\ti\tau=\big(\ti Z,\ti\cP\big)$ are close, then there should exist a unique third Bridgeland stability condition $\hat\tau=\big(\hat Z,\hat\cP\big)$ on $\T$ such that:
\begin{itemize}\itemsep=0pt
\item[(i)] The central charges $Z$, $\ti Z$, $\hat Z$ satisfy $\Re\hat Z=\Re Z$ and $\Im\hat Z=\Im\ti Z$.
\item[(ii)] Write $\A_{[\phi,\phi+\pi)},\ti{\mathcal A}_{[\phi,\phi+\pi)},\hat{\mathcal A}_{[\phi,\phi+\pi)}\subset\T$ for the abelian subcategories generated by $\tau$-, $\ti\tau$- and $\hat\tau$-semistable objects with phases in $[\phi,\phi+\pi)$, for $\phi\in\R$. Then $\hat{\mathcal A}_{[-\pi/2,\pi/2)}=\A_{[-\pi/2,\pi/2)}$ and $\hat{\mathcal A}_{[0,\pi)}=\ti{\mathcal A}_{[0,\pi)}$.
\end{itemize}

Then wall-crossing from $\tau$ to $\hat\tau$ is equivalent to wall-crossing in the abelian category $\hat{\mathcal A}_{[-\pi/2,\pi/2)}$, and wall-crossing from $\hat\tau$ to $\ti\tau$ is equivalent to wall-crossing in $\hat{\mathcal A}_{[0,\pi)}$, as explained in \cite[Section~7]{Joyc7}. Hence Conjecture \ref{wc4conj1}(ii) for triangulated categories reduces to Conjecture \ref{wc4conj1}(ii) for abelian categories.
\item[(c)] Bridgeland \cite{Brid2} has some interesting work on encoding Donaldson--Thomas invariants of a~3-Calabi--Yau triangulated category $\T$ into geometric structures (attractively called `Joyce structures') on the space of Bridgeland stability conditions $\Stab(\T)$ on $\T$. The Kontsevich--Soibelman wall-crossing formula \cite{KoSo} discussed in Section~\ref{wc42} is an ingredient in \cite[Definition~5.3]{Brid2}. By replacing this by \eq{wc4eq7} and basing the definition on $U\big(\check H_0\big(\M^\pl\big)\big)$, it may be possible to generalize \cite{Brid2} to enumerative invariants in other triangulated categories, for example $\T=D^b\coh(X)$ for $X$ a~projective surface.
\item[(d)] Work by Halpern-Leistner and coauthors (see~\cite{Halp} and references therein) generalizes the requirement in Assumption~\ref{wc4ass1}(j) that $\M_\al^\ss(\tau)$ should have a compact `coarse moduli space' to triangulated categories.
\end{itemize}

\subsubsection{Representations of quivers, and quivers with relations}
\label{wc433}

We will discuss these at length in Sections~\ref{wc5}--\ref{wc6} and \cite{Joyc13}, so we say only a few words here. Sections~\ref{wc5}--\ref{wc6} will prove Conjecture~\ref{wc4conj1} when $\A=\modCQ$ for $Q$ a quiver without oriented cycles (`without oriented cycles' makes $\M_\al^\ss(\tau)$ compact). This will be generalized in~\cite{Joyc13} to $\A=\modCQI$ for $(Q,I)$ a quiver with relations. There may be other categories of interest in representation theory that we can study using similar techniques.

\subsubsection{Vector bundles and coherent sheaves on curves}\label{wc434}

Let $X$ be a projective complex curve, that is, a compact Riemann surface. Then we can study moduli spaces $\M_{r,d}=\M^\ss_{(r,d)}(\mu)$ of $\mu$-semistable vector bundles (for $r>0$) or coherent sheaves (for $r=0$) on $X$ with rank $r$ and degree $d$, where $\mu$ is slope stability. This was done by Harder--Narasimhan \cite{HaNa} in algebraic geometry and Atiyah--Bott \cite{AtBo} in differential geometry, and many subsequent authors. It is common to restrict to $r,d$ coprime, as in \cite[Section~9]{AtBo}, as then $\M^\rst_{(r,d)}(\mu)=\M^\ss_{(r,d)}(\mu)$ is nonsingular, and a compact complex manifold.

There is only really one nice stability condition on $\coh(X)$, namely slope stability $\mu$. So wall-crossing as in Conjecture \ref{wc4conj1}(ii) is not interesting for curves. However, Conjecture \ref{wc4conj1}(i),~(iii) may still have something new to say. They predict that there are classes $[\M_{r,d}]_\inv$ in $H_*\big(\M^\pl,\Q\big)$ for all~$(r,d)$, equal to the fundamental class of $\M_{r,d}$ in $H_*\big(\M^\pl,\Z\big)$ when $r,d$ are coprime, which could be computed using Definition~\ref{wc4def1}. For example, one could ask whether these classes have any interesting number-theoretic properties, as in~\cite{HaNa}.

\subsubsection{Coherent sheaves on surfaces}\label{wc435}

Next consider our programme when $\A=\coh(X)$ for $X$ a~projective complex surface. We take stability conditions on $\A$ to be Gieseker stability for an ample line bundle $L\ra X$. Invariants counting Gieseker semistable sheaves on~$X$ have been studied in depth by Mochizuki~\cite{Moch}, and should be understood as an algebro-geometric analogue of Donaldson invariants of 4-manifolds~\cite{DoKr}.

When $\M_\al^\rst(\tau)=\M_\al^\ss(\tau)$, the virtual classes $[\M_\al^\ss(\tau)]_\virt$ in Assumption~\ref{wc4ass1}(i) are defined using Behrend--Fantechi~\cite{BeFa} perfect obstruction theories $\phi\colon \cF^\bu\ra\bL_{\M^\pl}$ as in \cite[Sections~5 and~6.1]{Moch}. If the geometric genus $p_g=h^{2,0}(X)$ has $p_g>0$, and $\rank\al>0$, there is a~trivial factor in $h^{-1}(\cF^\bu)$ which causes $[\M_\al^\ss(\tau)]_\virt=0$, see \cite[Proposition~6.2.2]{Moch}. As in \cite[Section~6.2]{Moch}, we can obtain nonzero invariants by fixing determinants, but that takes us outside the framework of Conjecture \ref{wc4conj1}. So here we restrict to the case~$p_g=0$.

\begin{conj}\label{wc4conj2}
Conjecture~\ref{wc4conj1} holds for $\A=\coh(X)$ with $X$ a projective complex surface with $p_g=0$, with `stability condition $\tau$' meaning Gieseker stability for an ample line bundle on~$X$, and other details as in~Section~\ref{wc431}.
\end{conj}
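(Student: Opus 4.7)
\medskip

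\noindent\textbf{Proof proposal.} The plan is to verify the hypotheses of Assumption \ref{wc4ass1} for $\A=\coh(X)$ and then establish parts (i)--(iii) of Conjecture \ref{wc4conj1} by an indirect construction via pairs, mirroring the argument for quivers in \S\ref{wc5}--\S\ref{wc6} but with Behrend--Fantechi virtual classes replacing fundamental classes. First, one takes $K(\A)=K^\num(\coh(X))$, equips $\M,\M^\pl$ with the Ext complex $\cExt^\bu$ on $\M\t\M$ of \S\ref{wc431} and sets $\Th^\bu=(\cExt^\bu)^\vee\op\si_\M^*(\cExt^\bu)[2]$, producing the vertex algebra on $\hat H_*(\M)$ and Lie algebra on $\check H_*(\M^\pl)$ by Theorems \ref{wc2thm1} and \ref{wc2thm2}. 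The complex orientations on $\M,\M^\pl$ give $\ep_{\al,\be}=(-1)^{\chi_\A(\al,\be)}$. When $\M_\al^\rst(\tau)=\M_\al^\ss(\tau)$, the Behrend--Fantechi theory applied to Mochizuki's perfect obstruction theory \cite[\S 5]{Moch} gives $[\M_\al^\ss(\tau)]_\virt\in H_{2-\chi(\al,\al)}(\M_\al^\pl)=\check H_0(\M_\al^\pl)$; the hypothesis $p_g=0$ is precisely what kills the trivial factor in $h^{-1}(\cF^\bu)$ that would force the class to vanish.

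The second step is to implement Definition \ref{wc4def1} with $\B$ taken to be Mochizuki's category of $L$-Bradlow pairs: objects are morphisms $V\ot\O_X(-N)\ra E$ with $N\gg 0$, with $E\in\coh(X)$ and $V$ a finite-dimensional $\C$-vector space. For $N$ sufficiently large relative to a fixed class $\al$, the following hold: (a)~conditions (i)--(vi) of Definition \ref{wc4def1} are satisfied with the obvious $\ti\chi$ and extended $\Th^\bu_\B$ computed from two-term $\Hom$-complexes; (b)~for classes $(\al,1)$ with $\al\in C(\A)$, the moduli stacks $\tiM_{(\al,1)}^\ss(\tau_+)$ coincide with $\tiM_{(\al,1)}^\rst(\tau_+)$, are proper, and inherit a perfect obstruction theory from Mochizuki's, yielding virtual classes $[\tiM_{(\al,1)}^\ss(\tau_+)]_\virt\in\check H_0(\tiM_{(\al,1)}^{\pl})$ unconditionally. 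One then defines $[\M_\al^\ss(\tau)]_\inv$ by descending induction on a suitable stratification of $C(\A)$ via \eq{wc4eq5}, using that the bracket $[-,1_{H_0(\tiM_{(0,1)}^\pl)}]$ is injective on the relevant component (this injectivity should be provable directly from the vertex-algebra formula \eq{wc2eq12} together with the fact that $c_\top$ of a nontrivial bundle on $\tiM_{(0,1)}^\pl$ is nonzero). Independence of the auxiliary integer $N$ follows by comparing two choices $N,N'$ through a third stability condition, as in \cite[\S 5.4]{JoSo}.

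The central and most delicate step is the wall-crossing formula \eq{wc4eq1}--\eq{wc4eq2} between two Gieseker stabilities $\tau,\ti\tau$ from polarizations $L,\ti L$. The strategy is to lift the formula to the Ringel--Hall Lie algebra $\SFai(\tiM)$ of the pair category, where Theorem \ref{wc3thm4} gives the identity \eq{wc3eq19} for the classes $\bar\ep^{(\al,1)}(\tau_\pm)$, and then transfer it along a Lie algebra morphism $\Psi:\SFai(\tiM)\ra\check H_0(\tiM^\pl)$ sending $\bar\ep^{(\al,1)}(\tau_+)\mapsto[\tiM_{(\al,1)}^\ss(\tau_+)]_\virt$. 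Such a morphism is constructed by first defining $\Psi$ on the virtual-indecomposable characteristic functions coming from geometric points where Behrend--Fantechi produces unambiguous virtual classes, and extending by the Lie bracket relations using the behaviour of obstruction theories under the configuration diagrams of \eq{wc3eq4}. Applying $\Psi$ to the pair wall-crossing identity, and using the key combinatorial identity from \cite[Prop.~13.8]{JoSo} that reduces $\ti U(\ldots;\tau_-,\tau_+)$ to the coefficients appearing in \eq{wc4eq4}, produces two expressions for $[\tiM_{(\al,1)}^\ss(\tau_+)]_\virt$ in terms of $[\M_\be^\ss(\tau)]_\inv$ and $[\M_\be^\ss(\ti\tau)]_\inv$ respectively. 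Equating these, an induction on rank extracts \eq{wc4eq1} between $\tau$ and $\ti\tau$, once injectivity of $[-,1_{H_0(\tiM_{(0,1)}^\pl)}]$ is invoked.

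The main obstacle will be constructing the Lie algebra morphism $\Psi:\SFai(\tiM)\ra\check H_0(\tiM^\pl)$ and verifying that it intertwines the Ringel--Hall Lie bracket with the Lie bracket of Theorem \ref{wc2thm2}. This is where the projective Euler class of Upmeier (Remark \ref{wc2rem2}) and a careful comparison between the exact sequence moduli stack $\fExact$ and the perfect obstruction theories on $\M_\al^\ss(\tau)$ should enter: concretely one must show that the virtual class of a Zariski-locally-trivial stratum of extensions $0\to E_1\to E\to E_2\to 0$ pushes forward under $\pi_2$ to the bracketed product of virtual classes of $E_1,E_2$ weighted by the projective Euler class of the obstruction-theoretic Ext bundle. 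Once this compatibility is established, the rest of the argument is an essentially formal repackaging of the identities already available for motivic invariants in \cite{Joyc6,Joyc7}.
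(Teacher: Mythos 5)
First, a point of order: the statement you are proving is a \emph{conjecture} in this paper. No proof of it appears here; the authors explicitly defer it to the sequel \cite[\S 7.7]{Joyc13}, and the only case of Conjecture \ref{wc4conj1} actually proved in this paper is the quiver case of \S\ref{wc5}--\S\ref{wc6}. So your proposal can only be measured against the strategy the paper carries out for quivers and against the outline in Definition \ref{wc4def1}. Your first two steps --- setting up Assumption \ref{wc4ass1} for $\coh(X)$ as in \S\ref{wc431}, and implementing the pair construction of Definition \ref{wc4def1} with Bradlow pairs to pin down $[\M_\al^\ss(\tau)]_\inv$ inductively --- are consistent with what the paper sketches and are reasonable.

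The genuine gap is your third step. You propose to prove wall-crossing by constructing a Lie algebra morphism $\Psi:\SFai(\tiM)\ra\check H_0(\tiM^\pl)$ sending $\bar\ep{}^{(\al,1)}(\tau_+)$ to the virtual class, and you present its construction as a technical verification about pushing forward virtual classes of strata of extensions. But no such morphism can be defined on all of $\SFai(\tiM)$: stack functions form a \emph{motivic} theory, satisfying cut-and-paste relations under locally closed decompositions, whereas Behrend--Fantechi virtual classes are not motivic --- they depend on the obstruction theory and not merely on the constructible decomposition of the moduli stack. The existence of a Lie algebra morphism with the properties you require is essentially equivalent to the conjecture itself, not a lemma on the way to it. This is why Joyce--Song could build such a $\Psi$ only for Calabi--Yau $3$-folds (where the Behrend function makes the invariants motivic), and why even in the quiver case this paper constructs no morphism $\SFai(\M)\ra\check H_0(\M^\pl)$: in \S\ref{wc63} the map $\Up$ has source the explicit Lie algebra $L(Q)$, is defined only on a binary quotient, and is a Lie algebra morphism only modulo non-binary classes. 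The paper's actual route replaces your $\Psi$ by a chain of direct geometric comparisons --- the edge-deletion/Chern class insertions of \S\ref{wc64}, abelianization via Martin's theorem in \S\ref{wc65}, and the explicit projective-bundle computation of $[-,1_{H_0(\tiM_{(0,1)}^\pl)}]$ in \S\ref{wc66} --- none of which pass through the Ringel--Hall algebra of the pair category. Until you either identify a subalgebra of $\SFai(\tiM)$ on which $\Psi$ is genuinely well defined and compatible with virtual classes, or substitute direct master-space/obstruction-theory comparisons in the style of Mochizuki, the central step of your argument does not close.
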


This is proved in the sequel \cite[Section~7.7]{Joyc13}, which also covers the case $p_g>0$. It builds on previous work by Mochizuki \cite{Moch} and others.

\subsubsection{Donaldson--Thomas theory for Calabi--Yau and Fano 3-folds}\label{wc436}

Donaldson--Thomas invariants counting semistable coherent sheaves on Calabi--Yau 3-folds were proposed by Donaldson--Thomas \cite{DoTh} and defined by Thomas \cite{Thom1}, who also gave a version for Fano 3-folds. The Calabi--Yau 3-fold version was extended in Joyce--Song \cite{JoSo} and Kontsevich--Soibelman~\cite{KoSo}.

Let $X$ be a Calabi--Yau or Fano 3-fold, and $\tau$ be Gieseker stability on $\A=\coh(X)$, and $\al\in K(\A)$ with $\M_\al^\rst(\tau)=\M_\al^\ss(\tau)$. Since $\dim\A=3$ in Section~\ref{wc431}, the natural obstruction theory $\cF^\bu\ra\bL_{\M^\pl}$ on $\M_\al^\ss(\tau)$ is perfect in $[-2,0]$, and it is perfect in $[-1,0]$ (which is necessary for the virtual class $[\M_\al^\ss(\tau)]_\virt$ from \cite{BeFa} to be defined) if $h^2\big((\cF^\bu)^\vee\big)=0$. For a~$\C$-point $[E]$ in~$\M_\al^\ss(\tau)$ we have
\begin{equation*}
h^2\big((\cF^\bu)^\vee\vert_{[E]}\big)=\Ext^3(E,E)\cong\Hom(E,E\ot K_X)^*.	
\end{equation*}
\begin{itemize}\itemsep=0pt
\item[(a)] If $X$ is a Calabi--Yau 3-fold then $K_X\cong\O_X$, and as $E$ is stable we have $\Hom(E,E\ot K_X)=\C$, so that $h^2\big((\cF^\bu)^\vee\big)=\O_{\M_\al^\ss(\tau)}$. As in~\cite{Thom1} we may modify $\cF^\bu\ra\bL_{\M^\pl}$ by deleting the line bundle $h^2\big((\cF^\bu)^\vee\big)$, and then~\cite{BeFa} gives a virtual class $[\M_\al^\ss(\tau)]_\virt$ of real dimension~$-2\chi_\A(\al,\al)=0$.

Donaldson--Thomas theory of Calabi--Yau 3-folds {\it does not fit into the set-up of Assumption}~\ref{wc4ass1} {\it and Conjecture}~\ref{wc4conj1}, as deleting $h^2\big((\cF^\bu)^\vee\big)$ means that $[\M_\al^\ss(\tau)]_\virt\in\check H_{-2}\big(\M^\pl\big)$, which is the wrong dimension.

In the sequel \cite[Section~1.5]{Joyc13}, the second author explains a modification of Conjecture~\ref{wc4conj1} adapted to Donaldson--Thomas theory of Calabi--Yau 3-folds. The graded vertex algebra structure on $\hat H_*(\M)$ in Assumption~\ref{wc4ass1}(g) is replaced by a {\it graded vertex Lie algebra}.
\item[(b)] If $X$ is a Fano 3-fold and $\dim\supp E>0$, that is, if $\dim\al>0$, then using $\tau$-semistability of~$E$ and~$K_X$ negative one can show that $\Hom(E,E\ot K_X)=0$, so $\cF^\bu\ra\bL_{\M^\pl}$ is perfect in $[-1,0]$, and \cite{BeFa} gives a virtual class $[\M_\al^\ss(\tau)]_\virt$ of real dimension $2-2\chi_\A(\al,\al)$. If $\dim\al=0$ this argument does not work, and $[\M_\al^\ss(\tau)]_\virt$ is undefined. The sequel \cite[Section~7.8]{Joyc13} proves Conjecture~\ref{wc4conj1} for $\A=\coh(X)$ for classes $\al\in K(\A)$ with $\dim\al>0$ only.
\end{itemize}
Note that Donaldson--Thomas theory for Calabi--Yau 3-folds, and for Fano 3-folds, are significantly different, because of the difference in virtual dimension.

\subsubsection{Theories of vector bundles or sheaves with extra data}\label{wc437}

There are also enumerative invariant theories in the literature, in which the objects to be counted are one or more vector bundles or coherent sheaves on a smooth projective $\C$-scheme $X$, together with some morphisms between these sheaves. Such theories are usually special cases of the following general definition of \'Alvarez-C\'onsul and Garc\'\i a-Prada \cite{AlGa}.

\begin{dfn}
\label{wc4def2}
Let $Q=(Q_0,Q_1,h,t)$ be a quiver, and $X$ be a smooth projective $\C$-scheme. Assign a vector bundle $E_e\ra X$ to each edge $e\in Q_1$. Following \cite{AlGa}, define a $\C$-linear abelian category $\A$ whose objects are tuples $((V_v)_{v\in Q_0},(\phi_e)_{e\in Q_1})$, where $V_v\in\coh(X)$ and $\phi_e\colon V_{t(e)}\ra V_{h(e)}\ot E_e$ is a morphism in $\coh(X)$. Objects of $\A$ are called {\it twisted quiver sheaves}, or {\it twisted quiver bundles} if the $V_v$ are vector bundles. If $E_e=\O_X$ for all $e\in Q_1$ they are called {\it quiver sheaves}, or {\it quiver bundles}.

\'Alvarez-C\'onsul and Garc\'\i a-Prada \cite{AlGa} prove a Hitchin--Kobayashi correspondence for twisted quiver bundles, identifying solutions of a gauge theory equation with $\tau$-polystable objects in $\A$ for a stability condition $\tau$ on $\A$. This extends previously known Hitchin--Kobayashi correspondences. For example:
\begin{itemize}\itemsep=0pt
\item If $X$ is a Riemann surface, and $Q=\overset{v}{\bu}\,\rotatebox[origin=c]{90}{$\circlearrowleft$}\,\raisebox{1pt}{$\scriptstyle e$}$, and $E_e=K_X$, then twisted quiver bundles are {\it Higgs bundles}, as in Hitchin~\cite{Hitc}.
\item If $Q=\overset{v}{\bu}\,{\buildrel e\over \longra}\,\overset{w}{\bu}$ then {\it semistable pairs} as in Bradlow and Daskalopoulos \cite{Brad1,Brad2,BrDa}, Garc\'\i a-Prada \cite{GaPr1,GaPr2}, Thaddeus \cite{Thad} (who studies wall-crossing under change of stability condition), and others, are examples of $\tau$-semi\-stable quiver bundles with $V_v=\O_X$.
\end{itemize}
\end{dfn}

We usually have $\dim\A=\dim X+1$ (or $\dim\A=\dim X$ for some moduli spaces, under extra conditions). To fit such categories $\A$ into the enumerative invariant set-up of Section~\ref{wc43}, we need $\dim\A\le 2$ for virtual classes $[\M_\al^\ss(\tau)]_\virt$ to be defined, as in Section~\ref{wc431}, so we take $X$ to be a curve (or possibly a surface, under extra conditions). We also require the quiver $Q$ to have no oriented cycles, to ensure that $\M_\al^\ss(\tau)$ is compact. Under these conditions we expect Conjecture \ref{wc4conj1} to hold. Note that in contrast to Section~\ref{wc434}, there are usually many suitable stability conditions on $\A$, so wall-crossing in
Conjecture \ref{wc4conj1}(ii) is nontrivial.

\subsubsection{Theories using equivariant homology}\label{wc438}

Several interesting enumerative invariant theories involve moduli spaces $\M$, $\M^\pl$ with the action of an algebraic torus $T\cong\bG_m^k$, and we form virtual classes $[\M_\al^\ss(\tau)]_\virt$ in the {\it equivariant} homology $H_*^T\big(\M^\pl\big)$. See for example Tanaka--Thomas \cite{TaTh1,TaTh2} for the case of Vafa--Witten invariants. Then an equivariant version of Conjecture~\ref{wc4conj1} may apply. This is discussed in the sequel~\cite{Joyc13}.

One useful property of the equivariant setting, as used in \cite{TaTh1,TaTh2} for instance, is that to define virtual classes $[\M_\al^\ss(\tau)]_\virt$ in $H_*^T\big(\M^\pl\big)$ we do not need $\M_\al^\ss(\tau)$ to be compact, but only that the $T$-fixed locus $\M_\al^\ss(\tau)^T$ should be compact, which is often easier to satisfy.

\subsection{The conjecture for Calabi--Yau 4-fold DT4 invariants}
\label{wc44}

We summarize some ideas from derived algebraic geometry \cite{PTVV,Toen1,Toen2,ToVa,ToVe1,ToVe2} and Donaldson--Thomas type invariants of Calabi--Yau 4-folds~\cite{BoJo,CGJ,CaLe,OhTh}:
\begin{itemize}\itemsep=0pt
\item[(a)] Let $X$ be a smooth projective $\C$-scheme. Then To\"en and Vaqui\'e \cite{ToVa} construct a {\it derived moduli stack} $\bs\M$ of objects in $\coh(X)$ or in $D^b\coh(X)$, as a locally finitely presented derived $\C$-stack in the sense of To\"en and Vezzosi \cite{Toen1,Toen2,ToVe1,ToVe2}. It has a {\it virtual dimension} $\vdim_\C\bs\M$, a locally constant map $\bs\M\ra\Z$. The classical truncation $\M=t_0(\bs\M)$ is the usual moduli stack, as an Artin $\C$-stack or higher $\C$-stack.
\item[(b)] Pantev, To\"en, Vaqui\'e and Vezzosi \cite{PTVV} introduced a theory of shifted symplectic derived algebraic geometry, defining $k$-{\it shifted symplectic structures} $\om$ on a derived stack $\bs\cS$ for $k\in\Z$. If $X$ is a Calabi--Yau $m$-fold and $\bs\M$ is a derived moduli stack of objects in~$\coh(X)$ or~$D^b\coh(X)$ then $\bs\M$ has a $(2-m)$-shifted symplectic structure,~\cite[Corollary~2.13]{PTVV}.
\item[(c)] If $(\bs\cS,\om)$ is a $k$-shifted symplectic derived stack for $k$ even, Borisov--Joyce \cite[Section~2.4]{BoJo} define a notion of {\it orientation} on $(\bs\cS,\om)$.
\item[(d)] Let $(\bs\cS,\om)$ be a proper, oriented $-2$-shifted symplectic derived scheme with $\cS=t_0(\bs\cS)$. Then Borisov--Joyce \cite[Corollary~1.2]{BoJo} construct a {\it virtual class} $[\bs\cS]_\virt$ in $H_*(\cS,\Z)$ using derived differential geometry \cite{Joyc10,Joyc8,Joyc9,Joyc11}, of real dimension $\vdim_\C\bs\cS=\ha\vdim_\R\bs\cS$. Note that this is {\it half the expected dimension}. Oh--Thomas \cite{OhTh} provide an alternative definition of $[\bs\cS]_\virt$ in the style of Behrend--Fantechi~\cite{BeFa}.
\item[(e)] Let $X$ be a Calabi--Yau 4-fold, and $(\bs\M,\om)$ the $-2$-shifted symplectic derived moduli stack of objects in $\coh(X)$ or $D^b\coh(X)$ from (a),(b). Then Cao--Gross--Joyce \cite[Corollary~1.17]{CGJ} prove that $(\bs\M,\om)$ is orientable in the sense of~(c). By taking a shifted symplectic quotient by $[*/\bG_m]$, one can show that $\big(\bs\M^\pl,\om\big)$ is also $-2$-shifted symplectic and orientable. Choose an orientation on $\big(\bs\M^\pl,\om\big)$.
\item[(f)] Suppose $\al\in K(\coh(X))$ with $\M_\al^\rst(\tau)=\M_\al^\ss(\tau)$, where $\tau$ is Gieseker stability. Then $\bs\M_\al^\ss(\tau)$ is a proper, oriented $-2$-shifted symplectic derived scheme, and has a virtual class $[\bs\M_\al^\ss(\tau)]_\virt$ in $H_*\big(\M^\pl\big)$. Borisov--Joyce~\cite{BoJo} propose to define Donaldson--Thomas type `DT4 invariants' of~$X$ using these virtual classes. Cao--Leung \cite{CaLe} make a similar proposal using gauge theory rather than derived algebraic geometry.
\end{itemize}

We can now extend Section~\ref{wc41} to $\A=\coh(X)$ for $X$ a Calabi--Yau 4-fold. This works as in Section~\ref{wc431}, with the following important differences:
\begin{itemize}\itemsep=0pt
\item[(i)] In Section~\ref{wc431} we took $\Th^\bu=(\cExt^\bu)^\vee\op\si_\M^*(\cExt^\bu)[2n]$ in Assumption \ref{wc2ass1}(g), where $\cExt^\bu$ is the Ext complex on $\M\t\M$, and $\chi(\al,\be)=\chi_\A(\al,\be)+\chi_\A(\be,\al)$ in Assumption~\ref{wc4ass1}(c). As in~\cite{Joyc12}, in the Calabi--Yau $2m$-fold case we instead set $\Th^\bu=(\cExt^\bu)^\vee$ and $\chi(\al,\be)=\chi_\A(\al,\be)$, and use these to define the vertex and Lie algebra structures on $\hat H_*(\M)$, $\check H_*\big(\M^\pl\big)$ in Assumption~\ref{wc4ass1}(g),~(h) as in Sections~\ref{wc23}--\ref{wc24}. Serre duality for Calabi--Yau 4-folds implies that \eq{wc2eq6} holds with $n=2$, and $\chi$ is symmetric. Note that $\Th^\bu$ and $\chi$ are both (roughly speaking) half of their values in Section~\ref{wc431}. This is parallel to the virtual class~$[\bs\cS]_\virt$ in~(d) above having half the expected dimension.
\item[(ii)] In contrast to Section~\ref{wc431}, in the Calabi--Yau 4-fold case the complex geometry does not determine canonical orientations in Assumption \ref{wc4ass1}(e), but we must instead use Borisov--Joyce orientations~\cite{BoJo} as in~(c) above. As in~(e), orientations exist on $\M$, and we must choose~one.
\item[(iii)] As in~(d), we use Borisov--Joyce or Oh--Thomas virtual classes~\cite{BoJo,OhTh} instead of Behrend--Fantechi virtual classes~\cite{BeFa} (which are undefined in this case) in Assumption~\ref{wc4ass1}(j) when $\M_\al^\rst(\tau)=\M_\al^\ss(\tau)$.
\end{itemize}

\begin{conj}\label{wc4conj3}
Conjecture \ref{wc4conj1} holds for $\A=\coh(X)$ when $X$ is a Calabi--Yau 4-fold, with details in Assumption~\ref{wc4ass1} as above.	
\end{conj}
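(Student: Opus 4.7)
The plan is to follow the blueprint of Conjecture \ref{wc4conj1} with the three modifications (i)--(iii) dictated by the Calabi--Yau 4-fold geometry: taking $\Th^\bu=(\cExt^\bu)^\vee$ (with \eq{wc2eq6} verified for $n=2$ by Serre duality), using the Borisov--Joyce orientations guaranteed by Cao--Gross--Joyce's orientability result, and employing Borisov--Joyce or Oh--Thomas virtual classes in place of Behrend--Fantechi ones. As a first step, I would verify that Assumption \ref{wc4ass1} holds in this modified setup: the vertex and Lie algebra structures on $\hat H_*(\M)$ and $\check H_*(\M^\pl)$ come from \cite{Joyc12}, and the ranks of $\Th^\bu\vert_{\M_\al\t\M_\be}$ are $\chi_\A(\al,\be)$, which is symmetric on $K(\coh(X))$ by Serre duality on a CY4.

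Next I would construct the auxiliary category $\B$ as the category of pairs $V\ot\O_X(-n)\ra E$ with $E\in\coh(X)$ and $V$ a finite-dimensional $\C$-vector space, taking $n\gg 0$ depending on the classes $\al\in C(\A)$ of interest so that the relevant Castelnuovo--Mumford vanishing holds. The derived moduli stack $\bs\tiM$ of $\B$-objects should again be $-2$-shifted symplectic (verifiable from the tangent complex of a pair, which sits in a distinguished triangle relating it to $\cExt^\bu$ of the underlying sheaf), and its orientability should follow from that of $\bs\M^\pl$ via the morphism $i:\M\hookra\tiM$. The properties (i)--(vi) and (a)--(d) of Definition \ref{wc4def1} would then follow by the stability arguments of \cite[\S 5.4, \S 13.1]{JoSo} and \cite[\S 7.3]{Moch}, yielding in particular that $[\tiM^\ss_{(\al,1)}(\tau_+)]_\virt$ is defined by Assumption \ref{wc4ass1}(j) and that \eq{wc4eq4} holds formally.

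With those ingredients in place, \eq{wc4eq4} becomes a \emph{definition} of $[\M^\ss_\al(\tau)]_\inv$ by induction in any total order on $C(\A)$ compatible with addition. Well-definedness requires injectivity of $[-,1_{H_0(\tiM^\pl_{(0,1)})}]$ on the image of $i_*^\pl$ in the relevant degree components of $\check H_*(\tiM^\pl)$, which I would establish by combining the direct construction of the Lie bracket via the projective Euler class of \cite{Upme} with an explicit lattice-vertex-algebra model of $\hat H_*(\tiM)$ in the spirit of Theorem \ref{wc4thm1}.

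The principal obstacle is proving the wall-crossing formula \eq{wc4eq1} itself under a change $\tau\leadsto\ti\tau$ of Gieseker stability on $\coh(X)$. For Calabi--Yau 3-folds this was carried out in \cite{JoSo} using motivic Hall algebra identities combined with the Behrend function, but on a CY4 the virtual dimension is halved (cf.\ item (i) of \S\ref{wc44}) and no analogue of the Behrend identity is known. My plan would be to build $-2$-shifted symplectic master stacks interpolating between $\tau$- and $\ti\tau$-semistable pair objects, equipped with a natural $\bG_m$-action whose fixed locus decomposes into the bracketed pieces on the right of \eq{wc4eq1}, and then to apply Oh--Thomas equivariant virtual localization to extract the identity; the combinatorial coefficients $\ti U(\al_1,\ldots,\al_n;\tau,\ti\tau)$ should emerge by matching the localized contributions against the projective Euler class formula for the Lie bracket. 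Carrying this through will demand delicate compatibility results for Borisov--Joyce orientations across master space constructions, which is itself an open orientation-theoretic problem.
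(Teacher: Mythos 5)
The statement you are addressing is a conjecture which this paper does not prove: Conjecture \ref{wc4conj3} is stated in \S\ref{wc44} and left open, and the paper's actual theorems (Theorems \ref{wc5thm1} and \ref{wc5thm2}) concern only $\modCQ$; the sequel \cite{Joyc13} treats cases with Behrend--Fantechi classes, which excludes Calabi--Yau 4-folds. So there is no proof in the paper to compare against, and what you have written is a research programme rather than a proof. Your own text concedes the two decisive steps are unresolved: the wall-crossing identity \eq{wc4eq1} itself (``my plan would be\ldots''), and the compatibility of Borisov--Joyce orientations across master space constructions, which you correctly call an open problem. A proposal whose key lemmas are acknowledged open problems is not a proof attempt that can be accepted.

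Beyond that, there is a concrete technical error in the middle step. You assert that the derived moduli stack $\bs\tiM$ of objects of the pair category $\B$ ``should again be $-2$-shifted symplectic, verifiable from the tangent complex of a pair.'' This fails: $\B$ is not a 4-Calabi--Yau category, and the deformation complex of a pair $V\ot\O_X(-n)\ra E$ is not self-dual up to a shift --- the distinguished triangle relating it to $\cExt^\bu$ of the underlying sheaf destroys the symmetry rather than preserving it. Consequently the Borisov--Joyce/Oh--Thomas machinery does not apply to $\tiM^\ss_{(\al,1)}(\tau_+)$ as stated, so Definition \ref{wc4def1}(d) --- the geometric definition of the left-hand side of \eq{wc4eq4}, which anchors the whole induction --- is not available by the route you describe. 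Any genuine attack on the CY4 case must first explain what replaces the $-2$-shifted symplectic structure and the associated half-dimensional virtual class on the pair moduli spaces (this is precisely where the existing CY4 literature on stable pairs and Hilbert schemes has to work hard), before the inductive definition of $[\M^\ss_\al(\tau)]_\inv$ can even be set up.
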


\subsection[\texorpdfstring{Donaldson theory for 4-manifolds with $b^2_+=1$}{Donaldson theory for 4-manifolds with b²₊=1}]{Donaldson theory for 4-manifolds with $\boldsymbol{b^2_+=1}$}\label{wc45}

Let $(X,g)$ be a compact, oriented Riemannian 4-manifold, which need not be simply-connected. Hodge theory gives a natural isomorphism $H^2_{\rm dR}(X,\R)\cong\cH^2$, where $H^2_{\rm dR}(X,\R)$ is the second de Rham cohomology group, and $\cH^2=\bigl\{\eta\in\Ga^\iy\big(\La^2T^*X\big)\colon \d\eta=\d^*\eta=0\bigr\}$ is the harmonic 2-forms on $X$. The splitting $\La^2T^*X=\La^2_+T^*X\op\La^2_-T^*X$ into self-dual and anti-self-dual 2-forms induces a splitting $\cH^2=\cH^2_+\op\cH^2_-$. Write $H^2_{\rm dR}(X,\R)=H^2_+(X,\R)\op H^2_-(X,\R)$ for the corresponding splitting in de Rham cohomology, and~$b^2_\pm(X)=\dim H^2_\pm(X,\R)$.

{\it Donaldson theory} is the study of enumerative invariants (called {\it Donaldson invariants}) that `count' connections with anti-self-dual curvature (called {\it instantons}) on vector or principal bundles $E\ra X$, as in Donaldson and Kronheimer \cite{DoKr}. They have the amazing property that they can distinguish different smooth structures on the same topological 4-manifold. Most work on Donaldson theory takes $E\ra X$ to be a principal $\SU(2)$- or $\SO(3)$-bundle, but we will consider $\U(m)$-bundles for~$m\ge 0$.

We divide into three cases:
\begin{itemize}\itemsep=0pt
\item[(i)] If $b^2_+(X)=0$ then Donaldson invariants cannot be defined.\footnote{But the moduli spaces can be used to restrict the intersection form of $X$, as in \cite{Dona1}.}
\item[(ii)] If $b^2_+(X)=1$ then Donaldson invariants can be defined. They depend on the splitting $H^2_{\rm dR}(X,\R)=H^2_+(X,\R)\op H^2_-(X,\R)$ induced by $g$, and have wall-crossing behaviour under changes of this splitting.
\item[(iii)] If $b^2_+(X)>1$ then Donaldson invariants can be defined, and are independent of $g$ and the splitting $H^2_{\rm dR}(X,\R)=H^2_+(X,\R)\op H^2_-(X,\R)$.
\end{itemize}

We are interested here in case (ii). We will sketch how Donaldson theory for $\U(m)$-bundles on $X$ when $b^2_+(X)=1$ can be made, conjecturally, into a theory with the structure described in Section~\ref{wc41}. The sequel \cite[Section~7.7]{Joyc13} will study case (iii) for coherent sheaves on surfaces. We have nothing new to say about case~(i).

Some papers on Donaldson theory with $b^2_+=1$ are Kotschick--Morgan \cite{Kots,KoMo}, Ellingsrud--G\"{o}ttsche \cite{ElGo}, Friedman--Qin \cite{FrQi}, G\"ottsche \cite{Gott}, G\"ottsche--Zagier \cite{GoZa}, Moore--Witten \cite{MoWi}, and G\"ottsche--Nakajima--Yoshioka \cite{GNY1}. We define the moduli spaces of instantons we are interested in.

\begin{dfn}\label{wc4def3}
Let $(X,g)$ be a compact, connected, oriented Riemannian 4-manifold, and $E\ra X$ be a {\it unitary vector bundle}, that is, a complex vector bundle $E\ra X$ with a Hermitian metric $h$ on its fibres. A {\it unitary connection} $\nabla_E$ is a connection $\nabla_E$ on $E$ preserving $h$. Write $\A_E$ for the topological space of unitary connections on $E$, with the $C^\iy$-topology. The group $\Aut(E)$ of unitary automorphisms of $E$ acts on $\A_E$, and its normal subgroup $\U(1)=\U(1)\cdot\id_E\subset\Aut(E)$ acts trivially, so that $\Aut(E)/\U(1)$ also acts on $\A_E$. We call a connection $\nabla_E$ {\it irreducible} if $\Stab_{\Aut(E)}(\nabla_E)=\U(1)$, and {\it reducible} otherwise. Write $\A_E^\irr\subseteq\A_E$ for the open subset of irreducible connections. Define topological stacks $\B_E^\irr\subseteq\B_E$ and $\B_E^{\irr,\pl}\subseteq\B_E^\pl$ as in \cite{Metz,Nooh1,Nooh2} by\looseness=-1
\begin{alignat*}{3}
&\B_E^\irr=\A_E^\irr/\Aut(E),\qquad && \B_E=\A_E/\Aut(E),&\\
&\B_E^{\irr,\pl}=\A_E^\irr/(\Aut(E)/\U(1)),\qquad && \B_E^\pl=\A_E/(\Aut(E)/\U(1)),&
\end{alignat*}
Then $\B_E^{\irr,\pl}$ is a topological space, as $\Aut(E)/\U(1)$ acts freely on $\A_E^\irr$.

Let $\nabla_E$ in $\A_E$ have curvature $F^{\nabla_E}$, and split $F^{\nabla_E}=F^{\nabla_E}_+\op F^{\nabla_E}_-$ for $F^{\nabla_E}_\pm$ the components in $\ad(E)\ot_\R\La^2_\pm T^*X$. We call $\nabla_E$ an {\it instanton} if
\e
F^{\nabla_E}_+=i\,\id_E\ot\om\qquad\text{for some $\om\in\cH^2_+$}.
\label{wc4eq9}
\e
Define moduli spaces of instantons $\M_E^\rst\subseteq\M_E^\ss$ by
\begin{gather}
\begin{split}
&\M_E^\rst=\bigl\{[\nabla_E]\in \B_E^{\irr,\pl}\colon \text{$\nabla_E$ is an instanton}\bigr\}\subset\B_E^{\irr,\pl},
\\
&\M_E^\ss=\bigl\{[\nabla_E]\in \B_E^\pl\colon \text{$\nabla_E$ is an instanton}\bigr\}\subset\B_E^\pl.
\end{split}
\label{wc4eq10}
\end{gather}

If $g$ is generic then $\M_E^\rst$ is a smooth manifold of dimension
\e
\dim\M_E^\rst=1+b^2_+(X)-\chi(\lb E\rb,\lb E\rb),
\label{wc4eq11}
\e
where $\chi\colon K^0(X)\t K^0(X)\ra\Z$ is the symmetric biadditive map
\begin{align}
\chi(\al,\be)={}&-\bigl(1-b_1(X)+b^2_+(X)\bigr)\rank\al\rank \be-2 \int_X\ch_1(\al)\ch_1(\be)\nonumber\\
&{}+2\rank\al \int_X\ch_2(\be)+2\rank\be\int_X\ch_2(\al),\label{wc4eq12}
\end{align}
with $\ch_i(-)$ the Chern characters. Here in~\eq{wc4eq11}, the first term $1$ compensates for the quotient by $\U(1)$ in \eq{wc4eq10}, the second $b^2_+(X)$ compensates for $\om\in\cH^2_+$ in~\eq{wc4eq9}, and the third $-\chi(\lb E\rb,\lb E\rb)$ comes from the Atiyah--Singer index theorem as in Kronheimer \cite[equation~(3), p.~64]{Kron}. If $g$ is not generic then $\M_E^\rst$ is a {\it derived manifold} of virtual dimension~\eq{wc4eq11}, in the sense of~\cite{Joyc10,Joyc8,Joyc9,Joyc11}.

As in \cite{DoKr}, by {\it Uhlenbeck compactification} the moduli spaces $\M_E^\rst\subseteq\M_E^\ss$ have completions $\oM_E^\rst\subseteq\oM_E^\ss$ such that $\oM_E^\ss$ is compact, and if $\oM_E^\rst=\oM_E^\ss$ then $\oM_E^\ss$ should have a virtual class $[\oM_E^\ss]_\virt$, of dimension \eq{wc4eq11}. Points of $\oM_E^\ss\sm\M_E^\ss$ are singular instantons with `bubbles' at finitely many points in~$X$.
\end{dfn}

\begin{rem}
\label{wc4rem5}
The usual definition \cite{DoKr} of instantons (generally for $\SU(2)$ connections) has $F^{\nabla_E}_+=0$ rather than \eq{wc4eq9}. In fact $\om$ in \eq{wc4eq9} is determined by $[\om]=\pi_{H^2_+(X,\R)}(2\pi c_1(E))$ in $H^2_+(X,\R)$, so $\om=0$ if $E$ is an $\SU(m)$-bundle.	
\end{rem}

We now restrict to the case $b^2_+(X)=1$. We outline the analogue of the data in Assumption~\ref{wc4ass1}(a)--(j). We can take $\A$ to be the category of pairs $(E,\nabla_E)$ of a unitary bundle $E\ra X$ with a unitary connection $\nabla_E$. Then:
\begin{itemize}\itemsep=0pt\setlength{\leftskip}{0.43cm}
\item[(a)] The obvious choices for $\M$, $\M^\pl$ are the topological stacks
\begin{equation*}
\B=\coprod_{\substack{\text{\quad iso. classes $[E]$ of} \\ \text{unitary bundles $E\ra X$}}}\B_E, \qquad
\B^\pl=\coprod_{\substack{\text{\quad iso. classes $[E]$ of} \\ \text{unitary bundles $E\ra X$}}}\B_E^\pl.
\end{equation*}
These are studied in \cite{JTU}, and their topological realizations have homotopy equivalences
\begin{gather}
\begin{split}
&\B^\top\simeq \Map_{C^0}\bigg(X, \coprod_{m\ge 0}B\U(m)\bigg),\\
&(\B^\pl)^\top\simeq \Map_{C^0}\bigg(X, \coprod_{m\ge 0}B\U(m)\bigg)/B\U(1).
\end{split}
\label{wc4eq13}
\end{gather}

However, there is a problem: {\it it is not clear that the completions $\oM_E^\rst$, $\oM_E^\ss$ of $\M_E^\rst$, $\M_E^\ss$ map naturally to} $\B$, $\B^\pl$, and the authors expect that they do not. So we should not define invariants $[\oM_E^\ss]_\inv$ in $H_*\big(\B^\pl\big)$.

Instead, we define topological spaces
\e
\M= \Map_{C^0}(X,B\U\t\Z),\qquad
\M^\pl= \Map_{C^0}(X,B\U\t\Z)/B\U(1),
\label{wc4eq14}
\e
which are a kind of completion of \eq{wc4eq13} (in fact $\M$ is the {\it H-space completion} of $\B^\top$, and is studied in \cite{CGJ,JTU}). We propose that invariants $[\oM_E^\ss]_\inv$ should lie in $H_*\big(\M^\pl\big)$. See Remark \ref{wc4rem6} below on this.
\item[(b), (c)] We take $K(\A)=K^0(X)$, and $\chi$ as in \eq{wc4eq12}.
\item[(e)] Orientations on $\M$ are explained in Joyce--Tanaka--Upmeier \cite[Section~4.2.3]{JTU} and constructed in \cite[Theorem~4.6]{JTU}, following previous work of Donaldson.
\item[(g), (h)] The vertex algebra structure on $\hat H_*(\M)$ will be constructed in \cite{Joyc12}. It is the lattice vertex algebra on the super-lattice $K^0(X)\op K^1(X)$ with intersection form $\chi$ in \eq{wc4eq12}, so in a similar way to \eq{wc4eq8} we have
\begin{equation*}
\hat H_*(\M)\cong R\big[K^0(X)\big]\ot_R \mathop{\rm Sym}\nolimits^*\bigl(K^0(X)\ot_\Z t^2R\big[t^2\big]\bigr)\ot_R\bigwedge\vphantom{\bigl(}^*\bigl(K^1(X)\ot_\Z tR\big[t^2\big]\bigr).
\end{equation*}
This gives the Lie algebra~$\check H_0\big(\M^\pl\big)=\hat H_2(\M)/D\big(\hat H_0(\M)\big)$.
\item[(i)] By {\it stability condition} we mean either the Riemannian metric $g$, or the orthogonal splitting $H^2_{\rm dR}(X,\R)=H^2_+(X,\R)\op H^2_-(X,\R)$ induced by $g$, depending on your point of view. Here $H^2_+(X,\R)=\an{[\om]}_\R$ as $b^2_+(X)=1$, where $\om\in\cH^2_+$ is a harmonic self-dual 2-form on~$X$.

For comparison, Gieseker and $\mu$-stability conditions on $\coh(X)$ for a projective surface~$X$ with $b^2_+(X)=1$ correspond to K\"ahler classes $[\om]$ in $H^2_{\rm dR}(X,\R)$, where $\cH^2_+=\an{\om}$. For the wall-crossing formula \eq{wc4eq2}, the coefficients $U(-)$ should be defined as for $\mu$-stability for projective surfaces.
\item[(j)] If $\al\in K(\A)$, by $\M_\al^\rst(\tau)$, $\M_\al^\ss(\tau)$ we mean the disjoint union over isomorphism classes of unitary bundles $E\ra X$ with $\lb E\rb=\al$ of the moduli spaces $\oM_E^\rst$, $\oM_E^\ss$ above, considered as mapping to $\M^\pl$ as claimed in (a). Although the moduli spaces themselves depend on $g$, when $\M_\al^\rst(\tau)=\M_\al^\ss(\tau)$ the virtual class $[\M_\al^\ss(\tau)]_\virt\in H_*\big(\M^\pl\big)$ depends only on the splitting $H^2_{\rm dR}(X,\R)=H^2_+(X,\R)\op H^2_-(X,\R)$, which is why we have these two choices for stability conditions in (i). Note too that as $b^2_+(X)=1$, equation \eq{wc4eq11} gives $\vdim \M_\al^\ss(\tau)=2-\chi(\al,\al)$, as required.
\end{itemize}

\begin{rem}\label{wc4rem6}
Here is some justification for the choice of $\M$, $\M^\pl$ in \eq{wc4eq14}. Let $X$ be a~pro\-jec\-tive surface. Then we can form moduli stacks $\M_{\text{vect}}\subset\M_{\text{t-f}}\subset\M_{\text{perf}}$ of vector bundles, and torsion-free sheaves, and perfect complexes on $X$, and projective linear versions $\M_{\text{vect}}^\pl\subset\M_{\text{t-f}}^\pl\subset\M_{\text{perf}}^\pl$ with $\M_{\text{vect}}^\pl=\M_{\text{vect}}/[*/\bG_m]$, and so on. We have mapping stack presentations
\begin{equation*}
\M_{\text{vect}}\cong\Map\bigg(X, \coprod_{m\ge 0}[*/\GL(m,\C)]\bigg),\qquad \M_{\text{perf}}\cong\Map\big(X,\Perf_\C\big).
\end{equation*}

The analogues of $\M^\rst_E$, $\M^\ss_E$ in Definition \ref{wc4def3} are moduli spaces of (semi)\-stable vector bundles,
and are substacks of~$\M_{\text{vect}}^\pl$. But the analogues of the Uhlenbeck compactifications $\oM^\rst_E$,~$\oM^\ss_E$ are moduli spaces of (semi)stable {\it torsion-free sheaves} (thought of as singular vector bundles), so they are substacks of~$\M_{\text{t-f}}^\pl$, and hence of $\M_{\text{perf}}$, but {\it not} of~$\M_{\text{vect}}^\pl$. We have topological realizations
\begin{equation*}
\bigg(\coprod\limits_{m\ge 0}[*/\GL(m,\C)]\bigg)^\top\simeq \coprod\limits_{m\ge 0\!\!}B\U(m),\qquad \Perf_\C^\top\simeq B\U\t\Z,\qquad [*/\bG_m]^\top\simeq B\U(1).
\end{equation*}
Hence $\M_{\text{vect}}$, $\M_{\text{vect}}^\pl$ are analogous to $\B^\top$, $\big(\B^\pl\big)^\top$ in \eq{wc4eq13}, and $\M_{\text{perf}}$, $\M_{\text{perf}}^\pl$ are analogous to $\M$, $\M^\pl$ in \eq{wc4eq14}.
\end{rem}

\begin{conj}
\label{wc4conj4}
With the set up above, Conjecture \ref{wc4conj1} holds for Donaldson theory of 4-mani\-folds $X$ with $b^2_+(X)=1$, not necessarily simply-connected.	
\end{conj}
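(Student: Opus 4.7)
My plan is to mirror, in the differential-geometric setting, the strategy used for coherent sheaves on surfaces in the sequel \cite{Joyc13}, namely to reduce the whole conjecture to a wall-crossing computation for an auxiliary category of \emph{Bradlow-type pairs} via the method of Definition \ref{wc4def1}. Concretely, I would enlarge $\A$ to a category $\B$ whose objects are pairs $(E,\nabla_E,s)$, where $s$ is a smooth section of $E$ (or, if a size parameter is needed, of $E\ot L^{-N}$ for an auxiliary Hermitian line bundle $L$). The analogue of the object $I$ is the trivial rank-one bundle with its constant connection and a fixed nonvanishing section; the sequence \eq{wc4eq3} is the splitting of a pair into its underlying bundle and the rank of its section component. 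One checks that Assumption \ref{wc4ass1}(a)--(j) holds for $\B$, that the inclusion $i:\A\hookra\B$ satisfies (i)--(vi) of Definition \ref{wc4def1}, and that $\ti\chi$ on $K(\B)=K^0(X)\op\Z$ is given by \eq{wc4eq12} plus the extra biadditive pairing prescribed in (iv).

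The two pair stability conditions $\tau_\pm$ of Definition \ref{wc4def1} are introduced using a small deformation of the \emph{Hermitian--Einstein--vortex equation} for pairs, in which a real parameter $\tau\in\R$ rescales the moment map contribution of $s$; on the algebro-geometric side this is exactly the Bradlow parameter. For $\tau=\tau_+$ large the pair moduli space $\tiM^\rst_{(\al,1)}(\tau_+)=\tiM^\ss_{(\al,1)}(\tau_+)$ is a compact, oriented derived manifold (from \cite{Joyc8,Joyc9,Joyc10,Joyc11}) of virtual dimension $2-\ti\chi((\al,1),(\al,1))$, so its virtual class in $H_*(\tiM^\pl)$ is supplied directly by Assumption \ref{wc4ass1}(j); for $\tau=\tau_-$, property (c) of Definition \ref{wc4def1} makes the pair moduli space empty. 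Inductively on $\rank\al$, equation \eq{wc4eq5} then \emph{defines} $[\M^\ss_\al(\tau)]_\inv$ uniquely once one proves the relevant pair wall-crossing formula \eq{wc4eq4} in the Lie algebra $\check H_0(\tiM^{\pl})$ constructed from the vertex algebra $\hat H_*(\tiM)$ of \cite{Joyc12}, \cite{JTU}. This immediately establishes Conjecture \ref{wc4conj1}(i),(iii) for $\A$, and the ordinary change-of-metric wall-crossing \eq{wc4eq1}--\eq{wc4eq2} in Conjecture \ref{wc4conj1}(ii) is then obtained by the familiar trick of running the pair construction for two different metrics $g,\ti g$ and taking the difference, the coefficients $\ti U(-;\tau,\ti\tau)$ of \S\ref{wc35} appearing through the combinatorial identities \eq{wc3eq12}--\eq{wc3eq13}.

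The main obstacle is the differential-geometric realization of the pair wall-crossing identity \eq{wc4eq4}. Unlike in \cite{Moch} or \cite{Joyc13}, there is no global algebro-geometric master space whose localization produces the Lie bracket terms; instead, one must produce a \emph{cobordism} in the universal family of oriented derived manifolds between the pair moduli space at $\tau_+$ and the iterated Lie bracket configurations on the right of \eq{wc4eq4}. Concretely, I expect this to require: an analytic construction of the one-parameter family of pair moduli spaces as $\tau$ varies, a thorough analysis of the reducible loci and Uhlenbeck-type bubbling that occur at each wall, the identification of the neighborhood of each wall with a model on $[*/\bG_m]$-fibrations so that the projective Euler class calculus of Upmeier \cite{Upme} applies, and the verification via orientation-preserving gluing that the sign contributions match $\ep_{\al,\be}$ from Assumption \ref{wc4ass1}(f) and \cite{JTU}. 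For K\"ahler surfaces with $b^2_+(X)=1$, a sizable shortcut is available: the Hitchin--Kobayashi correspondence identifies $\M^\ss_\al(\tau)$ with a Gieseker moduli space, and the pair invariants and their wall-crossing match those constructed algebraically in \cite[\S 7.7]{Joyc13}, so Conjecture \ref{wc4conj2} implies Conjecture \ref{wc4conj4} in this case. The genuine new content is therefore the gauge-theoretic wall-crossing on non-K\"ahler $4$-manifolds with $b^2_+=1$, which is where the delicate analytic work is concentrated.
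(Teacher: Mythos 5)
There is a basic mismatch here: the statement you are asked to prove is stated in the paper as a \emph{conjecture}, and the paper contains no proof of it. The only cases of Conjecture \ref{wc4conj1} actually proved in this paper are for $\A=\modCQ$ with $Q$ a quiver without oriented cycles (Theorems \ref{wc5thm1}--\ref{wc5thm2}, proved in \S\ref{wc6}), with the algebro-geometric cases deferred to the sequel \cite{Joyc13}. Your text is therefore not a proof to be compared with the paper's, but a research programme; and as a programme it is broadly consistent with the paper's own suggested route (the method of pair invariants of Conjecture \ref{wc4conj1}(iii) and Definition \ref{wc4def1}). However, every step that constitutes the actual mathematical content is named rather than carried out, so as a proof it has genuine gaps.

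Concretely: (a) Definition \ref{wc4def1} is formulated for $\C$-linear \emph{abelian} categories, and pairs $(E,\nabla_E,s)$ of unitary bundles with connections and sections do not form one (kernels and cokernels of bundle maps need not be bundles), so even the framework of your auxiliary category $\B$ and the exact sequence \eq{wc4eq3} needs to be re-founded before the induction can start. (b) The paper explicitly warns in \S\ref{wc45}(a) that the Uhlenbeck completions $\oM_E^\rst\subseteq\oM_E^\ss$ are not expected to map to $\B^\pl$, which is why $\M^\pl$ is taken to be $\Map_{C^0}(X,B\U\t\Z)/B\U(1)$ as in \eq{wc4eq14}; your proposal never says where the classes of the compactified pair moduli spaces live, nor why they carry virtual classes at all --- Uhlenbeck bubbling adds ideal connections with point singularities, and the existence of a virtual class on such a compactification (even when $\tiM^\rst=\tiM^\ss$) is an open analytic problem, not a consequence of Assumption \ref{wc4ass1}(j). (c) The pair wall-crossing identity \eq{wc4eq4}, including the identification of wall contributions with iterated Lie brackets in $\check H_0(\tiM^\pl)$ and the matching of orientations with the signs $\ep_{\al,\be}$ of \cite{JTU}, is the entire substance of the conjecture, and you state only that you ``expect'' what it would require. (d) Your K\"ahler shortcut conflates $\mu$-stability (which is what the Hitchin--Kobayashi correspondence produces, and what the paper's stability conditions in \S\ref{wc45}(i) correspond to) with Gieseker stability, and the Uhlenbeck compactification with the Gieseker one; comparing the resulting virtual classes is itself nontrivial and is not addressed. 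In short, the proposal is a reasonable sketch of the intended strategy, but it does not establish the conjecture, which remains open in the paper.
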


\subsection{Other gauge-theoretic enumerative invariant theories}\label{wc46}

We comment briefly on other gauge-theoretic invariants in the literature.

\subsubsection*{Casson invariants of 3-manifolds}
Casson invariants count flat connections on compact 3-manifolds, as in Akbulut--McCarthy \cite{AkMc}, Taubes \cite{Taub} and Boden--Herald \cite{BoHe}. They do not fit into the set-up of Section~\ref{wc41}, because the virtual dimension is wrong, as for Donaldson--Thomas invariants of Calabi--Yau 3-folds in Section~\ref{wc436}(a), and for other reasons.

\subsubsection*{$\boldsymbol{G_2}$-instantons on $\boldsymbol{G_2}$-manifolds}
Let $X$ be a compact 7-manifold and $(\vp,g)$ a torsion-free $G_2$-structure on $X$, as in Joyce~\cite{Joyc1}. The $G_2$-structure induces a splitting $\La^2T^*X=\La^2_7\op\La^2_{14}$ into subbundles of ranks~7,~14. A~connection~$\nabla_E$ on a vector or principal bundle $E\ra X$ is a $G_2$-{\it instanton} if $\pi_{\La^2_7}\big(F^{\nabla_E}\big)=0$. Donaldson and Segal \cite{DoSe} propose defining enumerative invariants of $(X,\vp,g)$ by counting $G_2$-instantons. Again, these do not fit into the set-up of Section~\ref{wc41}, because the virtual dimension is wrong.

\subsubsection*{$\boldsymbol{\Spin(7)}$-instantons on $\boldsymbol{\Spin(7)}$-manifolds}
Let $X$ be a compact, simply-connected 8-manifold and $(\Om,g)$ a torsion-free $\Spin(7)$-structure on~$X$, as in Joyce~\cite{Joyc1}. The $\Spin(7)$-structure induces a splitting $\La^2T^*X=\La^2_7\op\La^2_{21}$ into subbundles of ranks~7,~21. A connection $\nabla_E$ on a~vector or principal bundle $E\ra X$ is a~$\Spin(7)$-{\it instanton} if $\pi_{\La^2_7}\big(F^{\nabla_E}\big)=0$. Donaldson and Thomas~\cite{DoTh} propose defining enumerative invariants of $(X,\Om,g)$ by counting $\Spin(7)$-instantons.

There is a strong analogy between counting $\Spin(7)$-instantons and Donaldson theory of 4-manifolds. Under this analogy we compare $\La^2_7\leftrightsquigarrow \La^2_+$ and $\La^2_{21}\leftrightsquigarrow \La^2_-$. There is a splitting $H^2_{\rm dR}(X,\R)=H^2_7(X,\R)\op H^2_{21}(X,\R)$, with $b^2_7=\dim H^2_7(X,\R)$ the analogue of $b^2_+$. As in Section~\ref{wc45}, we divide into three cases:
\begin{itemize}\itemsep=0pt
\item[(i)] $b^2_7=0$, which happens if $g$ has holonomy group $\Hol(g)=\Spin(7)$. Then the analogy with Donaldson theory suggests that enumerative invariants counting $\Spin(7)$ instantons cannot be defined.
\item[(ii)] $b^2_7=1$, which happens if $\Hol(g)=\SU(4)$, and $X$ is a Calabi--Yau 4-fold. Then the set-up of Section~\ref{wc41} may work. Note however that this may just be a gauge-theoretic version of the DT4 invariants of Calabi--Yau 4-folds discussed in Section~\ref{wc44}, as proposed in Cao--Leung~\cite{CaLe}.
\item[(iii)] $b^2_7>1$, which happens if $b^2_7=2$ and $\Hol(g)=\Sp(2)$ or $b^2_7=3$ and $\Hol(g)=\Sp(1)\t\Sp(1)$, and $g$ is hyperk\"ahler. Then we might hope that enumerative invariants can be defined, which are independent of $g$, but which do not fit into the framework of Section~\ref{wc41}, though see Question~\ref{wc4quest4}.
\end{itemize}

\subsection{Questions for future work}\label{wc47}

Here are some questions that seem to the authors to be interesting.

\begin{quest}\label{wc4quest1}
In a $\C$-linear enumerative invariant theory of the kind discussed in Section~{\rm \ref{wc41}}, can we write the family of invariants $[\M_\al^\ss(\tau)]_\inv$ for all $\al$ in $C(\A)$ in terms of a small amount of data by a universal formula?
\end{quest}

Examples of the kind of formula we have in mind are those writing the generating series of Donaldson invariants in terms of a finite collection of Kronheimer--Mrowka basic classes, or Seiberg--Witten invariants, as in Kronheimer--Mrowka \cite{KrMr}, Fintushel--Stern~\cite{FiSt}, Witten \cite{Witt}, Moore--Witten \cite{MoWi}, Mari\~no--Moore \cite{MaMo}, and G\"{o}ttsche--Nakajima--Yoshioka \cite{GNY3}. Using the results of Sections~\ref{wc5}--\ref{wc6}, we can investigate this question for $\A=\modCQ$, and the authors hope to write about this in a future paper in the series.

\begin{quest}\label{wc4quest2}
Can we extend the set up of Section~{\rm \ref{wc41}} to replace $H_*(\M)$, $H_*\big(\M^\pl\big)$ by $E_*(\M)$, $\ab E_*\big(\M^\pl\big)$ for $E_*(-)$ a complex-oriented generalized homology theory over $R$, such as K-homology?
\end{quest}

The vertex algebra and Lie algebra parts of Assumption~\ref{wc4ass1} will be extended to $E_*(\M)$, $E_*\big(\M^\pl\big)$ in \cite{Joyc12}. It turns out that $E_*(\M)$ is a `vertex $F$-algebra', where $F(x,y)$ is the formal group law associated to the complex-oriented cohomology theory~$E^*(-)$. There is an interesting literature on K-theoretic versions of enumerative invariants, which often form generating functions with attractive properties~-- see for example G\"{o}ttsche--Nakajima--Yoshioka~\cite{GNY2}, Okounkov~\cite{Okou}, G\"{o}ttsche--Kool~\cite{GoKo1}, Thomas~\cite{Thom2}, Laarakker~\cite{Laar}, Arbesfeld \cite{Arbe}, and Cao--Kool--Monavari~\cite{CKM}~-- and also on cobordism invariants, as in G\"ottsche--Kool \cite{GoKo2} and Shen \cite{Shen}. In Question~\ref{wc4quest1} over $E_*(-)$, should the universal formula depend on the formal group law~$F$? A~good starting point would be to study invariants $[\M_\al^\ss(\tau)]_\inv\in E_*\big(\M^\pl\big)$ for $\A=\modCQ$, as in Sections~\ref{wc5}--\ref{wc6}.

\begin{quest}\label{wc4quest3}
Can we use the theory of vertex algebras to understand more about the structure of enumerative invariants, given the appearance of vertex algebras in Assumption~{\rm \ref{wc4ass1}?} For example, to explain modular properties of generating functions of enumerative invariants? Remark~{\rm \ref{wc4rem3}} may help.
\end{quest}

Connections between vertex algebras and Donaldson theory, Vafa--Witten theory, or Seiberg--Witten theory of 4-manifolds are suggested by the work of Nakajima \cite{Naka1,Naka2} and Feigin--Gukov~\cite{FeGu}.

As a possible place to start, observe that in Conjecture \ref{wc4conj1}, for $\al\in K(\A)$ and with $R=\Q$ or $\C$, we can consider the graded $R$-vector space
\begin{equation*}
V^\al_*:=[\M_\al^\ss(\tau)]_\inv\cap H^*\big(\M_\al^\pl\big)\subset H_*\big(\M_\al^\pl\big).
\end{equation*}
This is finite-dimensional over $R$, and may be considered an approximation to the homology $H_*(\M_\al^\ss(\tau))$, since if $\M_\al^\ss(\tau)$ is a smooth projective $\C$-scheme and the restriction map $H^*\big(\M^\pl_\al\big)\ra H^*(\M_\al^\ss(\tau))$ is surjective (this is called {\it Kirwan surjectivity}, and can be proved in some situations), then $V^\al_*=H_*(\M_\al^\ss(\tau))$. For example, if $\M_\al^\ss(\tau)$ is a moduli space of rank 1 torsion-free sheaves on a simply-connected projective surface $X$ then $\M_\al^\ss(\tau)$ may be identified with a Hilbert scheme ${\rm Hilb}{}^{(n)}(X)$, and $V^\al_*\cong H_*\big({\rm Hilb}{}^{(n)}(X)\big)$.

We regard $V^\al_*$ as a `categorification' of $[\M_\al^\ss(\tau)]_\inv$. It would be interesting to use vertex algebra ideas to produce representations of interesting algebras on $\bigop_{\al\in S}V_*^\al$ for subsets $S\subset K(\A)$, just as Grojnowski~\cite{Groj} and Nakajima~\cite{Naka3} find representations of Heisenberg algebras on $\bigop_{n\ge 0}H_*\big({\rm Hilb}^{(n)}(X)\big)$.

\begin{quest}\label{wc4quest4}
Donaldson theory of compact, oriented $4$-manifolds $X$ with $b^2_+(X)=1$ fits directly into our theory, as in Section~{\rm \ref{wc45}}. Can we produce a variant of our theory which describes the case~{\rm$b^2_+(X)>1$?}

This should also apply to Mochizuki-style~{\rm \cite{Moch}} counting of coherent sheaves on smooth projective surfaces $X$ with $p_g >0$, and other situations. There should no longer be wall-crossing phenomena under change of stability condition, but counting strictly $\tau$-semistables, as in Conjecture~{\rm \ref{wc4conj1}(i)}, and general structures in the invariants, as in Question~{\rm \ref{wc4quest1}}, may still apply.	
\end{quest}

Question \ref{wc4quest4} is answered in the sequel \cite[Section~7.6]{Joyc13} for invariants counting coherent sheaves on smooth projective surfaces $X$ with~$p_g>0$.

\begin{quest}\label{wc4quest5}
Conjecture {\rm \ref{wc4conj1}} gives invariants $[\M_\al^\ss(\tau)]_\inv$ in $\Q$-homology rather than $\Z$-homology when $\M_\al^\rst(\tau)\ne\M_\al^\ss(\tau)$. The wall-crossing formulae \eq{wc4eq1}, \eq{wc4eq2}, \eq{wc4eq4} also involve coefficients $\ti U(-)$, $U(-)$ in $\Q$ rather than~$\Z$.

Is there a universal way to produce invariants $[\M_\al^\ss(\tau)]_\inv^\Z$ in $\Z$-homology, with an invertible $($modulo torsion$)$ transformation law to the $[\M_\al^\ss(\tau)]_\inv$ similar to \eq{wc3eq7}--\eq{wc3eq10}, satisfying a~different wall-crossing formula under change of stability condition, with coefficients in~$\Z$?
\end{quest}

As an example of what we have in mind, note that Joyce--Song \cite{JoSo} define `generalized Donaldson--Thomas invariants' $\bar{DT}{}^\al(\tau)\in\Q$, the analogue of our $[\M_\al^\ss(\tau)]_\inv$. In \cite[Section~6.2]{JoSo} they also define `BPS invariants' $\hat{DT}{}^\al(\tau)$ by
\begin{equation*}
\bar{DT}{}^\al(\tau)=\sum_{m\ge 1,\, m\mid\al}\frac{1}{m^2}\,
\hat{DT}{}^{\al/m}(\tau),
\end{equation*}
and they conjecture \cite[Conjecture~6.12]{JoSo} that $\hat{DT}{}^\al(\tau)\in\Z$ when $\tau$ is `generic'. The authors expect the answer to Question~\ref{wc4quest5} will involve linear operations mapping $\check H_*\big(\M^\pl_\al\big)\ra\check H_*\big(\M^\pl_{m\al}\big)$ for~$m=2,3,\dots$.

\begin{quest}\label{wc4quest6}
Do our conjectures have an interpretation in string theory?
\end{quest}

\section{An example: representations of quivers}\label{wc5}

We now prove Conjecture~\ref{wc4conj1} when $\A=\modCQ$ for $Q$ a quiver without oriented cycles. The proofs of Theorems~\ref{wc5thm1} and~\ref{wc5thm2} are postponed to Section~\ref{wc6}.

\subsection{Quivers, their moduli stacks, and vertex algebras}\label{wc51}

Here are the basic definitions in quiver theory, as in Benson~\cite[Section~4.1]{Bens}.

\begin{dfn}
\label{wc5def1}
A {\it quiver} $Q$ is a finite
directed graph. That is, $Q$ is a quadruple $(Q_0,Q_1,h,t)$, where
$Q_0$ is a finite set of {\it vertices}, $Q_1$ is a finite set of
{\it edges}, and $h,t\colon Q_1\ra Q_0$ are maps giving the {\it head}
and {\it tail} of each edge.

A closed loop of directed edges $\overset{v_0}{\bu}\,{\buildrel e_1\over \longra}\,\overset{v_1}{\bu}\,{\buildrel e_2\over \longra}\,\cdots \,{\buildrel e_n\over \longra}\,\overset{v_n=v_0}{\bu}$ is an {\it oriented cycle} in $Q$. Later we will restrict to quivers with {\it no oriented cycles}.

A {\it representation} $(\bs V,\bs\rho)=((V_v)_{v\in Q_0},(\rho_e)_{e\in Q_1})$ of $Q$ gives
finite-dim\-en\-sion\-al $\C$-vector spa\-ces~$V_v$ for $v\in Q_0$, and linear maps $\rho_e\colon V_{t(e)}\ra V_{h(e)}$ for $e\in Q_1$. A {\it morphism of representations} $\bs\phi=(\phi_v)_{v\in Q_0}\colon (\bs V,\bs\rho)\ra(\bs W,\bs\si)$ gives linear maps $\phi_v\colon V_v\ra W_v$ for $v\in Q_0$ with $\phi_{h(e)}\ci\rho_e=\si_e\ci\phi_{t(e)}$ for $e\in Q_1$. Write $\modCQ$ for the $\C$-linear abelian category of representations of~$Q$.

Write $\Z^{Q_0}$ for the abelian group of maps $\bs d\colon Q_0\ra\Z$, and $\N^{Q_0}\subset\Z^{Q_0}$ for the subset of maps $\bs d\colon Q_0\ra\N$. The {\it dimension vector} $\bdim(\bs V,\bs\rho)\in\N^{Q_0}$ of a representation $(\bs V,\bs\rho)$ is $\bdim(\bs V,\bs\rho)\colon v\mapsto\dim_\C V_v$. This induces a surjective morphism~$\bdim\colon K_0(\modCQ)\ra\Z^{Q_0}$.
\end{dfn}

We describe the moduli stacks $\M$, $\M^\pl$ for $\A=\modCQ$.

\begin{dfn}
\label{wc5def2}
Let $Q=(Q_0,Q_1,h,t)$ be a quiver. Write $\M$ for the moduli stack of objects $(\bs V,\bs\rho)$ in $\modCQ$. Then there is a natural decomposition
\begin{equation*}
\M= \coprod_{\bs d\in\N^{Q_0}}\M_{\bs d},
\end{equation*}
where $\M_{\bs d}$ is the moduli stack of $(\bs V,\bs\rho)$ with $\bdim(\bs V,\bs\rho)=\bs d$. For any such $(\bs V,\bs\rho)$, by considering isomorphisms $V_v\cong\C^{\bs d(v)}$ for $v\in Q_0$ we see we may write $\M_{\bs d}$ explicitly as a quotient stack
\begin{gather*}
\M_{\bs d}=[R_{\bs d}/\GL_{\bs d}], \qquad\text{where}\quad R_{\bs d}= \prod_{e \in Q_1} \Hom\big(\C^{\bs d(t(e))},\C^{\bs d(h(e))}\big)\\
\text{and} \quad  \GL_{\bs d}=  \prod_{v \in Q_0} \GL(\bs d(v),\C),\quad\text{with group action}\\
(A_v)_{v\in Q_0}\colon \ ((B_e)_{e\in Q_1})\longmapsto \big(A_{h(e)}\ci B_e\ci A_{t(e)}^{-1}\big)_{e\in Q_1}.
\end{gather*}

Write $\cV_v\ra\M$ for $v\in Q_0$ for the tautological vector bundle with
\begin{equation*}
\cV_v\vert_{[((V_v)_{v\in Q_0},(\rho_e)_{e\in Q_1})]}=V_v,
\end{equation*}
and write $\cV_{v,\bs d}=\cV_v\vert_{\M_{\bs d}}$ for $\bs d\in\N^{Q_0}$, so that $\rank \cV_{v,\bs d}=\bs d(v)$. As $R_{\bs d}$ is contractible, we have $\bA^1$-homotopy equivalences
\begin{equation*}
\M_{\bs d}\simeq[*/\GL_{\bs d}]=\prod_{v\in Q_0}[*/\GL(\bs d(v),\C)].	
\end{equation*}
Thus the topological realization of $\M_{\bs d}$ is
\begin{equation*}
\M_{\bs d}^\top\simeq\prod_{v\in Q_0}B\GL(\bs d(v),\C).
\end{equation*}
Let $R$ be a commutative $\Q$-algebra, such as $\Q$, $\R$ or $\C$. As $\GL(r,\C)\simeq\U(r)$, the computation of $H^*(B\U(r))$ by Milnor and Stasheff \cite[Theorem~14.5]{MiSt} implies that the cohomology of $\M_{\bs d}$ over $R$~is
\e
H^*(\M_{\bs d})=H^*\big(\M_{\bs d}^\top,R\big)\cong R\big[c_{v,{\bs d}}^i\colon v\in Q_0,\, i=1,2,\dots,\bs d(v)\big],
\label{wc5eq2}
\e
where $c_{v,{\bs d}}^i$ is a formal variable of degree $2i$, with $c_{v,{\bs d}}^i=c_i(\cV_{v,\bs d})$. The homology $H_*(\M_{\bs d})$ is the $R$-linear dual of \eq{wc5eq2}, and $H_*(\M)=\bigop_{\bs d\in\N^{Q_0}}H_*(\M_{\bs d})$.

Similarly, the projective linear moduli stack $\M^\pl$ from Definition \ref{wc2def4} is
\e
\M^\pl= \coprod_{\bs d\in\N^{Q_0}}\M_{\bs d}^\pl,\qquad\text{where}\quad \M_{\bs d}^\pl=[R_{\bs d}/\PGL_{\bs d}],
\label{wc5eq3}
\e
for $\PGL_{\bs d}=\GL_{\bs d}/\bG_m$ with $\bG_m=\bigl\{(\la\,\id_{\bs d(v)})_{v\in Q_0}\colon 0\ne\la\in\C\bigr\}\subseteq\GL_{\bs d}$.
\end{dfn}

We can describe the Ext groups $\Ext^i(D,E)$, the Euler form $\chi_Q$, and the Ext complex $\cExt^\bu$ explicitly for~$\modCQ$.

\begin{dfn}\label{wc5def3}
Let $Q=(Q_0,Q_1,h,t)$ be a quiver. It is well known that $\Ext^i(D,E)=0$ for all $D,E\in\modCQ$ and $i>1$, and
\begin{equation*}
\dim_\C\Hom(D,E)-\dim_\C\Ext^1(D,E)=\chi_Q(\bdim D,\bdim E),
\end{equation*}
where $\chi_Q\colon \Z^{Q_0}\t\Z^{Q_0}\ra\Z$ is the Euler form of
$\modCQ$, given by
\e
\chi_Q(\bs d,\bs e)= \sum_{v\in Q_0}\bs d(v)\bs e(v)-\sum_{e\in
Q_1}\bs d(t(e))\bs e(h(e)).\label{wc5eq4}
\e
The Ext complex $\cExt^\bu\ra\M\t\M$ may be written explicitly as the two-term complex of vector bundles in degrees~0,~1:
\e
\cExt^\bu=\Bigl[\xymatrix@C=50pt{
\mathop{\bigop_{v\in Q_0} \cV_v^*\bt \cV_v}\limits_0 \ar[r]^{\la} & \mathop{\bigop_{e\in Q_1} \cV_{t(e)}^*\bt \cV_{h(e)}}\limits_1}\Bigr],
\label{wc5eq5}
\e
where `$\bt$' is external tensor product, and the morphism $\la$ depends on the point in $R_{\bs d}$ in~$\M_{\bs d}=[R_{\bs d}/\GL_{\bs d}]$.
\end{dfn}

We then make $\hat H_*(\M)$ into a graded vertex algebra, and $\check H_*\big(\M^\pl\big)$ into a graded Lie algebra, as in Sections~\ref{wc23}--\ref{wc24} with the data in Assumption \ref{wc2ass1} chosen as in Section~\ref{wc431}, so in particular we take
\begin{alignat}{3}
&K(\modCQ)=\Z^{Q_0}, \qquad && \chi(\al,\be)=\chi_Q(\al,\be)+\chi_Q(\be,\al), &\nonumber\\
&\ep_{\al,\be}=(-1)^{\chi_Q(\al,\be)}, \qquad && \Th^\bu=(\cExt^\bu)^\vee\op\si_\M^*(\cExt^\bu).&\label{wc5eq6}
\end{alignat}

\subsection{(Weak) stability conditions on quiver categories}\label{wc52}

{\it Slope stability conditions} on quiver categories are an important class.

\begin{dfn}\label{wc5def4}
Let $Q$ be a quiver, and in the situation of Section~\ref{wc33} with $\A=\modCQ$, take $K(\A)=\Z^{Q_0}$, so that $C(\A)=\N^{Q_0}\sm\{0\}$. Fix $\mu_v\in\R$ for all $v\in Q_0$. Define $\mu\colon C(\A)\ra\R$ by
\begin{equation*}
\mu(\bs d)=\frac{\sum_{v\in Q_0}\mu_v\bs d(v)}{\sum_{v\in Q_0}\bs d(v)} .
\end{equation*}
Then $(\mu,\R,\le)$ is a stability condition on $\modCQ$ in the sense of Definition~\ref{wc3def3}, called {\it slope stability}, which we often write as~$\mu$. We call $\mu$ a {\it slope function}.

For an object $E$ of $\modCQ$ to be $\mu$-stable, or $\mu$-semistable, is an open condition on the point~$[E]$ in~$\M$ or $\M^\pl$. Write $\M^\rst_{\bs d}(\mu)\subseteq\M^\ss_{\bs d}(\mu)\subseteq\M_{\bs d}^\pl$ for the open $\C$-substacks of $\mu$-(semi)stable objects. They are quotient stacks
\begin{equation*}
\M^\rst_{\bs d}(\mu)=[R^\rst_{\bs d}(\mu)/\PGL_{\bs d}],\qquad
\M^\ss_{\bs d}(\mu)=[R^\ss_{\bs d}(\mu)/\PGL_{\bs d}],
\end{equation*}
for $\PGL_{\bs d}$-invariant open subschemes $R^\rst_{\bs d}(\mu)\subseteq R^\ss_{\bs d}(\mu)\subseteq R_{\bs d}$.
\end{dfn}

Here is a class of slope functions for which the moduli spaces $\M^\rst_{\bs d}(\mu)$, $\ab\M^\ss_{\bs d}(\mu)$ are easy to understand.

\begin{dfn}\label{wc5def5}
Let $Q$ be a quiver, and $\mu$ a slope function on $\modCQ$ defined using $\mu_v\in\R$ for $v\in Q_0$. We call $\mu$ {\it increasing} if for all edges $\overset{v}{\bu}\,{\buildrel e\over \longra}\,\overset{w}{\bu}$ in $Q$ we have $\mu_v<\mu_w$. Such $\mu$ exist if and only if $Q$ has {\it no oriented cycles}.
\end{dfn}

\begin{prop}\label{wc5prop1}
Let $Q$ be a quiver with no oriented cycles, and $\mu$ be an increasing slope function on $Q$. Then for each $\bs d\in\N^{Q_0}\sm\{0\}$, either:
\begin{itemize}\itemsep=0pt
\item[{\rm (a)}] $\bs d=\de_v$ for some $v\in Q_0$, that is, $\bs d(v)=1$ and $\bs d(w)=0$ for $w\ne v$. Then $\M^\rst_{\bs d}(\mu)=\M^\ss_{\bs d}(\mu)$ is a single point~$*$.
\item[{\rm (b)}] $\bs d\ne\de_v$ for any $v\in Q_0$, and for some $t\in\R$ we have $\bs d(v)=0$ for all $v\in Q_0$ with $\mu_v\ne t$. Then $\M^\rst_{\bs d}(\mu)=\es$ and $\M^\ss_{\bs d}(\mu)\cong[*/\PGL_{\bs d}]$. Also  $2-\chi(\bs d,\bs d)<0$ in this case.
\item[{\rm (c)}] Neither {\rm (a)} nor {\rm (b)} hold. Then $\M^\rst_{\bs d}(\mu)=\M^\ss_{\bs d}(\mu)=\es$.
\end{itemize}
\end{prop}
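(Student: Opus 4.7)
The plan is to treat the three cases separately, exploiting the fact that whenever $\bs d$ is supported on vertices all of slope $t$, the increasing hypothesis $\mu_v<\mu_w$ for each edge $v\to w$ forces every structure map $\rho_e$ to vanish. Throughout, for an edge $e$ in $Q$, I shall write $v=t(e)$ and $w=h(e)$, and I shall use the standard identity $\mu(\bs d')\le\mu(\bs d)\Leftrightarrow\mu(\bs d')\le\mu(\bs d-\bs d')$ for a nonzero proper summand $\bs d'$ of $\bs d$.

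First, for case \textbf{(a)}, an object $(\bs V,\bs\rho)$ with $\bdim=\de_v$ has $V_v=\C$ and $V_w=0$ for $w\ne v$. Because $Q$ has no oriented cycles there is no edge $e$ with $t(e)=h(e)=v$, so for every edge $e$ one of the endpoints has zero vector space and hence $\rho_e=0$. Thus $R_{\de_v}$ is a single point, $\GL_{\de_v}=\bG_m$, so $\PGL_{\de_v}$ is trivial and $\M_{\de_v}=[*/\bG_m]$, $\M_{\de_v}^{\rm pl}=*$. The unique object is simple, hence $\mu$-stable.

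For case \textbf{(b)}, suppose $\bs d$ is supported on the set $S=\{v:\mu_v=t\}$ but $\bs d\ne\de_v$. For any edge $e:v\to w$, if both $\bs d(v),\bs d(w)>0$ we would have $v,w\in S$ and hence $\mu_v=\mu_w$, contradicting the increasing hypothesis. So every $\rho_e$ vanishes, $R_{\bs d}$ is a point, and $\M^\ss_{\bs d}(\mu)\cong[*/\PGL_{\bs d}]$. Every subobject $(\bs V',0)$ has all of its dimensions supported in $S$, so its slope is $t=\mu(\bs d)$; this shows every such object is $\mu$-semistable. However because $\bs d\ne\de_v$ either some $\bs d(v)\ge 2$ or $\bs d$ has support at two distinct vertices, and in either case one can pick a proper nonzero subspace of some $V_v$ (respectively, restrict to one vertex of the support) to produce a proper nonzero subobject with slope equal to $t$, showing no object is $\mu$-stable. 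For the inequality, use \eq{wc5eq4} together with the vanishing $\bs d(t(e))\bs d(h(e))=0$ for every edge to get $\chi_Q(\bs d,\bs d)=\sum_v\bs d(v)^2$, and by \eq{wc5eq6}
\[
2-\chi(\bs d,\bs d)=2-2\chi_Q(\bs d,\bs d)=2-2\ts\sum_v\bs d(v)^2\le 2-2\cdot 2=-2<0,
\]
since $\sum_v\bs d(v)^2\ge 2$ in either subcase above.

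For case \textbf{(c)}, neither alternative holds, so the support of $\bs d$ meets at least two distinct slope values; let $t_{\max}$ be the largest $\mu_v$ with $\bs d(v)>0$, and let $S_{\max}=\{v:\mu_v=t_{\max},\bs d(v)>0\}$. Given any $(\bs V,\bs\rho)$ of dimension vector $\bs d$, define the subobject $(\bs V',\bs\rho')$ by $V'_v=V_v$ for $v\in S_{\max}$ and $V'_v=0$ otherwise. This is a subrepresentation: for any edge $e:v\to w$ with $v\in S_{\max}$, the increasing hypothesis gives $\mu_w>t_{\max}$, hence $\bs d(w)=0$, so $V_w=0$ and the condition $\rho_e(V'_v)\subseteq V'_w$ is vacuous. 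By construction $\bs d':=\bdim(\bs V',\bs\rho')$ is nonzero (since $S_{\max}\ne\es$) and strictly less than $\bs d$ (since the support of $\bs d$ contains a vertex with slope $<t_{\max}$). Then $\mu(\bs d')=t_{\max}>\mu(\bs d)$, so $(\bs V,\bs\rho)$ fails $\mu$-semistability, proving $\M^\ss_{\bs d}(\mu)=\es$.

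I do not anticipate any serious obstacle: the whole argument reduces to the observation that the increasing hypothesis turns a single-slope support into a quiver with no edges, and otherwise singles out a canonical destabilizing subrepresentation supported on the top-slope vertices. The only mildly computational point is the inequality $2-\chi(\bs d,\bs d)<0$ in (b), which follows directly from $\chi_Q(\bs d,\bs d)=\sum_v\bs d(v)^2$ once the vanishing of all $\rho_e$ is noted.
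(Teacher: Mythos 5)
Your proof is correct and follows essentially the same route as the paper's: in (a) and (b) the increasing hypothesis kills all edge maps so $R_{\bs d}=0$, and in (c) one exhibits an explicit destabilizing subrepresentation cut out by a slope threshold. The only cosmetic difference is that in (c) you take the subobject supported on the top-slope vertices $S_{\max}$, whereas the paper uses the vertices with $\mu_v\ge\ha(\mu_{v_1}+\mu_{v_2})$; both are valid and verified the same way.
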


\begin{proof} For (a), if $\bs d=\de_v$ then as $Q$ has no oriented cycles there are no edges $\overset{v}{\bu}\,{\buildrel e\over \longra}\,\overset{v}{\bu}$ in $Q$, and in \eq{wc5eq3} we have $R_{\bs d}=0$ and $\PGL_{\bs d}=\{1\}$, so $\M_{\bs d}^\pl=*$. But $\M^\rst_{\bs d}(\mu)=\M^\ss_{\bs d}(\mu)=\M_{\bs d}^\pl$ as any $(\bs V,\bs\rho)$ in class $\bs d$ in $\modCQ$ has no non-trivial subobjects, and so is automatically $\mu$-stable.

For (b), suppose $\bs d\ne\de_v$ for any $v$, and for $t\in\R$ we have $\bs d(v)=0$ for all $v\in Q_0$ with $\mu_v\ne t$. Then if $v_1,v_2\in Q_0$ with $\bs d(v_1),\bs d(v_2)>0$ we have $\mu_{v_1}=\mu_{v_2}=t$, so there are no edges $\overset{v_1}{\bu}\,{\buildrel e\over \longra}\,\overset{v_2}{\bu}$ in $Q$ as $\mu$ is increasing. Thus in \eq{wc5eq3} we have $R_{\bs d}=0$, so $\M_{\bs d}^\pl\cong[*/\PGL_{\bs d}]$.

Let $(\bs V,\bs\rho)$ lie in class $\bs d$ in $\modCQ$. Then $\bs\rho=\bs 0$ as $R_{\bs d}=0$. If $0\ne (\bs W,\bs 0)\subseteq(\bs V,\bs 0)$ is any subobject then $\mu(\lb\bs W,\bs 0\rb)=\mu(\lb\bs V,\bs 0\rb)=t$ as $\bs d(v)=0$ unless $\mu_v=t$. Hence $(\bs V,\bs\rho)$ is $\mu$-semistable and $\M^\ss_{\bs d}(\mu)=\M_{\bs d}^\pl\cong[*/\PGL_{\bs d}]$. Also as $\bs d\ne\de_v$ we have $\sum_{v\in Q_0}\bs d(v)>1$. So there exists a proper subobject $0\ne (\bs W,\bs 0)\subsetneq(\bs V,\bs 0)$, and $\mu(\lb\bs W,\bs 0\rb)=\mu(\lb\bs V,\bs 0\rb)$ shows that $(\bs V,\bs\rho)$ is not $\mu$-stable, giving $\M^\rst_{\bs d}(\mu)=\es$. As above, if $v_1,v_2\in Q_0$ with $\bs d(v_1),\bs d(v_2)>0$ there are no edges $\overset{v_1}{\bu}\,{\buildrel e\over \longra}\,\overset{v_2}{\bu}$ in $Q$. Thus from \eq{wc5eq4} and \eq{wc5eq6} we see that $\chi(\bs d,\bs d)=2\sum_{v\in Q_0}\bs d(v)^2$, so $\chi(\bs d,\bs d)\ge 4$ as $\bs d\ne\de_v$, and~$2-\chi(\bs d,\bs d)<0$.

For (c), as neither (a), (b) hold there exist $v_1,v_2\in Q_0$ with $\bs d(v_1),\bs d(v_2)>0$ and $\mu_{v_1}\ne\mu_{v_2}$. Set $t=\ha(\mu_{v_1}+\mu_{v_2})$. Define a subobject $(\bs W,\bs\si)\subset(\bs V,\bs\rho)$ in $\modCQ$ by $W_v=V_v$ if $\mu_v\ge t$ and $W_v=0\subseteq V_v$ if $\mu_v<t$, and $\bs\si=\bs\rho\vert_{\bs W}$. If $\overset{v}{\bu}\,{\buildrel e\over \longra}\,\overset{w}{\bu}$ is an edge in $Q$ then $\mu$ increasing implies that either (i) $W_v=V_v$, $W_w=V_w$, or (ii) $W_v=W_w=0$, or (iii) $W_v=0$, $W_w=V_w$, and in each case $\rho_e\colon V_v\ra V_w$ maps $W_v\ra W_w$, so $(\bs W,\bs\si)$ is well defined. Both $(\bs W,\bs\si)$ and $(\bs V,\bs\rho)/(\bs W,\bs\si)$ are nonzero, as one contains $V_{v_1}\ne 0$ and the other $V_{v_2}\ne 0$. Also $\mu\bigl([(\bs W,\bs\si)]\bigr)\ge t$ and $\mu\bigl([(\bs V,\bs\rho)/(\bs W,\bs\si)]\bigr)<t$, so $\mu\bigl([(\bs W,\bs\si)]\bigr)>\mu\bigl([(\bs V,\bs\rho)/(\bs W,\bs\si)]\bigr)$. Hence $(\bs V,\bs\rho)$ is $\mu$-unstable by Definition \ref{wc3def3}, for all $(\bs V,\bs\rho)$ with $\bdim(\bs V,\bs\rho)=\bs d$. Thus~$\M^\rst_{\bs d}(\mu)=\M^\ss_{\bs d}(\mu)=\es$.
\end{proof}

\begin{prop}\label{wc5prop2}
Let $Q$ be a quiver with no oriented cycles, and $\mu$ a slope function on $\modCQ$, and $\bs d\in \N^{Q_0}\sm\{0\}$ with $\M^\rst_{\bs d}(\mu)=\M^\ss_{\bs d}(\mu)$. Then $\M^\ss_{\bs d}(\mu)$ is a smooth projective $\C$-scheme.
\end{prop}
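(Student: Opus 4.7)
The proof plan combines two ingredients: freeness of the $\PGL_{\bs d}$-action on the stable locus (giving smoothness), and King's GIT construction together with the absence of oriented cycles (giving projectivity).

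First I would write $\M_{\bs d}^\ss(\mu)=[R_{\bs d}^\ss(\mu)/\PGL_{\bs d}]$ as in Definition \ref{wc5def4}, where $R_{\bs d}^\ss(\mu)\subseteq R_{\bs d}$ is open and $R_{\bs d}$ is the affine space \eq{wc5eq1}, hence smooth. Since by hypothesis $\M_{\bs d}^\rst(\mu)=\M_{\bs d}^\ss(\mu)$, every $(\bs V,\bs\rho)$ in $R_{\bs d}^\ss(\mu)$ is $\mu$-stable, and $\mu$-stable representations are simple, so $\Aut(\bs V,\bs\rho)=\bG_m\cdot\id$. Consequently the induced $\PGL_{\bs d}=\GL_{\bs d}/\bG_m$ action on $R_{\bs d}^\ss(\mu)$ is \emph{free} with trivial stabilizers, so the quotient stack is a smooth algebraic space.

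To get projectivity I would invoke King \cite{King} (Proposition 3.1 and Proposition 4.3 of his paper ``Moduli of representations of finite dimensional algebras''). Define the character $\theta:\GL_{\bs d}\to\bG_m$ by $\theta((A_v)_{v\in Q_0})=\prod_{v\in Q_0}\det(A_v)^{n_v}$, where the integers $n_v$ are chosen (after clearing denominators in a standard way) so that $\sum_v n_v\bs d(v)=0$ and so that $\theta$-(semi)stability in the sense of GIT agrees with $\mu$-(semi)stability on the component with dimension vector $\bs d$. King shows that $R_{\bs d}^\ss(\mu)$ is precisely the $\theta$-semistable locus of $R_{\bs d}$ under $\GL_{\bs d}$, and that the GIT quotient
\e
R_{\bs d}^\ss(\mu)/\!\!/_{\theta}\,\GL_{\bs d}\longra R_{\bs d}/\!\!/\,\GL_{\bs d}
\label{wc5eqplan}
\e
is projective.

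The key step is identifying the target of \eq{wc5eqplan}. Because $Q$ has no oriented cycles, every $\GL_{\bs d}$-invariant regular function on $R_{\bs d}$ is constant: invariants on $R_{\bs d}$ are generated by traces along oriented cycles (by Le Bruyn--Procesi), and these are empty. Hence $R_{\bs d}/\!\!/\,\GL_{\bs d}=\Spec\C$, so \eq{wc5eqplan} shows $R_{\bs d}^\ss(\mu)/\!\!/_{\theta}\,\GL_{\bs d}$ is projective over $\C$. Under the stable$=$semistable hypothesis, this GIT quotient coincides with the geometric quotient $R_{\bs d}^\ss(\mu)/\PGL_{\bs d}$, and agrees with the stack $\M_{\bs d}^\ss(\mu)$ as a scheme. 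Combining with the smoothness in the previous paragraph, $\M_{\bs d}^\ss(\mu)$ is a smooth projective $\C$-scheme.

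The main subtlety I anticipate is the bookkeeping to translate the slope function $\mu$ (defined via real numbers $\mu_v\in\R$) into an integral GIT character $\theta$ on the component of dimension vector $\bs d$, so that $\mu$-(semi)stability as defined in \S\ref{wc33} matches $\theta$-(semi)stability in King's sense. This is essentially routine--one replaces $\mu_v$ with $\mu_v-\mu(\bs d)$, clears denominators, and verifies the Hilbert--Mumford numerical criterion--but it must be carried out to apply \cite{King}.
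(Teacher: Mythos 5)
Your proof follows essentially the same route as the paper's: both rest on King's identification of $\mu$-(semi)stability with GIT $\theta$-(semi)stability for a character of $\GL_{\bs d}$, both use triviality of stabilizers on the stable locus to identify the quotient stack with the (smooth) GIT quotient scheme, and both deduce projectivity by showing the affine quotient $R_{\bs d}/\!/\GL_{\bs d}$ is a point. The only genuine difference is how that last fact is obtained: you quote Le Bruyn--Procesi (invariants are generated by traces along oriented cycles, of which there are none), while the paper exhibits a $\bG_m$-subgroup of $\PGL_{\bs d}$ acting on the vector space $R_{\bs d}$ with only positive weights (possible by topologically ordering the vertices, since $Q$ has no oriented cycles), which likewise forces all invariants to be constant. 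Both arguments are standard and correct, and buy the same thing.

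The one step you should not wave away is the passage from real $\mu_v$ to an integral character. Your recipe of replacing $\mu_v$ by $\mu_v-\mu(\bs d)$ and ``clearing denominators'' works only when the $\mu_v$ are rational; for irrational $\mu_v$ there is nothing to clear, and no integral character reproduces $\mu$ on the nose. The paper's fix is a short wall-and-chamber argument: the loci $\M^\rst_{\bs d}(\mu)$ and $\M^\ss_{\bs d}(\mu)$ depend only on the stratum of $\R^{Q_0}$ cut out by the finitely many rational hyperplanes $\mu(\bs e)=\mu(\bs d-\bs e)$ for $0<\bs e<\bs d$, and every such stratum contains a rational point $(\ti\mu_v)_{v\in Q_0}$, so one may replace $\mu$ by $\ti\mu$ without changing either moduli space and then apply the rational case. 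You explicitly flagged this as the main subtlety, so the omission is not structural, but as written the reduction to King's setting is incomplete for general real slope functions.
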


\begin{proof}
Suppose first that $\mu_v\in\Z$ for all $v\in Q_0$. Then using geometric invariant theory (GIT) from Mumford--Fogarty--Kirwan \cite{MFK}, King \cite[Theorem~4.1]{King} shows that $R^\rst_{\bs d}(\mu)$, $R^\ss_{\bs d}(\mu)$ are the open subschemes of GIT (semi)stable points for a certain linearization $\th$ of the action of $\PGL_{\bs d}$ on~$R_{\bs d}$ determined by the $\mu_v$. Thus $\M^\rst_{\bs d}(\mu)=[R^\rst_{\bs d}(\mu)/\PGL_{\bs d}]$ is not merely an Artin $\C$-stack, but a smooth quasi-projective $\C$-scheme. Also there exists a GIT quotient $\tiM^\ss_{\bs d}(\mu)=R^\ss_{\bs d}(\mu)/\!/_\th\PGL_{\bs d}$, which is a coarse moduli scheme for $\M^\ss_{\bs d}(\mu)$. As $Q$ has no oriented cycles, there is a $\bG_m$-subgroup of $\PGL_{\bs d}$ acting on the vector space $R_{\bs d}$ with only positive weights, so $\tiM^\ss_{\bs d}(\mu)$ is a projective $\C$-scheme. If $\M^\rst_{\bs d}(\mu)=\M^\ss_{\bs d}(\mu)$ then $\M^\rst_{\bs d}(\mu)=\tiM^\ss_{\bs d}(\mu)$, so $\M^\ss_{\bs d}(\mu)$ is a smooth projective $\C$-scheme.

The condition that $\mu_v\in\Z$ is unnecessary. As the notions of $\mu$-(semi)stability are unchanged by multiplying all $\mu_v$ by a positive number, the result holds for $\mu_v\in\Q$. To allow $\mu_v\in\R$, note that the space $\R^{Q_0}$ of values $(\mu_v)_{v\in Q_0}$ is divided into chambers by finitely many real hyperplanes of the form $\mu(\bs e)=\mu(\bs d-\bs e)$ for $0<\bs e<\bs d$, such that $\M^\rst_{\bs d}(\mu)$, $\M^\ss_{\bs d}(\mu)$ depend only on the codimension $k$ stratum $S$ in the induced stratification of $\R^{Q_0}$ containing $(\mu_v)_{v\in Q_0}$. As the hyperplanes $\mu(\bs e)=\mu(\bs d-\bs e)$ are rational, $S$ contains a rational point $(\ti\mu_v)_{v\in Q_0}$, and then $\M^\rst_{\bs d}(\mu)=\M^\rst_{\bs d}(\ti\mu)$ and $\M^\ss_{\bs d}(\mu)=\M^\ss_{\bs d}(\ti\mu)$. The proposition follows.	
\end{proof}

\subsection{Defining invariants}\label{wc53}

The next theorem, Conjecture~\ref{wc4conj1}(i),~(ii) for $\modCQ$, is proved in Section~\ref{wc6}.

\begin{thm}\label{wc5thm1}
Let $Q$ be a quiver with no oriented cycles, and use the notation of Section~{\rm \ref{wc51}}, so in particular we have a Lie algebra over the $\Q$-algebra~$R$
\begin{equation*}
\check H_0\big(\M^\pl\big)=\bigop_{\bs d\in\N^{Q_0}}\check H_0\big(\M_{\bs d}^\pl\big)=\bigop_{\bs d\in\N^{Q_0}}H_{2-2\chi_Q(\bs d,\bs d)}\big(\M_{\bs d}^\pl\big),
\end{equation*}
for $\chi_Q$ as in \eq{wc5eq4}. Then for all weak stability conditions $(\tau,T,\le)$ on $\modCQ$ in the sense of Section~{\rm \ref{wc33}}, and for all $\bs d\in \N^{Q_0}\sm\{0\}$, there exist unique classes $[\M_{\bs d}^\ss(\tau)]_\inv\in\check H_0\big(\M_{\bs d}^\pl\big)$ with the properties:
\begin{itemize}\itemsep=0pt
\item[{\rm (i)}] Suppose $\mu$ is a slope function on $\modCQ$, and $\bs d\in\N^{Q_0}\sm\{0\}$ with $\M^\rst_{\bs d}(\mu)=\M^\ss_{\bs d}(\mu)$, so that $\M^\ss_{\bs d}(\mu)$ is a smooth projective $\C$-scheme by Proposition~{\rm \ref{wc5prop2}}. Then $\dim_\C\M^\ss_{\bs d}(\mu)=1-\chi_Q(\bs d,\bs d)$, so it has a fundamental class $\big[\M^\ss_{\bs d}(\mu)\big]_\fund$ in $H_{2-2\chi_Q(\bs d,\bs d)}\big(\M^\ss_{\bs d}(\mu)\big)$, and $\big[\M_{\bs d}^\ss(\mu)\big]_\inv$ is the pushforward $\io_*\bigl([\M^\ss_{\bs d}(\mu)]_\fund\bigr)$ under the inclusion $\io\colon \M^\ss_{\bs d}(\mu)\hookra\M_{\bs d}^\pl$.

This includes the case when $\M^\ss_{\bs d}(\mu)=\es$, with~$[\M_{\bs d}^\ss(\mu)]_\inv=0$.
\item[{\rm (ii)}] Let $(\tau,T,\le)$ and $\big(\ti\tau,\ti T,{\le}\big)$ be two weak stability conditions on $\modCQ$. Then as for \eq{wc4eq1}--\eq{wc4eq2}, for all $\bs d\in \N^{Q_0}\sm\{0\}$ we have
\begin{gather}
[\M_{\bs d}^\ss(\ti\tau)]_\inv=
\sum_{\substack{n\ge 1,\,\bs d_1,\dots,\bs d_n\in
\N^{Q_0}\sm\{0\}\colon \\ \bs d_1+\cdots+\bs d_n=\bs d }}  \!\!\!
\begin{aligned}[t]
\ti U(\bs d_1,&\dots,\bs d_n;\tau,\ti\tau)\cdot\bigl[\bigl[\cdots\bigl[\big[\M_{\bs d_1}^\ss(\tau)\big]_\inv,\\
&
\big[\M_{\bs d_2}^\ss(\tau)\big]_\inv\bigr],\dots\bigr],\big[\M_{\bs d_n}^\ss(\tau)\big]_\inv\bigr],
\end{aligned}
\label{wc5eq7}\\
[\M_{\bs d}^\ss(\ti\tau)]_\inv=
\sum_{\substack{n\ge 1,\,\bs d_1,\dots,\bs d_n\in
\N^{Q_0}\sm\{0\}\colon \\ \bs d_1+\cdots+\bs d_n=\bs d }}  \!\!\!
\begin{aligned}[t]
U(\bs d_1,&\dots,\bs d_n;\tau,\ti\tau)\cdot\big[\M_{\bs d_1}^\ss(\tau)\big]_\inv *\\
&
\big[\M_{\bs d_2}^\ss(\tau)\big]_\inv*\cdots *\big[\M_{\bs d_n}^\ss(\tau)\big]_\inv,
\end{aligned}\label{wc5eq8}
\end{gather}
which are equivalent equations, \eq{wc5eq7} in the Lie algebra $\check H_0\big(\M^\pl\big)$ and \eq{wc5eq8} in its universal enveloping algebra $U\big(\check H_0\big(\M^\pl\big)\big)$.
\item[{\rm (iii)}] Let $\mu$ be an increasing slope function on $\modCQ$. Then
\e
[\M_{\bs d}^\ss(\mu)]_\inv=\begin{cases} 1\in H_0\big(\M_{\bs d}^\pl\big)\cong R, & \bs d=\de_v,\  v\in Q_0, \\ 0, & \text{otherwise}. 	
\end{cases}
\label{wc5eq9}
\e
This follows from {\rm (i)} and Proposition {\rm\ref{wc5prop1}}, noting that in Proposition~{\rm \ref{wc5prop1}(b)}, $[\M_{\bs d}^\ss(\mu)]_\inv$ lies in $H_{<0}\big(\M_{\bs d}^\pl\big)=0$ as~$2-\chi(\bs d,\bs d)<0$.
\end{itemize}
\end{thm}

The sequel \cite[Section~6]{Joyc13} gives an alternative proof of Theorem~\ref{wc5thm1}.

\subsection{Morphisms of quivers}\label{wc54}

Here is a new notion of morphisms of quivers, which is designed to be compatible with the morphisms of vertex and Lie algebras in~Section~\ref{wc25}.

\begin{dfn}\label{wc5def6}
Let $Q=(Q_0,Q_1,h,t)$ and $Q'=(Q_0',Q_1',h',t')$ be quivers. A {\it morphism} $\la\colon Q\ra Q'$ is a pair $\la=(\la_0,\la_1)$, where $\la_0\colon Q_0\ra Q_0'$ is a map, and $\la_1\subseteq Q_1\t Q_1'$ a subset satisfying:
\begin{itemize}\itemsep=0pt
\item[(i)] If $(e,e')\in\la_1$ then $\la_0\ci h(e)=h'(e')$ and $\la_0\ci t(e)=t'(e')$.
\item[(ii)] If $v,w\in Q_0$ and $e'\in Q_1'$ with $\la_0(v)=h'(e')$ and $\la_0(w)=t'(e')$, there exists unique $e\in Q_1$ such that $(e,e')\in\la_1$ and $h(e)=v$, $t(e)=w$.
\item[(iii)] The projection $\pi_{Q_1}\colon \la_1\ra Q_1$ mapping $(e,e')\mapsto e$ is injective.
\end{itemize}

If $Q''=(Q_0'',Q_1'',h'',t'')$ is another quiver and $\mu\colon Q'\ra Q''$ is a morphism, the {\it composition} $\mu\ci\la=((\mu\ci\la)_0,(\mu\ci\la)_1)$ is given by $(\mu\ci\la)_0=\mu_0\ci\la_0$ and
\begin{equation*}
(\mu\ci\la)_1=\bigl\{(e,e'')\in Q_1\t Q_1''\colon \text{$\exists e'\in Q_1'$ with $(e,e')\in \la_1$ and $(e',e'')\in\mu_1$}\bigr\}.
\end{equation*}
It is easy to show this is a morphism, and makes quivers into a category.

For $\la$ as above, define a $\C$-linear exact functor $\Si_\la\colon \modCQ\ra\modCQ'$~by
\begin{align*}
&\Si_\la\colon \ ((V_v)_{v\in Q_0},(\rho_e)_{e\in Q_1})\mapsto((V'_{v'})_{v'\in Q_0'},(\rho'_{e'})_{e'\in Q_1'}) \ \text{on objects},\\
&\text{where} \  V'_{v'}=\bigop_{v\in Q_0\colon \la_0(v)=v'}V_v \quad \text{and}\quad  \rho'_{e'}=\sum_{e\in Q_1\colon (e,e')\in\la_1}\rho_e\quad\text{and} \\
&\Si_\la\colon \ (\phi_v)_{v\in Q_0}\mapsto (\phi'_{v'})_{v'\in Q_0'}\ \text{on morphisms, where}\
\phi'_{v'}=\sum_{v\in Q_0\colon \la_0(v)=v'}\phi_v.
\end{align*}
If $\mu\colon Q'\ra Q''$ is another morphism then~$\Si_{\mu\ci\la}=\Si_\mu\ci \Si_\la$.

The induced action $(\Si_\la)_*\colon K_0(\modCQ)\ra K_0(\modCQ')$ descends to
\begin{equation*}
\la_*\colon \ \Z^{Q_0}\ra\Z^{Q_0'}, \qquad \la_*\colon \ \bs d\mapsto \bs d', \qquad\text{where}\quad \bs d'(v')=\sum_{v\in Q_0\colon \la_0(v)=v'}\bs d(v).
\end{equation*}
Then $\la_*$ maps $\N^{Q_0}\ra\N^{Q_0'}$ and $\N^{Q_0}\sm\{0\}\ra\N^{Q_0'}\sm\{0\}$.

If $(\tau',T',\le)$ is a (weak) stability condition on $\modCQ'$, as in Section~\ref{wc33}, it is easy to check that $(\tau'\ci\la_*,T',\le)$ is a (weak) stability condition on $\modCQ$. If $\mu'$ is a slope function on $\modCQ'$ defined using constants $\mu'_{v'}$ for $v'\in Q_0'$ then $\mu=\mu'\ci\la_*$ is the slope function defined using $\mu_v=\mu'_{\la_0(v)}$ for~$v\in Q_0$.
\end{dfn}

\begin{dfn}\label{wc5def7}
Let $\la\colon Q\ra Q'$ be a morphism of quivers, as in Definition \ref{wc5def6}. Write~$\M$,~$\M'$ for the moduli stacks of objects in $\modCQ$, $\modCQ'$, and $\M^\pl$, $\M^{\prime\pl}$ for the projective linear moduli stacks, so that Section~\ref{wc51} and Sections~\ref{wc23}--\ref{wc24} define graded vertex algebras~$\hat H_*(\M)$, $\hat H_*(\M')$ and graded Lie algebras~$\check H_*\big(\M^\pl\big)$,~$\check H_*\big(\M^{\prime\pl}\big)$. We will use the constructions of Section~\ref{wc25} to define morphisms $\Om:\hat H_*(\M)\ra\hat H_*(\M')$ and~$\Om^\pl\colon \check H_*\big(\M^\pl\big)\ra \check H_*\big(\M^{\prime\pl}\big)$.

In Definition \ref{wc2def5}(a)--(c), let $T\colon \A\ra\A'$ be $\Si_\la\colon \modCQ\ra\modCQ'$, and write $\si_\la\colon \M\ra\M'$ and $\si_\la^\pl\colon \M^\pl\ra\M^{\prime\pl}$ for the induced stack morphisms. Define $\xi\colon \Z^{Q_0}\t\Z^{Q_0}\ra\Z$ and vector bundles $F\ra\M\t\M$, $G\ra\M$ by
\begin{gather}
\xi(\bs d,\bs e)=\sum_{v\ne w\in Q_0\colon \la_0(v)=\la_0(w)}\bs d(v)\bs d(w)+\sum_{e\in Q_1\colon \not\exists (e,e')\in \la_1}\bs d(t(e))\bs e(h(e)),
\label{wc5eq10}\\
F=\bigop_{v\ne w\in Q_0\colon \la_0(v)=\la_0(w)}\cV_v\bt\cV_w^*\op \bigop_{e\in Q_1\colon \not\exists (e,e')\in \la_1}\cV_{t(e)}\bt\cV_{h(e)}^*,
\label{wc5eq11}\\
G=\bigop_{v\ne w\in Q_0\colon \la_0(v)=\la_0(w)}\cV_v^*\ot\cV_w\op \bigop_{e\in Q_1\colon \not\exists (e,e')\in \la_1}\cV_{t(e)}^*\ot\cV_{h(e)}.
\label{wc5eq12}
\end{gather}

We now claim that the conditions Definition \ref{wc2def5}(i)--(v) hold. From \eq{wc5eq4} and Definition \ref{wc5def6} (especially (i)--(iii)) we can show that
\begin{equation*}
\chi_{Q'}(\la_*(\bs d),\la_*(\bs e))=\chi_Q(\bs d,\bs e)+\xi(\bs d,\bs e),
\end{equation*}
and then Definition \ref{wc2def5}(i),~(ii) follow from the second and third equations of \eq{wc5eq6}. For (iii), equations \eq{wc2eq18}--\eq{wc2eq21} follow from obvious compatibilities between $\cV_v$ and $\Phi$, $\Psi$. For~(iv), in $K_0(\Perf(\M\t\M))$ we have
\begin{align*}
&(\si_\la\t\si_\la)^*\big(\big[(\cExt^{\prime\bu})^\vee\big]\big)
 =\sum_{v'\in Q_0'}(\si_\la\t\si_\la)^*([\cV_{v'}\bt\cV_{v'}^*])
-\sum_{e'\in Q_1'}(\si_\la\t\si_\la)^*([\cV_{t'(e')}\bt\cV_{h'(e')}^*])\\
&\qquad{} =\sum_{v'\in Q_0'}\sum_{\substack{v,w\in Q_0\colon \\ \la_0(v)=\la_0(w)=v'}}[\cV_v\bt\cV_w^*]
-\sum_{e'\in Q_1'}\sum_{\substack{v,w\in Q_0\colon  \\ \la_0(v)=t'(e'),\, \la_0(w)=h'(e')}}[\cV_v\bt\cV_w^*]\\
&\qquad{} =\bigg(\sum_{v\in Q_0}[\cV_v\bt\cV_v^*]+\sum_{v\ne w\in Q_0\colon \la_0(v)=\la_0(w)}[\cV_v\bt\cV_w^*]\bigg)\\
&\qquad\quad{} -\bigg(\sum_{e\in Q_1}[\cV_{t(e)}\bt\cV_{h(e)}^*]
-\sum_{e\in Q_1\colon \not\exists (e,e')\in \la_1}[\cV_{t(e)}\bt\cV_{h(e)}^*]\bigg)=\big[(\cExt^\bu)^\vee\big]+[F],
\end{align*}
using \eq{wc5eq5} in the first step, and Definition \ref{wc5def6} in the second, and Definition \ref{wc5def6}(i)--(iii) to rewrite the sum $\sum_{e'\in Q_1'}\sum_{v,w}$ in the third, and \eq{wc5eq5} and \eq{wc5eq11} in the fourth. Definition \ref{wc2def5}(iv) then follows from the fourth equation of \eq{wc5eq6}. Part (v) follows as the vector bundles $\cV_v^*\ot\cV_w\ra\M$ descend to $\M^\pl$, as they have weight 0 for the $[*/\bG_m]$-action $\Psi$ on $\M$.

The additional condition in Definition \ref{wc2def6} that if $E\ne 0$ then $\Si_\la(E)\ne 0$ in $\modCQ'$ also holds. Thus Section~\ref{wc25} defines morphisms $\Om$, $\Om^\pl$ in \eq{wc2eq22}--\eq{wc2eq23}.
\end{dfn}

The next theorem relates enumerative invariants of quivers $Q$, $Q'$ linked by a morphism $\la\colon Q\ra Q'$. It will be proved in~Section~\ref{wc62}.

\begin{thm}\label{wc5thm2}
In the situation of Definitions {\rm\ref{wc5def6}} and {\rm\ref{wc5def7}}, let $(\tau',T',\le)$ be a weak stability condition on~$\modCQ'$, so that $(\tau,T,\le):=(\tau'\ci\la_*,T',\le)$ is a weak stability condition on $\modCQ$. Suppose that $Q,Q'$ have no oriented cycles. Then for all $\bs d\in\N^{Q_0}\sm\{0\}$ with $\la_*(\bs d)=\bs d'\in \N^{Q_0'}\sm\{0\}$, the invariants of Theorem~{\rm\ref{wc5thm1}} for $\modCQ,\modCQ'$ satisfy
\e
\prod_{v\in Q_0}\bs d(v)!\cdot \Om^\pl\bigl([\M_{\bs d}^\ss(\tau)]_\inv\bigr)=\prod_{v'\in Q'_0}\bs d'(v')!\cdot [\M_{\bs d'}^{\prime\ss}(\tau')]_\inv.
\label{wc5eq13}
\e
If $\la_0\colon Q_0\!\ra\! Q_0'$ is injective this simplifies to~$\Om^\pl([\M_{\bs d}^\ss(\tau)]_\inv)\!=\![\M_{\bs d'}^{\prime\ss}(\tau')]_\inv$.
\end{thm}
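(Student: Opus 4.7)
I would prove \eqref{wc5eq13} by induction on $|\bs d|:=\sum_{v\in Q_0}\bs d(v)$, using the wall-crossing formula (Theorem \ref{wc5thm1}(ii)) together with Corollary \ref{wc2cor1}, which says that $\Om^\pl$ is a morphism of graded Lie algebras. The base case $|\bs d|=1$ is direct: then $\bs d=\de_v$ and $\bs d'=\de_{\la_0(v)}$, both moduli spaces are points with invariant $1$; by \eqref{wc5eq10} together with the fact that $Q$ has no self-loops (as it has no oriented cycles), $\xi(\de_v,\de_v)=0$, so $G^\pl$ has rank $0$, $c_\top(G^\pl)=1$, and $\Om^\pl(1)=1$; all factorials equal $1$.

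For the inductive step, fix an increasing slope function $\mu'$ on $Q'$ with pairwise distinct values $\mu'_{v'}$ (possible as $Q'$ has no oriented cycles), set $\mu:=\mu'\circ\la_*$, and define the discrepancy $E(\tau):=\prod_v\bs d(v)!\,\Om^\pl([\M_{\bs d}^\ss(\tau)]_\inv)-\prod_{v'}\bs d'(v')!\,[\M_{\bs d'}^{\prime\ss}(\tau')]_\inv$. The plan has two parts: (A) show $E(\tau)=E(\mu)$ for every $\tau=\tau'\circ\la_*$, and (B) show $E(\mu)=0$. For (A), apply \eqref{wc5eq7} on $Q$ going from $\mu$ to $\tau$, push forward through $\Om^\pl$, and multiply by $\prod_v\bs d(v)!$. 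For $n=1$ the coefficient is $\ti U(\bs d;\mu,\tau)=1$, giving $\prod_v\bs d(v)!\,\Om^\pl([\M_{\bs d}^\ss(\mu)]_\inv)$. For $n\ge 2$, each $|\bs d_i|<|\bs d|$ so the inductive hypothesis substitutes $\prod_v\bs d_i(v)!\,\Om^\pl([\M_{\bs d_i}^\ss(\mu)]_\inv)=\prod_{v'}\bs d'_i(v')!\,[\M_{\bs d'_i}^{\prime\ss}(\mu')]_\inv$ with $\bs d'_i:=\la_*(\bs d_i)$; meanwhile $\ti U(\bs d_1,\ldots,\bs d_n;\mu,\tau)$ depends only on the $\mu$- and $\tau$-values of the partial sums $\sum_{k\le j}\bs d_k$, so it equals $\ti U(\bs d'_1,\ldots,\bs d'_n;\mu',\tau')$. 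Summing over $(\bs d_i)$ with fixed image $(\bs d'_i)$ and applying the multinomial identity
\[
\sum_{\sum_i\bs d_i=\bs d,\ \la_*(\bs d_i)=\bs d'_i}\prod_v\binom{\bs d(v)}{\bs d_1(v),\ldots,\bs d_n(v)}=\prod_{v'}\binom{\bs d'(v')}{\bs d'_1(v'),\ldots,\bs d'_n(v')},
\]
proved fibrewise over $Q'_0$ (distributing $\bs d'(v')$ labelled balls of assorted colours into $n$ boxes, counted either by first sorting by colour or by first sorting by box), matches the $n\ge 2$ contributions on both sides after scaling by $\prod_{v'}\bs d'(v')!$. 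Hence $E(\tau)=E(\mu)$.

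For (B), call $\bs d$ \emph{pure} if $\supp\bs d\subseteq\la_0^{-1}(v')$ for some $v'$. If $\bs d$ is not pure, then along any edge $e:v\to w$ of $Q$ one has $\mu_v\le\mu_w$, with strict inequality whenever $\la_0(v)\ne\la_0(w)$ (by Definition \ref{wc5def6}(ii) and the increasing property of $\mu'$), so the subobject of any $(\bs V,\bs\rho)$ supported on the maximum-$\mu$-slope vertices gives a Harder--Narasimhan-style destabilisation; thus $\M_{\bs d}^\ss(\mu)=\es$, and $\bs d'\ne\de_{v'}$ for any $v'$, so $[\M_{\bs d'}^{\prime\ss}(\mu')]_\inv=0$ by Theorem \ref{wc5thm1}(iii), giving $E(\mu)=0$. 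For pure $\bs d$ with $|\bs d|>1$, the right-hand term vanishes by Theorem \ref{wc5thm1}(iii); for the left I would wall-cross once more on $Q$, from $\mu$ to an increasing slope function $\nu$ on $Q$ (which exists as $Q$ has no oriented cycles). Since $[\M_{\bs d}^\ss(\nu)]_\inv=0$ (Theorem \ref{wc5thm1}(iii)) and $\ti U(\bs d;\mu,\nu)=1$, \eqref{wc5eq7} expresses $[\M_{\bs d}^\ss(\mu)]_\inv$ as a $\Q$-linear combination of left-nested brackets of $[\M_{\bs d_i}^\ss(\mu)]_\inv$ with $|\bs d_i|<|\bs d|$. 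Each such $\bs d_i$ is pure with $\bs d'_i=|\bs d_i|\de_{v'}$, so the inductive hypothesis together with Theorem \ref{wc5thm1}(iii) forces $\Om^\pl([\M_{\bs d_i}^\ss(\mu)]_\inv)=0$ unless $|\bs d_i|=1$, in which case the pushforward is the single degree-$0$ generator $1_{v'}$ of $\check H_0(\M_{\de_{v'}}^{\prime\pl})\cong R$. Hence $\Om^\pl([\M_{\bs d}^\ss(\mu)]_\inv)$ is a linear combination of brackets of the form $[[[1_{v'},1_{v'}],1_{v'}],\ldots,1_{v'}]$, all of which vanish since $[1_{v'},1_{v'}]=0$ by antisymmetry of the Lie bracket in degree $0$. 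Thus $E(\mu)=0$.

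\textbf{Main obstacle.} The delicate step is (B) for pure $\bs d$: on the $Q$ side the class $[\M_{\bs d}^\ss(\mu)]_\inv$ does not vanish by a dimension count, since $\M_{\bs d}^\ss(\mu)=[R_{\bs d}/\PGL_{\bs d}]$ can have non-trivial homology in the expected degree when $Q$ has edges internal to a fibre $\la_0^{-1}(v')$. The crux is that its pushforward under $\Om^\pl$ lies in a Lie subalgebra generated by a single degree-zero vector and hence dies by antisymmetry, and identifying this structural vanishing is the key insight. A secondary but essential point is the factorial bookkeeping in (A): the multinomial identity is precisely what makes $\prod_v\bs d(v)!$ and $\prod_{v'}\bs d'(v')!$ the natural normalisations, and also explains why the formula collapses to $\Om^\pl([\M_{\bs d}^\ss(\tau)]_\inv)=[\M_{\bs d'}^{\prime\ss}(\tau')]_\inv$ when $\la_0$ is injective, as $\bs d'(v')\in\{0,\bs d(v)\}$ in that case.
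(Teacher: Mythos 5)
Your Part (A) is sound and is essentially the computation \eq{wc6eq3} in the paper's own proof (wall-cross from a fixed slope function, use that $U$ or $\ti U$ depends only on the $\tau$-values of partial sums so the coefficients agree under $\la_*$, and match the factorials by the fibrewise multinomial identity). Your treatment of pure $\bs d$ in Part (B), via $[1_{v'},1_{v'}]=0$, is also correct. But the non-pure case of Part (B) contains a genuine error. You claim that $\mu_v\le\mu_w$ along every edge $\overset{v}{\bu}\,{\buildrel e\over\longra}\,\overset{w}{\bu}$ of $Q$, citing Definition \ref{wc5def6}(ii). That clause only says edges of $Q'$ lift to $Q$; it does not say every edge of $Q$ lies over an edge of $Q'$. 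Since $\pi_{Q_1}:\la_1\ra Q_1$ is merely injective, $Q$ may have edges $e$ with no $(e,e')\in\la_1$ — these are exactly the edges contributing to $F$ and $G$ in \eq{wc5eq11}--\eq{wc5eq12} — and for such an edge there is no constraint between $\mu'_{\la_0(t(e))}$ and $\mu'_{\la_0(h(e))}$. Concretely, take $Q$ with one edge $v_2\ra v_1$, $Q'=Q$ with that edge deleted, $\la_0=\id$, and $\mu'_{v_1'}<\mu'_{v_2'}$: then $\bs d=(1,1)$ is not pure, yet $\M_{\bs d}^\ss(\mu)$ is a point, not empty. Your Harder--Narasimhan destabilisation argument fails, and $E(\mu)=0$ is left unproved in this case. (A second, independent problem: even where $\M_{\bs d}^\ss(\mu)=\es$ does hold, concluding $[\M_{\bs d}^\ss(\mu)]_\inv=0$ invokes Theorem \ref{wc5thm1}(i), which the paper proves in \S\ref{wc63}--\S\ref{wc66} \emph{using} Theorem \ref{wc5thm2}; at this stage only the wall-crossing definition of Proposition \ref{wc6prop1} is available, so you must argue algebraically, not geometrically.)

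The paper avoids this by factorising $\la=\la_2\ci\la_1$ into an edge-deletion morphism ($\la_0=\id$, case (a)) and an edge-bijective, vertex-merging morphism (case (b)), and choosing a \emph{different} slope function for each factor: in case (a) one picks $\mu'$ so that $\mu=\mu'\ci\la_*$ is increasing on $Q$ (possible there, since $Q_0=Q_0'$), and in case (b) one picks $\mu'$ increasing on $Q'$ and shows by a domination/perturbation argument (as at the end of the proof of Proposition \ref{wc6prop1}, using \eq{wc3eq14}) that \eq{wc5eq9} still holds for the merely non-decreasing $\mu$. In both cases both sides of \eq{wc5eq13} then reduce to \eq{wc5eq9} and the identity is trivial; your Part (A) then propagates it to general $\tau$. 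No single choice of $\mu'$ works for a general $\la$ (one can arrange deleted edges of $Q$ forcing contradictory inequalities on $\mu'$), so some version of this factorisation, or a genuinely different argument for $E(\mu)=0$ on non-pure classes, is needed to close the gap.
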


We will use Theorem \ref{wc5thm2} in Sections~\ref{wc64}--\ref{wc65} to prove Theorem~\ref{wc5thm1}(i).

\subsection{Pair invariants for quivers}\label{wc55}

Conjecture \ref{wc4conj1}(iii) and Definition \ref{wc4def1} described the method of pair invariants. We explain how to use this for quivers, proving a version of Conjecture~\ref{wc4conj1}(iii).

\begin{dfn}
\label{wc5def8}
Let $Q=(Q_0,Q_1,h,t)$ be a quiver with no oriented cycles. Choose $n_v\in\N$ for $v\in Q_0$. Define a new quiver $\ti Q=\big(\ti Q_0,\ti Q_1,\ti h,\ti t\big)$ to be $Q$ together with one extra vertex~$\iy$, so that $\ti Q_0=Q_0\amalg \{\iy\}$, and with $n_v$ extra edges $\overset{\iy}{\bu}\longra\overset{v}{\bu}$ for each $v\in Q_0$, so that $\ti Q_1=Q_1\amalg\coprod_{v\in Q_0}\{v\}\t\{1,\dots,n_v\}$. Then $\ti Q$ also has no oriented cycles. There is an obvious inclusion $i\colon \modCQ\hookra\modCtQ$ identifying $\modCQ$ with the full subcategory of $((V_v)_{v\in\ti Q_0},(\rho_e)_{e\in\ti Q_1})$ in $\modCtQ$ with $V_\iy=0$. In fact $i=\Si_\la$ in Definition~\ref{wc5def6} for $\la\colon Q\ra\ti Q$ given by $\la_0\colon v\mapsto v$ and~$\la_1=\{(e,e)\colon e\in Q_1\}$.

We can now apply Definition \ref{wc4def1} with $\A=\modCQ$ and $\B=\modCtQ$. For Definition~\ref{wc4def1}(i)--(vi), in (i) we take $i\colon \modCQ\hookra\modCtQ$ as above, and in (ii) we define an object $I=\big((W_v)_{v\in\ti Q_0},(\si_e)_{e\in\ti Q_1}\big)$ in $\modCtQ$ to have $W_\iy=\C$, and $W_v=0$ for all $v\in Q_0$, and $\si_e=0$ for all $e\in\ti Q_1$. For (iii) we use $\Z^{\ti Q_0}=\Z^{Q_0\amalg\{\iy\}}=\Z^{Q_0}\op\Z^{\{\iy\}}=\Z^{Q_0}\op\Z$. Parts (iv)--(vi) are obvious. The rest of Definition~\ref{wc4def1} then works in a straightforward way. Consider the map
\e
\big[{-},1_{H_0(\tiM_{(0,1)}^\pl)}\big]\colon \ H_*\big(\tiM_{(\al,0)}^\pl\big)\longra H_*\big(\tiM_{(\al,1)}^\pl\big).\label{wc5eq14}
\e

Using the explicit formula for $[\,,\,]$ in \cite{Joyc12} we can prove that \eq{wc5eq14} is injective for all $\al\in \N^{Q_0}\sm 0$ if and only if $n_v>0$ for all $v\in Q_0$. One way to do this is to consider the line bundle
\begin{equation*}
\cV_\iy^{-\sum_{v\in Q_0}\al(v)}\ot \bigot_{v\in Q_0}\det \cV_v\longra\tiM_{(\al,1)}.
\end{equation*}
This has weight 0 for the $[*/\bG_m]$-action on $\tiM_{(\al,1)}$, and so is the pullback of a line bundle $L\ra \tiM_{(\al,1)}^\pl$. The map \eq{wc5eq14} increases homological degree by $2d=2\big(\sum_{v\in Q_0}n_v\al(v)-1\big)$, where $d\ge 0$ as $\al\ne 0$ and $n_v>0$ for all $v\in Q_0$. We can show that if $\la\in H_*\big(\tiM_{(\al,0)}^\pl\big)$ then $\Pi_*\bigl(\big[\la,1_{H_0(\tiM_{(0,1)}^\pl)}\big]\cap c_1(L)^d\bigr)$ is a nonzero multiple of $\la$, where $\Pi\colon \tiM_{(\al,1)}^\pl\ra\tiM_{(\al,0)}^\pl$ is the forgetful morphism omitting $V_\iy$ and its edge maps, so $\big[\la,1_{H_0(\tiM_{(0,1)}^\pl)}\big]\ne 0$ if~$\la\ne 0$.

However, there is a problem: Theorem~\ref{wc5thm1}(i) only gives an explicit geometric definition of $[\M_{\bs d}^\ss(\tau)]_\inv$ when $\M_{\bs d}^\rst(\tau)=\M_{\bs d}^\ss(\tau)$ {\it and $\tau$ is a slope function}. As the weak stability conditions $(\tau_\pm,T_\pm,\le)$ on $\B=\modCtQ$ in Definition~\ref{wc4def1} are clearly not slope functions, Definition~\ref{wc4def1} as written does {\it not} give an inductive definition of $[\M_{\bs d}^\ss(\tau)]_\inv$ in terms of geometrically defined classes.

We now explain a way to get round this, by a variation of Definition~\ref{wc4def1} which uses only slope functions. Instead of a general weak stability condition $(\tau,T,\le)$ on $\A=\modCQ$, we choose a~slope function $\mu$ on $\modCQ$, defined by constants $(\mu_v)_{v\in Q_0}$. The analogue of the weak stability conditions  $(\tau_\pm,T_\pm,\le)$ on $\B=\modCtQ$ in Definition \ref{wc4def1} are slope functions $\ti\mu^{\bs d}_\pm$ on $\modCtQ$ which depend on a choice of $\bs d\in\N^{Q_0}\sm\{0\}$, defined by constants $\ti\mu^{\bs d}_{\pm,v}=\mu_v$ for $v\in Q_0\subset\ti Q_0$, and $\ti\mu^{\bs d}_{\pm,\iy}=\mu(\bs d)\pm\ep$ for $\iy\in\ti Q_0$, where $\ep>0$ is small.

The important properties of $\ti\mu^{\bs d}_\pm$ we need are:
\begin{itemize}\itemsep=0pt
\item[(a)] If $\bs e\in\N^{Q_0}\sm\{0\}$ then $\ti\mu^{\bs d}_\pm(\bs e,0)=\mu(\bs e)$.
\item[(b)] If $\bs e,\bs f\in\N^{Q_0}\sm\{0\}$ with $\bs d=\bs e+\bs f$ and $\mu(\bs e)<\mu(\bs f)$ then $\ti\mu^{\bs d}_+(\bs e,0)<\ti\mu^{\bs d}_+(\bs f,1)$, $\ti\mu^{\bs d}_+(\bs e,1)<\ti\mu^{\bs d}_+(\bs f,0)$, $\ti\mu^{\bs d}_-(\bs e,0)<\ti\mu^{\bs d}_-(\bs f,1)$, $\ti\mu^{\bs d}_-(\bs e,1)<\ti\mu^{\bs d}_-(\bs f,0)$.
\item[(c)] If $\bs e,\bs f\in\N^{Q_0}\sm\{0\}$ with $\bs d=\bs e+\bs f$ and $\mu(\bs e)=\mu(\bs f)$ then $\ti\mu^{\bs d}_+(\bs e,0)<\ti\mu^{\bs d}_+(\bs f,1)$ and $\ti\mu^{\bs d}_-(\bs e,1)<\ti\mu^{\bs d}_-(\bs f,0)$.
\item[(d)] $\ti\mu^{\bs d}_+(0,1)>\mu(\bs d)$ and $\ti\mu^{\bs d}_-(0,1)<\mu(\bs d)$.
\end{itemize}
Here $\ep>0$ needs to be small to ensure $\ti\mu^{\bs d}_+(\bs e,1)<\ti\mu^{\bs d}_+(\bs f,0)$ and $\ti\mu^{\bs d}_-(\bs e,0)<\ti\mu^{\bs d}_-(\bs f,1)$ in (b). Then $\mu$, $\mu_+^{\bs d}$, $\mu_-^{\bs d}$ satisfy all the same inequalities for $\tau$, $\tau_+$, $\tau_-$ used to prove \eq{wc4eq4} with $\al=\bs d$. So combining~\eq{wc4eq4} with Theorem~\ref{wc5thm1}(i) and Definition~\ref{wc4def1}(a),~(b) proves that
\ea
&\ti\io_*\bigl(\big[\tiM_{(\bs d,1)}^\ss\big(\ti\mu^{\bs d}_+\big)\big]_\fund\bigr)=
\label{wc5eq15}\\
&\sum_{\substack{n\ge 1,\,\bs d_1,\dots,\bs d_n\in
\N^{Q_0}\sm\{0\}\colon \\ \bs d_1+\cdots+\bs d_n=\bs d, \\
\mu(\bs d_i)=\mu(\bs d),\, i=1,\dots,n }} \!\!\!\!\!\!\!\!\!\!
\frac{(-1)^n}{n!}\cdot\bigl[\bigl[\cdots\bigl[[1_{H_0(\tiM_{(0,1)}^\pl)},
i_*^\pl\big(\big[\M_{\bs d_1}^\ss(\mu)\big]_\inv\big)\bigr],\dots\bigr],i_*^\pl\big(\big[\M_{\bs d_n}^\ss(\mu)\big]_\inv\big)\bigr].
\nonumber
\ea

Here $\big[\tiM_{(\bs d,1)}^\ss\big(\ti\mu^{\bs d}_+\big)\big]_\fund$ is the fundamental class of the smooth projective $\C$-scheme $\tiM_{(\bs d,1)}^\rst\!\big(\ti\mu^{\bs d}_+\big) \allowbreak =\tiM_{(\bs d,1)}^\ss\big(\ti\mu^{\bs d}_+\big)$, and is geometrically defined. As for \eq{wc4eq5}, we can rewrite \eq{wc5eq15} as
\e
\ti\io_*\bigl(\big[\tiM_{(\bs d,1)}^\ss\big(\ti\mu^{\bs d}_+\big)\big]_\fund\bigr)=\bigl[i_*^\pl\bigl(\big[\M_{\bs d}^\ss(\mu)\big]_\inv\bigr),1_{H_0(\tiM_{(0,1)}^\pl)}\bigr]+\text{lower order terms},
\label{wc5eq16}
\e
where the operation $\big[{-},1_{H_0(\tiM_{(0,1)}^\pl)}\big]$ is injective if $n_v>0$ for all $v$ in $Q_0$, and $i_*^\pl$ is an isomorphism. Then we can use \eq{wc5eq16} to determine $\big[\M_{\bs d}^\ss(\mu)\big]_\inv$ uniquely in terms of $\big[\tiM_{(\bs d,1)}^\ss\big(\ti\mu^{\bs d}_+\big)\big]_\fund$ and $[\M_{\bs e}^\ss(\mu)]_\inv$ for $\md{\bs e}<\md{\bs d}$, by induction on increasing~$\md{\bs d}$.
\end{dfn}

\section{Proofs of Theorems \ref{wc5thm1} and \ref{wc5thm2}}\label{wc6}

We now prove Theorems \ref{wc5thm1} and \ref{wc5thm2}. Firstly, in Section~\ref{wc61}, we prove that there are unique classes $[\M_{\bs d}^\ss(\tau)]_\inv$ satisfying Theorem~\ref{wc5thm1}(ii),~(iii), but without yet showing that they satisfy Theorem~\ref{wc5thm1}(i). Then Section~\ref{wc62} shows that Theorem~\ref{wc5thm2} holds for these classes $\big[\M_{\bs d}^\ss(\tau)\big]_\inv$. Sections \ref{wc63}--\ref{wc66} prove that Theorem~\ref{wc5thm1}(i) also holds, using Theorem~\ref{wc5thm2} as a tool.

\subsection{Proof of Theorem \ref{wc5thm1}(ii), (iii)}\label{wc61}

\begin{prop}\label{wc6prop1}
In the situation of Theorem~{\rm \ref{wc5thm1}}, there exist unique classes $\big[\M_{\bs d}^\ss(\tau)\big]_\inv$ in $\check H_0\big(\M_{\bs d}^\pl\big)$ satisfying Theorem~{\rm\ref{wc5thm1}(ii)},~{\rm (iii)}.
\end{prop}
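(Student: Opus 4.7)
The proof splits naturally into uniqueness and existence. Since $Q$ has no oriented cycles, Definition \ref{wc5def5} yields an increasing slope function, and I will moreover fix a \emph{generic} one $\mu_0$ with all constants $\mu_{0,v}$ pairwise distinct. Property (iii) forces $[\M^\ss_{\bs e}(\mu_0)]_\inv = 1_{H_0(\M^\pl_{\de_v})}$ when $\bs e=\de_v$ and $[\M^\ss_{\bs e}(\mu_0)]_\inv = 0$ otherwise; these elements lie in the correct graded components. Property (ii) with $(\mu_0,\tau)$ in place of $(\tau,\ti\tau)$ then uniquely determines $[\M^\ss_{\bs d}(\tau)]_\inv$ as the right-hand side of \eq{wc5eq7}, which collapses to a finite sum because only sequences of basis vectors $\de_{v_i}$ contribute. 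This gives uniqueness, and I take these formulae as the \emph{definition} of $[\M^\ss_{\bs d}(\tau)]_\inv$ for every weak stability condition $\tau$.

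Property (ii) for arbitrary pairs $(\tau,\ti\tau)$ then follows from the composition law \eq{wc3eq13} of Theorem \ref{wc3thm3}, transferred to its Lie-bracket form via Theorem \ref{wc3thm5}, applied with $\hat\tau=\tau$: substituting the $\mu_0\to\tau$ wall-crossing expression for each $[\M^\ss_{\bs d_i}(\tau)]_\inv$ into the iterated brackets that would define $[\M^\ss_{\bs d}(\ti\tau)]_\inv$ from $\tau$ recovers the $\mu_0\to\ti\tau$ wall-crossing expression, by a bookkeeping of coefficients identical to that of \cite[Th.~5.2]{Joyc7}. For (iii) with an arbitrary increasing $\mu$, the case $\bs d=\de_v$ is immediate: only the $n=1$ term survives and $\ti U(\de_v;\mu_0,\mu)=U(\de_v;\mu_0,\mu)=1$ directly from Definition \ref{wc3def5} and \eq{wc3eq11}. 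For $\bs d\ne\de_v$, Proposition \ref{wc5prop1} splits into subcase (b), where $\check H_0(\M^\pl_{\bs d})=H_{2-\chi(\bs d,\bs d)}(\M^\pl_{\bs d})=0$ by degree so the vanishing is automatic, and subcase (c), where $\M^\ss_{\bs d}(\mu)=\emptyset$ while the target group may be nonzero.

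The genuinely hard part is subcase (c), which requires the identity
\[
\sum_{n\ge 1,\;v_1,\dots,v_n:\;\sum\de_{v_i}=\bs d}\ti U(\de_{v_1},\dots,\de_{v_n};\mu_0,\mu)\,\bigl[\bigl[\cdots[X_{v_1},X_{v_2}],\dots\bigr],X_{v_n}\bigr]=0
\]
in $\check H_0(\M^\pl_{\bs d})$, where $X_v:=1_{H_0(\M^\pl_{\de_v})}$. My strategy is to lift it to the Ringel--Hall Lie algebra $\SFai(\M)$ of \S\ref{wc32}: because $\M^\ss_{\bs d}(\mu)=\emptyset$, every term of \eq{wc3eq8} is supported on the empty locus, so $\bar\ep^{\bs d}(\mu)=0$; genericity of $\mu_0$ combined with Proposition \ref{wc5prop1} similarly forces $\bar\ep^{\bs e}(\mu_0)=0$ for all $\bs e\ne\de_v$. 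Applying the Lie form \eq{wc3eq19} of the stack-function wall-crossing to the equation $\bar\ep^{\bs d}(\mu)=0$ then yields the desired identity verbatim in $\SFai(\M_{\bs d})$, with the point classes $\bar\ep^{\de_v}(\mu_0)=\bar\de_{\M_{\de_v}}$ playing the role of $X_v$. The remaining obstacle, which is the real core of the proof, is to transport this identity from $\SFai(\M)$ into $\check H_0(\M^\pl)$ by constructing a Lie algebra homomorphism $\Psi$ on the subalgebra generated by the $\bar\ep^{\de_v}(\mu_0)$ with $\Psi(\bar\ep^{\de_v}(\mu_0))=X_v$. I expect to build $\Psi$ directly from the geometry by observing that Ringel--Hall products of the point classes are pushforwards along the morphism $\pi_2:\fExact\to\M$, and that after capping against the appropriate Chern classes of the Ext complex \eq{wc5eq5} and descending to $\M^\pl$ these pushforwards realize precisely the vertex-algebra bracket \eq{wc2eq12}; checking this compatibility on the single-point stacks $\M_{\de_v}$ reduces to a tractable Chern-class computation using \eq{wc5eq2} and \eq{wc5eq6}.
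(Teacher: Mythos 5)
The first two thirds of your argument --- uniqueness, taking the wall-crossing formula from a fixed increasing slope function as the definition, and deducing property (ii) for arbitrary pairs from the composition law \eq{wc3eq13} --- is exactly the paper's proof. The divergence, and the problem, is in your treatment of property (iii) for a second increasing slope function $\mu$, specifically your ``genuinely hard'' subcase (c).

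First, your claim that genericity of $\mu_0$ forces $\bar\ep{}^{\bs e}(\mu_0)=0$ in $\SFai(\M)$ for all $\bs e\ne\de_v$ is false. Take $\bs e=m\de_v$ with $m\ge 2$: Proposition \ref{wc5prop1}(b) gives $\M^\ss_{m\de_v}(\mu_0)\cong[*/\PGL_m]\ne\es$, and $\bar\ep{}^{m\de_v}(\mu_0)$ is a nontrivial $\Q$-combination of Ringel--Hall products of $\bar\de_{\M^\ss_{k\de_v}(\mu_0)}$ which is \emph{supported on virtual indecomposables} but is certainly not zero as a stack function (these are precisely the elements whose nonvanishing drives generalized DT theory, cf.\ \cite[\S 6]{JoSo}). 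Consequently the identity you extract from \eq{wc3eq19} in $\SFai(\M)$ is not the desired identity ``verbatim'': it contains extra iterated brackets involving the nonzero classes $\bar\ep{}^{m\de_v}(\mu_0)$. (The vanishing you want does hold in $\CFi(\M)$, where $\ep^{\bs e}(\mu_0)$ is supported on indecomposable semistables and all semistables in case (b) are decomposable --- but moving to $\CFi$ only worsens the second problem below.)

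Second, the transport step is not a tractable Chern-class computation; it is the hard content of the entire theory. You need a Lie algebra morphism from (a subalgebra of) $\SFai(\M)$ to $\check H_0(\M^\pl)$ matching the Ringel--Hall bracket with the vertex-algebra bracket \eq{wc2eq12}, and the structure constants genuinely differ: e.g.\ for two vertices joined by $k\ge 2$ edges, $[1_{H_0(\M_{\de_v}^\pl)},1_{H_0(\M_{\de_w}^\pl)}]$ involves $c_k(\Th^\bu)$ capped against a class of too low degree and need not reproduce the Euler-characteristic count in the Hall algebra. The paper's own Proposition \ref{wc6prop3} constructs such a comparison only modulo non-binary dimension vectors, only for $\supp\bs d$ a tree, and only by invoking the full Joyce--Song morphism $\Psi:\SFai(\M)\ra L(Q)$ of \cite[\S 7.3]{JoSo}; nothing of this strength is available at the stage of Proposition \ref{wc6prop1}.

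The paper's actual argument for this step is much more elementary and avoids $\SFai(\M)$ entirely: the increasing slope functions form an open convex cone in $\R^{Q_0}$, so one joins $\mu_0$ to $\mu$ by a generic path and reduces to wall-crossings between two \emph{nearby} increasing slope functions $\mu',\mu''$, one of which dominates the other. Then \eq{wc3eq14} forces $U(\de_{v_1},\ldots,\de_{v_n};\mu',\mu'')=0$ unless all the $\mu'(\de_{v_i})$ (or $\mu''(\de_{v_i})$) coincide; since $\mu',\mu''$ are increasing this forbids any edges among $v_1,\ldots,v_n$, whence every iterated bracket $[[\cdots[1_{H_0(\M_{\de_{v_1}}^\pl)},1_{H_0(\M_{\de_{v_2}}^\pl)}],\ldots]]$ with $n\ge 2$ vanishes term by term. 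You should replace your subcase (c) argument with this deformation argument, which handles your subcases (b) and (c) uniformly.
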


\begin{proof}Fix an increasing slope function $\mu$ on $\modCQ$, which is possible as in Definition~\ref{wc5def5} since~$Q$ has no oriented cycles. Then Theorem \ref{wc5thm1}(iii) determines the classes $\big[\M_{\bs d}^\ss(\mu)\big]_\inv$. Let $(\tau,T,\le)$ be any weak stability condition on $\modCQ$. Define classes $\big[\M_{\bs d}^\ss(\tau)\big]_\inv\in\check H_0\big(\M_{\bs d}^\pl\big)$ for all $\bs d\in\N^{Q_0}\sm\{0\}$ by the equivalent equations \eq{wc5eq7}--\eq{wc5eq8} with $\mu,\tau$ in place of~$\tau$,~$\ti\tau$. Note that when $\tau=\mu$, equation \eq{wc3eq12} implies that this recovers the classes $\big[\M_{\bs d}^\ss(\mu)\big]_\inv$ in \eq{wc5eq9}, so Theorem~\ref{wc5thm1}(iii) holds for this fixed increasing slope function~$\mu$.

We claim that these classes $\big[\M_{\bs d}^\ss(\tau)\big]_\inv$ satisfy Theorem~\ref{wc5thm1}(ii). To see this, let $(\tau,T,\le)$, $\big(\ti\tau,\ti T,{\le}\big)$ be weak stability conditions on $\modCQ$. Then
\begin{gather}
\big[\M_{\bs d}^\ss(\tau)\big]_\inv=
\sum_{\substack{n\ge 1,\,\bs e_1,\dots,\bs e_n\in
\N^{Q_0}\sm\{0\}\colon \\ \bs e_1+\cdots+\bs e_n=\bs d } }
\begin{aligned}[t]
U(\bs e_1,&\dots,\bs e_n;\mu,\tau)\cdot[\M_{\bs e_1}^\ss(\mu)]_\inv *\\
&
[\M_{\bs e_2}^\ss(\mu)]_\inv*\cdots *[\M_{\bs e_n}^\ss(\mu)]_\inv,
\end{aligned}
\label{wc6eq1}\\
\big[\M_{\bs d}^\ss(\ti\tau)\big]_\inv=
\sum_{\substack{n\ge 1,\,\bs e_1,\dots,\bs e_n\in
\N^{Q_0}\sm\{0\}:\\ \bs e_1+\cdots+\bs e_n=\bs d}}
\begin{aligned}[t]
U(\bs e_1,&\dots,\bs e_n;\mu,\ti\tau)\cdot[\M_{\bs e_1}^\ss(\mu)]_\inv *\\
&
[\M_{\bs e_2}^\ss(\mu)]_\inv*\cdots *[\M_{\bs e_n}^\ss(\mu)]_\inv.
\end{aligned}\label{wc6eq2}
\end{gather}
To verify \eq{wc5eq8} for $(\tau,T,\le)$, $\big(\ti\tau,\ti T,{\le}\big)$, substitute \eq{wc6eq2} into the left-hand side of~\eq{wc5eq8}, and substitute~\eq{wc6eq1} with $\bs d_i$ in place of $\bs d$ into each term $\big[\M_{\bs d_i}^\ss(\tau)\big]_\inv$ on the right-hand side. Multiplying out, we see that the coefficients of $[\M_{\bs e_1}^\ss(\mu)]_\inv*\cdots *[\M_{\bs e_n}^\ss(\mu)]_\inv$ on each side are equal by equation~\eq{wc3eq13} of Theorem~\ref{wc3thm3} with $\mu$, $\tau$, $\ti\tau$, $\bs e_i$ in place of $\tau$, $\hat\tau$, $\ti\tau$, $\al_i$. Thus \eq{wc5eq8} holds, so~\eq{wc5eq7} also holds as it is equivalent by Theorem~\ref{wc3thm5}. This proves Theorem~\ref{wc5thm1}(ii).

Next, suppose for a contradiction that $\ti\mu$ is another increasing slope function, and \eq{wc5eq9} fails for $\ti\mu$ and $\bs{\ti d}\in\N^{Q_0}\sm\{0\}$. Let $\mu$, $\ti\mu$ be defined using constants $(\mu_v)_{v\in Q_0}$, $\ab(\ti\mu_v)_{v\in Q_0}$ in $\R^{Q_0}$. Observe that the condition on $(\mu_v)_{v\in Q_0}$ for $\mu$ to be increasing in Definition \ref{wc5def5} defines an open convex subset $\R^{Q_0}_{>}$ of $\R^{Q_0}$. Choose a generic smooth path $\big(\mu_v^t\big)_{v\in Q_0}$ for $t\in[0,1]$ in $\R^{Q_0}_{>}$ with $\mu_v^0=\mu_v$ and $\mu_v^1=\ti\mu_v$ for $v\in Q_0$. This defines a generic smooth path $\mu^t$ of increasing slope functions on $\modCQ$ with $\mu^0=\mu$ and $\mu^1=\ti\mu$. Define
\begin{equation*}
t_0=\inf\bigl\{t\in[0,1]\colon \text{\eq{wc5eq9} fails for $\big[\M_{\bs d}^\ss\big(\mu^t\big)\big]_\inv$ for some $\bs d$ with $0<\bs d\le\bs{\ti d}$}\bigr\}.
\end{equation*}

Divide into cases:
\begin{itemize}\itemsep=0pt
\item[(a)] \eq{wc5eq9} holds for $\big[\M_{\bs d}^\ss\big(\mu^{t_0}\big)\big]_\inv$ whenever $0<\bs d\le\bs{\ti d}$. Then let $\ep>0$ be small and $0<\bs d\le\bs{\ti d}$ such that \eq{wc5eq9} fails for $\big[\M_{\bs d}^\ss\big(\mu^{t_0+\ep}\big)\big]_\inv$, and set $\mu'=\mu^{t_0}$ and $\mu''=\mu^{t_0+\ep}$.
\item[(b)] \eq{wc5eq9} does not hold for $\big[\M_{\bs d}^\ss\big(\mu^{t_0}\big)\big]_\inv$, some $0<\bs d\le\bs{\ti d}$. Then let $\ep>0$ be small and set $\mu'=\mu^{t_0-\ep}$ and $\mu''=\mu^{t_0}$.
\end{itemize}
In both cases, $\mu'$, $\mu''$ are increasing such that \eq{wc5eq9} holds for $\mu'$ and all $\bs d'\le\bs{\ti d}$, but fails for~$\mu''$,~$\bs d$. Furthermore, as~$\ep$ is small, in case~(a)~$\mu'$ dominates~$\mu''$ in the sense of Definition~\ref{wc3def3}, and in case~(b)~$\mu''$ dominates~$\mu'$. This holds because any slope function~$\mu_1$ on $\modCQ$ dominates all slope functions~$\mu_2$ in a sufficiently small neighbourhood of $\mu_1$ in the space of slope functions~$\R^{Q_0}$.

Consider \eq{wc5eq8} with $\mu'$, $\mu''$ in place of $\tau$, $\ti\tau$. To get a nonzero term on the right-hand side, we must have $\bs d_i=\de_{v_i}$ for $v_i\in Q_0$ and $i=1,\dots,n$, as \eq{wc5eq9} holds for~$\mu'$. Also, if $U(\bs d_1,\dots,\bs d_n;\mu',\mu'') \ne 0$ then~\eq{wc3eq14} gives $\mu'(\de_{v_1})=\cdots=\mu'(\de_{v_n})$ in case~(a), as~$\mu'$ dominates~$\mu''$, and $\mu''(\de_{v_1})=\cdots=\mu''(\de_{v_n})$ in case (b). As $\mu'$, $\mu''$ are increasing, Definition~\ref{wc5def5} implies that there are no edges in $Q$ joining $v_i$, $v_j$ for any $i,j=1,\dots,n$. The definition of the Lie bracket on $\check H_0\big(\M^\pl\big)$ and~\eq{wc5eq9} for~$\mu'$ then implies that
\begin{equation*}
\bigl[\big[\M_{\de_{v_i}}^\ss(\mu')\big]_\inv,\big[\M_{\de_{v_j}}^\ss(\mu')\big]_\inv\bigr]=0.
\end{equation*}
Thus rewriting \eq{wc5eq8} in the form \eq{wc5eq7}, we see that every term on the right-hand side is zero, so $\big[\M_{\bs d}^\ss(\mu'')\big]_\inv=0$, a contradiction, as \eq{wc5eq9} fails for $\mu''$, $\bs d$. Therefore Theorem~\ref{wc5thm1}(iii) holds. This completes the proof.
\end{proof}

\subsection{Proof of Theorem \ref{wc5thm2}}\label{wc62}

Given Sections~\ref{wc63}--\ref{wc66}, the next proposition is equivalent to Theorem~\ref{wc5thm2}.

\begin{prop}\label{wc6prop2}
In the situation of Theorem {\rm\ref{wc5thm2}}, the classes $\big[\M_{\bs d}^\ss(\tau)\big]_\inv$ in $\check H_0\big(\M_{\bs d}^\pl\big)$ defined in Proposition~{\rm \ref{wc6prop1}} satisfy~\eq{wc5eq13}.
\end{prop}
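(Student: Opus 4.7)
The plan is to expand both sides of~\eq{wc5eq13} via wall-crossing from an increasing slope function and then compare, using that $\Om^\pl$ is a graded Lie algebra morphism by Corollary~\ref{wc2cor1}. Pick increasing slope functions $\mu$ on $\modCQ$ and $\mu'$ on $\modCQ'$, which exist since $Q,Q'$ have no oriented cycles. By~\eq{wc5eq9}, the only nonzero classes among $[\M_{\bs e}^\ss(\mu)]_\inv$ are $1_v:=[\M_{\de_v}^\ss(\mu)]_\inv=1\in H_0(\M_{\de_v}^\pl)$ for $v\in Q_0$, and similarly $1'_{v'}:=[\M_{\de_{v'}}^{\prime\ss}(\mu')]_\inv$ for $v'\in Q_0'$. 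Applying~\eq{wc5eq7} with $\mu,\tau$ in place of $\tau,\ti\tau$ (and analogously for primed data) gives
\begin{equation*}
[\M_{\bs d}^\ss(\tau)]_\inv=\!\!\!\!\!\!\!\sum_{\begin{subarray}{l}n\ge 1,\;v_1,\ldots,v_n\in Q_0:\\ \de_{v_1}+\cdots+\de_{v_n}=\bs d\end{subarray}}\!\!\!\!\!\!\!\ti U(\de_{v_1},\ldots,\de_{v_n};\mu,\tau)\cdot\bigl[\bigl[\cdots\bigl[1_{v_1},1_{v_2}\bigr],\ldots\bigr],1_{v_n}\bigr],
\end{equation*}
and an analogous formula for $[\M_{\bs d'}^{\prime\ss}(\tau')]_\inv$ in terms of the $1'_{v'}$.

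Next I compute $\Om^\pl(1_v)$ for each $v\in Q_0$. By~\eq{wc5eq10}, the rank $\xi(\de_v,\de_v)$ of $G|_{\M_{\de_v}}$ vanishes: the off-diagonal contribution needs distinct $v_1,v_2\in Q_0$ both equal to $v$, while the edge contribution needs loops $v\to v$, forbidden since $Q$ has no oriented cycles. Hence $c_\top(G^\pl)=1$ on $\M_{\de_v}^\pl$, and since $\M_{\de_v}^\pl=\M_{\de_{\la_0(v)}}^{\prime\pl}=*$ with $\si_\la^\pl$ the identity map between points, one concludes $\Om^\pl(1_v)=1'_{\la_0(v)}$. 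The second key ingredient is invariance of the wall-crossing coefficients under $\la_*$: since $\mu=\mu'\ci\la_*$ and $\tau=\tau'\ci\la_*$ imply $\mu(\bs e)=\mu'(\la_*(\bs e))$ and $\tau(\bs e)=\tau'(\la_*(\bs e))$ for every $\bs e\in\N^{Q_0}\sm\{0\}$, and the coefficients $S(-),U(-)$ of Definition~\ref{wc3def5} (and hence $\ti U$ of Theorem~\ref{wc3thm5}) depend on the stability conditions only through their values on partial sums of the arguments, we obtain the identity
\begin{equation*}
\ti U(\de_{v_1},\ldots,\de_{v_n};\mu,\tau)=\ti U(\de_{\la_0(v_1)},\ldots,\de_{\la_0(v_n)};\mu',\tau').
\end{equation*}

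Applying $\Om^\pl$ to the expansion of $[\M_{\bs d}^\ss(\tau)]_\inv$, and regrouping the resulting sum by the image sequence $(v'_i):=(\la_0(v_i))\in(Q'_0)^n$, I note that for each fixed $(v'_1,\ldots,v'_n)$ with $\sum_i\de_{v'_i}=\bs d'$ the number of lifts $(v_1,\ldots,v_n)\in Q_0^n$ with $\la_0(v_i)=v'_i$ and $\sum_i\de_{v_i}=\bs d$ equals the multinomial product
\begin{equation*}
\prod_{v'\in Q_0'}\binom{\bs d'(v')}{(\bs d(v))_{v\in\la_0^{-1}(v')}}=\frac{\prod_{v'\in Q_0'}\bs d'(v')!}{\prod_{v\in Q_0}\bs d(v)!},
\end{equation*}
and each lift contributes the same iterated Lie bracket $\bigl[[\cdots[1'_{v'_1},1'_{v'_2}],\ldots],1'_{v'_n}\bigr]$ and the same wall-crossing coefficient by the invariance identity above. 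Collecting terms and comparing with the primed wall-crossing expansion of $[\M_{\bs d'}^{\prime\ss}(\tau')]_\inv$ yields
\begin{equation*}
\Om^\pl\bigl([\M_{\bs d}^\ss(\tau)]_\inv\bigr)=\frac{\prod_{v'\in Q_0'}\bs d'(v')!}{\prod_{v\in Q_0}\bs d(v)!}\cdot[\M_{\bs d'}^{\prime\ss}(\tau')]_\inv,
\end{equation*}
which is equivalent to~\eq{wc5eq13}; when $\la_0$ is injective each fiber $\la_0^{-1}(v')$ has at most one element, each lift is unique, and the prefactor collapses to $1$.

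The main obstacle is the invariance identity for the $\ti U$-coefficients. It reduces to checking that each strict inequality and equality among slope values $\tau(\be_i),\hat\tau(\be_{b_{i-1}+1}+\cdots+\be_j)$ entering the combinatorial sum of Definition~\ref{wc3def5} is preserved by the identifications $\mu(\bs e)=\mu'(\la_*(\bs e))$ and $\tau(\bs e)=\tau'(\la_*(\bs e))$, so that the sums defining $S$ and $U$ match term-by-term on the $Q$- and $Q'$-sides; the passage from $U$ to $\ti U$ is then purely combinatorial by Theorem~\ref{wc3thm5} and does not affect the argument.
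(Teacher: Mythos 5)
Your overall architecture is reasonable and the combinatorial core is right: the computation $\Om^\pl\bigl([\M_{\de_v}^\ss(\mu)]_\inv\bigr)=[\M_{\de_{\la_0(v)}}^{\prime\ss}(\mu')]_\inv$ via $\xi(\de_v,\de_v)=0$, the invariance of $\ti U$ under $\la_*$ when $\mu=\mu'\ci\la_*$ and $\tau=\tau'\ci\la_*$, and the multinomial count of lifts producing the factor $\prod_{v'}\bs d'(v')!/\prod_v\bs d(v)!$ all match what the paper does. But there is a genuine gap at the base of the argument: you need \emph{simultaneously} that $\mu'$ is increasing on $\modCQ'$, that $\mu$ is increasing on $\modCQ$ (both so that \eq{wc5eq9} applies and the wall-crossing expansions involve only the classes supported on the $\de_v$), and that $\mu=\mu'\ci\la_*$ (for the coefficient identity). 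Such a pair need not exist. For an edge $e\in Q_1$ not in $\Im\pi_{Q_1}$ (a ``deleted'' edge), the increasing condition on $\mu'$ imposes no relation between $\mu'_{\la_0(t(e))}$ and $\mu'_{\la_0(h(e))}$, and Definition \ref{wc5def6} even permits $\la_0(t(e))=\la_0(h(e))$ for such $e$, in which case $\mu_{t(e)}=\mu_{h(e)}$ for every choice of $\mu'$ and $\mu$ is never increasing. (Concretely: $Q$ has vertices $v,w$ and one edge from $v$ to $w$; $Q'$ has a single vertex $u$ and no edges; $\la_0(v)=\la_0(w)=u$, $\la_1=\es$.) Nor can you simply drop the increasing hypothesis on $\mu$: equation \eq{wc5eq9} genuinely fails for non-increasing slope functions — for the two-vertex, one-edge quiver with $\mu_{t(e)}>\mu_{h(e)}$, every representation of dimension vector $(1,1)$ with nonzero edge map is $\mu$-stable, so $[\M_{(1,1)}^\ss(\mu)]_\inv\ne 0$.

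This is precisely where the paper's proof does its real work. It factors $\la=\la_2\ci\la_1$, where $\la_1$ only deletes the edges of $Q$ outside $\Im\pi_{Q_1}$ (identity on vertices) and $\la_2$ has $\pi_{Q_1}$ bijective, and proves \eq{wc5eq13} for each factor separately. For $\la_1$ one can arrange compatible increasing slope functions as you do. For $\la_2$, every edge of $Q$ corresponds to an edge of $Q'$, so $\mu=\mu'\ci\la_*$ satisfies only the weak inequality $\mu_{t(e)}\le\mu_{h(e)}$ on edges; the paper then takes an increasing perturbation $\ti\mu$ which $\mu$ dominates and uses \eq{wc3eq14} together with the wall-crossing formula to show that \eq{wc5eq9} nevertheless holds for the non-increasing $\mu$. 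Your proof needs this factorization-plus-domination step (or some substitute) before the expansion of $[\M_{\bs d}^\ss(\tau)]_\inv$ purely in terms of the classes $[\M_{\de_v}^\ss(\mu)]_\inv$ is available; once that is supplied, your regrouping argument goes through.
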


\begin{proof} We divide into three cases, depending on the morphism $\la\colon Q\ra Q'$:
\begin{itemize}\itemsep=0pt
\item[(a)] We have $Q_0=Q_0'$ and $\la_0=\id_{Q_0}$.
\item[(b)] The projection $\pi_{Q_1}\colon \la_1\ra Q_1$ in Definition~\ref{wc5def6}(iii) is a bijection.
\item[(c)] The general case.
\end{itemize}
Now any $\la=(\la_0,\la_1)\colon Q\ra Q'$ may be written $\la=\la''\ci\la'$, for $\la'\colon Q\ra\ti Q$ as in (a) and $\la''\colon \ti Q\ra Q'$ as in (b), where $\ti Q$ is $Q$ with edges $e$ in $Q_1\sm\Im\pi_{Q_1}$ deleted, and $\la'=(\id_{Q_0},\{(e,e)\colon e\in\ti Q_1\})$, $\la''=(\la_0,\la_1)$. Clearly \eq{wc5eq13} for $\la'$, $\la''$ imply \eq{wc5eq13} for $\la=\la''\ci\la'$. Thus~(a),~(b) imply~(c).

For both (a), (b), we will give different constructions of slope functions $\mu'$ and $\mu=\mu'\ci\la_*$ on $\modCQ'$, $\modCQ$ with both $\mu$, $\mu'$ increasing, and prove \eq{wc5eq13} holds for $\mu$, $\mu'$. Then we complete the proofs of (a), (b) together.

For (a), note that the map $\mu'\mapsto\mu=\mu'\ci\la_*$ is a bijection from slope functions on $\modCQ$ to slope functions on $\modCQ'$. Choose $\mu'$ such that $\mu=\mu'\ci\la_*$ is an increasing slope function on $\modCQ$. Then $\mu'$ is also increasing, since edges of $Q'$ are also edges of $Q$. Hence \eq{wc5eq9} holds for $\mu$ on $\modCQ$ and for $\mu'$ on $\modCQ'$. To verify~\eq{wc5eq13} with $\mu$, $\mu'$ in place of $\tau$, $\tau'$, note that if $\bs d=\bs d'=\de_v$ then $\big[\M_{\bs d}^\ss(\tau)\big]_\inv=\big[\M_{\bs d'}^{\prime\ss}(\tau')\big]_\inv=1$ and $\Om^\pl=\id$ on $H_*\big(\M_{\bs d}^\pl\big)$ as $\rank G^\pl_{\bs d}=\xi(\de_v,\de_v)=0$, and if $\bs d\ne \de_v$ then both sides of \eq{wc5eq13} are zero by~\eq{wc5eq9}. So~\eq{wc5eq13} holds for~$\mu$, $\mu'$.

For (b), let $\mu'$ be an increasing slope function on $\modCQ'$ with constants $(\mu'_{v'})_{v'\in Q_0'}$, and set $\mu=\mu'\ci\la_*$. Then $\mu$ is a slope function on $\modCQ$, with constants $(\mu_v)_{v\in Q_0}$ for $\mu_v=\mu'_{\la_0(v)}$. If $\overset{v}{\bu}\,{\buildrel e\over \longra}\,\overset{w}{\bu}$ is an edge in $Q$ there exists unique $e'\in Q_1'$ with $(e,e')\in\la_1$, and then $\overset{\la_0(v)}{\bu}\,{\buildrel e'\over \longra}\,\overset{\la_0(w)}{\bu}$ is an edge in~$Q'$. Hence as $\mu'$ is increasing we have $\mu_v=\mu'_{\la_0(v)}<\mu'_{\la_0(w)}=\mu_w$, so $\mu$ is also increasing.

If $\bs d=\de_v$ for some $v\in Q_0$, so that $\bs d'=\de_{\la_0(v)}$, then $\big[\M_{\bs d}^\ss(\tau)\big]_\inv=\big[\M_{\bs d'}^{\prime\ss}(\tau')\big]_\inv=1$ by \eq{wc5eq9} and we can show \eq{wc5eq13} holds in a similar way to~(a). If $\bs d\ne\de_v$ for any $v\in Q_0$ then $\bs d'\ne\de_{v'}$ for any $v'\in Q_0'$, so $\big[\M_{\bs d}^\ss(\tau)\big]_\inv=\big[\M_{\bs d'}^{\prime\ss}(\tau')\big]_\inv=0$ by \eq{wc5eq9} as $\mu$, $\mu'$ are increasing, and~\eq{wc5eq13} holds trivially.

Note that in both cases (a), (b), in \eq{wc5eq13} for $\mu$, $\mu'$ either both sides are zero, or $\bs d$, $\bs d'$ are {\it binary} in the sense of Definition~\ref{wc6def1} below, so the mysterious factors $\prod_v\bs d(v)!$, $\prod_{v'}\bs d'(v')!$ in~\eq{wc5eq13} are~1.

Now in both cases (a), (b), let $(\tau',T',\le)$ be any weak stability condition on $\modCQ$, set $(\tau,T,\le)=(\tau'\ci\la_*,T',\le)$, and let $\bs d\in\N^{Q_0}\sm\{0\}$ with $\la_*(\bs d)=\bs d'\in\N^{Q_0'}\sm\{0\}$. Then
\begin{gather}
 \Om^\pl\bigl([\M_{\bs d}^\ss(\tau)]_\inv\bigr)
 =\sum_{\substack{n\ge 1,\,\bs d_1,\dots,\bs d_n\in
\N^{Q_0}\sm\{0\}\colon \\ \bs d_1+\cdots+\bs d_n=\bs d, \\
\text{set $\bs d_i'=\la_*(\bs d_i)$} }}
\begin{aligned}[t]
U(\bs d_1,&\dots,\bs d_n;\mu,\tau)\cdot\frac{\prod_{i=1,\dots,n,\, v'\in Q'_0}\bs d'_i(v')!}{\prod_{i=1,\dots,n,\, v\in Q_0}\bs d_i(v)!}\\
&[\M_{\bs d_1'}^{\prime\ss}(\mu')]_\inv *\cdots *[\M_{\bs d_n'}^{\prime\ss}(\mu')]_\inv
\end{aligned}
\nonumber\\
\qquad{} =\sum_{\substack{n\ge 1,\,\bs d'_1,\dots,\bs d'_n\in
\N^{Q_0'}\sm\{0\}\colon \\ \bs d'_1+\cdots+\bs d'_n=\bs d' }}
\begin{aligned}[t]
U(\bs d'_1,&\dots,\bs d'_n;\mu',\tau')\cdot[\M_{\bs d_1'}^{\prime\ss}(\mu')]_\inv *\cdots *[\M_{\bs d_n'}^{\prime\ss}(\mu')]_\inv \\[-3pt]
&{}\cdot\raisebox{-6pt}{$\biggl[$}\,\sum_{\substack{\bs d_1,\dots,\bs d_n\in
\N^{Q_0}\colon \\ \bs d_1+\cdots+\bs d_n=\bs d, \,\la_*(\bs d_i)=\bs d_i' }} \raisebox{-6pt}{$\displaystyle\frac{\prod_{i=1,\dots,n,\, v'\in Q'_0}\bs d'_i(v')!}{\prod_{i=1,\dots,n,\, v\in Q_0}\bs d_i(v)!} \biggr]$}
\end{aligned}
\nonumber\\
\qquad{} =\sum_{\substack{n\ge 1,\,\bs d'_1,\dots,\bs d'_n\in
\N^{Q_0'}\sm\{0\}\colon \\ \bs d'_1+\cdots+\bs d'_n=\bs d'} }
\begin{aligned}[t] U(\bs d'_1,& \dots,\bs d'_n;\mu',\tau')\cdot[\M_{\bs d_1'}^{\prime\ss}(\mu')]_\inv *\cdots *[\M_{\bs d_n'}^{\prime\ss}(\mu')]_\inv\\
& {}\cdot\Biggl[\frac{\prod_{v'\in Q'_0}\bs d'(v')!}{\prod_{v\in Q_0}\bs d(v)!}\Biggr].
\end{aligned}\label{wc6eq3}
\end{gather}
Here the first step holds by applying the Lie algebra morphism $\Om^\pl$ to \eq{wc5eq8} for $\modCQ$ with~$\mu$,~$\tau$ in place of $\tau$, $\ti\tau$ and using \eq{wc5eq13} for $\mu$, $\mu'$. In the second we rewrite the sums and use $U(\bs d'_1,\dots,\bs d'_n;\mu',\tau')=U(\bs d_1,\dots,\bs d_n;\mu,\tau)$ as $\mu=\mu'\ci\la_*$, $\tau=\tau'\ci\la_*$ and $\bs d_i'=\la_*(\bs d_i)$.

The third step holds by a combinatorial identity showing the brackets $[\cdots]$ are equal. It may be written as the product over $v'\in Q_0'$ of the simpler identity
\begin{equation*}
\sum_{\substack{\bs d_1^{v'},\dots,\bs d_n^{v'}\in
\N^{\la_0^{-1}(v')}\colon \\ \bs d_1^{v'}+\cdots+\bs d_n^{v'}=\bs d^{v'}, \,
\sum_{v\in \la_0^{-1}(v')}\bs d_i^{v'}(v)=\bs d_i'(v') }}  \frac{\prod_{i=1,\dots,n}\bs d'_i(v')!}{\prod_{i=1,\dots,n,\, v\in \la_0^{-1}(v')}\bs d_i^{v'}(v)!}=
\frac{\bs d'(v')!}{\prod_{v\in \la_0^{-1}(v')}\bs d^{v'}(v)!},
\end{equation*}
writing $\bs d^{v'}$, $\bs d_i^{v'}$ for the restrictions of $\bs d$, $\bs d_i$ to $\la_0^{-1}(v')\subseteq Q_0$. Equivalently,
\begin{equation*}
\sum_{\substack{\bs d_1^{v'},\dots,\bs d_n^{v'}\in
\N^{\la_0^{-1}(v')}\colon \\ \bs d_1^{v'}+\cdots+\bs d_n^{v'}=\bs d^{v'}, \,
\sum_{v\in \la_0^{-1}(v')}\bs d_i^{v'}(v)=\bs d_i'(v') }}  \frac{\prod_{v\in \la_0^{-1}(v')}\bs d^{v'}(v)!}{\prod_{i=1,\dots,n,\, v\in \la_0^{-1}(v')}\bs d_i^{v'}(v)!}=
\frac{\bs d'(v')!}{\prod_{i=1,\dots,n}\bs d'_i(v')!},
\end{equation*}
which follows from considering the number of ways of dividing a set of size $\sum_{v\in \la_0^{-1}(v')}\bs d^{v'}(v)=\bs d'(v')$ into $n$ subsets of size $\bs d'_i(v')$ for $i=1,\dots,n$. Equation~\eq{wc6eq3} implies \eq{wc5eq13} for $\tau$, $\tau'$. This proves~(a), (b), and~(c) follows as above.	
\end{proof}

\subsection[\texorpdfstring{Proof of Theorem \ref{wc5thm1}(i) for $d$ binary, $Q$ a tree}{Proof of Theorem \ref{wc5thm1}(i) for d binary, Q a tree}]{Proof of Theorem \ref{wc5thm1}(i) for $\boldsymbol{\bs d}$ binary, $\boldsymbol{Q}$ a tree}
\label{wc63}

We will break the proof of Theorem \ref{wc5thm1}(i) into four cases in Sections~\ref{wc63}--\ref{wc66}, depending on the following definition.

\begin{dfn}\label{wc6def1}
Work in the situation of Theorem \ref{wc5thm1}. We call a dimension vector $\bs d\in\N^{Q_0}\sm\{0\}$ {\it binary} if $\bs d(v)\in\{0,1\}$ for all $v\in Q_0$. The {\it support} $\supp\bs d$ of $\bs d$ is the subquiver $Q'\subseteq Q$ containing all vertices $v\in Q_0$ with $\bs d(v)>0$, and all edges linking two such vertices. We say that $\supp\bs d$ {\it is a tree} if $Q'$ is connected and simply-connected.	

Let $\mu$ be a slope function on $\modCQ$, and $\bs d\in\N^{Q_0}\sm\{0\}$. We call the pair $\mu$, $\bs d$ {\it generic} if whenever $\bs d=\bs e+\bs f$ for $\bs e, \bs f\in\N^{Q_0}\sm\{0\}$ we have $\mu(\bs e)\ne\mu(\bs f)$. This implies that there are no strictly $\mu$-semistable objects in class $\bs d$, so $\M_{\bs d}^\rst(\mu)=\M_{\bs d}^\ss(\mu)$. Note that $\mu$, $\bs d$ cannot be generic if $\bs d=n\bs e$ for~$n>1$.
\end{dfn}

The proof of the next proposition uses the Donaldson--Thomas theory of quivers in Joyce--Song \cite[Section~7]{JoSo}, which satisfy a wall-crossing formula like~\eq{wc5eq7}.

\begin{prop}\label{wc6prop3}
The classes $\big[\M_{\bs d}^\ss(\mu)\big]_\inv$ in $\check H_0\big(\M_{\bs d}^\pl\big)$ defined in Proposition~{\rm \ref{wc6prop1}} satisfy Theo\-rem~{\rm \ref{wc5thm1}(i)} when $\bs d$ is binary and~$\supp\bs d$ is a~tree.
\end{prop}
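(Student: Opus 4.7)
First I would reduce to the case where $Q$ is itself a tree with $\bs d=(1,\ldots,1)$. Let $Q'=\supp\bs d\subseteq Q$, a full subquiver which is a tree by hypothesis, and let $\bs d'\in\N^{Q'_0}$ denote the restriction, so $\bs d'(v)=1$ for $v\in Q'_0$. The inclusion $\la:Q'\ra Q$, with $\la_0$ the obvious injection and $\la_1=\{(e,e):e\in Q'_1\}$, is a morphism of quivers in the sense of Definition~\ref{wc5def6}. Applying Theorem~\ref{wc5thm2} with $(\tau',T',\le)=(\mu,\R,\le)$ and $(\tau,T,\le)=(\mu\ci\la_*,\R,\le)$ gives $\Om^\pl\bigl([\M'{}^\ss_{\bs d'}(\mu\ci\la_*)]_\inv\bigr)=[\M^\ss_{\bs d}(\mu)]_\inv$, since $\la_0$ is injective and all factorials $\bs d(v)!,\bs d'(v')!$ equal $1$. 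Extension by zero identifies $\M'{}^\ss_{\bs d'}(\mu\ci\la_*)$ with $\M^\ss_{\bs d}(\mu)$ as smooth projective $\C$-schemes, and carries the pushforward of the fundamental class along under $\Om^\pl$. Hence the result for $(Q,\bs d)$ follows from that for $(Q',\bs d')$.

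Next, assume $Q$ is a tree with $n=|Q_0|$ vertices and $\bs d=(1,\ldots,1)$. Equation \eq{wc5eq4} gives $\chi_Q(\bs d,\bs d)=n-(n-1)=1$, so $\dim_\C\M^\ss_{\bs d}(\mu)=0$ and $\check H_0(\M^\pl_{\bs d})=H_0(\M^\pl_{\bs d})\cong R$. I claim $\M^\ss_{\bs d}(\mu)$ is either empty or a single reduced point. If $(x_e)_{e\in Q_1}\in R_{\bs d}=\C^{|Q_1|}$ has some $x_e=0$, then removing $e$ disconnects the tree $Q$ into two nonempty subtrees, along which the representation splits as a direct sum of two nonzero subrepresentations; under $\M^\rst_{\bs d}(\mu)=\M^\ss_{\bs d}(\mu)$ this decomposition destabilizes it. Hence $\M^\ss_{\bs d}(\mu)$ is the image in $\M^\pl_{\bs d}$ of a $\PGL_{\bs d}$-invariant subset of $(\C^\times)^{|Q_1|}$, and since $Q$ is a tree, propagating scalars from a fixed base vertex shows the action of $\PGL_{\bs d}$ (of dimension $n-1=|Q_1|$) on $(\C^\times)^{|Q_1|}$ is free with a unique orbit. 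Therefore $\io_*[\M^\ss_{\bs d}(\mu)]_\fund\in H_0(\M^\pl_{\bs d})=R$ equals $1$ or $0$.

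It remains to show $[\M^\ss_{\bs d}(\mu)]_\inv$ takes the same value. For this I would invoke the Donaldson--Thomas theory for quivers without relations of \cite[\S 7]{JoSo}. Their rational DT invariants $\bar{DT}^{\bs d}(\mu)$ live in a numerical Lie algebra $L_\Q=\bigop_{\bs d}\Q\cdot\la^{\bs d}$, satisfy the wall-crossing formula \eq{wc3eq19} with the identical coefficients $\ti U(-)$, take the value $\bar{DT}^{\de_v}(\mu_0)=1$ for any increasing $\mu_0$, and by direct computation equal the topological Euler characteristic $\chi(\M^\ss_{\bs d}(\mu))$ for primitive $\bs d$ with $\M^\rst_{\bs d}(\mu)=\M^\ss_{\bs d}(\mu)$. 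The key technical ingredient is a graded Lie algebra morphism $\Up:\check H_0(\M^\pl)\ra L_\Q$ sending $[\M^\ss_{\de_v}(\mu_0)]_\inv\mapsto\la^{\de_v}$; once it is in place, applying $\Up$ to \eq{wc5eq7} matches $[\M^\ss_{\bs d}(\mu)]_\inv$ with $\bar{DT}^{\bs d}(\mu)\la^{\bs d}$, and under $\check H_0(\M^\pl_{\bs d})\cong R$ this recovers the count from the previous paragraph.

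The main obstacle will be constructing and verifying $\Up$: the vertex-algebra bracket on $\check H_0(\M^\pl)$ from Definition~\ref{wc2def3} and the Ringel--Hall bracket on $L_\Q$ arise from quite different constructions, and matching their structure constants requires careful tracking of signs from $\ep_{\al,\be}=(-1)^{\chi_Q(\al,\be)}$ and of the Chern class contributions in \eq{wc2eq12}. In practice one can reduce to checking $[1_v,1_w]\in\check H_0(\M^\pl_{\de_v+\de_w})$, since $\chi(\de_v,\de_w)\in\{0,-1\}$ in a tree and both $\M_{\de_v},\M_{\de_w}$ are points, making \eq{wc2eq12} very explicit. An alternative route would avoid $L_\Q$ altogether and instead compute iterated brackets $[\cdots[1_{v_{\sigma(1)}},1_{v_{\sigma(2)}}],\ldots,1_{v_{\sigma(n)}}]\in R$ for each ordering $\sigma$ of $Q_0$, verifying the combinatorial identity that $\sum_\sigma\ti U(\de_{v_{\sigma(1)}},\ldots,\de_{v_{\sigma(n)}};\mu_0,\mu)\cdot(\pm 1)_\sigma$ equals the point count of $\M^\ss_{\bs d}(\mu)$.
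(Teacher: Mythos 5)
Your overall strategy coincides with the paper's: both arguments first identify $\M^\ss_{\bs d}(\mu)$ as a single point or empty (you by explicit coordinates on the tree, the paper more quickly by noting that $R^\rst_{\bs d}(\mu)$ is a connected open subset of affine space and $\M^\rst_{\bs d}(\mu)$ is smooth of dimension $0$), and both then compare the homological wall-crossing formula \eq{wc5eq7} with the Joyce--Song wall-crossing formula in $\SFai(\M)$ and $L(Q)$, using that the two sides satisfy the same universal formula with the same coefficients $\ti U(-)$ and agree on the generators $\de_v$ for an increasing slope function. Your preliminary reduction to the case $Q=\supp\bs d$ via Theorem \ref{wc5thm2} is sound (for a full subquiver inclusion the bundle $G^\pl$ is zero, so $\Om^\pl$ is just pushforward) but is not needed.

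The genuine gap is exactly where you place it: the comparison map $\Up$. As you state it --- a Lie algebra morphism defined on all of $\check H_0(\M^\pl)$ and verified by checking only $[1_{H_0(\M^\pl_{\de_v})},1_{H_0(\M^\pl_{\de_w})}]$ --- it would neither exist nor suffice. It would not exist because for general $\bs e,\bs f$ the bracket $[1_{H_0(\M^\pl_{\bs e})},1_{H_0(\M^\pl_{\bs f})}]$ is not a multiple of $1_{H_0(\M^\pl_{\bs e+\bs f})}$: when more than one edge joins $\supp\bs e$ to $\supp\bs f$ the formula \eq{wc2eq12} produces translation-operator terms $D^k(\cdots)$, and when the supports overlap the class $\bs e+\bs f$ is non-binary. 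And checking only single vertices would not suffice, because the iterated brackets in \eq{wc5eq7} require the bracket of $1_{H_0(\M^\pl_{\bs e})}$ with $1_{H_0(\M^\pl_{\bs f})}$ for all binary $\bs e,\bs f$ produced by the inner brackets. The paper's resolution is to define $\Up$ (in the direction $\la^{\bs e}\mapsto -1_{H_0(\M^\pl_{\bs e})}$) only on the quotient of the Lie subalgebra spanned by classes with connected support inside $\supp(\bs d)$ by the ideal generated by non-binary classes. On that quotient the tree hypothesis forces disjointly supported binary $\bs e,\bs f$ to be joined by at most one edge, so $\chi(\bs e,\bs f)\in\{0,-1\}$, only the $i=0$ term of \eq{wc2eq12} survives for degree reasons, and the bracket collapses to $0$ or $\pm 1_{H_0(\M^\pl_{\bs e+\bs f})}$, matching the $L(Q)$ bracket; this is the computation \eq{wc6eq7}--\eq{wc6eq9}, and it is the real content of the proposition. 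Your alternative route --- verifying a combinatorial identity on the $\ti U$ coefficients directly against the point count --- would be far harder, since those coefficients are not computed explicitly; routing everything through the Joyce--Song morphism $\Psi:\SFai(\M)\ra L(Q)$ is precisely what makes the combinatorics disappear.
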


\begin{proof} Suppose $\bs d$ is binary, $\supp\bs d$ is a tree, and~$\mu$ is a slope function on $\modCQ$ with $\M^\rst_{\bs d}(\mu)=\M^\ss_{\bs d}(\mu)$. Now $\M_{\bs d}^\pl$ is a smooth Artin $\C$-stack with $\dim_\C\M_{\bs d}^\pl=1-\chi_Q(\bs d,\bs d)$, for $\chi_Q$ as in \eq{wc5eq4}. As~$\bs d$ is binary and $\supp\bs d$ is a tree, we see that $\chi_Q(\bs d,\bs d)=1$, giving $\dim_\C\M_{\bs d}^\pl=0$, so as $\M^\rst_{\bs d}(\mu)$ is open in $\M_{\bs d}^\pl$, it is smooth of dimension 0. Also $\M^\rst_{\bs d}(\mu)=[R^\rst_{\bs d}(\mu)/\PGL_{\bs d}]$ with $R^\rst_{\bs d}(\mu)\subseteq R_{\bs d}\cong\bA^n$ open, so $R^\rst_{\bs d}(\mu)$ and hence $\M^\rst_{\bs d}(\mu)$ are connected. This implies that
\e
\text{$\M^\rst_{\bs d}(\mu)=\M^\ss_{\bs d}(\mu)$ is either a point $*$ or empty}.
\label{wc6eq4}
\e

Now let $\ti\mu$ be an increasing slope function on $\modCQ$. Equations \eq{wc3eq19} and \eq{wc5eq7} with $\ti\mu$, $\mu$ in place of $\tau$, $\ti\tau$ and \eq{wc5eq9} for $\ti\mu$ yield
\begin{gather}
\bar\ep{}^{\bs d}(\mu)=
\sum_{\substack{\bs d_1,\dots,\bs d_n\in
\N^{Q_0}\sm\{0\}\colon n=\md{\bs d},\\ \bs d_1+\cdots+\bs d_n=\bs d,\, \bs d_i=\de_{v_i},\; v_i\in Q_0 } }
\begin{aligned}[t]
\ti U(\bs d_1,&\dots,\bs d_n;\ti\mu,\mu)\cdot\bigl[\bigl[\cdots\bigl[\bar\ep{}^{\bs d_1}(\ti\mu),\\
&
\bar\ep{}^{\bs d_2}(\ti\mu)\bigr],\dots\bigr],\bar\ep{}^{\bs d_n}(\ti\mu)\bigr],
\end{aligned}
\label{wc6eq5}\\
[\M_{\bs d}^\ss(\mu)]_\inv=
\sum_{\substack{\bs d_1,\dots,\bs d_n\in
\N^{Q_0}\sm\{0\}\colon n=\md{\bs d},\\ \bs d_1+\cdots+\bs d_n=\bs d,\; \bs d_i=\de_{v_i},\, v_i\in Q_0 }}  \begin{aligned}[t]
\ti U(\bs d_1,&\dots,\bs d_n;\ti\mu,\mu)\cdot\bigl[\bigl[\cdots\bigl[1_{H_0(\M_{\bs d_1}^\pl)},\\
&
1_{H_0(\M_{\bs d_2}^\pl)}\bigr],\dots\bigr],1_{H_0(\M_{\bs d_n}^\pl)}\bigr].
\end{aligned}
\label{wc6eq6}
\end{gather}
Here we use Proposition \ref{wc5prop1} and \eq{wc5eq9} to deduce that the only nonzero terms on the right-hand sides of \eq{wc3eq19} and \eq{wc5eq7} are when $\bs d_i=\de_{v_i}$ for $i=1,\dots,n$, and then $n=\md{\bs d}:=\sum_{v\in Q_0}\bs d(v)$. Equations \eq{wc6eq5} and \eq{wc6eq6} are in the Lie algebras $\SFai(\M)$ and~$\check H_0\big(\M^\pl\big)$.

Now Joyce and Song \cite[Section~7]{JoSo} define invariants $\bar{DT}{}_Q^{\bs e}(\mu)$ for quivers $Q$ (this is the special case when the superpotential is $W=0$). In \cite[Section~7.3]{JoSo} they define an explicit Lie algebra~$L(Q)$ over~$R$ (they take $R=\Q$ and write $\ti L(Q)$), with basis given by symbols $\la^{\bs e}$ for $\bs e\in\N^{Q_0}$, and Lie bracket
\begin{equation*}
\big[\la^{\bs e}, \la^{\bs f}\big]=(-1)^{\bar\chi(\bs e, \bs f)} \bar\chi(\bs e, \bs f) \la^{\bs e + \bs f},
\end{equation*}
where $\bar\chi$ is the {\it anti-symmetric Euler form} $\bar\chi(\bs e,\bs f):=\chi_Q(\bs e,\bs f)-\chi_Q(\bs f,\bs e)$. They define a Lie algebra morphism $\Psi\colon \SFai(\M)\ra L(Q)$ (this is difficult), for $\SFai(\M)$ as in Section~\ref{wc32}, and they define $\bar{DT}{}_Q^{\bs e}(\mu)\in\Q\subseteq R$ by $\Psi(\bar\ep{}^{\bs e}(\mu))=-\bar{DT}{}_Q^{\bs e}(\mu)\la^{\bs e}$, for $\bar\ep{}^{\bs e}(\mu)$ as in Section~\ref{wc34}.

Define Lie algebra ideals
\begin{gather*}
L(Q)_{\text{non-bin}}= \bigop_{\bs e \ \text{non-binary}} R\cdot \la^{\bs e} \subseteq L(Q), \\
\check H_0(\M^\pl)_{\text{non-bin}}= \bigop_{\bs e \ \text{non-binary}} \check H_0\big(\M^\pl_{\bs e}\big) \subseteq  \check H_0\big(\M^\pl_Q\big),
\end{gather*}
noting that if $\bs e$ or $\bs f$ is non-binary then so is $\bs e+\bs f$. Define Lie subalgebras
\begin{gather*}
L(Q)^{\bs d}_{\text{conn}}= \bigop_{\substack{\text{$\bs e$ connected support}, \\ \supp(\bs e)\subseteq\supp(\bs d)}} R\cdot \la^{\bs e}  \subseteq L(Q), \\
\check H_0(\M^\pl)^{\bs d}_{\text{conn}}=  \bigop_{\substack{\text{$\bs e$ connected support}, \\ \supp(\bs e)\subseteq\supp(\bs d)}} \check H_0\big(\M^\pl_{\bs e}\big)  \subseteq \check H_0\big(\M^\pl\big).
\end{gather*}
These are Lie subalgebras as if $\supp(\bs e)$, $\supp(\bs f)$ are disjoint and not connected by any edges then the Lie brackets between $\la^{\bs e}$, $\la^{\bs f}$ and $\check H_0\big(\M^\pl_{\bs e}\big)$, $\check H_0\big(\M^\pl_{\bs f}\big)$ are zero. The quotients
\begin{gather*}
L(Q)_{\text{bin}}^{\bs d} = L(Q)^{\bs d}_{\text{conn}}/ (L(Q)^{\bs d}_{\text{conn}} \cap L(Q)_{\text{non-bin}}), \\
\check H_0\big(\M^\pl\big)^{\bs d}_{\text{bin}} = \check H_0\big(\M^\pl\big)^{\bs d}_{\text{conn}}/\big(\check H_0\big(\M^\pl\big)^{\bs d}_{\text{conn}} \cap \check H_0\big(\M^\pl\big)_{\text{non-bin}}\big)
\end{gather*}
are Lie algebras.

Define an $R$-linear map $\Up\colon L(Q)^{\bs d}_{\text{bin}} \ra \check H_0\big(\M^\pl\big)^{\bs d}_{\text{bin}}$ by $\la^{\bs e} \mapsto -1_{H_0(\M_{\bs e}^\pl)}$. We will show $\Up$ is a~Lie algebra morphism, as $\Up\big(\big[\la^{\bs e},\la^{\bs f}\big]\big)=\big[\Up(\la^{\bs e}),\Up\big(\la^{\bs f}\big)\big]$, that is
\e
\begin{split}
&\big[1_{H_0(\M_{\bs e}^\pl)}, 1_{H_0(\M_{\bs f}^\pl)}\big] = -(-1)^{\bar\chi(\bs e, \bs f)} \bar\chi(\bs e, \bs f) \cdot 1_{H_0(\M_{\bs e+\bs f}^\pl)} \\
&\qquad \mod \check H_0\big(\M^\pl\big)^{\bs d}_{\text{conn}} \cap \check H_0\big(\M^\pl\big)_{\text{non-bin}},
\end{split}
\label{wc6eq7}
\e
for $\bs e, \bs f\in\N^{Q_0}\sm\{0\}$ binary with $\supp(\bs e)$, $\supp(\bs f)$ connected and in $\supp(\bs d)$.

Using the definition of the Lie bracket on $\check H_0\big(\M^\pl\big)$ via Proposition~\ref{wc2prop1} and $\check H_*\big(\M^\pl\big)=\hat H_{*+2}(\M)/D\big(\hat H_*(\M)\big)$, and the definition~\eq{wc2eq12} of the vertex algebra structure on $\hat H_*(\M)$, we find that
\e
\begin{split}
&\big[1_{H_0(\M_{\bs e}^\pl)}, 1_{H_0(\M_{\bs f}^\pl)}\big]\\
&=\sum_{\substack{ i\ge 0\colon \\ i\ge\chi(\bs e,\bs f)+1}}\begin{aligned}[t]&\frac{(-1)^{\chi_Q(\bs e,\bs f)}}{(i-\chi(\bs e,\bs f)-1)!}H_*(\Phi)\ci \big(D^{i-\chi(\bs e,\bs f)-1}\ot\id\big)\\
&\bigl((1_{H_0(\M_{\bs e})}\bt 1_{H_0(\M_{\bs f})})\cap c_i(\Th^\bu)\bigr)
\end{aligned}
\\
&\quad {}+D\big(\hat H_0(\M)\big), \quad \mod \check H_0\big(\M^\pl\big)_{\text{conn}} \cap \check H_0\big(\M^\pl\big)_{\text{non-bin}}.
\end{split}
\label{wc6eq8}
\e
Equation \eq{wc6eq8} implies that
\ea
&\big[1_{H_0(\M_{\bs e}^\pl)}, 1_{H_0(\M_{\bs f}^\pl)}\big]
\nonumber\\
&=\begin{cases} 0, & \supp(\bs e)\cap\supp(\bs f)\ne\es, \\
0, & \supp(\bs e)\cap\supp(\bs f)=\es,\;\> \chi_Q(\bs e,\bs f)=\chi_Q(\bs f,\bs e)=0, \\
1_{H_0(\M_{\bs e+\bs f}^\pl)}, & \supp(\bs e)\cap\supp(\bs f)=\es,\  \chi_Q(\bs e,\bs f)=0,   \chi_Q(\bs f,\bs e)=-1,\\
-1_{H_0(\M_{\bs e+\bs f}^\pl)}, & \supp(\bs e)\cap\supp(\bs f)=\es,\  \chi_Q(\bs e,\bs f)=-1, \  \chi_Q(\bs f,\bs e)=0,
\end{cases}
\nonumber\\
&\quad \mod \check H_0\big(\M^\pl\big)^{\bs d}_{\text{conn}} \cap \check H_0\big(\M^\pl\big)_{\text{non-bin}},
\label{wc6eq9}
\ea
where the four cases realize all possibilities. To see this, note that if $\supp(\bs e)\cap\supp(\bs f)\ne\es$ then $\bs e+\bs f$ is non-binary, so $\big[1_{H_0(\M_{\bs e}^\pl)}, 1_{H_0(\M_{\bs f}^\pl)}\big]\in \check H_0\big(\M^\pl\big)_{\text{non-bin}}$, giving the first case. If $\supp(\bs e)\cap\supp(\bs f)=\es$ then as $\supp(\bs e)$, $\supp(\bs f)$ lie in $\supp(\bs d)$, which is a tree, there is at most one edge joining $\supp(\bs e)$, $\supp(\bs f)$, so the possibilities are $\chi_Q(\bs e,\bs f)=\chi_Q(\bs f,\bs e)=0$ (if no edges), or $\chi_Q(\bs e,\bs f)=0$, $\chi_Q(\bs f,\bs e)=-1$ (if one edge $\supp(\bs f)\ra\supp(\bs e)$, by \eq{wc5eq4}), or $\chi_Q(\bs e,\bs f)=-1$, $\chi_Q(\bs f,\bs e)=0$ (if one edge $\supp(\bs e)\ra\supp(\bs f)$).

To prove \eq{wc6eq9}, the first case is immediate. In \eq{wc6eq8}, we have $\big(1_{H_0(\M_{\bs e})}\bt 1_{H_0(\M_{\bs f})}\big)\cap c_i(\Th^\bu)=0$ unless $i=0$, for dimension reasons. As $\chi(\bs e,\bs f)=\chi_Q(\bs e,\bs f)+\chi_Q(\bs f,\bs e)$, the condition $i\ge\chi(\bs e,\bs f)+1$ in \eq{wc6eq8} means there are no nonzero terms in \eq{wc6eq8} in the second case of \eq{wc6eq9}. For the third and fourth cases, there is one nonzero term in \eq{wc6eq8}, with $i=0$, $\chi(\bs e,\bs f)=-1$, and $(-1)^{\chi_Q(\bs e,\bs f)}$ is 1 in the third case and $-1$ in the fourth. Equation \eq{wc6eq9} follows, and as  $\bar\chi(\bs e,\bs f)=\chi_Q(\bs e,\bs f)-\chi_Q(\bs f,\bs e)$ this implies \eq{wc6eq7}. Hence $\Up$ is a Lie algebra morphism, as claimed.

Applying the Lie algebra morphism $\Psi\colon \SFai(\M)\ra L(Q)$ to \eq{wc6eq5} gives an equation in $L(Q)^{\bs d}_{\text{conn}}\subseteq L(Q)$. Composing with the Lie algebra projection $L(Q)^{\bs d}_{\text{conn}}\ra L(Q)_{\text{bin}}^{\bs d}$, and then applying the Lie algebra morphism $\Up\colon L(Q)^{\bs d}_{\text{bin}} \ra \check H_0\big(\M^\pl\big)^{\bs d}_{\text{bin}}$, gives an identity in $\check H_0\big(\M^\pl\big)^{\bs d}_{\text{bin}}$:
\e
\Up \ci \Psi(\bar\ep{}^{\bs d}(\mu))=
\sum_{\substack{\bs d_1,\dots,\bs d_n\in
\N^{Q_0}\sm\{0\}\colon n=\md{\bs d},\\ \bs d_1+\cdots+\bs d_n=\bs d,\, \bs d_i=\de_{v_i},\, v_i\in Q_0 }}\begin{aligned}[t]
\ti U(\bs d_1,&\dots,\bs d_n;\ti\mu,\mu)\cdot\bigl[\bigl[\cdots\bigl[\Up\!\ci\!\Psi(\bar\ep{}^{\bs d_1}(\ti\mu)),\\
&
\Up \ci \Psi(\bar\ep{}^{\bs d_2}(\ti\mu))\bigr],\dots\bigr],\Up\!\ci\!\Psi(\bar\ep{}^{\bs d_n}(\ti\mu))\bigr].
\end{aligned}
\label{wc6eq10}
\e
Now \eq{wc6eq6} is an identity in $\check H_0\big(\M^\pl\big)^{\bs d}_{\text{conn}}$, so we can apply the Lie algebra morphism $\check H_0\big(\M^\pl\big)^{\bs d}_{\text{conn}}\ab\ra\check H_0\big(\M^\pl\big)^{\bs d}_{\text{bin}}$, and compare it with \eq{wc6eq10}. Using $\bs d_i=\de_{v_i}$, Proposition~\ref{wc5prop1}, and the definitions of $\Psi$, $\Up$, we find that
\e
\Up\ci\Psi\big(\bar\ep{}^{\bs d_i}(\ti\mu)\big)=\Up\big({-}\la^{\bs d_i}\big)=1_{H_0(\M_{\bs d_i}^\pl)}.
\label{wc6eq11}
\e

Note that this uses the complicated definition of $\Psi$ in \cite[Section~7.3]{JoSo}, which we have not explained. But in this case, $\bar\ep{}^{\bs d_i}(\ti\mu)$ is the stack function $\M_{\bs d_i}\hookra\M$, where $\M_{\bs d_i}\cong[*/\bG_m]$ as $\bs d_i=\de_{v_i}$, and showing $\Psi\big(\bar\ep{}^{\bs d_i}(\ti\mu)\big)=-\la^{\bs d_i}$ is straightforward. Therefore the right-hand sides of~\eq{wc6eq10} and the projection of~\eq{wc6eq6} agree, so
\e
\Up\ci\Psi\big(\bar\ep{}^{\bs d}(\mu)\big)=[\M_{\bs d}^\ss(\mu)]_\inv
\mod \check H_0\big(\M^\pl\big)^{\bs d}_{\text{conn}} \cap \check H_0\big(\M^\pl\big)_{\text{non-bin}}.
\label{wc6eq12}
\e
But \eq{wc6eq4} and the proof of \eq{wc6eq11} imply that
\e
\Up\ci\Psi\big(\bar\ep{}^{\bs d}(\mu)\big)=\begin{cases} 1_{H_0(\M_{\bs d_i}^\pl)}, & \M^\rst_{\bs d}(\mu)=\M^\ss_{\bs d}(\mu)\cong *, \\
0, & \M^\rst_{\bs d}(\mu)=\M^\ss_{\bs d}(\mu)=\es.	
\end{cases}
\label{wc6eq13}
\e
Combining \eq{wc6eq12}--\eq{wc6eq13} proves Theorem \ref{wc5thm1}(i) for $\mu$, $\bs d$.
\end{proof}

\subsection[\texorpdfstring{Proof of Theorem \ref{wc5thm1}(i) for $d$ binary}{Proof of Theorem \ref{wc5thm1}(i) for d binary}]{Proof of Theorem \ref{wc5thm1}(i) for $\boldsymbol{\bs d}$ binary}\label{wc64}

Consider the following situation:

\begin{dfn}\label{wc6def2}
Let $Q=(Q_0,Q_1,h,t)$, $Q'=(Q_0',Q_1',h',t')$ be quivers with $Q_0'=Q_0$, $Q_1'\subseteq Q_1$, $h'=h\vert_{Q_1'}$, $t'=t\vert_{Q_1'}$, that is, $Q'$ is a subquiver of~$Q$ obtained by deleting the edges $Q_1\sm Q_1'$. Then $\la=(\la_0,\la_1)\colon Q\ra Q'$ given by $\la_0=\id_{Q_0}$, $\la_1=\{(e',e')\colon e'\in Q_1'\}$ is a morphism in the sense of Section~\ref{wc54}, so Definition~\ref{wc5def7} gives a Lie algebra morphism $\Om^\pl\colon \check H_0\big(\M^\pl\big)\ra\check H_0\big(\M^{\prime\pl}\big)$, where $\M^\pl$, $\M^{\prime\pl}$ are the `projective linear' moduli stacks for~$\modCQ$, $\modCQ'$.

Let $\mu'$ be a slope function on $\modCQ'$, and $\mu=\mu'\ci\la_*$ the associated slope function on $\modCQ$. Let $\bs d\in\N^{Q_0}\sm\{0\}$, and $\bs d'=\la_*(\bs d)\in\N^{Q_0'}\sm\{0\}$ (in fact $\bs d=\bs d'$). As the factors $\prod_v\bs d(v)!$, $\prod_{v'}\bs d'(v')!$ in \eq{wc5eq13} are equal, Proposition~\ref{wc6prop2} implies that the classes $\big[\M_{\bs d}^\ss(\mu)\big]_\inv$ defined in Proposition~\ref{wc6prop1} satisfy
\e
\Om^\pl\bigl(\big[\M_{\bs d}^\ss(\mu)\big]_\inv\bigr)=\big[\M_{\bs d'}^{\prime\ss}(\mu')\big]_\inv.
\label{wc6eq14}
\e
\end{dfn}

\begin{prop}\label{wc6prop4}
In Definition~{\rm \ref{wc6def2}}, suppose also that $\M^\rst_{\bs d}(\mu)=\M^\ss_{\bs d}(\mu)$. Then $\M^{\prime\rst}_{\bs d'}(\mu')=\M^{\prime\ss}_{\bs d'}(\mu')$, so $\M^\ss_{\bs d}(\mu)$, $\M^{\prime\ss}_{\bs d'}(\mu')$ are smooth projective $\C$-schemes by Proposition~{\rm \ref{wc5prop2}}, and
\e
\Om^\pl\bigl(\io_*\big(\big[\M_{\bs d}^\ss(\mu)\big]_\fund\big)\bigr)=\io'_*\bigl(\big[\M_{\bs d'}^{\prime\ss}(\mu')\big]_\fund\bigr),
\label{wc6eq15}
\e	
where $\io\colon \M^\ss_{\bs d}(\mu)\hookra\M_{\bs d}^\pl$, $\io'\colon \M^{\prime\ss}_{\bs d'}(\mu')\hookra\M_{\bs d'}^{\prime\pl}$ are the inclusions.
\end{prop}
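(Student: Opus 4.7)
My plan is to recognise $\si_\la^\pl : \M_{\bs d}^\pl \to \M_{\bs d'}^{\prime\pl}$ as a vector bundle projection between stacks and to carry out a standard Euler class / transverse zero locus argument for its tautological section. Since $\la_0=\id_{Q_0}$, the product decomposition $R_{\bs d}=R_{\bs d'}\t W_{\bs d}$ with $W_{\bs d}=\bigop_{e\in Q_1\sm Q_1'}\Hom(\C^{\bs d(t(e))},\C^{\bs d(h(e))})$ is $\PGL_{\bs d}$-equivariant, which presents $\M_{\bs d}^\pl$ as the total space of a rank $\xi(\bs d,\bs d)$ vector bundle $\tilde G^\pl$ on $\M_{\bs d'}^{\prime\pl}$, equipped with a zero section $\iota_0^\pl : \M_{\bs d'}^{\prime\pl}\hookra \M_{\bs d}^\pl$ that extends a representation of $Q'$ to one of $Q$ by declaring the extra edge maps to be zero. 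Inspecting Definition \ref{wc5def7} one identifies $G^\pl\cong(\si_\la^\pl)^*(\tilde G^\pl)$, and the tautological edge maps $\rho_e$ for $e\in Q_1\sm Q_1'$ assemble into a tautological section $s^\pl\in\Ga(\M_{\bs d}^\pl,G^\pl)$ whose zero locus coincides with $\iota_0^\pl(\M_{\bs d'}^{\prime\pl})$; this section is transverse, being the diagonal section of the pullback bundle over the total space of $\tilde G^\pl$.

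I would next verify $\M_{\bs d'}^{\prime\rst}(\mu')=\M_{\bs d'}^{\prime\ss}(\mu')$. For $(V,\rho')$ in $\modCQ'$ of class $\bs d$ write $\iota_0(V,\rho')=(V,\rho)\in\modCQ$ for its zero extension. A graded subspace $W\subseteq V$ is preserved by all the $\rho'_{e'}$ iff it is preserved by all the $\rho_e$ (the new $\rho_e=0$ preserve $W$ trivially), so the lattices of subobjects of $(V,\rho')$ in $\modCQ'$ and of $\iota_0(V,\rho')$ in $\modCQ$ coincide. Since $\mu=\mu'\ci\la_*$ agrees with $\mu'$ on dimension vectors, this correspondence preserves (strict) $\mu'$- versus $\mu$-(semi)stability. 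A strictly $\mu'$-semistable $(V,\rho')$ would then produce a strictly $\mu$-semistable $\iota_0(V,\rho')$, contradicting $\M_{\bs d}^\rst(\mu)=\M_{\bs d}^\ss(\mu)$; hence $M':=\M_{\bs d'}^{\prime\rst}(\mu')=\M_{\bs d'}^{\prime\ss}(\mu')$. The same correspondence shows that $\iota_0^\pl$ restricts to a closed embedding $\iota_0:M'\hookra M:=\M_{\bs d}^\ss(\mu)$, and both $M,M'$ are smooth projective by Proposition \ref{wc5prop2}.

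Restricting $s^\pl$ to the open subscheme $M\subseteq\M_{\bs d}^\pl$ yields a transverse section of $G^\pl\vert_M$ with zero locus $\iota_0(M')$, which has the expected codimension $\xi(\bs d,\bs d)$ because $\dim M-\dim M'=\chi_{Q'}(\bs d,\bs d)-\chi_Q(\bs d,\bs d)=\xi(\bs d,\bs d)=\rank G^\pl$. The standard Euler class / transverse section formula on the smooth projective $\C$-scheme $M$ therefore gives $[M]_\fund\cap c_\top(G^\pl\vert_M)=\iota_{0,*}[M']_\fund$ in $H_*(M)$. Pushing forward along $\io$ and applying the projection formula,
\begin{equation*}
\io_*([M]_\fund)\cap c_\top(G^\pl)=\io_*\bigl([M]_\fund\cap\io^*c_\top(G^\pl)\bigr)=(\io\ci\iota_0)_*[M']_\fund,
\end{equation*}
and applying $H_*(\si_\la^\pl)$ together with $\si_\la^\pl\ci\io\ci\iota_0=\io'$ (extending by zero and then forgetting the extra edges returns $(V,\rho')$) produces $\Om^\pl(\io_*[M]_\fund)=\io'_*[M']_\fund$, which is \eq{wc6eq15}. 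The main obstacle is really the first step: verifying carefully that $\si_\la^\pl$ is an honest vector bundle morphism between these quotient stacks with $G^\pl$ descending as its pullback from $\tilde G^\pl$, after which the Euler class and projection formula computation is routine.
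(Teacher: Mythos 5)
Your proposal is correct and follows essentially the same route as the paper: the equivariant splitting $R_{\bs d}=R'_{\bs d'}\op R''_{\bs d}$, the identification of $G^\pl$ with the bundle associated to $R''_{\bs d}$, the transverse tautological section whose zero locus is $\io^\pl(\M_{\bs d'}^{\prime\ss}(\mu'))$, and the composition $\si^\pl_*\ci(\cap\, c_\top(G^\pl))$ together with $\si^\pl\ci\io^\pl=\id$. Your explicit verification via zero extensions that $\M^\rst_{\bs d'}(\mu')=\M^\ss_{\bs d'}(\mu')$, and that the zero locus restricted to $\M_{\bs d}^\ss(\mu)$ is exactly $\iota_0(\M_{\bs d'}^{\prime\ss}(\mu'))$, is a welcome detail the paper leaves implicit.
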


\begin{proof}
We have $\M_{\bs d}^\pl=[R_{\bs d}/\PGL_{\bs d}]$, $\M_{\bs d'}^{\prime \pl}=[R'_{\bs d'}/\PGL_{\bs d'}]$, where $\PGL_{\bs d}=\PGL_{\bs d'}$, and~$R_{\bs d}=R'_{\bs d'}\op R_{\bs d}''$, where $R'_{\bs d'}$ is the $\C$-vector space of edge maps $\rho_e$ coming from $e\in Q_1'\subset Q_1$, and~$R_{\bs d}''$ the $\C$-vector space of edge maps $\rho_e$ coming from $e\in Q_1\sm Q_1'$. By \eq{wc5eq12}, the vector bundle $G_{\bs d}^\pl\ra\M_{\bs d}^\pl$ in Definitions \ref{wc2def5} and \ref{wc5def7} is that associated to $R_{\bs d}''$ with its $\PGL_{\bs d'}$-action. Also $\si^\pl\colon \M_{\bs d}^\pl\ra \M_{\bs d'}^{\prime \pl}$ is induced by the projection $R_{\bs d}\ra R'_{\bs d'}$ and the identity $\PGL_{\bs d}\ra\PGL_{\bs d'}$.

There is a natural morphism $\io^\pl\colon \M_{\bs d'}^{\prime \pl}\hookra\M_{\bs d}^\pl$ induced by the inclusion $R'_{\bs d'}\hookra R_{\bs d}$ and the identity $\PGL_{\bs d'}\ra\PGL_{\bs d}$, which embeds $\M_{\bs d'}^{\prime \pl}$ as a closed substack of $\M_{\bs d}^\pl$. It has $\si^\pl\ci\io^\pl=\id$. There is a natural transverse section $s$ of the vector bundle $G^\pl\ra \M_{\bs d}^\pl$ mapping an object $((V_v)_{v\in Q_0},(\rho_e)_{e\in Q_1})$ to $(\rho_e)_{e\in Q_1\sm Q_1'}$ in $G^\pl$. The zero locus $s^{-1}(0)$ in $\M_{\bs d}^\pl$ is the substack~$\io^\pl\big(\M_{\bs d'}^{\prime \pl}\big)$.

Now $\M^\rst_{\bs d}(\mu)\cap s^{-1}(0)=\io^\pl\big(\M^{\prime\rst}_{\bs d'}(\mu')\big)$ and $\M^\ss_{\bs d}(\mu)\cap s^{-1}(0)=\io^\pl\big(\M^{\prime\ss}_{\bs d'}(\mu')\big)$. Hence $\M^\rst_{\bs d}(\mu)=\M^\ss_{\bs d}(\mu)$ implies that
$\io^\pl\big(\M^{\prime\rst}_{\bs d'}(\mu')\big)=\io^\pl\big(\M^{\prime\ss}_{\bs d'}(\mu')\big)$, and thus that $\M^{\prime\rst}_{\bs d'}(\mu')=\M^{\prime\ss}_{\bs d'}(\mu')$, as $\io^\pl$ is an embedding.

By Definition \ref{wc2def6}, $\Om^\pl$ on $H_*\big(\M_{\bs d}^\pl\big)$ is the composition
\e
\smash{\xymatrix@C=50pt{ H_*\big(\M_{\bs d}^\pl\big) \ar[r]^{\cap c_\top(G^\pl)} & H_*\big(\M_{\bs d}^\pl\big) \ar[r]^{\si_*^\pl} & H_*\big(\M_{\bs d'}^{\prime\pl}\big). }}
\label{wc6eq16}
\e
This acts on $\io_*\big(\big[\M_{\bs d}^\ss(\mu)\big]_\fund\big)$ by
\begin{equation*}
\xymatrix@C=33pt{ \io_*\big(\big[\M_{\bs d}^\ss(\mu)\big]_\fund\big) \ar@{|->}[r]^(0.41){\raisebox{7pt}{$\scriptstyle\cap c_\top(G^\pl)$}} & {\substack{\io_*([\M_{\bs d}^\ss(\mu)\!\cap\! s^{-1}(0)]_\fund) \\   =\io_*([\io^\pl(\M_{\bs d'}^{\prime\ss}(\mu'))]_\fund) \\   =\io^\pl_*(\io'_*([\M_{\bs d'}^{\prime\ss}(\mu')]_\fund))}} \ar@{|->}[r]^(0.55){\si_*^\pl} & \io'_*\big(\big[\M_{\bs d'}^{\prime\ss}(\mu')\big]_\fund\big). }
\end{equation*}
Here the first step holds as $\cap s^{-1}(0)$ and $\cap c_\top\big(G^\pl\big)$ have the same effect in homology, since~$s$ is transverse, and $\M_{\bs d}^\ss(\mu)\cap s^{-1}(0)=\io^\pl\big(\M_{\bs d'}^{\prime\ss}(\mu')\big)$, and the second step holds as $\si^\pl\ci\io^\pl=\id$. Equation~\eq{wc6eq15} follows.	
\end{proof}

\begin{cor}\label{wc6cor1}
In Proposition~{\rm \ref{wc6prop4}}, suppose also that $\bs d$ is binary. Then if either $\supp(\bs d')$ is a~tree in~$Q'$, or $\supp(\bs d')$ is disconnected, then
\e
\Om^\pl\bigl(\big[\M_{\bs d}^\ss(\mu)\big]_\inv\bigr)=\Om^\pl\bigl(\io_*\big(\big[\M_{\bs d}^\ss(\mu)\big]_\fund\big)\bigr).\label{wc6eq17}
\e
\end{cor}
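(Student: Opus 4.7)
The plan is to reduce \eq{wc6eq17} to Theorem \ref{wc5thm1}(i) applied to the triple $(Q',\mu',\bs d')$, and then verify that reduced statement in each of the two cases. By \eq{wc6eq14} and Proposition \ref{wc6prop4} I have
\begin{equation*}
\Om^\pl\bigl([\M_{\bs d}^\ss(\mu)]_\inv\bigr)=[\M_{\bs d'}^{\prime\ss}(\mu')]_\inv, \qquad \Om^\pl\bigl(\io_*([\M_{\bs d}^\ss(\mu)]_\fund)\bigr)=\io'_*\bigl([\M_{\bs d'}^{\prime\ss}(\mu')]_\fund\bigr),
\end{equation*}
so \eq{wc6eq17} is equivalent to $[\M_{\bs d'}^{\prime\ss}(\mu')]_\inv=\io'_*([\M_{\bs d'}^{\prime\ss}(\mu')]_\fund)$, which is just Theorem \ref{wc5thm1}(i) for $(Q',\mu',\bs d')$. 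Here $\bs d=\bs d'$ (since $\la_0=\id_{Q_0}$), $\bs d'$ is binary, and by Proposition \ref{wc6prop4} the hypothesis $\M^\rst_{\bs d}(\mu)=\M^\ss_{\bs d}(\mu)$ transfers to $\M^{\prime\rst}_{\bs d'}(\mu')=\M^{\prime\ss}_{\bs d'}(\mu')$.

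If $\supp(\bs d')$ is a tree in $Q'$, then $\bs d'$ is binary with connected, tree support, so Proposition \ref{wc6prop3} applied to $(Q',\mu',\bs d')$ yields the reduced identity directly. Suppose instead $\supp(\bs d')$ is disconnected in $Q'$; split $\supp(\bs d')=C_1\sqcup C_2$ (more components handled analogously) and write $\bs d'=\bs e_1+\bs e_2$ with $\supp(\bs e_i)\subseteq C_i$. Since $Q'$ has no edges between $C_1$ and $C_2$, every representation $E$ of $Q'$ in class $\bs d'$ splits canonically as $E_1\op E_2$ with $\lb E_i\rb=\bs e_i$. If such an $E$ were $\mu'$-stable, the subobject $E_1\subset E$ would force $\mu'(\bs e_1)<\mu'(\bs e_2)$, while $E_2\subset E$ would force the reverse strict inequality, a contradiction. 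Thus $\M^{\prime\rst}_{\bs d'}(\mu')=\es$, and using $\M^{\prime\rst}_{\bs d'}(\mu')=\M^{\prime\ss}_{\bs d'}(\mu')$ this gives $\io'_*([\M^{\prime\ss}_{\bs d'}(\mu')]_\fund)=0$.

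To show $[\M_{\bs d'}^{\prime\ss}(\mu')]_\inv=0$, fix an increasing slope function $\ti\mu'$ on $\modCQ'$ and apply \eq{wc5eq7} together with Theorem \ref{wc5thm1}(iii): only the classes $[\M^{\prime\ss}_{\de_v}(\ti\mu')]_\inv=1_{H_0(\M^{\prime\pl}_{\de_v})}$ contribute on the right, so $[\M_{\bs d'}^{\prime\ss}(\mu')]_\inv$ is an $R$-linear combination of iterated brackets $\bigl[\cdots[1_{H_0(\M^{\prime\pl}_{\de_{v_1}})},\ldots],1_{H_0(\M^{\prime\pl}_{\de_{v_n}})}\bigr]$ indexed by orderings $(v_1,\ldots,v_n)$ of $\supp(\bs d')$. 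The key lemma I will prove is: for any $\bs g,\bs h\in\N^{Q'_0}$ with $\supp(\bs g)\cap\supp(\bs h)=\es$ and no edges of $Q'$ joining $\supp(\bs g)$ to $\supp(\bs h)$, each summand in \eq{wc5eq5} vanishes on $\M'_{\bs g}\t\M'_{\bs h}$, so $\cExt^\bu$ and hence $\Th^\bu$ restrict to the zero complex there, and $c_i(\Th^\bu)=0$ on this locus for $i\ge 1$. Feeding this into \eq{wc2eq12} shows that $Y(u,z)v$ contains only non-negative powers of $z$ for any $u\in\hat H_*(\M'_{\bs g})$, $v\in\hat H_*(\M'_{\bs h})$, whence $u_0(v)=0$ and $[u,v]=0$ \emph{exactly} in $\check H_*(\M^{\prime\pl})$.

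Granted the lemma, I dispose of each iterated bracket. Given an ordering $(v_1,\ldots,v_n)$ of $\supp(\bs d')$, since this set meets both components there is a smallest $k\ge 1$ with $\{v_1,\ldots,v_k\}$ contained in one component, say $C_1$, and $v_{k+1}\in C_2$. The partial bracket $\xi_k:=[\cdots[1_{H_0(\M^{\prime\pl}_{\de_{v_1}})},\ldots],1_{H_0(\M^{\prime\pl}_{\de_{v_k}})}]$ lies in $\check H_*(\M^{\prime\pl}_{\bs f})$ for $\bs f=\de_{v_1}+\cdots+\de_{v_k}$ with $\supp(\bs f)\subseteq C_1$; since $\{v_{k+1}\}\subseteq C_2$ is disjoint from $C_1$ with no connecting edges, the lemma gives $[\xi_k,1_{H_0(\M^{\prime\pl}_{\de_{v_{k+1}}})}]=0$, and hence the full iterated bracket vanishes. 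Therefore $[\M_{\bs d'}^{\prime\ss}(\mu')]_\inv=0=\io'_*([\M_{\bs d'}^{\prime\ss}(\mu')]_\fund)$, finishing the disconnected case. The main obstacle I anticipate is the lemma itself, specifically obtaining the \emph{exact} (rather than modulo-non-binary) vanishing of $[u,v]$: Proposition \ref{wc6prop3}'s argument only delivers vanishing up to a non-binary correction, whereas here the stronger exact vanishing rests on $\Th^\bu$ literally being zero on the relevant product stratum, which is where the disconnectedness of $\supp(\bs d')$ in $Q'$ is crucially used.
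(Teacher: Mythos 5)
Your proof is correct and follows essentially the same route as the paper: the tree case via \eq{wc6eq14}, Proposition \ref{wc6prop3} and \eq{wc6eq15}, and the disconnected case by showing $\M^{\prime\rst}_{\bs d'}(\mu')=\M^{\prime\ss}_{\bs d'}(\mu')=\es$ and then killing every nested bracket in the wall-crossing expansion at the first index where the vertices change connected component. The only difference is that you spell out (correctly, via the vanishing of $\Th^\bu$ on $\M'_{\bs g}\t\M'_{\bs h}$ when the supports are disjoint and unlinked) the exact vanishing of the Lie bracket across components, which the paper simply asserts.
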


\begin{proof}
If $\supp(\bs d')$ is a tree then
\begin{equation*}
\Om^\pl\bigl(\big[\M_{\bs d}^\ss(\mu)\big]_\inv\bigr) = \big[\M_{\bs d'}^{\prime\ss}(\mu')\big]_\inv = \io'_*\bigl(\big[\M_{\bs d'}^{\prime\ss}(\mu')\big]_\fund\bigr) = \Om^\pl\bigl(\io_*\big(\big[\M_{\bs d}^\ss(\mu)\big]_\fund\big)\bigr),	
\end{equation*}
using \eq{wc6eq14} in the first step, Proposition \ref{wc6prop3} in the second, and \eq{wc6eq15} in the third, proving~\eq{wc6eq17}.

If $\supp(\bs d')$ is disconnected then $\M^{\prime\rst}_{\bs d'}(\mu')=\es$, as every object in class $\bs d'$ in $\modCQ'$ is the direct sum of nonzero objects from each component of $\supp(\bs d')$, and so cannot be $\mu'$-stable. Thus $\M^{\prime\ss}_{\bs d'}(\mu')=\es$ as $\M^{\prime\rst}_{\bs d'}(\mu')=\M^{\prime\ss}_{\bs d'}(\mu')$ by Proposition \ref{wc6prop4}, so $[\M^{\prime\ss}_{\bs d'}(\mu')]_\fund=0$. Let $\ti\mu'$ be an increasing slope function on $\modCQ'$. Then $\big[\M_{\bs d'}^{\prime\ss}(\mu')\big]_\inv$ is given by~\eq{wc6eq6} with $\ti\mu'$, $\mu'$, $\bs d'$ in place of $\ti\mu$, $\mu$, $\bs d$. For each term on the right-hand side of \eq{wc6eq6} from $\bs d'_1=\de_{v_1'},\dots,\bs d'_n=\de_{v_n'}$, there exists unique $1\le k<n$ such that $v_1,\dots,v_k$ lie in one component of $\supp\bs d'$, and $v_{k+1}$ in a~different component. Then the nested Lie bracket in this term
\begin{equation*}
\bigl[\bigl[\cdots\bigl[1_{H_0(\M_{\bs d_1}^\pl)},1_{H_0(\M_{\bs d_2}^\pl)}\bigr],\dots1_{H_0(\M_{\bs d_k}^\pl)}\bigr],1_{H_0(\M_{\bs d_{k+1}}^\pl)}\bigr]=0,
\end{equation*}
as the outer Lie bracket is of the form $[A,B]$, where $A$, $B$ are supported on different connected components of $\bs d$, and their Lie bracket is zero. Hence $\big[\M_{\bs d'}^{\prime\ss}(\mu')\big]_\inv=\big[\M^{\prime\ss}_{\bs d'}(\mu')\big]_\fund=0$, so~\eq{wc6eq17} follows from~\eq{wc6eq14}--\eq{wc6eq15}.
\end{proof}

\begin{prop}\label{wc6prop5}
Let $Q=(Q_0,Q_1,h,t)$ be a quiver. For each $e\in Q_1$, let $Q'_e$ be $Q$ with edge $e$ deleted, and $\la_e\colon Q\ra Q_e'$, $\Om_e^\pl\colon \check H_0\big(\M_{\bs d}^\pl\big)\ra\check H_0\big(\M_{\bs d,e}^{\prime\pl}\big)$	be the morphisms in Definition~{\rm\ref{wc6def2}}, where $\M_{\bs d,e}^{\prime\pl}$ is the moduli stack of $(\bs V,\bs\rho)$ in $\text{\rm mod-}\C Q'_e$ with $\bdim(\bs V,\bs\rho)=\bs d$. Suppose $\bs d\in\N^{Q_0}\sm\{0\}$ is a binary dimension vector with connected support and $\chi_Q(\bs d,\bs d)\le 0$. Then the following is injective:
\e
 \bigop_{\text{$e$ edge in $\supp(\bs d)$}}\Om_e^\pl\colon \ \check H_0\big(\M_{\bs d}^\pl\big)\longra \bigop_{\text{$e$ edge in $\supp(\bs d)$}}\check H_0\big(\M_{\bs d,e}^{\prime\pl}\big).
\label{wc6eq18}
\e
\end{prop}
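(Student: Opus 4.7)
The plan is to make $\M_{\bs d}^\pl$, the line bundles $G_e^\pl$, and the morphisms $\si_e^\pl$ all explicit, and then reduce to a short commutative algebra plus graph theory statement. Set $n=|\supp(\bs d)|$ and let $N$ be the number of edges of $\supp(\bs d)$, so that $\chi_Q(\bs d,\bs d)=n-N\le 0$ forces $N\ge n$. Since $\bs d$ is binary, $\GL(\bs d(v),\C)=\bG_m$ for each $v\in\supp(\bs d)$, hence $\PGL_{\bs d}\cong\bG_m^{n-1}$, and the contractibility of $R_{\bs d}$ in \eqref{wc5eq3} gives $\M_{\bs d}^\pl\simeq B\PGL_{\bs d}$. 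First I would identify the pullback $(\Pi^\pl)^*:H^*(\M_{\bs d}^\pl)\hookra H^*(\M_{\bs d})=R[z_v:v\in\supp(\bs d)]$ (with $z_v=c_1(\cV_v)$) as the inclusion of the polynomial subring generated by the differences $z_v-z_w$. In particular $H^*(\M_{\bs d}^\pl)$ is a polynomial ring on $n-1$ degree-$2$ generators, so each $H^k(\M_{\bs d}^\pl)$ is a finitely generated free $R$-module and $H_k(\M_{\bs d}^\pl)\cong\Hom_R(H^k(\M_{\bs d}^\pl),R)$.

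Next I would unpack $\Om_e^\pl$ for an edge $e$ of $\supp(\bs d)$, with tail $v=t(e)$ and head $w=h(e)$. By \eqref{wc5eq12} and Definition~\ref{wc5def7}(v), the bundle $G_e$ is simply the line bundle $\cV_v^*\ot\cV_w$, which descends to a line bundle $G_e^\pl$ on $\M_{\bs d}^\pl$ with $c_1(G_e^\pl)=z_w-z_v$ in the above identification. Because both $R_{\bs d}$ and the subspace $R'_{\bs d,e}$ (where the $e$-coordinate vanishes) are contractible and carry the same $\PGL_{\bs d}$-action, $\si_e^\pl:\M_{\bs d}^\pl\to\M_{\bs d,e}^{\prime\pl}$ is a homotopy equivalence on topological realizations, hence an isomorphism on homology. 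By \eqref{wc2eq23}, $\Om_e^\pl$ therefore coincides, up to this isomorphism, with cap product by $c_1(G_e^\pl)=z_w-z_v$. So the injectivity of \eqref{wc6eq18} at the relevant degree $k=2-2\chi_Q(\bs d,\bs d)\ge 2$ reduces to the injectivity of the cap-product direct sum
\[
\bigoplus_{e}\bigl(\,\cdot\cap(z_{h(e)}-z_{t(e)})\bigr):H_k(\M_{\bs d}^\pl)\longra\bigoplus_{e} H_{k-2}(\M_{\bs d}^\pl)
\]
indexed over edges $e$ of $\supp(\bs d)$.

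Dualizing via the freeness of $H^*(\M_{\bs d}^\pl)$, this injectivity is equivalent to the surjectivity of the cup-product map $(\phi_e)\mapsto\sum_e(z_{h(e)}-z_{t(e)})\cdot\phi_e$ from $\bigoplus_e H^{k-2}(\M_{\bs d}^\pl)$ to $H^k(\M_{\bs d}^\pl)$. Equivalently, I must show that the ideal $\cI\subset H^*(\M_{\bs d}^\pl)$ generated by the edge forms $\{z_{h(e)}-z_{t(e)}\}$, as $e$ runs over edges of $\supp(\bs d)$, contains $H^k(\M_{\bs d}^\pl)$ for every $k\ge 2$. The final step is an elementary graph-theoretic observation: because $\supp(\bs d)$ is connected, any spanning tree supplies $n-1$ linearly independent such edge forms, so $\cI$ already contains the $(n-1)$-dimensional degree-$2$ part of $H^*(\M_{\bs d}^\pl)$, and thus the whole irrelevant ideal and all of $H^{\ge 2}$. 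The main obstacle I expect is pinning down the explicit presentation of $\Om_e^\pl$---specifically the identification $c_1(G_e^\pl)=z_w-z_v$ inside $H^*(\M_{\bs d}^\pl)$ and the verification that $\si_e^\pl$ induces an isomorphism on homology; once these are in hand, the dualization and the combinatorial step are short.
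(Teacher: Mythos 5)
Your proposal is correct and follows essentially the same route as the paper's proof: both identify $H^*(\M_{\bs d}^\pl)$ with the subalgebra of $R[c_v]$ generated by differences $c_w-c_v$, use connectivity of $\supp(\bs d)$ to write every class of degree $\ge 2$ in the ideal generated by the edge forms $c_{h(e)}-c_{t(e)}=c_1(G_e^\pl)$, and then conclude injectivity via the perfect pairing between homology and cohomology (the paper pairs a kernel element $\eta$ against all $\ze=\sum_e\ze_e\cup(c_w-c_v)$, which is exactly the dual of your surjectivity statement). Your explicit verifications that $\si_e^\pl$ is a homology isomorphism and that $c_1(G_e^\pl)=c_{h(e)}-c_{t(e)}$ are correct and are used implicitly in the paper's computation.
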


\begin{proof}
As $\bs d$ is binary, we have
\begin{equation*}
\GL_{\bs d}= \prod_{v\in Q_0\colon \bs d(v)=1}\bG_m\cong\bG_m^{\md{\bs d}}\qquad\text{and}\qquad \PGL_{\bs d}\cong\bG_m^{\md{\bs d}-1}.
\end{equation*}
Since $H^*([*/\bG_m])\cong R[c]$ for $c$ a formal variable of degree~2, we see that
\begin{equation*}
H^*(\M_{\bs d})=H^*([R_{\bs d}/\GL_{\bs d}])\cong H^*([*/\GL_{\bs d}])\cong R[c_v\colon v\in Q_0,\, \bs d(v)=1].
\end{equation*}
The projection $\Pi^\pl\colon \M_{\bs d}\ra\M_{\bs d}^\pl$ has pullback $\big(\Pi^\pl\big)^*\colon H^*\big(\M_{\bs d}^\pl\big)\ra H^*(\M_{\bs d})$, which is injective. This realizes $H^*\big(\M_{\bs d}^\pl\big)$ as the subalgebra
\begin{equation*}
H^*\big(\M_{\bs d}^\pl\big)\cong\an{c_w-c_v\colon v\ne w,\, v,w\in Q_0,\, \bs d(v)=\bs d(w)=1}\subset R[c_v\colon v\in Q_0,\, \bs d(v)=1]
\end{equation*}
generated by differences $c_w-c_v$. Since $\bs d$ has connected support, any such $c_w-c_v$ is a finite sum of $\pm(c_{w'}-c_{v'})$, for $\overset{v'}{\bu}\,{\buildrel e\over \longra}\,\overset{w'}{\bu}$ an edge in $\supp(d)$. Hence
\e
H^*\big(\M_{\bs d}^\pl\big)\cong\big\langle c_w-c_v\colon \text{$\overset{v}{\bu}\,{\buildrel e\over \longra}\,\overset{w}{\bu}$ an edge in $\supp(d)$}\big\rangle
\subset R[c_v\colon v\in Q_0,\, \bs d(v)=1].
\label{wc6eq19}
\e
Suppose $\eta \!\in\! \check H_0\big(\M_{\bs d}^\pl\big)=H_{2-2\chi_Q(\bs d,\bs d)}\big(\M_{\bs d}^\pl\big)$ lies in the kernel of \eq{wc6eq18}. Let $\ze \!\in\!  H^{2-2\chi_Q(\bs d,\bs d)}\big(\M_{\bs d}^\pl\big)$. By~\eq{wc6eq19}, as $\chi_Q(\bs d,\bs d)\le 0$ we may write
\e
\ze= \sum_{\text{$\overset{v}{\bu}\,{\buildrel e\over \longra}\,\overset{w}{\bu}$ edge in $\supp(\bs d)$}}\ze_e\cup(c_w-c_v),	
\label{wc6eq20}
\e
with $\ze_e\in H^{-2\chi_Q(\bs d,\bs d)}\big(\M_{\bs d}^\pl\big)$. Then
\begin{align*}
\ze\cdot\eta&= \sum_{\text{$\overset{v}{\bu}\,{\buildrel e\over \longra}\,\overset{w}{\bu}$ edge in $\supp(\bs d)$}}\bigl(\ze_e\cup(c_w-c_v)\bigr)\cdot\eta\\
&= \sum_{\text{$\overset{v}{\bu}\,{\buildrel e\over \longra}\,\overset{w}{\bu}$ edge in $\supp(\bs d)$}}\big(\big(\si^\pl_e\big)^*\big)^{-1}(\ze_e)\cdot\Om^\pl_e(\eta)=0,
\end{align*}
using \eq{wc6eq20} in the first step, and the definition of $\Om^\pl_e$ in Definition \ref{wc2def6} with $c_\top\big(G_e^\pl\big)=c_w-c_v$ in the second, and $\eta$ in the kernel of \eq{wc6eq18} in the third. Hence $\ze\cdot\eta=0$ for all $\ze\in H^{2-2\chi_Q(\bs d,\bs d)}\big(\M_{\bs d}^\pl\big)$, so $\eta=0$, and \eq{wc6eq18} is injective.
\end{proof}

\begin{prop}\label{wc6prop6}
The classes $\big[\M_{\bs d}^\ss(\mu)\big]_\inv$ in $\check H_0\big(\M_{\bs d}^\pl\big)$ defined in Proposition~{\rm \ref{wc6prop1}} satisfy Theo\-rem~{\rm \ref{wc5thm1}(i)} when $\bs d$ is binary.
\end{prop}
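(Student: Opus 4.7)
\smallskip
\noindent\textbf{Proof plan for Proposition \ref{wc6prop6}.}

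The plan is to show that the class $\eta=[\M_{\bs d}^\ss(\mu)]_\inv-\io_*([\M_{\bs d}^\ss(\mu)]_\fund)\in\check H_0(\M_{\bs d}^\pl)$ vanishes, splitting into two cases according to whether $\supp(\bs d)$ is connected in $Q$. First observe that if $\supp(\bs d)$ decomposes as a disjoint union $V_1\sqcup V_2$, there can be no $Q$-edge between $V_1$ and $V_2$, because any such edge would have both endpoints in $\supp(\bs d)$ and so would lie in $\supp(\bs d)$, contradicting disconnectedness.

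In the disconnected case, every object in class $\bs d$ splits as $E_1\op E_2$ supported on $V_1,V_2$, so $\M_{\bs d}^\rst(\mu)=\es$; combined with the hypothesis $\M_{\bs d}^\rst(\mu)=\M_{\bs d}^\ss(\mu)$ this forces $\M_{\bs d}^\ss(\mu)=\es$, and hence $[\M_{\bs d}^\ss(\mu)]_\fund=0$. To show $[\M_{\bs d}^\ss(\mu)]_\inv=0$, I will expand it via \eq{wc6eq6} with an increasing slope function $\ti\mu$ and argue that every iterated bracket $\bigl[\bigl[\cdots[1_{H_0(\M_{\de_{v_1}}^\pl)},1_{H_0(\M_{\de_{v_2}}^\pl)}],\ldots\bigr],1_{H_0(\M_{\de_{v_n}}^\pl)}\bigr]$ appearing there vanishes. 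The key computation is that whenever $\al,\be\in\N^{Q_0}$ satisfy $\supp\al\subseteq V_1$ and $\supp\be\subseteq V_2$, inspection of \eq{wc5eq5} shows $\cExt^\bu\vert_{\M_\al\t\M_\be}=0$, hence $\Th^\bu\vert_{\M_\al\t\M_\be}=0$ and $\chi(\al,\be)=0$; substituting into \eq{wc2eq12} then gives $Y(u,z)v\in V_*[[z]]$ with no negative powers of $z$, so $u_0(v)=0$ and the Lie bracket $[u,v]$ vanishes in $\check H_0$. Since $\bs d$ has support in both $V_1$ and $V_2$, any sequence $(\de_{v_1},\ldots,\de_{v_n})$ summing to $\bs d$ must at some step cross from $V_1$ to $V_2$ or vice versa, and at that step the bracket vanishes; this kills every summand of \eq{wc6eq6}, giving $[\M_{\bs d}^\ss(\mu)]_\inv=0=\eta$.

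For the connected case, I induct on $N=$ number of edges in $\supp(\bs d)$. The base case $N=\md{V(\supp\bs d)}-1$ (a tree) is Proposition \ref{wc6prop3}. For the inductive step $N\ge\md{V(\supp\bs d)}$, we have $\chi_Q(\bs d,\bs d)\le 0$, so Proposition \ref{wc6prop5} makes $\bigop_e\Om_e^\pl$ injective on $\check H_0(\M_{\bs d}^\pl)$, where $e$ ranges over edges in $\supp(\bs d)$. It suffices to show $\Om_e^\pl(\eta)=0$ for each such $e$. Setting $\mu'=\mu$ on $Q'_e$ (so $\mu=\mu'\ci\la_*$), one checks that the hypothesis $\M_{\bs d}^\rst(\mu)=\M_{\bs d}^\ss(\mu)$ propagates to $Q'_e$ by extending any strictly $\mu'$-semistable object to a strictly $\mu$-semistable object in $Q$ via $\rho_e=0$. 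By \eq{wc6eq14} and Proposition \ref{wc6prop4}, $\Om_e^\pl(\eta)=[\M_{\bs d}^{\prime\ss}(\mu')]_\inv-\io'_*([\M_{\bs d}^{\prime\ss}(\mu')]_\fund)$ computed in $\modCQ'_e$. In $Q'_e$ the support of $\bs d$ has $N-1$ edges, and is either a tree, disconnected, or still connected cyclic; in the first two cases Corollary \ref{wc6cor1} gives directly $\Om_e^\pl(\eta)=0$, while in the third case the inductive hypothesis applies in $Q'_e$. Either way $\Om_e^\pl(\eta)=0$ for each $e$, and injectivity of $\bigop_e\Om_e^\pl$ yields $\eta=0$.

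The main obstacle is the disconnected case, since Proposition \ref{wc6prop5}'s injectivity fails there; one cannot simply reduce by deleting edges. The resolution is the direct Lie-algebraic vanishing described above, which crucially uses that the vertex operator $Y(u,z)v$ produces no negative powers of $z$ when $\Th^\bu$ vanishes on the relevant product, so that the associated Lie bracket is zero in $\check H_0(\M^\pl)$. Everything else is a routine inductive reduction stitching Propositions \ref{wc6prop3}, \ref{wc6prop4}, \ref{wc6prop5} and Corollary \ref{wc6cor1} together.
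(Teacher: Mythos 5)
Your proof is correct and follows essentially the same route as the paper: base case a tree via Proposition \ref{wc6prop3}, then induction via the edge-deletion maps $\Om_e^\pl$ using Propositions \ref{wc6prop2}, \ref{wc6prop4} and the injectivity of Proposition \ref{wc6prop5}, with the disconnected case killed by the vanishing of brackets between classes supported on different components (your induction on the edge count of $\supp(\bs d)$ is equivalent to the paper's induction on $\chi_Q(\bs d,\bs d)$). If anything you are slightly more careful than the paper, which only spells out the disconnected case when $\chi_Q(\bs d,\bs d)=1$ and leaves the disconnected case with $\chi_Q(\bs d,\bs d)\le 0$ (where Proposition \ref{wc6prop5} does not apply) to the reader; your explicit vertex-algebra computation showing $\Th^\bu\vert_{\M_\al\t\M_\be}=0$ forces $u_0(v)=0$ is the right way to fill that in.
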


\begin{proof} Let $\bs d$ be a binary dimension vector, and $\mu$ a slope function on $\modCQ$ such that $\M^\rst_{\bs d}(\mu)=\M^\ss_{\bs d}(\mu)$. We must prove that
\e
\io_*\bigl(\big[\M^\ss_{\bs d}(\mu)\big]_\fund\bigr)=\big[\M_{\bs d}^\ss(\mu)\big]_\inv\qquad\text{in} \quad H_{2-2\chi_Q(\bs d,\bs d)}\big(\M^\pl_{\bs d}\big).
\label{wc6eq21}
\e
If $\chi_Q(\bs d,\bs d)>1$ this is automatic, as both sides lie in $H_{<0}\big(\M^\pl_{\bs d}\big)=0$. If $\chi_Q(\bs d,\bs d)=1$, then $\supp(\bs d)$ is a quiver with $n$ vertices and $n-1$ edges by \eq{wc5eq4}, so either $\supp(\bs d)$ is a~tree, when~\eq{wc6eq21} holds by Proposition \ref{wc6prop3}, or  $\supp(\bs d)$ is disconnected, when both sides of \eq{wc6eq21} are zero by the proof of Corollary~\ref{wc6cor1}.

Suppose by induction on $k=0,1,\dots$ that \eq{wc6eq21} holds when $\chi_Q(\bs d,\bs d)\ge 1-k$. The first step $k=0$ holds from above. For the inductive step, suppose the inductive hypothesis holds for some $k\ge 0$, and that $\bs d$ has $\chi_Q(\bs d,\bs d)=1-(k+1)\le 0$. Using the notation of Proposition~\ref{wc6prop5}, we have
\begin{gather*}
 \bigop_{\text{$e$ edge in $\supp(\bs d)$}}\Om_e^\pl\bigl(\io_*([\M^\ss_{\bs d}(\mu)]_\fund)\bigr)=\bigop_{\text{$e$ edge in $\supp(\bs d)$}}\io_e^*\bigl([\M_{\bs d'_e}^\ss(\mu'_e)]_\fund\bigr) \\
\qquad{} =\bigop_{\text{$e$ edge in $\supp(\bs d)$}}[\M_{\bs d'_e}^\ss(\mu'_e)]_\inv=\bigop_{\text{$e$ edge in $\supp(\bs d)$}}\Om_e^\pl\bigl([\M_{\bs d}^\ss(\mu)]_\inv\bigr),
\end{gather*}
where the first step uses Proposition~\ref{wc6prop4}, the second the inductive hypothesis for~$Q_e'$, noting that $\chi_{Q'_e}(\bs d,\bs d)=1-k$ as $Q_e'$ has one fewer edge in $\supp(\bs d)$, and the third Proposition~\ref{wc6prop2} for $\la_e\colon Q\ra Q_e'$. As~\eq{wc6eq18} is injective by Proposition~\ref{wc6prop5}, this implies~\eq{wc6eq21}, and the proposition follows by induction.
\end{proof}

\subsection[\texorpdfstring{Proof of Theorem \ref{wc5thm1}(i) when $\mu$, $d$ are generic}{Proof of Theorem \ref{wc5thm1}(i) when μ, d are generic}]{Proof of Theorem \ref{wc5thm1}(i) when $\boldsymbol{\mu}$, $\boldsymbol{\bs d}$ are generic}\label{wc65}

Consider the following situation:

\begin{dfn}
\label{wc6def3}
Let $Q=(Q_0,Q_1,h,t)$ be a quiver, and $\bs d \in\N^{Q_0}\sm\{0\}$. Define a quiver $\ti Q=\big(\ti Q_0,\ti Q_1,\ti h,\ti t\big)$ as follows: for each vertex $v \in Q_0$ there are $\bs d(v)$ vertices labelled by pairs $(v,i)$ where $v \in Q_0$ and $i=1,\dots,\bs d(v)$. For each edge $\overset{v}{\bu}\,{\buildrel e\over \longra}\,\overset{w}{\bu}$ in $Q$ there is an edge from $(v,i)$ to $(w,j)$ in $\ti Q$ for all $i=1,\dots,\bs d(v)$ and $j=1,\dots,\bs d(w)$. Explicitly we set
\begin{alignat*}{3}
&\ti Q_0 = \bigl\{(v,i)\colon v\in Q_0,\, i=1,\dots,\bs d(v)\bigr\}, \qquad && \ti h\colon \ (e,i,j) \mapsto  (h(e),i),&
\\
& \ti Q_1 = \bigl\{(e,i,j) \in  Q_1 \t \N^2\colon 1 \le  i \le  \bs d(h(e)),\, 1\!\le  j \le  \bs d(t(e))\bigr\},\qquad && \ti t\colon (e,i,j) \mapsto  (t(e),j).&
\end{alignat*}
This is illustrated in the next diagram, with $\bs d=(2,3)$:
\begin{equation*}
\splinetolerance{.8pt}
\begin{xy}
0;<1mm,0mm>:
,(-15,3)*{Q}
,(35,10.5)*{\ti Q}
,(-30,0)*{\bu}
,(-30,3)*{v}
,(-30,-3)*{2}
,(0,0)*{\bu}
,(0,3)*{w}
,(0,-3)*{3}
,(50,0)*{\bu}
,(52,3)*{(w,2)}
,(20,5)*{\bu}
,(52,13)*{(w,1)}
,(50,10)*{\bu}
,(20,-5)*{\bu}
,(18,-2)*{(v,2)}
,(18,8)*{(v,1)}
,(50,0)*{\bu}
,(52,-7)*{(w,3)}
,(50,-10)*{\bu}
,(-30,0)*+{\bu} ; (0,0)*+{\bu} **@{-} ?>*\dir{>}
,(20,5)*+{\bu} ; (50,10)*+{\bu} **@{-} ?>*\dir{>}
,(20,5)*+{\bu} ; (50,0)*+{\bu} **@{-} ?>*\dir{>}
,(20,5)*+{\bu} ; (50,-10)*+{\bu} **@{-} ?>*\dir{>}
,(20,-5)*+{\bu} ; (50,10)*+{\bu} **@{-} ?>*\dir{>}
,(20,-5)*+{\bu} ; (50,0)*+{\bu} **@{-} ?>*\dir{>}
,(20,-5)*+{\bu} ; (50,-10)*+{\bu} **@{-} ?>*\dir{>}
\end{xy}
\end{equation*}

Define $\bs{\ti d}\in\N^{\ti Q_0}\sm\{0\}$ by $\bs{\ti d}(v,i)=1$ for all $(v,i)\in\ti Q_0$.

Define a morphism $\la\colon \ti Q\ra Q$ in the sense of Section~\ref{wc54} by $\la_0\colon (v,i)\mapsto v$ and $\la_1=\bigl\{((e,i,j),e)\colon (e,i,j)\in\ti Q_1\bigr\}$. Then Definition \ref{wc5def6}(i)--(iii) hold, so we have a $\C$-linear exact functor $\Si_\la\colon \modCtQ\ra\modCQ$ inducing morphisms of moduli stacks $\si_\la\colon \tiM\ra\M$ and $\si_\la^\pl\colon \tiM^\pl\ra\M^\pl$ for $\modCtQ$, $\modCQ$. We have $\la_*\big(\bs{\ti d}\big)=\bs d$, so $\si_\la$, $\si_\la^\pl$ map $\tiM_{\bs{\ti d}}\ra\M_{\bs d}$ and~$\tiM_{\bs{\ti d}}^\pl\ra\M_{\bs d}^\pl$.

Explicitly, as in Definition \ref{wc5def2} we have
\begin{gather*}
\M_{\bs d}=[R_{\bs d}/\GL_{\bs d}],
\qquad\text{where}\quad R_{\bs d}= \prod_{e \in Q_1} \Hom\big(\C^{\bs d(t(e))},\C^{\bs d(h(e))}\big),\\
\GL_{\bs d}=  \prod_{v \in Q_0} \GL(\bs d(v),\C), \qquad
\M_{\bs d}^\pl=[R_{\bs d}/\PGL_{\bs d}], \qquad \PGL_{\bs d}=\GL_{\bs d}/\bG_m, \\
\tiM_{\smash{\bs{\ti d}}}=\big[\ti R_{\smash{\bs{\ti d}}}/\GL_{\smash{\bs{\ti d}}}\big],
\qquad\text{where}\quad \ti R_{\smash{\bs{\ti d}}}= \prod_{e\in Q_1,\, i=1,\dots, \bs d(t(e)),\, j=1,\dots, \bs d(h(e))}\Hom(\C,\C),\\
\GL_{\smash{\bs{\ti d}}}=  \prod_{v \in Q_0,\; i=1,\dots, \bs d(v)} \bG_m, \qquad
\tiM_{\smash{\bs{\ti d}}}^\pl=\big[\ti R_{\smash{\bs{\ti d}}}/\PGL_{\smash{\bs{\ti d}}}\big], \qquad \PGL_{\smash{\bs{\ti d}}}=\GL_{\smash{\bs{\ti d}}}/\bG_m.
\end{gather*}
The morphisms $\si_\la\colon \tiM_{\bs{\ti d}}\ra\M_{\bs d}$ and $\si_\la^\pl\colon \tiM_{\bs{\ti d}}^\pl\ra\M_{\bs d}^\pl$ are induced by the obvious map $\la_*\colon \ti R_{\smash{\bs{\ti d}}}\ra R_{\bs d}$, which is an {\it isomorphism} in this case, and by morphisms $\la_*\colon \GL_{\smash{\bs{\ti d}}}\ra \GL_{\bs d}$ and $\la_*^\pl\colon \PGL_{\smash{\bs{\ti d}}}\ra \PGL_{\bs d}$, which are {\it inclusions of maximal tori} in this case. That is, we have $\M_{\bs d}=[V/G]$ and $\tiM_{\smash{\bs{\ti d}}}=[V/H]$ where $H\subseteq G$ is the maximal torus.

Next let $\mu$ be a slope function on $\modCQ$ such that $\mu$, $\bs d$ are generic in the sense of Definition~\ref{wc6def1}, and let $\ti\mu=\mu\ci\la_*$ be the associated slope function on $\modCtQ$. Then $\ti\mu$, $\bs{\ti d}$ are also generic for $\modCtQ$, so $\M_{\bs d}^\rst(\mu)=\M_{\bs d}^\ss(\mu)$ and $\tiM_{\bs{\ti d}}^\rst(\ti\mu)=\tiM_{\bs{\ti d}}^\ss(\ti\mu)$. Proposition~\ref{wc6prop2} gives
\e
\Om^\pl\bigl(\big[\tiM_{\bs{\ti d}}^\ss(\ti\mu)\big]_\inv\bigr)=\prod_{v\in Q_0}\bs d(v)!\cdot \big[\M_{\bs d}^\ss(\mu)\big]_\inv,
\label{wc6eq22}
\e
and as $\bs{\ti d}$ is binary with $\tiM_{\bs{\ti d}}^\rst(\ti\mu)=\tiM_{\bs{\ti d}}^\ss(\ti\mu)$, Proposition \ref{wc6prop6} gives
\e
\big[\tiM_{\bs{\ti d}}^\ss(\ti\mu)\big]_\inv=\ti\io_*\bigl(\big[\tiM_{\bs{\ti d}}^\ss(\ti\mu)\big]_\fund\bigr).
\label{wc6eq23}
\e
The reason we suppose $\mu$, $\bs d$ generic in this section is that otherwise $\M_{\bs d}^\rst(\mu)=\M_{\bs d}^\ss(\mu)$ does not imply that $\tiM_{\bs{\ti d}}^\rst(\ti\mu)=\tiM_{\bs{\ti d}}^\ss(\ti\mu)$, and if $\tiM_{\bs{\ti d}}^\rst(\ti\mu)\ne\tiM_{\bs{\ti d}}^\ss(\ti\mu)$ then $\big[\tiM_{\bs{\ti d}}^\ss(\ti\mu)\big]_\fund$ in \eq{wc6eq23} is not~defined.
\end{dfn}

We will use the following result of Martin \cite[Theorem~B]{Mart}. It can also be written in algebraic geometry in terms of smooth GIT quotients.

\begin{thm}\label{wc6thm1}
Let $(X,\om)$ be a symplectic $2n$-manifold, with a Hamiltonian action of a compact Lie group $G$ with moment map $\mu_G\colon X\ra\g^*$, where $\g$ is the Lie algebra of $G$. Let $T\subset G$ be a maximal torus with Lie algebra $\mathfrak{t}\subseteq\g$, so $\mu_G$ induces a moment map $\mu_T\colon X\ra\mathfrak{t^*}$. Suppose $\mu_G^{-1}(0)$ and $\mu_T^{-1}(0)$ are compact, with free $G$- and $T$-actions, so the quotients $X/\!/ G=\mu_G^{-1}(0)/G$ and $X/\!/ T=\mu_T^{-1}(0)/T$ are compact symplectic manifolds. Also $Y=\mu_G^{-1}(0)/T$ is a compact manifold, with projections $\pi\colon Y\ra X/\!/G$, $i\colon Y\ra X/\!/T$.

Write $\g=\mathfrak{t}\op\mathfrak{m}$ for the $T$-invariant splitting, and $E\ra X/\!/T$ for the complex vector bundle $E=\big(\mu_T^{-1}(0)\t\mathfrak{m}\ot_\R\C\big)/T$ associated to the complex representation of $T$ on $\mathfrak{m}\ot_\R\C$. Then for all classes $\eta\in H^{2n-2\dim G}(X/\!/G)$, $\ze\in H^{2n-2\dim G}(X/\!/T)$ with $\pi^*(\eta)=i^*(\ze)$ in $H^{2n-2\dim G}(Y)$, we have
\begin{equation*}
\int_{X/\!/G}\eta=\frac{1}{\md{W}}\int_{X/\!/T}\ze\cup c_\top(E),
\end{equation*}
where $W$ is the Weyl group of $G$.
\end{thm}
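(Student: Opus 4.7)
The plan is to use the intermediate space $Y=\mu_G^{-1}(0)/T$ together with its two canonical maps $\pi:Y\to X/\!/G$ and $i:Y\hookrightarrow X/\!/T$, exploiting simultaneously two structural facts: that $i$ realizes $Y$ as the zero locus of a transverse section of a real vector bundle $V\to X/\!/T$ of rank $\dim G-\dim T$, and that $\pi$ is a locally trivial fibre bundle whose fibre is the flag variety $G/T$ with Euler characteristic $\chi(G/T)=|W|$.

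First I would unpack these two structures. Decomposing $\mu_G$ under the $T$-invariant splitting $\mathfrak{g}^*=\mathfrak{t}^*\oplus\mathfrak{m}^*$ as $\mu_G=(\mu_T,\mu_{\mathfrak{m}})$, the component $\mu_{\mathfrak{m}}:X\to\mathfrak{m}^*$ is $T$-equivariant, so the restriction $\mu_{\mathfrak{m}}|_{\mu_T^{-1}(0)}$ descends to a section $s$ of the real bundle $V:=(\mu_T^{-1}(0)\times\mathfrak{m}^*)/T\to X/\!/T$, and $s^{-1}(0)=Y$. Transversality of $s$ follows from freeness of the $G$-action on $\mu_G^{-1}(0)$: the infinitesimal action of $\mathfrak{m}$ together with the moment map condition forces $ds$ to be surjective transverse to the $T$-orbits. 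Hence $i:Y\hookrightarrow X/\!/T$ is a regular embedding with normal bundle $i^*V$. Separately, the natural projection $\mu_G^{-1}(0)/T\to\mu_G^{-1}(0)/G$ is a fibre bundle with fibre $G/T$, and its vertical tangent bundle $T_\pi$ is canonically identified with $i^*V$, since the tangent space to a fibre is $\mathfrak{g}/\mathfrak{t}\cong\mathfrak{m}$.

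Combining these, from the Thom isomorphism for $i$,
\[
\int_Y i^*(\zeta)\cup i^*(e(V))=\int_{X/\!/T}\zeta\cup e(V)\cup e(V)=\int_{X/\!/T}\zeta\cup e(V)^2,
\]
while from fibre integration along $\pi$ together with the fibrewise Gauss--Bonnet identity $\pi_*(e(T_\pi))=\chi(G/T)\cdot 1=|W|\cdot 1\in H^0(X/\!/G)$ and the projection formula,
\[
\int_Y\pi^*(\eta)\cup e(T_\pi)=\int_{X/\!/G}\eta\cup\pi_*(e(T_\pi))=|W|\int_{X/\!/G}\eta.
\]
Since $\pi^*(\eta)=i^*(\zeta)$ on $Y$ and $e(T_\pi)=i^*(e(V))$, the two left-hand sides agree, giving $|W|\int_{X/\!/G}\eta=\int_{X/\!/T}\zeta\cup e(V)^2$.

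It remains to identify $e(V)^2$ with $c_{\mathrm{top}}(E)$. Choosing a system of positive roots endows $\mathfrak{m}$ with a $T$-invariant complex structure under which $V$ becomes a complex vector bundle with weights the positive roots, so $e(V)=\prod_{\alpha>0}\alpha$; correspondingly $E=V\otimes_\R\C\cong V\oplus\overline V$, hence $c_{\mathrm{top}}(E)=e(V)\cdot e(\overline V)=e(V)^2$ once orientations are fixed so the sign $(-1)^{\#\{\alpha>0\}}$ from $c_i(\overline V)=(-1)^i c_i(V)$ is absorbed into the orientation on $V$. The main obstacle I anticipate is precisely this bookkeeping of orientations and signs at each stage: verifying transversality of $s$ carefully, checking that the identification $T_\pi\cong i^*V$ preserves the complex orientations, and matching $c_{\mathrm{top}}(E)$ with $+e(V)^2$ (rather than $(-1)^\ell e(V)^2$) under the fixed conventions; once this is settled the argument reduces to a clean application of the projection formula, Gauss--Bonnet in families, and the Thom class.
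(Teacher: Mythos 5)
The paper does not actually prove this statement: it is quoted verbatim as Martin's Theorem B \cite[Th.~B]{Mart}, so there is no internal proof to compare against. What you have written is, in architecture, exactly Martin's argument: realize $Y=\mu_G^{-1}(0)/T$ both as the transverse zero locus of the section of $V=(\mu_T^{-1}(0)\t\mathfrak{m}^*)/T$ induced by the $\mathfrak{m}^*$-component of $\mu_G$, and as the associated $G/T$-bundle over $X/\!/G$ with vertical tangent bundle modelled on $\mathfrak{m}$; then combine the Thom class identity $\int_Y i^*(\ze)\cup i^*(e(V))=\int_{X/\!/T}\ze\cup e(V)^2$ with $\pi_*(e(T_\pi))=\chi(G/T)=|W|$ and the hypothesis $\pi^*(\eta)=i^*(\ze)$. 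All of these steps, including your transversality argument from freeness of the $G$-action, are sound.

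The genuine gap is the last step, where you identify $e(V)^2$ with $c_{\rm top}(E)$. With the complex structure on $\mathfrak{m}$ coming from a choice of positive roots, $E=V\ot_\R\C\cong V\op\ov{V}$ gives $c_{\rm top}(E)=(-1)^{n_+}e(V)^2$ where $n_+$ is the number of positive roots (equivalently, $c_{\rm top}(E)$ is the product of \emph{all} roots, while $e(V)^2$ is the square of the product of the positive ones). You propose to absorb this sign "into the orientation on $V$", but that cannot work: reversing the orientation of $V$ changes $e(V)$ to $-e(V)$ and leaves $e(V)^2$ unchanged, so $e(V)^2$ and $(-1)^{n_+}e(V)^2$ genuinely differ whenever $n_+$ is odd (already for $G=\U(2)$). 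Since the theorem is true, the missing sign $(-1)^{n_+}$ must be produced earlier in your chain of equalities, and it is: the orientation of $Y$ induced as the zero locus of $s$ in the symplectically oriented $X/\!/T$ does not agree with its orientation as the total space of the $G/T$-bundle over the symplectically oriented $X/\!/G$ (equivalently, the oriented identification $T_{X/\!/T}|_Y\cong\pi^*T_{X/\!/G}\op T_\pi\op N_Y$ with $T_\pi\op N_Y\cong\mathfrak{m}\op\mathfrak{m}^*\cong E|_Y$ introduces exactly this factor relative to the product of the two positive-root orientations). Until you carry out that orientation comparison, your argument establishes the identity only up to the sign $(-1)^{n_+}$, and your stated mechanism for removing the sign is not available.
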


\begin{prop}\label{wc6prop7}
The classes $\big[\M_{\bs d}^\ss(\mu)\big]_\inv$ in $\check H_0\big(\M_{\bs d}^\pl\big)$ defined in Proposition~{\rm \ref{wc6prop1}} satisfy Theo\-rem~{\rm \ref{wc5thm1}(i)} if $\mu,\bs d$ are generic.
\end{prop}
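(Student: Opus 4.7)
The strategy is to combine equations \eq{wc6eq22}--\eq{wc6eq23} with Martin's theorem (Theorem \ref{wc6thm1}). From \eq{wc6eq22} and \eq{wc6eq23} we have
\begin{equation*}
\Om^\pl\bigl(\ti\io_*([\tiM_{\bs{\ti d}}^\ss(\ti\mu)]_\fund)\bigr)=\prod_{v\in Q_0}\bs d(v)!\cdot [\M_{\bs d}^\ss(\mu)]_\inv,
\end{equation*}
so to prove Theorem \ref{wc5thm1}(i) in the generic case it suffices to establish the purely geometric identity
\begin{equation*}
\Om^\pl\bigl(\ti\io_*([\tiM_{\bs{\ti d}}^\ss(\ti\mu)]_\fund)\bigr)=\prod_{v\in Q_0}\bs d(v)!\cdot \io_*\bigl([\M_{\bs d}^\ss(\mu)]_\fund\bigr)\quad\text{in } H_*(\M_{\bs d}^\pl).
\end{equation*}

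First, I would realize both moduli spaces as GIT quotients in the style of King, as in the proof of Proposition \ref{wc5prop2}: $\M_{\bs d}^\ss(\mu)\cong V/\!/_\mu G$ with $G=\PGL_{\bs d}$ acting on $V=R_{\bs d}$, and $\tiM_{\bs{\ti d}}^\ss(\ti\mu)\cong V/\!/_{\ti\mu} T$ with $T=\PGL_{\bs{\ti d}}\subset G$ the standard maximal torus, using the natural identification $\ti R_{\bs{\ti d}}\cong R_{\bs d}$ noted in Definition \ref{wc6def3}. Under genericity of $\mu,\bs d$ we have $\M_{\bs d}^\rst(\mu)=\M_{\bs d}^\ss(\mu)$ and similarly for $\ti\mu,\bs{\ti d}$; both group actions are free on the common semistable locus, and the natural morphism $\pi:=\si_\la^\pl\vert_{\tiM^\ss}:\tiM_{\bs{\ti d}}^\ss(\ti\mu)\ra\M_{\bs d}^\ss(\mu)$ becomes a smooth proper fibration with fibres the full flag variety $G/T\cong\prod_{v\in Q_0}\GL(\bs d(v),\C)/B$.

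Second, I would identify the bundle $G^\pl\ra\tiM_{\bs{\ti d}}^\pl$ of Definition \ref{wc5def7} with Martin's bundle $E$. Applying \eq{wc5eq12} to $\la:\ti Q\ra Q$ from Definition \ref{wc6def3}, the edge-sum in \eq{wc5eq12} is empty because every $\ti e\in\ti Q_1$ appears in $\la_1$, so only the off-diagonal vertex contribution survives, yielding
\begin{equation*}
G\vert_{\tiM_{\bs{\ti d}}}\cong\bigop_{v\in Q_0}\bigop_{1\le i\ne j\le \bs d(v)}\cV_{(v,i)}^*\ot\cV_{(v,j)}.
\end{equation*}
Restricted to $\tiM_{\bs{\ti d}}^\ss(\ti\mu),$ this is precisely the associated bundle of the $T$-representation $\mathfrak{m}\ot_\R\C,$ where $\g=\mathfrak{t}\op\mathfrak{m}$ is the standard $T$-invariant decomposition of the Lie algebra $\g$ of $G$. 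A quick numerology check gives $\rank G=\chi_Q(\bs d,\bs d)-\chi_{\ti Q}(\bs{\ti d},\bs{\ti d})=\sum_{v\in Q_0}\bs d(v)(\bs d(v)-1)$ and $|W|=\prod_{v\in Q_0}\bs d(v)!$, the latter being exactly the factor appearing in \eq{wc6eq22}.

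Third, I would invoke Theorem \ref{wc6thm1} in its Poincar\'e-dual formulation, which asserts
\begin{equation*}
\pi_*\bigl([\tiM_{\bs{\ti d}}^\ss(\ti\mu)]_\fund\cap c_\top(G^\pl)\bigr)=|W|\cdot [\M_{\bs d}^\ss(\mu)]_\fund
\end{equation*}
in $H_{2-2\chi_Q(\bs d,\bs d)}(\M_{\bs d}^\ss(\mu))$. Pushing this forward along $\io:\M_{\bs d}^\ss(\mu)\hookra\M_{\bs d}^\pl$ and unpacking the definition \eq{wc2eq23} of $\Om^\pl$ yields the required geometric identity above, completing the proof. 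The principal obstacle will be the geometric input to Theorem \ref{wc6thm1}: one must verify carefully that under genericity of $\mu,\bs d$ the $G$- and $T$-semistable loci in $V$ coincide, so that the intermediate space $Y=\mu_G^{-1}(0)/T$ in Martin's theorem equals $\tiM_{\bs{\ti d}}^\ss(\ti\mu)$ and the inclusion $i$ is the identity, and that the relevant Kempf--Ness/symplectic-reduction hypotheses on moment maps and free actions hold. Once these are in place, matching Chern classes, dimensions, and Weyl-group factors is a direct computation from \eq{wc5eq12} and the structure of $\PGL_{\bs d}$.
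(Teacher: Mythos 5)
Your proposal is correct and follows essentially the same route as the paper: realize $\M_{\bs d}^\ss(\mu)$ and $\tiM_{\bs{\ti d}}^\ss(\ti\mu)$ as GIT/symplectic quotients of $R_{\bs d}\cong\ti R_{\bs{\ti d}}$ by $\PGL_{\bs d}$ and its maximal torus, identify Martin's bundle $E$ with $\ti\io^*(G^\pl)$, and match the Weyl group factor $\prod_v\bs d(v)!$ against \eq{wc6eq22}. One correction to your stated ``principal obstacle'': you do \emph{not} need the $G$- and $T$-semistable loci to coincide, nor for $Y=\mu_G^{-1}(0)/T$ to equal $X/\!/T$ with $i$ the identity --- in general $\mu_G^{-1}(0)\subsetneq\mu_T^{-1}(0)$ and $i:Y\ra X/\!/T$ is a proper inclusion; Martin's theorem only requires $\pi^*(\eta)=i^*(\ze)$ on $Y$, which holds automatically here because $\eta$ and $\ze$ are both pullbacks of a single class $\th$ on the ambient quotient stack $\M_{\bs d}^\pl$, which is exactly how the paper arranges it.
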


\begin{proof} Use the notation of Definition \ref{wc6def3}. We apply Theorem~\ref{wc6thm1} with $X=R_{\bs d}$, and $G=\bigl(\prod_{v\in Q_0}\U(\bs d(v))\bigr)/\U(1)$, which is the maximal compact subgroup of $\PGL_{\bs d}$, and acts on $R_{\bs d}$ via the $\PGL_{\bs d}$-action preserving a Euclidean K\"ahler form $\om$ on $R_{\bs d}$. By the relationship between GIT quotients and symplectic quotients in Kirwan~\cite{Kirw}, there is a moment map $\mu_G\colon X\ra\g^*$ such that $\mu_G^{-1}(0)/G$ is the GIT quotient $R_d/\!/\PGL_{\bs d}=\M_{\bs d}^\ss(\mu)$, so as $\M_{\bs d}^\ss(\mu)$ is a smooth projective $\C$-scheme we see that $\mu_G^{-1}(0)$ is compact with a free $G$-action.

We take the maximal torus $T\subset G$ to be $T=\bigl(\prod_{v\in Q_0}\U(1)^{\bs d(v)}\bigr)/\U(1)\cong \U(1)^{\md{\bs d}-1}$. This is the maximal compact subgroup of $\PGL_{\smash{\bs{\ti d}}}\cong\bG_m^{\md{\bs d}-1}$ under the inclusion $\la_*^\pl\colon \PGL_{\smash{\bs{\ti d}}}\ra \PGL_{\bs d}$ as an algebraic maximal torus. Then the isomorphism $\la_*\colon \ti R_{\smash{\bs{\ti d}}}\ra R_{\bs d}$ and the fact that $\ti\mu=\mu\ci\la_*$ implies that $\mu_T^{-1}(0)/T$ is the GIT quotient $\ti R_{\smash{\bs{\ti d}}}/\!/\PGL_{\smash{\bs{\ti d}}}=\tiM_{\bs{\ti d}}^\ss(\ti\mu)$, so as $\tiM_{\bs{\ti d}}^\ss(\ti\mu)$ is a smooth projective $\C$-scheme we see that $\mu_T^{-1}(0)$ is compact with a free $T$-action.

Let $\th\in H^{2n-2\dim G}\big(\M_{\bs d}^\pl\big)$, and set $\eta=\io^*(\th)$ in $H^{2n-2\dim G}\big(\M_{\bs d}^\ss(\mu)\big)$ and $\ze=\ti\io^*\ci\big(\si_\la^\pl\big)^*(\th)$ in $H^{2n-2\dim G}\big(\tiM_{\bs{\ti d}}^\ss(\ti\mu)\big)$. Since $\eta,\ze$ are pullbacks of the same class $\th\in H^*\big(\big[X/G^\C\big]\big)$, we see that $\pi^*(\eta)=i^*(\ze)$ in $H^{2n-2\dim G}(Y)$. Hence as the Weyl group of $G$ is $\prod_{v\in Q_0}S_{\bs d(v)}$, Theorem~\ref{wc6thm1} gives
\begin{equation*}
\int_{\M_{\bs d}^\ss(\mu)}\io^*(\th)=\frac{1}{\prod_{v\in Q_0}\bs d(v)!}\int_{\tiM_{\bs{\ti d}}^\ss(\ti\mu)}\ti\io^*\ci\big(\si_\la^\pl\big)^*(\th)\cup c_\top(E).
\end{equation*}
We may rewrite this as
\begin{equation*}
\th\cdot\bigg( \prod_{v\in Q_0}\bs d(v)!\cdot \io_*\big(\big[\M_{\bs d}^\ss(\mu)\big]_\fund\big)\bigg)=\th\cdot\bigl(
\big(\si_\la^\pl\big)_*\bigl(\ti\io_*\big(\big[\tiM_{\bs{\ti d}}^\ss(\ti\mu)\big]_\fund\big)\cap c_\top(E)\bigr)\bigr).
\end{equation*}
As this holds for all $\th\in H^{2n-2\dim G}\big(\M_{\bs d}^\pl\big)$, where $2n-2\dim G=2-\chi_Q(\bs d,\bs d)$ is the dimension of $\big[\M_{\bs d}^\ss(\mu)\big]_\fund$, we see that in $H_{2-\chi_Q(\bs d,\bs d)}\big(\M_{\bs d}^\pl\big)$ we have
\e
 \prod_{v\in Q_0}\bs d(v)!\cdot \io_*\big(\big[\M_{\bs d}^\ss(\mu)\big]_\fund\big)=\big(\si_\la^\pl\big)_*\ci\ti\io_*\bigl(\big[\tiM_{\bs{\ti d}}^\ss(\ti\mu)\big]_\fund\cap c_\top(E)\bigr).\label{wc6eq24}
\e

Now one can show from the definitions that the vector bundle $E$ over $X/\!/T=\tiM_{\bs{\ti d}}^\ss(\ti\mu)$ in Theorem \ref{wc6thm1} is isomorphic to $\ti\io^*\big(G^\pl\big)$, where $G^\pl\ra\M_{\bs d}^\pl$ is defined as in Definition~\ref{wc2def5}(c),~(v) from the vector bundle $F\ra\M_{\bs d}\t\M_{\bs d}$ in \eq{wc5eq11}. Thus as in \eq{wc6eq16}, the right-hand side of \eq{wc6eq24} is $\Om^\pl\big(\big[\tiM_{\bs{\ti d}}^\ss(\ti\mu)\big]_\fund\big)$, giving
\e
 \prod_{v\in Q_0}\bs d(v)!\cdot \io_*\big(\big[\M_{\bs d}^\ss(\mu)\big]_\fund\big)=\Om^\pl\big(\big[\tiM_{\bs{\ti d}}^\ss(\ti\mu)\big]_\fund\big).
\label{wc6eq25}
\e
Comparing \eq{wc6eq22}, \eq{wc6eq23} and \eq{wc6eq25} gives $\big[\M_{\bs d}^\ss(\mu)\big]_\inv=\io_*\big(\big[\M_{\bs d}^\ss(\mu)\big]_\fund\big)$, as we want.
\end{proof}

\subsection{Proof of Theorem \ref{wc5thm1}(i) in the general case}\label{wc66}

The next proposition completes the proof of Theorem~\ref{wc5thm1}.

\begin{prop}\label{wc6prop8}
The classes $\big[\M_{\bs d}^\ss(\mu)\big]_\inv$ in $\check H_0\big(\M_{\bs d}^\pl\big)$ defined in Proposition~{\rm \ref{wc6prop1}} satisfy Theo\-rem~{\rm \ref{wc5thm1}(i)} for all~$\bs d$.
\end{prop}

\begin{proof} The proof is by induction on $k=0,1\dots$, with inductive hypothesis that the $\big[\M_{\bs d}^\ss(\mu)\big]_\inv$ in Proposition~\ref{wc6prop1} satisfy Theorem \ref{wc5thm1}(i) for all $\bs d\in\N^{Q_0}\sm\{0\}$ with $\md{\bs d}=\sum_{v\in Q_0}\bs d(v)\le k$. The first step $k=0$ is vacuous. For the inductive step, suppose the inductive hypothesis holds for some $k\ge 0$, and let $\bs d\in\N^{Q_0}\sm\{0\}$ with $\md{\bs d}=k+1$.

Consider the pair invariant set-up of Definition \ref{wc5def8}. Suppose $\bs d_1,\dots,\bs d_n$ lie in $\N^{Q_0}\sm\{0\}$ for $n\ge 2$ with $\bs d_1+\cdots+\bs d_n=\bs d$ and $\mu(\bs d_i)=\mu(\bs d)$. If $\M_{\bs d_i}^\ss(\mu)\ne\es$ for all $i=1,\dots,n$ then choosing a $\C$-point $[E_i]$ in $\M_{\bs d_i}^\ss(\mu)$, we see that $E_1\op\cdots\op E_n$ is strictly $\mu$-semistable in class $\bs d$, contradicting $\M^\rst_{\bs d}(\mu)=\M^\ss_{\bs d}(\mu)$. Hence $\M_{\bs d_i}^\ss(\mu)=\es$ for some $i=1,\dots,n$, so $[\M^\ss_{\bs d_i}(\mu)]_\inv=0$ by the inductive hypothesis, since $\md{\bs d_i}\le k$ as $\bs d_1+\cdots+\bs d_n=\bs d$ with $\md{\bs d}=k+1$, $n\ge 2$ and $\md{\bs d_j}>0$. Hence in \eq{wc5eq15} all terms with $n\ge 2$ are zero, so \eq{wc5eq15} reduces to
\e
\ti\io_*\bigl(\big[\tiM_{(\bs d,1)}^\ss(\ti\mu^{\bs d}_+)\big]_\fund\bigr)=\bigl[i_*^\pl\bigl(\big[\M_{\bs d}^\ss(\mu)\big]_\inv\bigr),1_{H_0(\tiM_{(0,1)}^\pl)}\bigr],
\label{wc6eq26}
\e
that is, the `lower order terms' in \eq{wc5eq16} vanish. Here we have used the fact that as $\ti\mu^{\bs d}_+$, $(\bs d,1)$ are generic for $\modCtQ$, Proposition \ref{wc6prop7} implies that{\samepage
\begin{equation*}
\big[\tiM_{(\bs d,1)}^\ss(\ti\mu^{\bs d}_+)\big]_\inv=\ti\io_*\bigl(\big[\tiM_{(\bs d,1)}^\ss(\ti\mu^{\bs d}_+)\big]_\fund\bigr),
\end{equation*}
which was used to rewrite the left-hand side of~\eq{wc4eq4} to get~\eq{wc5eq15}}.

There is a natural projection $\pi\colon \tiM_{(\bs d,1)}^\ss(\ti\mu^{\bs d}_+)\ra\M_{\bs d}^\ss(\mu)$ which acts by
\begin{equation*}
((V_v)_{v\in\ti Q_0},(\rho_e)_{e\in\ti Q_1})\longmapsto((V_v)_{v\in Q_0},(\rho_e)_{e\in Q_1})
\end{equation*}
on $\C$-points, noting that $Q_0\subset\ti Q_0$ and $\ti Q_1\subset Q_1$. That is, $\pi$ forgets the vector space $V_\iy\cong\C$ and the edge maps $\rho_{(v,i)}\colon V_\iy\ra V_v$ for edges $\overset{\iy}{\bu}\,{\buildrel (v,i)\over \longra}\,\overset{v}{\bu}$ in $\ti Q$, where $v\in Q_0$ and $i=1,\dots,n_v$. In terms of the exact sequence \eq{wc4eq3} in Definition~\ref{wc4def1}, $\pi$ maps $[B]\mapsto[A]$ on $\C$-points. The fibre of~$\pi$ over a~$\C$-point $[(V_v)_{v\in Q_0},(\rho_e)_{e\in Q_1}]$ is the projective space $\bP\big(\bigop_{v\in Q_0}V_v^{\op^{n_v}}\big)$ parametrizing the forgotten edge maps $\rho_{(v,i)}$ up to scale, where the rescalings come from changing the isomorphism $V_\iy\cong\C$, and the condition of $\ti\mu^{\bs d}_+$-semistability is that the image under $\pi$ should be $\mu$-semistable, and the $\rho_{(v,i)}$ should not all be zero. Thus we may identify $\tiM_{(\bs d,1)}^\ss\big(\ti\mu^{\bs d}_+\big)$ with the projective space~bundle
\e
\tiM_{(\bs d,1)}^\ss\big(\ti\mu^{\bs d}_+\big)\cong\bP\bigg( \bigop_{v\in Q_0}\cV_{v,\bs d}^{\op^{n_v}}\bigg)\ra\M_{\bs d}^\ss(\mu),
\label{wc6eq27}
\e
where the vector bundles $\cV_{v,\bs d}\ra\M_{\bs d}$ are as in Definition~\ref{wc5def2}. In fact $\cV_{v,\bs d}$ does not descend through $\M_{\bs d}\ra\M_{\bs d}^\pl$ to $\M^\pl_{\bs d}\supseteq\M_{\bs d}^\ss(\mu)$, but the projective bundle $\bP\bigl( \bigop_{v\in Q_0}\cV_{v,\bs d}^{\op^{n_v}}\bigr)\ra\M_{\bs d}$ does descend to $\M_{\bs d}^\pl$, which is what we mean in~\eq{wc6eq27}.

Now the second author \cite{Joyc12} gives an alternative, geometric definition of the Lie bracket $[\,,\,]$ on $\check H_*\big(\M^\pl\big)$ in Section~\ref{wc24} in terms of the `projective Euler class' $\mathop{\rm PE}(\Th^\bu)$ of the perfect complex $\Th^\bu\ra\M\t\M$ in Assumption~\ref{wc2ass1}(g). For the Lie bracket in \eq{wc6eq26}, the complex $\Th^\bu$ for $\modCtQ$ is given in \eq{wc5eq5}--\eq{wc5eq6}, and its restriction to $\tiM_{(\bs d,0)}\t\tiM_{(0,1)}$ reduces to
\e
\Th^\bu\vert_{\tiM_{(\bs d,0)}\t\tiM_{(0,1)}}\cong
 \bigg(\bigop_{v\in Q_0}\cV_{v,\bs d}^{\op^{n_v}}\bt\cV_{\iy,1}^*\bigg)[-1],
\label{wc6eq28}
\e
where $\cV_{\iy,1}^*\ra\tiM_{(0,1)}$ is a line bundle. When $\Th^\bu=F[-1]$ is a vector bundle $F$ in degree $1$, the definition of $\mathop{\rm PE}(\Th^\bu)$ in \cite{Joyc12} involves the projective bundle $\bP(F)$. Using this, one can show from \eq{wc6eq27}--\eq{wc6eq28} and \cite{Joyc12} that
\e
\ti\io_*\bigl(\big[\tiM_{(\bs d,1)}^\ss(\ti\mu^{\bs d}_+)\big]_\fund\bigr)=\bigl[i_*^\pl\bigl(\io_*([\M_{\bs d}^\ss(\mu)]_\fund)\bigr),1_{H_0(\tiM_{(0,1)}^\pl)}\bigr].
\label{wc6eq29}
\e
Comparing \eq{wc6eq26} and \eq{wc6eq29}, and using the fact that $\big[{-},1_{H_0(\tiM_{(0,1)}^\pl)}\big]$ is injective if $n_v>0$ for all $v\in Q_0$ (see the argument after \eq{wc5eq14} above), and $i_*^\pl$ is an isomorphism, we see that $\big[\M_{\bs d}^\ss(\mu)\big]_\inv=\io_*\big(\big[\M_{\bs d}^\ss(\mu)\big]_\fund\big)$. This proves the inductive step, and the proposition follows by induction.
\end{proof}

\subsection*{Acknowledgements}
This research was supported by the Simons Collaboration on Special Holonomy in Geometry, Analysis and Physics. The third author was partially supported by JSPS Grant-in-Aid for
Scientific Research numbers JP16K05125 and JP21K03246. The authors would like to thank Arkadij Bojko, Yalong Cao, Frances Kirwan, and Markus Upmeier for helpful conversations, and the anonymous referees for careful proofreading.


\begin{thebibliography}{99}
\footnotesize\itemsep=0pt

\bibitem{AOV}
Abramovich D., Olsson M., Vistoli A., Tame stacks in positive characteristic,
  \href{https://doi.org/10.5802/aif.2378}{\textit{Ann. Inst. Fourier (Grenoble)}} \textbf{58} (2008), 1057--1091,
  \href{https://arxiv.org/abs/math.AG/0703310}{arXiv:math.AG/0703310}.

\bibitem{AkMc}
Akbulut S., McCarthy J.D., Casson's invariant for oriented homology
  {$3$}-spheres. An exposition, \textit{Mathematical Notes}, Vol.~36, \href{https://doi.org/10.1515/9781400860623}{Princeton
  University Press}, Princeton, NJ, 1990.

\bibitem{AlGa}
\'Alvarez-C\'onsul L., Garc\'ia-Prada O., Hitchin--{K}obayashi correspondence,
  quivers, and vortices, \href{https://doi.org/10.1007/s00220-003-0853-1}{\textit{Comm. Math. Phys.}} \textbf{238} (2003), 1--33,
  \href{https://arxiv.org/abs/math.DG/0112161}{arXiv:math.DG/0112161}.

\bibitem{Arbe}
Arbesfeld N., {$K$}-theoretic {D}onaldson--{T}homas theory and the {H}ilbert
  scheme of points on a surface, \href{https://doi.org/10.14231/ag-2021-018}{\textit{Algebr. Geom.}} \textbf{8} (2021),
  587--625, \href{https://arxiv.org/abs/1905.04567}{arXiv:1905.04567}.

\bibitem{AtBo}
Atiyah M.F., Bott R., The {Y}ang--{M}ills equations over {R}iemann surfaces,
  \href{https://doi.org/10.1098/rsta.1983.0017}{\textit{Philos. Trans. Roy. Soc. London Ser.~A}} \textbf{308} (1983),
  523--615.

\bibitem{BeFa}
Behrend K., Fantechi B., The intrinsic normal cone, \href{https://doi.org/10.1007/s002220050136}{\textit{Invent. Math.}}
  \textbf{128} (1997), 45--88, \href{https://arxiv.org/abs/alg-geom/9601010}{arXiv:alg-geom/9601010}.

\bibitem{Bens}
Benson D.J., Representations and cohomology.~{II}. Cohomology of groups and
  modules, \textit{Cambridge Studies in Advanced Mathematics}, Vol.~31,
  Cambridge University Press, Cambridge, 1991.

\bibitem{Blan}
Blanc A., Topological {K}-theory of complex noncommutative spaces,
  \href{https://doi.org/10.1112/S0010437X15007617}{\textit{Compos. Math.}} \textbf{152} (2016), 489--555, \href{https://arxiv.org/abs/1211.7360}{arXiv:1211.7360}.

\bibitem{BoHe}
Boden H.U., Herald C.M., The {${\rm SU}(3)$} {C}asson invariant for integral
  homology {$3$}-spheres, \href{https://doi.org/10.4310/jdg/1214510050}{\textit{J.~Differential Geom.}} \textbf{50} (1998),
  147--206, \href{https://arxiv.org/abs/math.DG/9809124}{arXiv:math.DG/9809124}.

\bibitem{Borc}
Borcherds R.E., Vertex algebras, {K}ac--{M}oody algebras, and the {M}onster,
  \href{https://doi.org/10.1073/pnas.83.10.3068}{\textit{Proc. Nat. Acad. Sci. USA}} \textbf{83} (1986), 3068--3071.

\bibitem{BoJo}
Borisov D., Joyce D., Virtual fundamental classes for moduli spaces of sheaves
  on {C}alabi--{Y}au four-folds, \href{https://doi.org/10.2140/gt.2017.21.3231}{\textit{Geom. Topol.}} \textbf{21} (2017),
  3231--3311, \href{https://arxiv.org/abs/1504.00690}{arXiv:1504.00690}.

\bibitem{Brad1}
Bradlow S.B., Vortices in holomorphic line bundles over closed {K}\"ahler
  manifolds, \href{https://doi.org/10.1007/BF02097654}{\textit{Comm. Math. Phys.}} \textbf{135} (1990), 1--17.

\bibitem{Brad2}
Bradlow S.B., Special metrics and stability for holomorphic bundles with global
  sections, \href{http://doi.org/10.4310/jdg/1214446034}{\textit{J.~Differential Geom.}} \textbf{33} (1991), 169--213.

\bibitem{BrDa}
Bradlow S.B., Daskalopoulos G.D., Moduli of stable pairs for holomorphic
  bundles over {R}iemann surfaces, \href{https://doi.org/10.1142/S0129167X91000272}{\textit{Internat. J.~Math.}} \textbf{2}
  (1991), 477--513.

\bibitem{Brid1}
Bridgeland T., Stability conditions on triangulated categories, \href{https://doi.org/10.4007/annals.2007.166.317}{\textit{Ann. of
  Math.}} \textbf{166} (2007), 317--345, \href{https://arxiv.org/abs/math.AG/0703310}{arXiv:math.AG/0703310}.

\bibitem{Brid2}
Bridgeland T., Geometry from {D}onaldson--{T}homas invariants, in
  Integrability, Quantization, and Geometry~{II}. {Q}uantum Theories and
  Algebraic Geometry, \textit{Proc. Sympos. Pure Math.}, Vol.~103, Amer. Math.
  Soc., Providence, RI, 2021, 1--66, \href{https://arxiv.org/abs/1912.06504}{arXiv:1912.06504}.

\bibitem{CGJ}
Cao Y., Gross J., Joyce D., Otability of moduli spaces of {${\rm
  Spin}(7)$}-instantons and coherent sheaves on {C}alabi--{Y}au 4-folds,
  \href{https://doi.org/10.1016/j.aim.2020.107134}{\textit{Adv. Math.}} \textbf{368} (2020), 107134, 60~pages,
  \href{https://arxiv.org/abs/1811.09658}{arXiv:1811.09658}.

\bibitem{CKM}
Cao Y., Kool M., Monavari S., {$K$}-theoretic {DT/PT} correspondence for toric
  {C}alabi--{Y}au $4$-folds, \href{https://doi.org/10.1007/s00220-022-04472-0}{\textit{Commun. Math. Phys.}}, to appear,
  \href{https://arxiv.org/abs/1906.07856}{arXiv:1906.07856}.

\bibitem{CaLe}
Cao Y., Leung N.C., Donaldson--{T}homas theory for {C}alabi--{Y}au $4$-folds,
  \href{https://arxiv.org/abs/1407.7659}{arXiv:1407.7659}.

\bibitem{DoSe}
Donaldson S., Segal E., Gauge theory in higher dimensions,~{II}, in Geometry of
  Special Holonomy and Related Topics, \textit{Surv. Differ. Geom.}, Vol.~16,
  \href{https://doi.org/10.4310/SDG.2011.v16.n1.a1}{Int. Press}, Somerville, MA, 2011, 1--41, \href{https://arxiv.org/abs/0902.3239}{arXiv:0902.3239}.

\bibitem{Dona1}
Donaldson S.K., An application of gauge theory to four-dimensional topology,
  \href{http://doi.org/10.4310/jdg/1214437665}{\textit{J.~Differential Geom.}} \textbf{18} (1983), 279--315.

\bibitem{Dona2}
Donaldson S.K., Polynomial invariants for smooth four-manifolds,
  \href{https://doi.org/10.1016/0040-9383(90)90001-Z}{\textit{Topology}} \textbf{29} (1990), 257--315.

\bibitem{DoKr}
Donaldson S.K., Kronheimer P.B., The geometry of four-manifolds, \textit{Oxford
  Mathematical Monographs}, Vol. 1990, The Clarendon Press, Oxford University
  Press, New York, 1990.

\bibitem{DoTh}
Donaldson S.K., Thomas R.P., Gauge theory in higher dimensions, in The
  Geometric Universe ({O}xford, 1996), Oxford University Press,
  Oxford, 1998, 31--47.

\bibitem{ElGo}
Ellingsrud G., G\"ottsche L., Variation of moduli spaces and {D}onaldson
  invariants under change of polarization, \href{https://doi.org/10.1515/crll.1995.467.1}{\textit{J.~Reine Angew. Math.}}
  \textbf{467} (1995), 1--49, \href{https://arxiv.org/abs/alg-geom/9410005}{arXiv:alg-geom/9410005}.

\bibitem{FeGu}
Feigin B., Gukov S., {${\rm VOA}[M_4]$}, \href{https://doi.org/10.1063/1.5100059}{\textit{J.~Math. Phys.}} \textbf{61}
  (2020), 012302, 27~pages, \href{https://arxiv.org/abs/1806.02470}{arXiv:1806.02470}.

\bibitem{FiSt}
Fintushel R., Stern R.J., Donaldson invariants of {$4$}-manifolds with simple
  type, \href{http://doi.org/10.4310/jdg/1214457550}{\textit{J.~Differential Geom.}} \textbf{42} (1995), 577--633.

\bibitem{FrBZ}
Frenkel E., Ben-Zvi D., Vertex algebras and algebraic curves,
  \href{https://doi.org/10.1090/surv/088}{\textit{Mathematical Surveys and Monographs}}, Vol.~88, 2nd ed., Amer. Math.
  Soc., Providence, RI, 2004.

\bibitem{FrWa}
Friedlander E.M., Walker M.E., Semi-topological {$K$}-theory, in Handbook of
  {$K$}-Theory. {V}ols.~1, 2, \href{https://doi.org/10.1007/978-3-540-27855-9_17}{Springer}, Berlin, 2005, 877--924.

\bibitem{FrQi}
Friedman R., Qin Z., Flips of moduli spaces and transition formulas for
  {D}onaldson polynomial invariants of rational surfaces, \href{https://doi.org/10.4310/CAG.1995.v3.n1.a2}{\textit{Comm. Anal.
  Geom.}} \textbf{3} (1995), 11--83, \href{https://arxiv.org/abs/alg-geom/9410007}{arXiv:alg-geom/9410007}.

\bibitem{GaPr1}
Garc\'ia-Prada O., A direct existence proof for the vortex equations over a
  compact {R}iemann surface, \href{https://doi.org/10.1112/blms/26.1.88}{\textit{Bull. London Math. Soc.}} \textbf{26}
  (1994), 88--96.

\bibitem{GaPr2}
Garc\'ia-Prada O., Dimensional reduction of stable bundles, vortices and stable
  pairs, \href{https://doi.org/10.1142/S0129167X94000024}{\textit{Internat.~J.~Math.}} \textbf{5} (1994), 1--52.

\bibitem{Gome}
G\'omez T.L., Algebraic stacks, \href{https://doi.org/10.1007/BF02829538}{\textit{Proc. Indian Acad. Sci. Math. Sci.}}
  \textbf{111} (2001), 1--31, \href{https://arxiv.org/abs/math.AG/9911199}{arXiv:math.AG/9911199}.

\bibitem{Gott}
G\"ottsche L., Modular forms and {D}onaldson invariants for {$4$}-manifolds
  with {$b_{+}=1$}, \href{https://doi.org/10.1090/S0894-0347-96-00212-3}{\textit{J.~Amer. Math. Soc.}} \textbf{9} (1996), 827--843,
  \href{https://arxiv.org/abs/alg-geom/9506018}{arXiv:alg-geom/9506018}.

\bibitem{GoKo2}
G\"ottsche L., Kool M., A rank 2 {D}ijkgraaf--{M}oore--{V}erlinde--{V}erlinde
  formula, \href{https://doi.org/10.4310/CNTP.2019.v13.n1.a6}{\textit{Commun. Number Theory Phys.}} \textbf{13} (2019), 165--201,
  \href{https://arxiv.org/abs/1801.01878}{arXiv:1801.01878}.

\bibitem{GoKo1}
G\"ottsche L., Kool M., Virtual refinements of the {V}afa--{W}itten formula,
  \href{https://doi.org/10.1007/s00220-020-03748-7}{\textit{Comm. Math. Phys.}} \textbf{376} (2020), 1--49, \href{https://arxiv.org/abs/1703.07196}{arXiv:1703.07196}.

\bibitem{GNY1}
G\"ottsche L., Nakajima H., Yoshioka K., Instanton counting and {D}onaldson
  invariants, \href{http://dx.doi.org/10.4310/jdg/1226090481}{\textit{J.~Differential Geom.}} \textbf{80} (2008), 343--390,
  \href{https://arxiv.org/abs/math.AG/0606180}{arXiv:math.AG/0606180}.

\bibitem{GNY2}
G\"ottsche L., Nakajima H., Yoshioka K., {$K$}-theoretic {D}onaldson invariants
  via instanton counting, \href{https://doi.org/10.4310/PAMQ.2009.v5.n3.a5}{\textit{Pure Appl. Math.~Q.}} \textbf{5} (2009),
  1029--1111, \href{https://arxiv.org/abs/math.AG/0611945}{arXiv:math.AG/0611945}.

\bibitem{GNY3}
G\"ottsche L., Nakajima H., Yoshioka K., Donaldson--{S}eiberg--{W}itten from
  {M}ochizuki's formula and instanton counting, \href{https://doi.org/10.2977/PRIMS/37}{\textit{Publ. Res. Inst. Math.
  Sci.}} \textbf{47} (2011), 307--359, \href{https://arxiv.org/abs/1001.5024}{arXiv:1001.5024}.

\bibitem{GoZa}
G\"ottsche L., Zagier D., Jacobi forms and the structure of {D}onaldson
  invariants for {$4$}-manifolds with {$b_+=1$}, \href{https://doi.org/10.1007/s000290050025}{\textit{Selecta Math.~(N.S.)}}
  \textbf{4} (1998), 69--115, \href{https://arxiv.org/abs/alg-geom/9612020}{arXiv:alg-geom/9612020}.

\bibitem{Groj}
Grojnowski I., Instantons and affine algebras.~{I}. {T}he {H}ilbert scheme and
  vertex operators, \href{https://doi.org/10.4310/MRL.1996.v3.n2.a12}{\textit{Math. Res. Lett.}} \textbf{3} (1996), 275--291,
  \href{https://arxiv.org/abs/alg-geom/9506020}{arXiv:alg-geom/9506020}.

\bibitem{Gros}
Gross J., The homology of moduli stacks of complexes, \href{https://arxiv.org/abs/1907.03269}{arXiv:1907.03269}.

\bibitem{Halp}
Halpern-Leistner D., {$\Theta$}-stratifications, {$\Theta$}-reductive stacks,
  and applications, in Algebraic Geometry: {S}alt {L}ake {C}ity 2015,
  \textit{Proc. Sympos. Pure Math.}, Vol.~97, Amer. Math. Soc., Providence, RI,
  2018, 349--379, \href{https://arxiv.org/abs/1608.04797}{arXiv:1608.04797}.

\bibitem{HaNa}
Harder G., Narasimhan M.S., On the cohomology groups of moduli spaces of vector
  bundles on curves, \href{https://doi.org/10.1007/BF01357141}{\textit{Math. Ann.}} \textbf{212} (1975), 215--248.

\bibitem{Hitc}
Hitchin N.J., The self-duality equations on a {R}iemann surface, \href{https://doi.org/10.1112/plms/s3-55.1.59}{\textit{Proc.
  London Math. Soc.}} \textbf{55} (1987), 59--126.

\bibitem{HuLe}
Huybrechts D., Lehn M., The geometry of moduli spaces of sheaves, 2nd ed.,
  Cambridge Mathematical Library, \href{https://doi.org/10.1017/CBO9780511711985}{Cambridge University Press}, Cambridge, 2010.

\bibitem{Joyc1}
Joyce D., Compact manifolds with special holonomy, Oxford Mathematical
  Monographs, Oxford University Press, Oxford, 2000.

\bibitem{Joyc4}
Joyce D., Configurations in abelian categories.~{I}. {B}asic properties and
  moduli stacks, \href{https://doi.org/10.1016/j.aim.2005.04.008}{\textit{Adv. Math.}} \textbf{203} (2006), 194--255,
  \href{https://arxiv.org/abs/math.AG/0312190}{arXiv:math.AG/0312190}.

\bibitem{Joyc2}
Joyce D., Constructible functions on {A}rtin stacks, \href{https://doi.org/10.1112/S0024610706023180}{\textit{J.~London Math.
  Soc.}} \textbf{74} (2006), 583--606, \href{https://arxiv.org/abs/math.AG/0403305}{arXiv:math.AG/0403305}.

\bibitem{Joyc5}
Joyce D., Configurations in abelian categories.~{II}. {R}ingel--{H}all
  algebras, \href{https://doi.org/10.1016/j.aim.2006.07.006}{\textit{Adv. Math.}} \textbf{210} (2007), 635--706,
  \href{https://arxiv.org/abs/math.AG/0503029}{arXiv:math.AG/0503029}.

\bibitem{Joyc6}
Joyce D., Configurations in abelian categories.~{III}. {S}tability conditions
  and identities, \href{https://doi.org/10.1016/j.aim.2007.04.002}{\textit{Adv. Math.}} \textbf{215} (2007), 153--219,
  \href{https://arxiv.org/abs/math.AG/0410267}{arXiv:math.AG/0410267}.

\bibitem{Joyc3}
Joyce D., Motivic invariants of {A}rtin stacks and `stack functions',
  \href{https://doi.org/10.1093/qmath/ham019}{\textit{Q.~J.~Math.}} \textbf{58} (2007), 345--392, \href{https://arxiv.org/abs/math.AG/0509722}{arXiv:math.AG/0509722}.

\bibitem{Joyc7}
Joyce D., Configurations in abelian categories.~{IV}. {I}nvariants and changing
  stability conditions, \href{https://doi.org/10.1016/j.aim.2007.06.011}{\textit{Adv. Math.}} \textbf{217} (2008), 125--204,
  \href{https://arxiv.org/abs/math.AG/0410268}{arXiv:math.AG/0410268}.

\bibitem{Joyc10}
Joyce D., {$D$}-manifolds and d-orbifolds: a theory of derived differential
  geometry, {P}reliminary version, 2012, available at
  \url{https://people.maths.ox.ac.uk/~joyce/dmanifolds.html}.

\bibitem{Joyc8}
Joyce D., An introduction to d-manifolds and derived differential geometry, in
  Moduli Spaces, \textit{London Math. Soc. Lecture Note Ser.}, Vol.~411,
  \href{https://doi.org/10.1017/CBO9781107279544.006}{Cambridge University Press}, Cambridge, 2014, 230--281, \href{https://arxiv.org/abs/1206.4207}{arXiv:1206.4207}.



\bibitem{Joyc9}
Joyce D., Kuranishi spaces as a 2-category, in Virtual Fundamental Cycles in
  Symplectic Topology, \textit{Math. Surveys Monogr.}, Vol.~237, Amer. Math.
  Soc., Providence, RI, 2019, 253--298, \href{https://arxiv.org/abs/1510.07444}{arXiv:1510.07444}.

\bibitem{Joyc11}
Joyce D., Kuranishi spaces and symplectic geometry, {P}reliminary version of
  Vols.~I,~II, available at
  \url{https://people.maths.ox.ac.uk/~joyce/Kuranishi.html}.

\bibitem{Joyc12}
Joyce D., Ringel--{H}all style vertex algebra and {L}ie algebra structures on
  the homology of moduli spaces, {P}reliminary version, 2020, available at
  \url{https://people.maths.ox.ac.uk/~joyce/hall.pdf}.

\bibitem{Joyc13}
Joyce D., Enumerative invariants and wall-crossing formulae in abelian
  categories, \href{https://arxiv.org/abs/2111.04694}{arXiv:2111.04694}.

\bibitem{JoSo}
Joyce D., Song Y., A theory of generalized {D}onaldson--{T}homas invariants,
  \href{https://doi.org/10.1090/S0065-9266-2011-00630-1}{\textit{Mem. Amer. Math. Soc.}} \textbf{217} (2012), iv+199~pages,
  \href{https://arxiv.org/abs/0810.5645}{arXiv:0810.5645}.

\bibitem{JTU}
Joyce D., Tanaka Y., Upmeier M., On orientations for gauge-theoretic moduli
  spaces, \href{https://doi.org/10.1016/j.aim.2019.106957}{\textit{Adv. Math.}} \textbf{362} (2020), 106957, 64~pages,
  \href{https://arxiv.org/abs/1811.01096}{arXiv:1811.01096}.

\bibitem{Kac2}
Kac V., Vertex algebras for beginners, 2nd ed., \textit{University Lecture Series},
  Vol.~10, \href{https://doi.org/10.1090/ulect/010}{Amer. Math. Soc.}, Providence, RI, 1998.

\bibitem{Kac1}
Kac V.G., Infinite-dimensional {L}ie algebras, 3rd ed., \href{https://doi.org/10.1017/CBO9780511626234}{Cambridge University
  Press}, Cambridge, 1990.

\bibitem{King}
King A.D., Moduli of representations of finite-dimensional algebras,
  \href{https://doi.org/10.1093/qmath/45.4.515}{\textit{Quart.~J.~Math. Oxford Ser.~(2)}} \textbf{45} (1994), 515--530.

\bibitem{Kirw}
Kirwan F.C., Cohomology of quotients in symplectic and algebraic geometry,
  \textit{Mathematical Notes}, Vol.~31, \href{https://doi.org/10.2307/j.ctv10vm2m8}{Princeton University Press}, Princeton,
  NJ, 1984.

\bibitem{KoSo}
Kontsevich M., Soibelman Y., Stability structures, motivic
  {D}onaldson--{T}homas invariants and cluster transformations,
  \href{https://arxiv.org/abs/0811.2435}{arXiv:0811.2435}.

\bibitem{Kots}
Kotschick D., {${\rm SO}(3)$}-invariants for {$4$}-manifolds with {$b^+_2=1$},
  \href{https://doi.org/10.1112/plms/s3-63.2.426}{\textit{Proc. London Math. Soc.}} \textbf{63} (1991), 426--448.

\bibitem{KoMo}
Kotschick D., Morgan J.W., {${\rm SO}(3)$}-invariants for {$4$}-manifolds with
  {$b^+_2=1$}.~{II}, \href{https://doi.org/10.4310/jdg/1214454879}{\textit{J.~Differential Geom.}} \textbf{39} (1994),
  433--456.

\bibitem{Kron}
Kronheimer P.B., Four-manifold invariants from higher-rank bundles,
  \href{http://doi.org/10.4310/jdg/1143572014}{\textit{J.~Differential Geom.}} \textbf{70} (2005), 59--112,
  \href{https://arxiv.org/abs/math.GT/0407518}{arXiv:math.GT/0407518}.

\bibitem{KrMr}
Kronheimer P.B., Mrowka T.S., Embedded surfaces and the structure of
  {D}onaldson's polynomial invariants, \href{http://doi.org/10.4310/jdg/1214456482}{\textit{J.~Differential Geom.}}
  \textbf{41} (1995), 573--734.

\bibitem{Laar}
Laarakker T., Monopole contributions to refined {V}afa--{W}itten invariants,
  \href{https://doi.org/10.2140/gt.2020.24.2781}{\textit{Geom. Topol.}} \textbf{24} (2020), 2781--2828, \href{https://arxiv.org/abs/1810.00385}{arXiv:1810.00385}.

\bibitem{LaMo}
Laumon G., Moret-Bailly L., Champs alg\'ebriques, \textit{Ergebnisse der
  Mathematik und ihrer Grenzgebiete.~3. Folge. A~Series of Modern Surveys in
  Mathematics}, Vol.~39, \href{https://doi.org/10.1007/978-3-540-24899-6}{Springer-Verlag}, Berlin, 2000.

\bibitem{LeLi}
Lepowsky J., Li H., Introduction to vertex operator algebras and their
  representations, \href{https://doi.org/10.1007/978-0-8176-8186-9}{\textit{Progress in Mathematics}}, Vol. 227, Birkh\"auser
  Boston, Inc., Boston, MA, 2004.

\bibitem{MaMo}
Mari\~no M., Moore G., The {D}onaldson--{W}itten function for gauge groups of
  rank larger than one, \href{https://doi.org/10.1007/s002200050494}{\textit{Comm. Math. Phys.}} \textbf{199} (1998), 25--69,
  \href{https://arxiv.org/abs/hep-th/9802185}{arXiv:hep-th/9802185}.

\bibitem{Mart}
Martin S., The {D}onaldson--{W}itten function for gauge groups of rank larger
  than one, \href{https://arxiv.org/abs/math.SG/0001002}{arXiv:math.SG/0001002}.

\bibitem{May}
May J.P., A concise course in algebraic topology, \textit{Chicago Lectures in
  Mathematics}, Vol. 1999, University of Chicago Press, Chicago, IL, 1999.

\bibitem{Metz}
Metzler D., The {D}onaldson--{W}itten function for gauge groups of rank larger
  than one, \href{https://arxiv.org/abs/math.DG/0306176}{arXiv:math.DG/0306176}.

\bibitem{MiSt}
Milnor J.W., Stasheff J.D., Characteristic classes, \textit{Annals of
  Mathematics Studies}, Vol.~76, \href{https://doi.org/10.1515/9781400881826}{Princeton University Press}, Princeton, N. J.;
  University of Tokyo Press, Tokyo, 1974.

\bibitem{Moch}
Mochizuki T., Donaldson type invariants for algebraic surfaces. Transition of
  moduli stacks, \textit{Lecture Notes in Mathematics}, Vol. 1972,
  \href{https://doi.org/10.1007/978-3-540-93913-9}{Springer-Verlag}, Berlin, 2009.

\bibitem{MoWi}
Moore G., Witten E., Integration over the {$u$}-plane in {D}onaldson theory,
  \href{https://doi.org/10.4310/ATMP.1997.v1.n2.a7}{\textit{Adv. Theor. Math. Phys.}} \textbf{1} (1997), 298--387,
  \href{https://arxiv.org/abs/alg-geom/9510003}{arXiv:alg-geom/9510003}.

\bibitem{MFK}
Mumford D., Fogarty J., Kirwan F., Geometric invariant theory,
  \textit{Ergebnisse der Mathematik und ihrer Grenzgebiete~(2)}, Vol.~34, 3rd
  ed., \href{https://doi.org/10.1007/978-3-642-57916-5}{Springer-Verlag}, Berlin, 1994.

\bibitem{Naka1}
Nakajima H., Instantons on {ALE} spaces, quiver varieties, and {K}ac--{M}oody
  algebras, \href{https://doi.org/10.1215/S0012-7094-94-07613-8}{\textit{Duke Math.~J.}} \textbf{76} (1994), 365--416.

\bibitem{Naka2}
Nakajima H., Instantons and affine {L}ie algebras, \href{https://doi.org/10.1016/0920-5632(96)00017-5}{\textit{Nuclear Phys.~B Proc. Suppl.}}, 1996, 154--161,
  \href{https://arxiv.org/abs/alg-geom/9510003}{arXiv:alg-geom/9510003}.

\bibitem{Naka3}
Nakajima H., Heisenberg algebra and {H}ilbert schemes of points on projective
  surfaces, \href{https://doi.org/10.2307/2951818}{\textit{Ann. of Math.}} \textbf{145} (1997), 379--388,
  \href{https://arxiv.org/abs/alg-geom/9507012}{arXiv:alg-geom/9507012}.

\bibitem{Nooh1}
Noohi B., Foundations of topological stacks,~{I}, \href{https://arxiv.org/abs/math.AG/0503247}{arXiv:math.AG/0503247}.

\bibitem{Nooh2}
Noohi B., Homotopy types of topological stacks, \href{https://doi.org/10.1016/j.aim.2012.04.001}{\textit{Adv. Math.}}
  \textbf{230} (2012), 2014--2047, \href{https://arxiv.org/abs/0808.3799}{arXiv:0808.3799}.

\bibitem{OhTh}
Oh J., Thomas R.P., Counting sheaves on {C}alabi--{Y}au $4$-folds.~{I},
  \href{https://arxiv.org/abs/2009.05542}{arXiv:2009.05542}.

\bibitem{Okou}
Okounkov A., Lectures on {K}-theoretic computations in enumerative geometry, in
  Geometry of Moduli Spaces and Representation Theory, \textit{IAS/Park City
  Math. Ser.}, Vol.~24, \href{https://doi.org/10.1090/pcms/024}{Amer. Math. Soc.}, Providence, RI, 2017, 251--380,
  \href{https://arxiv.org/abs/1512.07363}{arXiv:1512.07363}.

\bibitem{Olss}
Olsson M., Algebraic spaces and stacks, \textit{American Mathematical Society
  Colloquium Publications}, Vol.~62, \href{https://doi.org/10.1090/coll/062}{Amer. Math. Soc.}, Providence, RI, 2016.

\bibitem{PTVV}
Pantev T., To\"en B., Vaqui\'e M., Vezzosi G., Shifted symplectic structures,
  \href{https://doi.org/10.1007/s10240-013-0054-1}{\textit{Publ. Math. Inst. Hautes \'Etudes Sci.}} \textbf{117} (2013),
  271--328, \href{https://arxiv.org/abs/1111.3209}{arXiv:1111.3209}.

\bibitem{Ring}
Ringel C.M., Hall algebras, in Topics in Algebra, {P}art~1 ({W}arsaw, 1988),
  \textit{Banach Center Publ.}, Vol.~26, PWN, Warsaw, 1990, 433--447.

\bibitem{Roma}
Romagny M., Group actions on stacks and applications, \href{https://doi.org/10.1307/mmj/1114021093}{\textit{Michigan
  Math.~J.}} \textbf{53} (2005), 209--236.

\bibitem{Ruda}
Rudakov A., Stability for an abelian category, \href{https://doi.org/10.1006/jabr.1997.7093}{\textit{J.~Algebra}} \textbf{197}
  (1997), 231--245.

\bibitem{Shen}
Shen J., Cobordism invariants of the moduli space of stable pairs,
  \href{https://doi.org/10.1112/jlms/jdw043}{\textit{J.~Lond. Math. Soc.}} \textbf{94} (2016), 427--446,
  \href{https://arxiv.org/abs/1409.4576}{arXiv:1409.4576}.

\bibitem{Simp}
Simpson C., The topological realization of a simplicial presheaf,
  \href{https://arxiv.org/abs/}{arXiv:q-alg/9609004}.

\bibitem{TaTh1}
Tanaka Y., Thomas R.P., Vafa--{W}itten invariants for projective surfaces~{I}:
  stable case, \href{https://doi.org/10.1090/jag/738}{\textit{J.~Algebraic Geom.}} \textbf{29} (2020), 603--668,
  \href{https://arxiv.org/abs/1702.08487}{arXiv:1702.08487}.

\bibitem{TaTh2}
Tanaka Y., Thomas R.P., Vafa--{W}itten invariants for projective surfaces~{II}:
  semistable case, \href{https://doi.org/10.4310/pamq.2017.v13.n3.a6}{\textit{Pure Appl. Math.~Q.}} \textbf{13} (2017), 517--562,
  \href{https://arxiv.org/abs/1702.08488}{arXiv:1702.08488}.


\bibitem{Taub}
Taubes C.H., Casson's invariant and gauge theory, \href{http://doi.org/10.4310/jdg/1214444327}{\textit{J.~Differential
  Geom.}} \textbf{31} (1990), 547--599.

\bibitem{Thad}
Thaddeus M., Stable pairs, linear systems and the {V}erlinde formula,
  \href{https://doi.org/10.1007/BF01232244}{\textit{Invent. Math.}} \textbf{117} (1994), 317--353,
  \href{https://arxiv.org/abs/alg-geom/9210007}{arXiv:alg-geom/9210007}.

\bibitem{Thom1}
Thomas R.P., A holomorphic {C}asson invariant for {C}alabi--{Y}au 3-folds, and
  bundles on {$K3$} fibrations, \href{http://projecteuclid.org/euclid.jdg/1214341649}{\textit{J.~Differential Geom.}} \textbf{54}
  (2000), 367--438, \href{https://arxiv.org/abs/math.AG/9806111}{arXiv:math.AG/9806111}.

\bibitem{Thom2}
Thomas R.P., Equivariant {$K$}-theory and refined {V}afa--{W}itten invariants,
  \href{https://doi.org/10.1007/s00220-020-03821-1}{\textit{Comm. Math. Phys.}} \textbf{378} (2020), 1451--1500,
  \href{https://arxiv.org/abs/1810.00078}{arXiv:1810.00078}.

\bibitem{Toen1}
To\"en B., Higher and derived stacks: a global overview, in Algebraic
  Geometry~-- {S}eattle 2005. {P}art~1, \textit{Proc. Sympos. Pure Math.},
  Vol.~80, \href{https://doi.org/10.1090/pspum/080.1/2483943}{Amer. Math. Soc.}, Providence, RI, 2009, 435--487,
  \href{https://arxiv.org/abs/math.AG/0604504}{arXiv:math.AG/0604504}.

\bibitem{Toen2}
To\"en B., Derived algebraic geometry, \href{https://doi.org/10.4171/EMSS/4}{\textit{EMS Surv. Math. Sci.}} \textbf{1}
  (2014), 153--240, \href{https://arxiv.org/abs/1401.1044}{arXiv:1401.1044}.

\bibitem{ToVa}
To\"en B., Vaqui\'e M., Moduli of objects in dg-categories, \href{https://doi.org/10.1016/j.ansens.2007.05.001}{\textit{Ann. Sci.
  \'Ecole Norm. Sup.~(4)}} \textbf{40} (2007), 387--444,
  \href{https://arxiv.org/abs/math.AG/0503269}{arXiv:math.AG/0503269}.

\bibitem{ToVe1}
To\"en B., Vezzosi G., From {HAG} to {DAG}: derived moduli stacks, in
  Axiomatic, Enriched and Motivic Homotopy Theory, \textit{NATO Sci. Ser.~II
  Math. Phys. Chem.}, Vol.~131, \href{https://doi.org/10.1007/978-94-007-0948-5_6}{Kluwer Acad. Publ.}, Dordrecht, 2004, 173--216,
  \href{https://arxiv.org/abs/math.AG/0210407}{arXiv:math.AG/0210407}.

\bibitem{ToVe2}
To\"en B., Vezzosi G., Homotopical algebraic geometry.~{II}. {G}eometric stacks
  and applications, \href{https://doi.org/10.1090/memo/0902}{\textit{Mem. Amer. Math. Soc.}} \textbf{193} (2008),
  x+224~pages, \href{https://arxiv.org/abs/math.AG/0404373}{arXiv:math.AG/0404373}.

\bibitem{Upme}
Upmeier M., Homological Lie brackets on moduli spaces and pushforward
  operations in twisted {$K$}-theory, \textit{Mem. Amer. Math. Soc.}  (2021),
  25~pages, \href{https://arxiv.org/abs/2101.10990}{arXiv:2101.10990}.

\bibitem{Witt}
Witten E., Monopoles and four-manifolds, \href{https://doi.org/10.4310/MRL.1994.v1.n6.a13}{\textit{Math. Res. Lett.}} \textbf{1}
  (1994), 769--796, \href{https://arxiv.org/abs/hep-th/9411102}{arXiv:hep-th/9411102}.

\bibitem{Zhu}
Zhu Y., Modular invariance of characters of vertex operator algebras,
  \href{https://doi.org/10.1090/S0894-0347-96-00182-8}{\textit{J.~Amer. Math. Soc.}} \textbf{9} (1996), 237--302.

\end{thebibliography}

\addcontentsline{toc}{section}{References}
\LastPageEnding

\end{document}